\newcommand\Defn[1]{\textbf{\color{black}#1}}
\renewcommand\u{\mathbf{u}}
\renewcommand\v{\mathbf{v}}
\newcommand\p{\mathbf{p}}
\newcommand\FaPa{\mathcal{B}}
\newcommand\Ptop{\widehat{1}}
\newcommand\Pbot{\widehat{0}}
\newcommand\NC{N}
\renewcommand\emptyset{\varnothing}
\newcommand\Z{\mathbb{Z}}               
\newcommand\Znn{\mathbb{Z}_{\ge0}}               
\newcommand\R{\mathbb{R}}               
\newcommand\Rnn{\R_{\ge0}}               
\newcommand\C{\mathbb{C}}               
\newcommand\z{\mathbf{z}}
\newcommand\x{\mathbf{x}}
\newcommand\y{\mathbf{y}}
\newcommand\subdiv{\mathcal{S}}
\newcommand\init{\textrm{in}_{\le_{rev}}}
\newcommand\CaySum[2]{{#1 \boxplus #2}}%
\newcommand\CayDiff[2]{{#1 \boxminus #2}}%
\newcommand\tprism[1]{\mathrm{tprism}(#1)}%
\newcommand\op{\mathrm{op}}
\renewcommand\P{P}
\newcommand\dP{\mathbf{P}}
\newcommand\Phat{\widehat{\P}}
\newcommand\AffLat{\mathbb{A}}
\newcommand\Ord[1]{\mathcal{O}(#1)}
\newcommand\Chain[1]{\mathcal{C}({#1})}
\newcommand\TOrd[1]{\Ord{#1}}
\newcommand\DOrd[1]{\overline{\mathcal{O}}(#1)}
\newcommand\TChain[1]{\Chain{#1}}
\newcommand\DChain[1]{\overline{\mathcal{C}}(#1)}
\newcommand\Transfer{\phi}
\newcommand\iTransfer{\psi}
\newcommand\TPsi[1]{\Psi_{#1}}%
\newcommand\ehr{\mathrm{Ehr}}%
\newcommand\Hring[1]{\C[\Ord{#1}]}
\newcommand\dHring[1]{\C[\TOrd{#1}]}
\newcommand\Id{\mathrm{I}}
\newcommand\mo{\le_{rev}}
\newcommand\dG{\mathbf{G}}
\newcommand\Hansen[1]{\mathcal{H}(#1)}
\newcommand\Val{\mathrm{Val}}
\newcommand\ValP{\Val_0}
\newcommand\Po{\mathcal{P}}
\newcommand\Qo{\mathcal{Q}}
\newcommand\0{\mathbf{0}}
\newcommand\1{\mathbf{1}}
\newcommand\e{\mathbf{e}}
\newcommand\dual{\triangle}
\newcommand\Birkhoff{\mathcal{J}}
\newcommand\Birk[1]{\Birkhoff(#1)}
\newcommand\TBirk[1]{\Birk{#1}}
\renewcommand\L{\mathrm{L}}
\newcommand\inner[1]{\langle{#1}\rangle}
\newcommand\Filter{\mathsf{J}}
\DeclareMathOperator{\supp}{supp}
\DeclareMathOperator{\sgn}{sgn}
\DeclareMathOperator{\relint}{relint}
\DeclareMathOperator{\conv}{conv}
\DeclareMathOperator{\cone}{cone}
\DeclareMathOperator{\nvol}{Vol}
\DeclareMathOperator{\vol}{vol}
\newtheorem{thm}{Theorem}[section]
\newtheorem{cor}[thm]{Corollary}
\newtheorem{lem}[thm]{Lemma}
\newtheorem{prop}[thm]{Proposition}
\newtheorem{conj}{Conjecture}
\theoremstyle{definition}
\newtheorem{example}{Example}
\newtheorem{rem}{Remark}
\title{Two double poset polytopes}
\author{Thomas Chappell}
\email{tom@jjdat.com}
\author{Tobias Friedl}
\address{Fachbereich Mathematik und Informatik, %
Freie Universit\"at Berlin, %
Germany}
\email{tfriedl@math.fu-berlin.de}
\author{Raman Sanyal}
\address{Institut f\"ur Mathematik, Goethe-Universit\"at Frankfurt, Germany}
\email{sanyal@math.uni-frankfurt.de}
\keywords{double posets, double order polytope, double chain polytope,
Birkhoff lattice, Ehrhart polynomials, volumes, anti-blocking polytopes,
Gr\"obner bases}
\subjclass[2010]{
06A07, %
06A11, %
52B12, %
52B20} %
\date{\today}
\begin{document}

\begin{abstract}
    To every poset $\P$, Stanley (1986) associated two polytopes, the order
    polytope and the chain polytope, whose geometric properties reflect the
    combinatorial qualities of $\P$. This construction allows for deep
    insights into combinatorics by way of geometry and vice versa.  Malvenuto
    and Reutenauer (2011) introduced \emph{double posets}, that is, (finite)
    sets equipped with two partial orders, as a generalization of Stanley's
    labelled posets. Many combinatorial constructions can be naturally phrased
    in terms of double posets.  We introduce the \emph{double order polytope}
    and the \emph{double chain polytope} and we amply demonstrate that they
    geometrically capture double posets, i.e., the interaction between the two
    partial orders. We describe the facial structures, Ehrhart polynomials, and
    volumes of these polytopes in terms of the combinatorics of double posets.
    We also describe a curious connection to Geissinger's valuation polytopes
    and we characterize $2$-level polytopes among our double poset polytopes.

    Fulkerson's \emph{anti-blocking} polytopes from combinatorial optimization
    subsume stable set polytopes of graphs and chain polytopes of posets. We
    determine the geometry of Minkowski- and Cayley sums of anti-blocking
    polytopes. In particular, we describe a canonical subdivision of Minkowski
    sums of anti-blocking polytopes that facilitates the computation of
    Ehrhart (quasi-)polynomials and volumes. This also yields canonical
    triangulations of double poset polytopes.
    
    Finally, we investigate the affine semigroup rings associated to 
    double poset polytopes. We show that they have quadratic Gr\"obner bases,
    which gives an algebraic description of the unimodular flag triangulations
    described in the first part.
\end{abstract}

\maketitle

\section{Introduction}\label{sec:intro}

A (finite) \Defn{partially ordered set} (or \Defn{poset}, for short) is a
finite set $\P$ together with a reflexive, transitive, and anti-symmetric
relation $\preceq$. The notion of \emph{partial order} pervades all of
mathematics and the enumerative and algebraic combinatorics of posets is
underlying in computations in virtually all areas. In 1986,
Stanley~\cite{TwoPoset} defined two convex polytopes for every poset $\P$
that, in quite different ways, geometrically capture combinatorial properties
of $\P$.  The \Defn{order polytope} $\Ord{\P}$ is set of all order preserving
functions into the interval $[0,1]$. That is, all functions $f : \P
\rightarrow \R$ such that 
\[
    0 \ \le \  f(a) \ \le \ f(b) \ \le \  1
\]
for all $a,b \in \P$ with $a \preceq b$.  Hence, $\Ord{\P}$ parametrizes
functions on $\P$ and many properties of $\P$ are encoded in the boundary
structure of $\Ord{\P}$: faces of $\Ord{\P}$ are in correspondence with
quotients of $\P$. In particular, the vertices of $\Ord{\P}$ are
in bijection to filters of $\P$. But also
metric and arithmetic properties of $\Ord{\P}$ can be determined from $\P$.
The order polytope naturally has vertices in the lattice $\Z^\P$ and its
Ehrhart polynomial $\ehr_{\Ord{\P}}(n) = |n\Ord{\P} \cap \Z^\P|$, up to a
shift, coincides with the order polynomial $\Omega_\P(n)$; see
Section~\ref{ssec:TO_ehrhart} for details.  
A full-dimensional simplex with vertices in a lattice $\Lambda \subset \R^n$
is \Defn{unimodular} with respect to $\Lambda$ if it has minimal volume. The
\Defn{normalized volume} relative to $\Lambda$ is the Euclidean volume scaled
such that the volume of a unimodular simplex is $1$. If the lattice is
clear from the context, we denote the normalized volume by $\nvol(\Po)$.
By describing a canonical triangulation of
$\Ord{\P}$ into unimodular simplices, Stanley showed that $\nvol(\Ord{\P})$ is
exactly the number of \Defn{linear extensions} of $\P$, that is, the number
$e(\P)$ of refinements of $\preceq$ to a total order. We will review these
results in more detail in Section~\ref{ssec:TO_ehrhart}.  This bridge between
geometry and combinatorics can, for example, be used to show that computing
volume is \emph{hard} (cf.~\cite{BW}) and, conversely, geometric inequalities
can be used on partially ordered sets; see~\cite{KN,TwoPoset}. 

The \Defn{chain polytope} $\Chain{\P}$ is the
collection of functions $g : \P \rightarrow \R_{\ge 0}$ such that 
\begin{equation}\label{eqn:chain}
    g(a_1) + g(a_2) + \cdots + g(a_k) \ \le \ 1
\end{equation}
for all \Defn{chains} $a_1 \prec a_2 \prec \cdots \prec a_k$ in $\P$. In
contrast to the order polytope, $\Chain{\P}$ does not determine $\P$. In fact,
$\Chain{\P}$ is defined by the \emph{comparability graph} of $\P$ and bears
strong relations to so-called stable set polytopes of perfect graphs; see
Section~\ref{ssec:double_graph}. Surprisingly, it is shown in~\cite{TwoPoset}
that the chain polytope and the order polytope have the same Ehrhart
polynomial and hence $\nvol(\Chain{\P}) = \nvol(\Ord{\P}) = e(\P)$, which
shows that the number of linear extensions only depends on the comparability
relation.
Stanley's poset
polytopes are very natural objects that appear in a variety of contexts in
combinatorics and beyond; see~\cite{ABS, js12, Sottile, Fourier}.

Inspired by Stanley's \emph{labelled} posets, Malvenuto and
Reutenauer~\cite{MR} introduced double poset in the context of
combinatorial Hopf algebras.  A \Defn{double poset} $\dP$ is a triple
consisting of a finite ground set $\P$ and two partial order relations
$\preceq_+$ and $\preceq_-$ on $\P$. We will write $\P_+ = (\P,\preceq_+)$ and
$\P_- = (\P,\preceq_-)$ to refer to the two underlying posets. If $\preceq_-$
is a total order, then this corresponds to labelled poset in the sense of
Stanley~\cite{Stanley72}, which is the basis for the rich theory of
$\P$-partitions. The combinatorial study of general double posets gained 
momentum in recent years with a focus on algebraic aspects; see, for
example,~\cite{Foissy1,Foissy2}. The goal of this paper is to build a bridge to
geometry by introducing \emph{two double poset polytopes} that, like the
chain- and the order polytope, geometrically reflect the combinatorial
properties of double posets and, in particular, the interaction between the
two partial orders.  

\subsection{Double order polytopes}

For a double poset $\dP = (\P,\preceq_\pm)$, we define the \Defn{double order
polytope} as
\[
    \TOrd{\dP} \ = \ \TOrd{\P,\preceq_+,\preceq_-} \ := \ \conv \bigl\{ (2\Ord{\P_+}
    \times \{1\} ) \cup (-2\Ord{\P_-} \times \{-1\} ) \bigr\}.
\]
This is a $(|\P|+1)$-dimensional polytope in $\R^\P \times \R$. Its vertices
are trivially in bijection to filters of $\P_+$ and $\P_-$. This is a lattice
polytope with respect to $\Z^\P \times \Z$ but we will mostly view
$\TOrd{\dP}$ as a lattice polytope with respect to the \Defn{affine lattice} 
$\AffLat = 2\Z^\P \times (2\Z + 1)$. That is, up to a translation by
$(\0,1)$, $\TOrd{\dP}$ is the polytope
\[
    2 \cdot \conv \bigl\{ (\Ord{\P_+} \times \{1\} ) \cup (-\Ord{\P_-} \times
    \{0\} ) \bigr\},
\]
which is a lattice polytope with respect to $2\Z^\P \times 2\Z$.  In
Section~\ref{ssec:TO_facets}, we describe the facets of $\TOrd{\dP}$ in terms
of chains and cycles alternating between $\P_+$ and $\P_-$ and, for the
important case of \emph{compatible} double posets, we completely determine the
facial structure in Section~\ref{ssec:TO_faces} in terms of \emph{double
Birkhoff lattices} $\Birk{\dP} := \Birk{\P_+} \uplus \Birk{\P_-}$. 
The double order polytope automatically has $2\Ord{\P_+}$ and $-2\Ord{\P_-}$
as facets. The non-trivial combinatorial structure is captured by 
the \Defn{reduced} double order polytope
\[
    \DOrd{\dP} \ := \ \TOrd{\dP} \cap \{ (f,t) : t = 0 \} \ = \ \Ord{\P_+} -
    \Ord{\P_-},
\]
which is a lattice polytope with respect to $\Z^\P$ by 
our choice of embedding.

By placing $2\Ord{\P_+}$ and $-2\Ord{\P_-}$ at heights $+1$ and $-1$,
respectively, we made sure that $\TOrd{\dP}$ always contains the origin.
Every poset $(\P,\preceq)$ trivially induces a double poset $\dP_\circ =
(\P,\preceq,\preceq)$ and for an induced double poset, $\TOrd{\dP_\circ}$ is
centrally-symmetric and, up to a (lattice-preserving) shear, is the polytope
\[
    \TOrd{\dP_\circ} \ \cong \ 
    \conv \bigl\{ (2\Ord{\P} \times \{1\}) \cup (2\Ord{\P^\op} \times
    \{-1\}) \bigr\},
\]
where $\P^\op$ is the poset with the opposite order.  Geissinger~\cite{Geissinger} introduced a
polytope associated to valuations on distributive lattices with values in
$[0,1]$.  In Section~\ref{ssec:val}, we show a surprising connection between
Geissinger's valuation polytopes and \emph{polars} of the (reduced) double
order polytopes of $\dP_\circ$.   We will review notions from the theory of
double posets and emphasize their geometric counterparts.

\subsection{Double chain-, Hansen-, and anti-blocking polytopes}
The \Defn{double chain polytope} associated to a double poset $\dP$ is the
polytope
\[
    \TChain{\dP} \ = \ \TChain{\P,\preceq_+,\preceq_-} \ := \ \conv \bigl\{
    (2\Chain{\P_+} \times \{1\} ) \cup (-2\Chain{\P_-} \times \{-1\} ) \bigr\}.
\]
The \Defn{reduced} version $\DChain{\dP} := \Chain{\P_+} - \Chain{\P_-}$
is studied in Section~\ref{sec:AB} in the context of \emph{anti-blocking}
polytopes. According to Fulkerson~\cite{Fulkerson}, a full-dimensional
polytope $\Po \subseteq \Rnn^n$ is \Defn{anti-blocking} if for any $q \in
\Po$, it contains all points $p \in \R^n$ with $0 \le p_i \le q_i$ for
$i=1,\dots,n$.  It is obvious from~\eqref{eqn:chain} that chain polytopes are
anti-blocking. Anti-blocking polytopes are important in combinatorial
optimization and, for example, contain stable set polytopes of graphs.  For
two polytopes $\Po_1,\Po_2 \subset \R^n$, we define the \Defn{Cayley sum} as
the polytope
\[
    \CaySum{\Po_1}{\Po_2} \ := \ \conv( \Po_1 \times \{1\} \cup \Po_2 \times
    \{-1\} )
\]
and we abbreviate $\CayDiff{\Po_1}{\Po_2} := \CaySum{\Po_1}{-\Po_2}$. Thus,
\[
    \TOrd{\dP} \ = \  \CayDiff{2\Ord{\P_+}}{2\Ord{\P_-}} \qquad \text{ and }
    \qquad
    \TChain{\dP} \ = \  \CayDiff{2\Chain{\P_+}}{2\Chain{\P_-}}. 
\]    
Section~\ref{sec:AB} is dedicated to a detailed study of the polytopes
$\CayDiff{\Po_1}{\Po_2}$ as well as their sections $\Po_1 - \Po_2$ for
anti-blocking polytopes $\Po_1,\Po_2 \subset \Rnn^n$. We completely determine
the facets of $\CayDiff{\Po_1}{\Po_2}$ in terms of $\Po_1,\Po_2$ in
Section~\ref{ssec:AB_Minkowski}, which yields the combinatorics of
$\TChain{\dP}$.  In Section~\ref{ssec:AB_subdiv}, we describe a canonical
subdivision of $\CayDiff{\Po_1}{\Po_2}$ and $\Po_1 - \Po_2$ for anti-blocking
blocking polytopes $\Po_1,\Po_2$. Moreover, if $\Po_1,\Po_2$ have regular,
unimodular, or flag triangulations, then so has $\CayDiff{\Po_1}{\Po_2}$
(for an appropriately chosen affine lattice).  The canonical subdivision
enables us to give explicit formulas for the volume and the Ehrhart
(quasi-)polynomials of these classes of polytopes.

The chain polytope $\Chain{\P}$ only depends on the comparability graph $G(\P)$
of $\P$ and, more precisely, is the stable set polytope of $G(\P)$. Thus,
$\TChain{\dP}$ only depends on the \Defn{double graph} $(G(\P_+),G(\P_-))$.
For a graph $G$, let $\Po_G$ be its stable set polytope; see
Section~\ref{ssec:AB_Minkowski} for precise definitions.
Lov\'{a}sz~\cite{Lovasz} characterized \emph{perfect} graphs in terms of
$\Po_G$ and Hansen~\cite{Hansen} studied the polytopes $\Hansen{G} :=
\CayDiff{2\Po_G}{2\Po_G}$. If $G$ is perfect, then Hansen showed that the polar
$\Hansen{G}^\dual$ is linearly isomorphic to $\Hansen{\overline{G}}$ where
$\overline{G}$ is the complement graph of $G$. In
Section~\ref{ssec:double_graph}, we generalize this result to all Cayley sums
of anti-blocking polytopes.

\subsection{$2$-level polytopes and volume}
A full-dimensional polytope $\Po \subset \R^n$ is called
\Defn{$\mathbf{2}$-level} if for any facet-defining hyperplane $H$ there is $t
\in \R^n$ such that $H \cup (t + H)$ contains all vertices of $\P$. The class
of $2$-level polytopes plays an important role in, for example, the study of
centrally-symmetric polytopes~\cite{WSZ,Hansen}, polynomial
optimization~\cite{GPT,gs14}, statistics~\cite{Sullivant}, and combinatorial
optimization~\cite{Schrijver}.  For
example, Lov\'{a}sz~\cite{Lovasz} characterizes perfect graphs by the
$2$-levelness of their stable set polytopes and Hansen showed that
$\Hansen{G}$ is $2$-level if $G$ is perfect. In fact, we extend this to yet
another characterization of perfect graphs in Corollary~\ref{cor:AB_2l}. This
result implies that $\TChain{\dP_\circ}$ is $2$-level for double posets induced
by posets. However, it is in general \emph{not} true that
$\CaySum{\Po_1}{\Po_2}$ is $2$-level if $\Po_1$ and $\Po_2$ are. A counterexample is the polytope
$\CaySum{\Delta_{6,2}}{\Delta_{6,4}}$, where $\Delta_{n,k}$ is the
$(n,k)$-hypersimplex. The starting point for this paper was the question for
which double posets $\dP$ the polytopes $\TOrd{\dP}$ and $\TChain{\dP}$ are $2$-level.
Answers are given in Corollary~\ref{cor:TO-2l},
Proposition~\ref{prop:2l-terti}, and Corollary~\ref{cor:AB_2l}. A new class of
$2$-level polytopes comes from valuation polytopes; see
Corollary~\ref{cor:val_2l}. Sullivant~\cite[Thm.~2.4]{Sullivant} showed that
$2$-level lattice polytopes $\Po$ have the interesting property that any
pulling triangulation that uses all lattice points in $\Po$ is unimodular.
Hence, for $2$-level lattice polytopes, the normalized volume is the number of
simplices.  In particular, $\Ord{\P}$ is $2$-level and Stanley's canonical
triangulation is a pulling triangulation. Stanley defined a piecewise linear
homeomorphism between $\Ord{\P}$ and $\Chain{\P}$ whose domains of linearity
are exactly the simplices of the canonical triangulation. Since this
\emph{transfer map} is lattice preserving, it follows that $\ehr_{\Ord{\P}}(n)
= \ehr_{\Chain{\P}}(n)$, which also implies the volume result. In
Section~\ref{sec:triang}, we generalize this transfer map to a lattice
preserving PL homeomorphism $\TPsi{\dP} : \R^\P \times \R \rightarrow \R^\P
\times \R$ for any compatible double poset $\dP$. In particular,
$\TChain{\dP}$ is mapped to $\TOrd{\dP}$. This also transfers the canonical
flag triangulation of $\TChain{\dP}$ to a canonical flag triangulation of
$\TOrd{\dP}$. Abstractly, the triangulation can be described in terms of a
suitable subcomplex of the order complex of the double Birkhoff lattice
$\TBirk{\dP} = \Birk{\P_+} \uplus \Birk{\P_-}$.  In
Section~\ref{ssec:TO_ehrhart}, we give explicit formulas for the Ehrhart
polynomial and the volume of $\TOrd{\dP}$ if $\dP$ is compatible and for
$\TChain{\dP}$ in general.

\subsection{Double Hibi rings}
Hibi~\cite{Hibi87} studied rings associated to finite posets that give posets
an algebraic incarnation and that are called \Defn{Hibi rings}. In modern
language, the Hibi ring $\Hring{\P}$ associated to a poset $\P$ is the
semigroup ring associated to $\Ord{\P}$. Many properties of $\P$ posses an
algebraic counterpart and, in particular, Hibi exhibited a quadratic Gr\"obner
basis for the associated toric ideal. In Section~\ref{sec:GB}, we introduce
the \Defn{double Hibi rings} $\dHring{\dP}$ as suitable analogs for double
posets, which are the semigroup rings associated to $\TOrd{\dP}$. We construct
a quadratic Gr\"obner basis for the cases of compatible double posets. Using a
result by Sturmfels~\cite[Thm.~8.3]{Sturmfels96}, this shows the existence of
a unimodular and flag triangulation of $\TOrd{\dP}$ which coincides with the
triangulation in Section~\ref{sec:triang}. We also construct a quadratic
Gr\"obner basis for the rings $\C[\TChain{\dP}]$ for arbitrary double posets
and we remark on the algebraic implications for double posets.

\textbf{Acknowledgements.} We would like to thank Christian Stump and Stefan
Felsner for many helpful conversations regarding posets and we thank Vic
Reiner for pointing out~\cite{MR}. We would also like to thank the referees
for valuable suggestions.  T.~Chappell was supported by a Phase-I scholarship
of the Berlin Mathematical School.  T.~Friedl and R.~Sanyal were supported by
the DFG-Collaborative Research Center, TRR 109 ``Discretization in Geometry
and Dynamics''. T.~Friedl received additional support from the Dahlem Research
School at Freie Universit\"at Berlin. 

\tableofcontents

\section{Double order polytopes}\label{sec:TO}

\subsection{Order polytopes}
Let $(\P,\preceq)$ be a poset. We write $\Phat$ for the poset obtained from
$\P$ by adjoining a minimum $\Pbot$ and a maximum $\Ptop$. For an order
relation $a \prec b$, we define a linear form $\ell_{a,b} :  \R^\P \rightarrow
\R$ by
\[
    \ell_{a,b}(f) \ := \ f(a) - f(b)
\]
for any $f \in \R^\P$.  Moreover, for $a \in \P$, we define $\ell_{a,\Ptop}(f)
:= f(a)$ and $\ell_{\Pbot,a}(f) := -f(a)$. With this notation, $f \in \R^\P$
is contained in $\Ord{\P}$ if and only if
\begin{equation}\label{eqn:ord_poly}
\begin{aligned}
        \ell_{a,b}(f) & \ \le \ 0 \quad \text{ for all } a \prec b,\\
        \ell_{\Pbot,b}(f) & \ \le \ 0 \quad \text{ for all } b  \in \P, \text{
        and}\\
        \ell_{a,\Ptop}(f) & \ \le \ 1 \quad \text{ for all } a  \in \P.
\end{aligned}
\end{equation}
Every nonempty face $F$ of $\Ord{\P}$ is of the form
\[
    F \ = \ \Ord{\P}^\ell \ := \ \{ f \in \Ord{\P} : \ell(f) \ge
    \ell(f') \text{ for all } f' \in \Ord{\P} \}
\]
for some linear function $\ell \in (\R^\P)^*$. Later, we want to identify
$\ell$ with its vector of coefficients and thus we write
\[
    \ell(f) \ = \ \sum_{a \in \P} \ell_a f(a).
\]

Combinatorially, faces can be described using \Defn{face partitions}: To every
face $F$ is an associated collection $B_1,\dots,B_m \subseteq \Phat$ of
nonempty and pairwise disjoint subsets that partition $\Phat$. According to
Stanley~\cite[Thm~1.2]{TwoPoset}, a partition of $\Phat$ is a (closed) face
partition if and only if each $(B_i, \preceq)$ is a connected poset and $B_i
\preceq' B_j :\Leftrightarrow p_i \preceq p_j$ for some $p_i \in B_i, p_j \in
B_j$ is a partial order on $\{B_1,\dots,B_m\}$. Of course, it is sufficient to
remember the non-singleton parts and we define the \Defn{reduced} face
partition of $F$ as $\FaPa(F) := \{ B_i : |B_i| > 1 \}$.  The \Defn{normal
cone} of a nonempty face $F \subseteq \Ord{\P}$ is the polyhedral cone
\[
    \NC_\P(F) \ := \ \{ \ell \in (\R^\P)^* : F \subseteq \Ord{\P}^\ell \}.
\]
The following description of $\NC_\P(F)$ follows directly
from~\eqref{eqn:ord_poly}.

\begin{prop}\label{prop:NC}
    Let $\P$ be a finite poset and  $F \subseteq \Ord{\P}$ a nonempty face
    with reduced face partition $\FaPa = \{B_1,\dots,B_k\}$. Then
    \[
        \NC_\P(F) \ = \ \cone \{\ell_{a,b}: [a,b] \subseteq B_i \text{ for
        some }i=1,\dots,k \}.
    \]
\end{prop}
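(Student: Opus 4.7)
The plan is to apply the standard fact that for any polytope $\Po = \{x : a_i \cdot x \le b_i,\ i \in I\}$ presented by a (possibly redundant) inequality description, the normal cone of a nonempty face $F$ equals $\cone\{a_i : a_i \cdot x = b_i \text{ for all } x \in F\}$. (The nontrivial direction is Farkas plus complementary slackness, where constancy of $\ell$ on $F$ forces $\lambda_i(b_i - a_i \cdot x) = 0$ for every $x \in F$, not just one.) I would read \eqref{eqn:ord_poly} as such a description, with one inequality $\ell_{a,b}(f) \le c_{a,b}$ for every order relation $a \prec b$ in $\Phat$ (with $c_{a,b}=0$ unless $b=\Ptop$, in which case $c_{a,b}=1$). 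This reduces the proposition to deciding when each $\ell_{a,b}$ is tight on all of $F$.

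For $a \prec b$ in $\Phat$, tightness amounts, under the conventions $f(\Pbot)=0$ and $f(\Ptop)=1$, to the condition $f(a)=f(b)$ for every $f \in F$, which by definition of the face partition is exactly the statement that $a$ and $b$ lie in a common block. So $\NC_\P(F)$ is generated by those $\ell_{a,b}$ for which $\{a,b\} \subseteq B_i$ for some $i$ in the reduced face partition.

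To match the stated form I would upgrade $\{a,b\}\subseteq B_i$ to $[a,b]\subseteq B_i$ via the order-convexity of blocks in any valid face partition: if $a,b\in B_i$ with $a\preceq c\preceq b$ but $c\in B_j\neq B_i$, then the induced block-relation $\preceq'$ in Stanley's Theorem~1.2 would satisfy both $B_i\preceq' B_j$ and $B_j\preceq' B_i$, contradicting its antisymmetry. Hence $\{a,b\}\subseteq B_i$ with $a\preceq b$ is equivalent to $[a,b]\subseteq B_i$, and the two descriptions of the generating set coincide. The only non-routine input beyond \eqref{eqn:ord_poly} is this convexity observation; everything else is bookkeeping, so I do not anticipate a serious obstacle.
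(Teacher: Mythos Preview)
Your proposal is correct and follows exactly the approach the paper intends: the paper does not give a detailed proof but simply states that the description ``follows directly from~\eqref{eqn:ord_poly}'', and your argument is precisely the natural way to unpack that claim. The only extra detail you supply---the order-convexity of blocks via antisymmetry of the quotient order---is the right way to reconcile ``$\{a,b\}\subseteq B_i$'' with ``$[a,b]\subseteq B_i$'', and it is implicit in Stanley's characterization of face partitions that the paper cites.
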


We note the following simple but very useful consequence of this
description. 

\begin{cor}\label{cor:maxmin}
    Let $F \subseteq \Ord{\P}$ be a nonempty face with reduced face partition
    $\FaPa = \{ B_1,\dots,B_k \}$. Then for every $\ell \in \relint 
    \NC_\P(F)$ and $p \in \P$ the following hold: 
    \begin{compactenum}[\rm (i)]
    \item if $p \in \min(B_i)$ for some $i$, then $\ell_p > 0$;
    \item if $p \in \max(B_i)$ for some $i$, then $\ell_p < 0$;
    \item if $p \not\in \bigcup_i B_i$, then $\ell_p = 0$.
    \end{compactenum}
\end{cor}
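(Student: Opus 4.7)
The plan is to apply Proposition~\ref{prop:NC} and read off the sign of each coefficient $\ell_p$ from the generator expansion. Recall that as vectors in $(\R^\P)^*$, the generator $\ell_{a,b}$ has $+1$ in coordinate $a$ and $-1$ in coordinate $b$, with the convention that coordinates at $\Pbot$ or $\Ptop$ are discarded since these symbols do not index $\P$. I will use the standard convex-analytic fact that for a finitely generated cone $C = \cone\{g_1, \dots, g_N\}$, a point $\ell$ lies in $\relint C$ if and only if it admits a representation $\ell = \sum_i \lambda_i g_i$ with every $\lambda_i > 0$.

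Applied here, any $\ell \in \relint \NC_\P(F)$ can be written as $\ell = \sum \lambda_{a,b}\, \ell_{a,b}$, summed over all pairs with $[a,b] \subseteq B_j$ for some $j$ and with all $\lambda_{a,b} > 0$. For $p \in \P$, the coefficient $\ell_p$ thus receives $+\lambda_{p,b}$ from each generator with left endpoint $p$ and $-\lambda_{a,p}$ from each generator with right endpoint $p$. Claim~(iii) is then immediate, since no such generator exists when $p \notin \bigcup_i B_i$.

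For claim~(i), suppose $p \in \min(B_i) \cap \P$. By minimality no generator $\ell_{a,p}$ with $a \in B_i$ and $a \prec p$ can exist; moreover $\Pbot \notin B_i$ (otherwise $\Pbot \prec p$ would contradict $p \in \min(B_i)$), so all negative contributions to $\ell_p$ vanish. On the other hand, since $|B_i| \geq 2$ and $B_i$ is a connected subposet of $\Phat$, there is some $b \in B_i \setminus \{p\}$ comparable to $p$; minimality forces $p \prec b$, producing a generator $\ell_{p,b}$ (covering also $b = \Ptop$) with strictly positive coefficient, so $\ell_p > 0$. Claim~(ii) follows by a symmetric argument under the exchange $\min \leftrightarrow \max$ and $\Pbot \leftrightarrow \Ptop$. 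The main point needing care is the convex-analytic characterization of $\relint C$ when the generators are linearly dependent or redundant; beyond this, the combinatorial case analysis is entirely routine.
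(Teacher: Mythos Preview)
Your proof is correct and is precisely the elaboration the paper has in mind; the corollary is stated there without proof as an immediate consequence of Proposition~\ref{prop:NC}. One small point you pass over: for $\ell_{p,b}$ to appear among the generators of Proposition~\ref{prop:NC} you need $[p,b]_{\Phat} \subseteq B_i$, not merely $p,b \in B_i$ with $p \prec b$. This holds because face-partition blocks are automatically order-convex (if $p \prec c \prec b$ with $p,b \in B_i$ and $c \in B_j$, then $B_i \preceq' B_j \preceq' B_i$ in the quotient, forcing $j=i$), so the argument goes through unchanged. Your caution about the relint characterization is well placed but unnecessary here: the fact that $\relint\bigl(\cone\{g_1,\dots,g_N\}\bigr)$ equals the image of $\R^N_{>0}$ is standard (e.g.\ Rockafellar, \emph{Convex Analysis}, Thm.~6.6) and does not require the generators to be irredundant.
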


The vertices of $\Ord{\P}$ are exactly the indicator functions $\1_\Filter :
\P \rightarrow \{0,1\}$ where $\Filter \subseteq \P$ is a filter.  For a
filter $\Filter \subseteq \P$, we write $\widehat{\Filter} := \Filter \cup
\{\Ptop\}$ for the filter induced in $\Phat$. 

\begin{prop}\label{prop:filter_fapa}
    Let $F \subseteq \Ord{\P}$ be a face with (reduced) face partition $\FaPa
    = \{B_1,\dots,B_k\}$ and let  $\Filter\subseteq \P$ be a filter. Then
    $\1_\Filter\in F$ if and only if
    \[
    \widehat{\Filter} \cap B_i \ = \ \emptyset \quad \text{ or } \quad
    \widehat{\Filter} \cap B_i \ = \ B_i
    \]
    for all $i=1,\dots,k$.
\end{prop}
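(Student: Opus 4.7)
The plan is to first prove an intermediate structural characterization of $F$ in terms of the extended function $\hat{f}:\Phat \to [0,1]$ defined by $\hat{f}|_\P = f$, $\hat{f}(\Pbot) = 0$, and $\hat{f}(\Ptop) = 1$, namely that $f \in F$ if and only if $\hat{f}$ is constant on every block of the face partition. The proposition then follows by specializing to the $\{0,1\}$-valued function $f = \1_\Filter$, whose extension is $\1_{\widehat{\Filter}}$.

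For the intermediate characterization, I would apply Proposition~\ref{prop:NC}: choose $\ell$ in the relative interior of $\NC_\P(F)$ realized as a strict positive combination of all generators $\ell_{a,b}$ with $[a,b] \subseteq B_i$ for some $i$. By definition of the normal cone, $F = \Ord{\P}^\ell$. The crucial observation is that every generator satisfies $\ell_{a,b}(g) \le 0$ on $\Ord{\P}$ by \eqref{eqn:ord_poly}, so the positive linear combination $\ell$ is also non-positive on $\Ord{\P}$, and vanishes at $f$ precisely when every generator does. This converts membership $f \in F$ into the system of equalities $\hat{f}(a) = \hat{f}(b)$ for all pairs $[a,b] \subseteq B_i$. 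Since each block $B_i$ is a connected subposet of $\Phat$ (as guaranteed by Stanley's description of face partitions recalled just before~\eqref{eqn:ord_poly}), chasing these equalities along covering relations inside $B_i$ upgrades them to the stronger statement that $\hat{f}$ is constant on each $B_i$.

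Specializing to $f = \1_\Filter$, the extension $\hat{\1}_\Filter$ equals the indicator $\1_{\widehat{\Filter}}$, which takes only the values $0$ and $1$. Hence $\hat{\1}_\Filter$ is constant on $B_i$ if and only if $B_i \cap \widehat{\Filter} = \emptyset$ (constant value $0$) or $B_i \cap \widehat{\Filter} = B_i$ (constant value $1$), giving exactly the condition in the proposition. Singleton blocks $\{p\}$ satisfy this condition trivially, so passing from the full face partition to the reduced face partition $\FaPa$ is harmless. The main subtlety of the argument is ensuring the tight equality $F = \{f \in \Ord{\P} : \hat{f} \text{ constant on each } B_i\}$ rather than a one-sided inclusion; this hinges both on choosing $\ell$ strictly in the relative interior of the normal cone and on the connectedness of the blocks, both of which are already packaged into the quoted results.
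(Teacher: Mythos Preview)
The paper does not actually supply a proof of this proposition; it is stated as an immediate consequence of Stanley's description of face partitions, with only the clarifying sentence ``$\1_\Filter$ belongs to $F$ if and only if $\widehat{\Filter}$ does not separate any two comparable elements in $B_i$'' appended afterward. Your argument via Proposition~\ref{prop:NC} is a correct and natural way to fill in the details.

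There is one small slip worth fixing. You write that ``every generator satisfies $\ell_{a,b}(g) \le 0$ on $\Ord{\P}$,'' but this fails for the generators $\ell_{a,\Ptop}$: by~\eqref{eqn:ord_poly} one has $\ell_{a,\Ptop}(g) = g(a) \le 1$, not $\le 0$. The remedy is to replace ``non-positive'' and ``vanishes'' by the statement that each generator $\ell_{a,b}$ attains a known maximum on $\Ord{\P}$ (namely $0$ unless $b = \Ptop$, in which case the maximum is $1$), and a strict positive combination attains its maximum exactly when every summand does. Equality $\ell_{a,b}(f) = \max$ then translates uniformly to $\hat f(a) = \hat f(b)$ in all three cases of~\eqref{eqn:ord_poly}, and the rest of your argument (connectedness of the $B_i$, specialization to $\1_{\widehat{\Filter}}$) goes through unchanged.
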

That is, $\1_\Filter$ belongs to $F$ if and only if $\widehat{\Filter}$ does not separate
any two comparable elements in $B_i$, for all $i$. 

\subsection{Facets of double order polytopes} \label{ssec:TO_facets}
Let $\dP = (\P,\preceq_\pm)$ be a double poset. The double order polytope
$\TOrd{\dP}$ is a $(|\P|+1)$-dimensional polytope in $\R^\P \times
\R$ with coordinates $(f,t)$.  It is obvious that the vertices of
$\TOrd{\dP}$ are exactly $(2\1_{\Filter_+},1), (-2\1_{\Filter_-},-1)$
for filters $\Filter_+ \subseteq \P_+$ and $\Filter_- \subseteq \P_-$,
respectively.  To get the most out of our notational convention, for
$\sigma \in \{-,+\}$ we define
\[
    -\sigma \ := \
    \begin{cases}
        - & \text{ if } \sigma = +\\
        + & \text{ if } \sigma = -.
    \end{cases}
\]

By construction, $2\Ord{\P_+} \times \{1\}$ and $-2\Ord{\P_-} \times \{-1\}$
are facets that are obtained by maximizing the linear function $\pm
L_\emptyset(f,t) := \pm t$ over $\TOrd{\dP}$. We call the remaining facets
\Defn{vertical}, as they are of the form $\CayDiff{F_+}{F_-}$, where $F_\sigma
\subset \Ord{\P_\sigma}$ are certain nonempty proper faces for $\sigma = \pm$.
The vertical facets are in bijection with the facets of the reduced double
order polytope $\DOrd{\dP} = \Ord{\P_+} - \Ord{\P_-}$.

More precisely, if $F \subset \TOrd{\dP}$ is a facet, then there is a
linear function $\ell \in (\R^\P)^*$ such that $F = \CayDiff{F_+}{F_-}$ where
$F_+ = \Ord{\P_+}^\ell$ and $F_- = \Ord{\P_-}^{-\ell}$. This linear function
is necessarily unique up to scaling and hence the faces $F_+,F_-$ are
characterized by the property
\begin{equation}\label{eqn:rigid}
    \relint \NC_{\P_+}(F_+)  \ \cap \
    \relint -\NC_{\P_-}(F_-) \ = \ \R_{>0} \cdot \ell \, .
\end{equation}
We will call a linear function $\ell$ \Defn{rigid} if it
satisfies~\eqref{eqn:rigid} for a pair of faces $(F_+,F_-)$.  Our next goal is
to give an explicit description of all rigid linear functions for
$\TOrd{\dP}$ which then yields a characterization of vertical facets.

An \Defn{alternating chain} $C$ of a double poset $\dP = (P,\preceq_\pm)$ is a
finite sequence of distinct elements
\begin{equation}\label{eqn:alt_chain}
    \Pbot \ = \ 
    p_0 \ \prec_{\sigma} \
    p_1 \ \prec_{-\sigma} \
    p_2 \ \prec_{\sigma} \
    \cdots \ \prec_{\pm\sigma} \
    p_k \ = \ \Ptop,
\end{equation}
where $\sigma \in \{\pm\}$. For an alternating chain $C$, we define a linear
function $\ell_C$ by 
\[
    \ell_C(f) \ := \ \sigma \, \sum_{i=1}^{k-1} (-1)^i f(p_i).
\]
Here, we severely abuse notation and interpret $\sigma$ as $\pm 1$.  Note that
$\ell_C \equiv 0$ if $k=1$ and we call $C$ a \Defn{proper} alternating chain
if $k > 1$.  An \Defn{alternating cycle} $C$ of $\dP$ is a sequence of length
$2k$ of the form
\[
    p_0 \ \prec_{\sigma} \ p_1 \ \prec_{-\sigma} \ p_2 \ \prec_{\sigma} \
    \cdots \ \prec_{-\sigma} \ p_{2k} \
    = \ p_0,
\]
where $\sigma \in \{\pm\}$ and $p_i \neq p_j$ for $0 \le i < j < 2k$. We
similarly define a linear function associated to $C$ by
\[
    \ell_C(f) \ := \ \sigma \, \sum_{i=0}^{2k-1} (-1)^i f(p_i).
\]
Note that it is possible that a sequence of elements $p_1,p_2,\dots,p_k$ gives
rise to two alternating chains, one starting with $\prec_+$ and one starting
with $\prec_-$.  On the other hand, every alternating cycle of length $2k$
yields $k$ alternating cycles starting with $\prec_+$ and $k$ alternating
starting with $\prec_-$.

\begin{prop}\label{prop:rigid}
    Let $\dP=(\P,\preceq_\pm)$ be a double poset. If $\ell$ is a rigid linear
    function for $\TOrd{\dP}$, then $\ell = \mu \ell_C$ for some
    alternating chain or alternating cycle $C$ and $\mu > 0$.
\end{prop}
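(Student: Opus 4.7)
The plan is to extract an alternating chain or cycle directly from the combinatorial data of a rigid $\ell$. Fix $\ell$ rigid with witnessing faces $F_+ = \Ord{\P_+}^\ell$ and $F_- = \Ord{\P_-}^{-\ell}$, and let $\FaPa_\pm = \{B^\pm_i\}$ be the reduced face partitions. Applying Corollary~\ref{cor:maxmin} to both $\ell \in \relint \NC_{\P_+}(F_+)$ and $-\ell \in \relint \NC_{\P_-}(F_-)$ simultaneously yields a sign dictionary: $\ell_p > 0$ forces $p \in \min(B^+_i) \cap \max(B^-_j)$ for some $i,j$; $\ell_p < 0$ forces $p \in \max(B^+_i) \cap \min(B^-_j)$; and (by an additional perturbation argument leveraging rigidity) $\ell_p = 0$ forces $p$ to lie outside every block of $\FaPa_+ \cup \FaPa_-$. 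The extra statement for $\ell_p = 0$ sharpens item (iii) of the corollary: an element strictly internal to a block would allow a perturbation of the $\alpha$ or $\beta$ coefficients that shifts $\ell_p$ in a direction not proportional to $\ell$, contradicting rigidity.

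Using Proposition~\ref{prop:NC} and the relative-interior hypothesis, write
\[
    \ell \ = \ \sum_{(a,b) \in E^+} \alpha_{ab}\,\ell_{a,b} \qquad \text{and} \qquad -\ell \ = \ \sum_{(c,d) \in E^-} \beta_{cd}\,\ell_{c,d}
\]
as strictly positive combinations, where $E^\pm$ is the set of $\prec_\pm$-cover pairs within a common block of $\FaPa_\pm$. Adding the two representations gives $\sum \alpha_{ab}\,\ell_{a,b} + \sum \beta_{cd}\,\ell_{c,d} = 0$. Since each $\ell_{a,b}$ has coefficient $+1$ at $a$ and $-1$ at $b$ (with the conventions for $\Pbot$ and $\Ptop$), this is Kirchhoff's law for a positive flow on the edge-$2$-colored multigraph $H = (\Phat, E^+ \cup E^-)$ with source $\Pbot$ and sink $\Ptop$. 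Standard flow decomposition writes $(\alpha,\beta)$ as a nonnegative combination of simple source-to-sink paths and simple cycles in $H$. Rigidity implies that each component of the decomposition individually contributes a linear form proportional to $\ell$: rescaling a single component preserves the positivity of $(\alpha,\beta)$ and keeps the resulting $\ell'$ in $\relint\NC_{\P_+}(F_+) \cap \relint(-\NC_{\P_-}(F_-)) = \R_{>0}\ell$. Pick any simple path (respectively cycle) $C$ in the support of the flow. At each internal vertex $p \in \P$ the sign dictionary prevents the in- and out-edges at $p$ from sharing a color: if they did, their contributions to $\ell_p$ would cancel, forcing $\ell_p = 0$ while $p$ lies in a block along $C$, contradicting the dictionary. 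Hence the colors alternate along $C$, identifying it with an alternating chain (in the path case, from $\Pbot$ to $\Ptop$) or an alternating cycle. A direct comparison of the formula for $\ell_C$ with the contribution of $C$ to the flow yields $\ell = \mu\,\ell_C$, where $\mu > 0$ is the common flow value along $C$.

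The main obstacle is the rigidity step: showing that each component of the flow decomposition can be rescaled independently without leaving the intersection of the two relative interiors, forcing every component to contribute a form proportional to $\ell$. This requires a careful infinitesimal perturbation argument, with particular care at the edges incident to $\Pbot$ and $\Ptop$, which carry source/sink boundary conditions rather than conservation constraints. Once this rigidity step is established, the alternation and the identification $\ell = \mu\,\ell_C$ follow cleanly from the sign dictionary and the definition of the linear form associated to an alternating chain or cycle.
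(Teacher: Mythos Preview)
Your flow-decomposition strategy is different from the paper's and can be made to work, but the argument as written has a genuine gap: the ``sharpened sign dictionary'' (that $\ell_p = 0$ forces $p$ outside every block of $\FaPa_+ \cup \FaPa_-$) is false, and the perturbation sketch you offer for it does not go through. Here is a concrete counterexample. Take $\P=\{a,p,b\}$ with $a \prec_+ p \prec_+ b$ a $3$-chain and $\preceq_-$ the antichain. For $\ell = \ell_{a,b}$ one computes $\FaPa_+ = \{\{a,p,b\}\}$ and $\FaPa_- = \{\{\Pbot,a\},\{b,\Ptop\}\}$, and a direct check shows $\ell$ is rigid. Yet $\ell_p = 0$ while $p$ sits strictly inside the unique block of $\FaPa_+$. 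In your flow graph $H$ (built from cover pairs) the only $\Pbot$--$\Ptop$ path is $\Pbot \xrightarrow{-} a \xrightarrow{+} p \xrightarrow{+} b \xrightarrow{-} \Ptop$, which is \emph{not} alternating; your alternation step would wrongly declare this impossible.

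The easy repair is to contract maximal monochromatic runs: if $p_{j} \prec_\sigma p_{j+1} \prec_\sigma \cdots \prec_\sigma p_{j'}$ are consecutive same-colored edges, replace them by the single relation $p_j \prec_\sigma p_{j'}$. Telescoping gives $\sum_{i=j}^{j'-1}\ell_{p_i,p_{i+1}} = \ell_{p_j,p_{j'}}$, so the $E^+$-contribution of any simple path/cycle $C_1$ equals $\ell_{C'}$ for the alternating chain/cycle $C'$ obtained by contraction. Combined with the observation that rigidity forces $\NC_{\P_+}(F_+)\cap -\NC_{\P_-}(F_-) = \R_{\ge 0}\,\ell$ (since nonempty intersection of relative interiors equals the relative interior of the intersection), any such $\ell_{C'}$ is a nonnegative multiple of $\ell$; picking one with $\ell_{C'}\neq 0$ finishes the proof. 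Note that this last fact also dissolves what you call the ``main obstacle'': there is no need for infinitesimal perturbations or for rescaling individual flow components, since \emph{any} nonzero element of the cone intersection is already a positive multiple of $\ell$.

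The paper avoids the whole issue more elegantly. It builds a bipartite digraph only on the elements with $\ell_p \neq 0$ (plus suitable copies of $\Pbot,\Ptop$), with $+$-edges going from $\{\ell_p>0\}$ to $\{\ell_p<0\}$ and $-$-edges going the other way. Corollary~\ref{cor:maxmin} guarantees every ordinary node has both an incoming and an outgoing edge, so a longest path yields a cycle or a $\Pbot$--$\Ptop$ path, and bipartiteness makes the alternation automatic. One then checks directly that the resulting $\ell_C$ lies in $\NC_{\P_+}(F_+)\cap -\NC_{\P_-}(F_-)$ and invokes rigidity once. Your approach carries more bookkeeping (the full flow, the decomposition, the contraction) to reach the same endpoint; the paper's choice of graph trades that bookkeeping for a single application of Corollary~\ref{cor:maxmin}.
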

\begin{proof}
    Let $F_+ = \Ord{\P_+}^{\ell}$ and $F_- = \Ord{\P_-}^{-\ell}$ be the two
    faces for which~\eqref{eqn:rigid} holds and let $\FaPa_\pm = \{B_{\pm1},
    B_{\pm2},\dots\}$ be the corresponding reduced face partitions.  We define
    a directed bipartite graph $G = (V_+\cup V_-,E)$ with nodes $V_+ = \{ p
    \in \P : \ell_p > 0 \}$ and $V_-$ accordingly. If $\Ptop$ is contained in
    some part of $\FaPa_+$, then we add a corresponding node $\Ptop_+$ to
    $V_-$ Consistently, we add a node $\Ptop_-$ to $V_+$ if $\Ptop$ it occurs
    in a part of $\FaPa_-$.  Note that $\Pbot_-$ and $\Pbot_+$ are distinct
    nodes.  Similarly we add $\Pbot_{+}$ to $V_+$ and $\Pbot_{-}$ to $V_-$ if
    they appear in $\FaPa_+$ and $\FaPa_-$, respectively.  By
    Corollary~\ref{cor:maxmin}, we have ensured that $\max(B_{+i}) \subseteq
    V_-$ and $\max(B_{-i}) \subseteq V_+$ for all $i$.
    
    For $u \in V_+$ and $v \in V_-$, we add the directed edge $uv \in E$ if $u
    \prec_+ v$ and $[u,v]_{\P_+} \subseteq B_{+i}$ for some $i$. Similarly, we
    add the directed edge $vu \in E$ if $v \prec_- u$ and $[v,u]_{\P_-}
    \subseteq B_{-i}$ for some $i$. We claim that every node $u$ except for
    maybe the special nodes $\Pbot_{\pm},\Ptop_{\pm}$ has an incoming and an
    outgoing edge. For example, if $u \in V_+$, then $\ell_u > 0$. By
    Corollary~\ref{cor:maxmin}(iii), there is an $i$ such that $u \in B_{+i}$
    and by (ii), $u$ is not a maximal element in $B_{+i}$.  Thus, there is
    some $v \in \max(B_{+i})$ with $u \prec_+ v$ and $uv$ is an edge.  It
    follows that every longest path either yields an alternating cycle or a
    proper alternating chain.

    For an alternating cycle $C = (p_0 \prec_+ \cdots \prec_- p_{2l})$, we
    observe that 
    \begin{align*}
        \ell_C &\ = \ 
        \ell_{p_0,p_1} + 
        \ell_{p_2,p_3} + 
        \cdots + 
        \ell_{p_{2l-2},p_{2l-1}} \text{ and } \\
        -\ell_C &\ = \ 
        \ell_{p_1,p_2} + 
        \ell_{p_3,p_4} + 
        \cdots + 
        \ell_{p_{2l-1},p_{2l}}.
    \end{align*}
    Since for every $j$, $[p_{2j},p_{2j+1}]_{\P_+}$ is contained in some part
    of $\FaPa_+$, we conclude that $\ell_C \in \NC_{\P_+}(F_+)$.  Similarly,
    for all $j$, $[p_{2j-1},p_{2j}]_{\P_-}$ is contained in some part of
    $\FaPa_-$, and hence $-\ell_C \in \NC_{\P_-}(F_-)$. Assuming that $\ell$
    is rigid then shows that $\ell = \mu \ell_C$ for some $\mu > 0$.

    If $G$ does not contain a cycle, then let $C = (p_0,p_1,\dots,p_k)$ be a
    longest path in $G$. In particular $p_0 = \Pbot_{\pm}$ and $p_k =
    \Ptop_{\pm}$. The same reasoning applies and shows that $\ell_C \in
    \NC_{\P_+}(F_+) \cap -\NC_{\P_-}(F_-)$ and hence $\ell = \mu
    \ell_C$ for some $\mu > 0$. 
\end{proof}

In general, not every alternating chain or cycle gives rise to a rigid linear
function. Let $(\P,\preceq)$ be a poset that is not the antichain and define
the double poset $\dP = (\P,\preceq,\preceq^\op)$, where $\preceq^\op$ is the \Defn{opposite} order. In this case
$\TOrd{\dP}$ is, up to a shear, the ordinary prism over
$\Ord{\P,\preceq}$. Hence, the vertical facets of $\TOrd{\dP}$ are prisms over
the facets of $\Ord{\P}$. It follows from~\eqref{eqn:ord_poly} that these
facets correspond to cover relations in $\P$. Hence, every rigid $\ell$ is of
the form $\ell = \mu \ell_{p,q}$ for cycles $p \prec_+ q \prec_- p$ where $p
\prec q$ is a cover relation in $\P$.

We call a double poset $\dP = (\P,\preceq_+,\preceq_-)$ \Defn{compatible} if
$\P_+ = (\P,\preceq_+)$ and $\P_- = (\P,\preceq_-)$ have a common linear extension.  Note
that a double poset is compatible if and only if it does not contain
alternating cycles. Following~\cite{MR}, we call a double poset $\dP$
\Defn{special} if $\preceq_-$ is a total order. At the other extreme, we call
$\dP$ \Defn{anti-special} if $(\P,\preceq_-)$ is an anti-chain. A \Defn{plane
poset}, as defined in~\cite{Foissy1} is a double poset $\dP =
(\P,\preceq_+,\preceq_-)$ such that distinct $a,b \in \P$ are
$\prec_+$-comparable if and only if they are not $\prec_-$-comparable.  For
two posets $(\P_1,\preceq^1)$ and $(\P_2,\preceq^2)$ one classically defines
the \Defn{disjoint union} $\preceq_\uplus$ and the \Defn{ordinal sum}
$\preceq_\oplus$ as the posets on $\P_1 \uplus \P_2$ as follows. For $a,b \in
\P_1 \uplus \P_2$ set $a \preceq_{\uplus} b$ if $a,b \in \P_i$ and $a
\preceq^i b$ for some $i \in \{1,2\}$. For the ordinal sum, $\preceq_\oplus$
restricts to $\preceq^1$ and $\preceq^2$ on $\P_1$ and $\P_2$ respectively and
$p_1 \prec_\oplus p_2$ for all $p_1 \in \P_1$ and $p_2 \in \P_2$. The effect
on order polytopes is $\Ord{\P_1 \uplus \P_2} = \Ord{\P_1} \times \Ord{\P_2}$
and $\Ord{\P_1 \oplus \P_2}$ is a \emph{subdirect sum} in the sense of
McMullen~\cite{mcmullen}. Malvenuto and Reutenauer~\cite{MR} define the
\Defn{composition} of two double posets $(\P_1,\prec^1_\pm)$ and
$(\P'_2,\prec^2_\pm)$ as the double poset $(\P,\preceq_\pm)$ such that
$(\P,\preceq_+) = (\P_1,\prec^1_+) \uplus (\P_2,\prec^2_+)$ and
$(\P,\preceq_-) = (\P_1,\prec^1_-) \oplus (\P_2,\prec^2_-)$.

The following is easily seen; for plane posets with the help of~\cite[Prop.~11]{Foissy1}.
\begin{prop}\label{prop:compat}
    Anti-special and  plane posets are compatible. Moreover, the
    composition of two compatible double posets is a compatible double poset.
\end{prop}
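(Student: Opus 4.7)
The plan is to use the reformulation noted just before the proposition: a double poset is compatible if and only if it admits no alternating cycle. Under this criterion, the anti-special claim and the composition claim become essentially mechanical, and the only genuine work is the plane-poset case, which I would outsource to Foissy.

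For the anti-special case, $(\P,\preceq_-)$ is an antichain, so no relation $a \prec_- b$ holds. Every alternating cycle of length $2k\geq 2$ uses at least one $\prec_-$-step, so no such cycle exists and $\dP$ is compatible. Equivalently, any linear extension of $(\P,\preceq_+)$ automatically extends the antichain $(\P,\preceq_-)$, exhibiting a common linear extension directly.

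For plane posets, I would appeal to \cite[Prop.~11]{Foissy1}. The key input is the defining property: for distinct $a,b\in\P$ exactly one of $a\prec_+ b$, $b\prec_+ a$, $a\prec_- b$, $b\prec_- a$ holds. The oriented graph with $a\to b$ whenever $a\prec_+ b$ or $a\prec_- b$ is therefore a tournament on $\P$, and Foissy's argument verifies that it is transitive, yielding a total order that simultaneously extends $\preceq_+$ and $\preceq_-$. This transitivity check is the step I expect to be the main obstacle if one wished to reprove the statement from scratch, but with the cited result it is immediate.

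For the composition, let $\L_i$ be a common linear extension of $(\P_i,\preceq^i_+)$ and $(\P_i,\preceq^i_-)$, which exist by hypothesis for $i=1,2$. I form the ordinal sum $\L_1\oplus\L_2$, a total order on $\P = \P_1\uplus\P_2$. Any $\preceq_+$-comparison lies within a single $\P_i$ since $\preceq_+$ is the disjoint union, and is respected by $\L_i\subseteq\L_1\oplus\L_2$. A $\preceq_-$-comparison either lies within a single $\P_i$ -- again respected by $\L_i$ -- or is a cross-part relation $a\prec_- b$ with $a\in\P_1$ and $b\in\P_2$, which $\L_1\oplus\L_2$ respects by the definition of the ordinal sum. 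Hence $\L_1\oplus\L_2$ is a common linear extension of $\preceq_+$ and $\preceq_-$, and the composition is compatible.
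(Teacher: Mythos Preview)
Your proof is correct and follows the same approach the paper gestures at: the paper merely says the result ``is easily seen; for plane posets with the help of~\cite[Prop.~11]{Foissy1}'' without spelling out details, and you supply exactly those details, citing the same Foissy result for the plane case. One small inaccuracy: the equivalence ``compatible $\Leftrightarrow$ no alternating cycle'' is not actually stated before the proposition in the paper (only the forward direction is noted, and that comes afterward), but this does not affect your argument since in each case you also give the direct construction of a common linear extension.
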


This defining property of compatible double posets assures us that in an
alternating chain $p_i \prec_\sigma p_j$ implies $i < j$ for any $\sigma \in
\{\pm\}$. In particular, a compatible double poset does not have 
alternating cycles. This also shows the following.

\begin{lem}\label{lem:compat}
    Let $\dP = (\P,\preceq_\pm)$ be a compatible double poset.  If $a_{i}
    \prec_\sigma a_{i+1} \prec_{-\sigma} \cdots \prec_{-\tau} a_j \prec_\tau
    a_{j+1}$ is part of an alternating chain with $\sigma,\tau \in \{\pm\}$
    and $i < j$, then there is no $b \in \P$ such that $a_i \prec_\sigma b
    \prec_\sigma a_{i+1}$ and $a_j \prec_\tau b \prec_\tau a_{j+1}$.
\end{lem}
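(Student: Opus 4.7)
My plan is a proof by contradiction using the fact, remarked just before the lemma, that a compatible double poset $\dP$ admits a common linear extension. Let $\leq$ denote such a linear order on $\P$; since it extends both $\preceq_+$ and $\preceq_-$, every strict relation $p \prec_\sigma q$ with $\sigma \in \{\pm\}$ forces $p < q$ under $\leq$.

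I would first apply this to each step of the given alternating chain $a_i \prec_\sigma a_{i+1} \prec_{-\sigma} \cdots \prec_{-\tau} a_j \prec_\tau a_{j+1}$, obtaining the strict chain $a_i < a_{i+1} < \cdots < a_j < a_{j+1}$ under $\leq$. In particular $a_{i+1} \leq a_j$, with equality precisely when $j = i+1$. This is just the observation that in a compatible double poset the indices along an alternating chain respect both partial orders.

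Next, supposing for contradiction that there is some $b \in \P$ with $a_i \prec_\sigma b \prec_\sigma a_{i+1}$ and $a_j \prec_\tau b \prec_\tau a_{j+1}$, the two relations $b \prec_\sigma a_{i+1}$ and $a_j \prec_\tau b$ yield $b < a_{i+1}$ and $a_j < b$ under $\leq$. Chained with $a_{i+1} \leq a_j$, this gives the impossible inequality $a_j < b < a_{i+1} \leq a_j$.

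I do not expect any real obstacle: the lemma is essentially a one-line consequence of the linear-extension characterization of compatibility, with only a trivial case split needed to handle $j=i+1$ (where $a_{i+1}$ and $a_j$ coincide and the contradiction collapses to $a_j < b < a_j$).
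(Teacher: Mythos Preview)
Your argument is correct and is exactly the approach the paper intends: the lemma is stated immediately after the remark that compatibility forces $p_i \prec_\sigma p_j \Rightarrow i<j$ along an alternating chain, with the words ``This also shows the following,'' and your contradiction $a_j < b < a_{i+1} \le a_j$ is precisely the one-line consequence of that remark.
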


For compatible double posets, we can give complete characterization of facets.

\begin{thm}\label{thm:compat_facets}
    Let $\dP$ a compatible double poset. A linear function $\ell$ is rigid if
    and only if $\ell \in \R_{>0} \ell_C$ for some alternating chain $C$. In
    particular, the facets of $\TOrd{\dP}$ are in bijection with alternating
    chains.
\end{thm}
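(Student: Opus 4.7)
The plan is to handle the ``only if'' direction using Proposition~\ref{prop:rigid}, and then to prove the ``if'' direction by an explicit construction of the face pair witnessing rigidity. For ``only if'', any rigid $\ell$ equals $\mu \ell_C$ for some alternating chain or cycle by Proposition~\ref{prop:rigid}, and since a compatible $\dP$ admits a common linear extension and therefore has no alternating cycles (as noted after Proposition~\ref{prop:compat}), $C$ must be an alternating chain. The bijection between facets and alternating chains will follow once I establish rigidity of each $\ell_C$: the support $\{p_1,\dots,p_{k-1}\}$ and sign pattern $\sigma(-1)^i$ at $p_i$ determine $C$ uniquely up to positive scalars, so distinct proper chains yield distinct vertical facets, and the two horizontal facets $\pm 2\Ord{\P_{\pm}} \times \{\pm 1\}$ correspond to the two trivial chains $\Pbot \prec_{\pm} \Ptop$.

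For the ``if'' direction, I would fix a proper alternating chain $C: \Pbot = p_0 \prec_\sigma p_1 \prec_{-\sigma} \cdots \prec_{\pm\sigma} p_k = \Ptop$ and set $F_+ := \Ord{\P_+}^{\ell_C}$ and $F_- := \Ord{\P_-}^{-\ell_C}$. A quick linear-programming inspection shows that $F_+$ is cut out by the equalities $f(p_{2j}) = f(p_{2j+1})$ coming from the $\prec_+$-pairs in $C$, and its reduced face partition $\FaPa(F_+)$ consists of the $\preceq_+$-intervals $B_j := [p_{2j}, p_{2j+1}]_{\P_+}$, with $\Pbot$ or $\Ptop$ adjoined to endpoint blocks as appropriate for the parities of $k$ and $\sigma$. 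Symmetrically, $\FaPa(F_-)$ consists of the $\preceq_-$-intervals $B'_i := [p_{2i-1}, p_{2i}]_{\P_-}$. Decomposing $\ell_C$ as the sum of the $\ell_{p_{2j},p_{2j+1}}$ over $\prec_+$-pairs and $-\ell_C$ as the sum of the $\ell_{p_{2i-1},p_{2i}}$ over $\prec_-$-pairs (each a generator of the corresponding block ray), Proposition~\ref{prop:NC} gives the containment $\ell_C \in \NC_{\P_+}(F_+) \cap \bigl(-\NC_{\P_-}(F_-)\bigr)$.

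The main step is to show that this intersection collapses to the ray $\R_{>0}\ell_C$ in relative interiors. Given any $\ell' \in \relint \NC_{\P_+}(F_+) \cap \relint\bigl(-\NC_{\P_-}(F_-)\bigr)$, I would first show that $\ell'$ is supported on $\{p_1, \dots, p_{k-1}\}$. By Corollary~\ref{cor:maxmin}(iii), $\ell'_q = 0$ whenever $q$ lies in a singleton block of either $F_+$ or $F_-$, and compatibility forces every $q \in \P \setminus C$ into a singleton block of at least one of the two: if $q \in B_j \setminus C$, the common linear extension places the index of $q$ strictly between $2j$ and $2j+1$, which precludes $q$ from any $\preceq_-$-interval $[p_{2i-1}, p_{2i}]_{\P_-}$, and the endpoint blocks containing $\Pbot$ or $\Ptop$ are handled analogously. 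This compatibility-based support argument is the main obstacle and the only place where compatibility is essentially used. Once $\ell'$ is supported on the chain, a flow-conservation argument inside each connected block $B_j$ forces $\ell'|_{B_j} = \mu_j \ell_{p_{2j},p_{2j+1}}$ for some $\mu_j > 0$, and similarly $-\ell'|_{B'_i} = \nu_i \ell_{p_{2i-1},p_{2i}}$ with $\nu_i > 0$. Matching the coefficients of $\ell'$ at each chain element telescopes the equalities $\mu_0 = \nu_1 = \mu_1 = \nu_2 = \cdots$ into a common positive scalar $\lambda$, yielding $\ell' = \lambda \ell_C$ and completing the proof.
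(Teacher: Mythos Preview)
Your proposal is correct and follows essentially the same approach as the paper's proof: both reduce the ``only if'' direction to Proposition~\ref{prop:rigid} plus the absence of cycles in compatible double posets, and for ``if'' both compute the reduced face partitions of $F_\pm$ as the alternating intervals, use compatibility to force any $\ell'$ in the relevant cone intersection to be supported on the chain, and then telescope the coefficients. The only substantive difference is packaging: the paper invokes Lemma~\ref{lem:compat} (and Proposition~\ref{prop:filter_fapa} via the filter count) to establish both the disjointness of the interval blocks and the support restriction, whereas you argue these directly from a common linear extension --- but this is precisely the content of Lemma~\ref{lem:compat}, so the arguments coincide.
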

\begin{proof}
    We already observed that $2\Ord{\P_+} \times \{1\}$ and $-2\Ord{\P_-}
    \times \{-1\}$ correspond to the improper alternating chains $\Pbot
    \prec_\sigma \Ptop$ for $\sigma = \pm$.  By Proposition~\ref{prop:rigid}
    it remains to show that for any proper alternating chain $C$ the function
    $\ell_C$ is rigid. We only consider the case that $C$ is an alternating
    chain of the form
    \[
        \Pbot \ = \ p_0 \ \prec_+ \  p_1 \ \prec_- \  p_2 \ \prec_+ \
        \cdots \ \prec_+ \ p_{2k-1} \ \prec_- \  p_{2k} \ \prec_+ \  p_{2k+1}
        \ = \ \Ptop.
    \]
    The other cases can be treated analogously. 
    Let $F_+ = \Ord{\P_+}^{\ell_C}$ and and $F_- = \Ord{\P_-}^{-\ell_C}$ be
    the corresponding faces with reduced face partitions $\FaPa_\pm$.
    Define $O = \{ p_1, p_3, \dots, p_{2k-1}\}$ and $E =
    \{p_2,p_4,\dots,p_{2k}\}$.  Then for any set $A \subseteq \P$, we observe
    that $\ell_C(\1_A) = |E \cap A| - |O \cap A|$. If $\Filter$ is a filter of
    $\P_+$, then $p_{2i} \in \Filter$ implies $p_{2i+1} \in \Filter$ and hence
    $\ell_C(\1_\Filter) \le 1$ and thus $\1_\Filter \in F_+$ if and only if
    $\Filter$ does not separate $p_{2j}$ and $p_{2j+1}$ for $1 \le j \le k$.
    Likewise, a filter $\Filter \subseteq \P_-$ is contained in $F_-$ if and
    only if $\Filter$ does not separate $p_{2j-1}$ and $p_{2j}$ for $1 \le j
    \le k$.  Lemma~\ref{lem:compat} implies that 
    \begin{align*}
        \FaPa_+ &\ = \
        \{[p_0,p_1]_{\P_+}, [p_2,p_3]_{\P_+}, \dots, [p_{2k},
        p_{2k+1}]_{\P_+}\}\text{ and}\\
        \FaPa_- &\ = \ \{[p_1,p_2]_{\P_-}, [p_3,p_4]_{\P_-},
        \dots, [p_{2k-1},p_{2k}]_{\P_-} \}.
    \end{align*}
    To show that $\ell_C$ is rigid pick a linear function $\ell(\phi) =
    \sum_{p \in \P} \ell_p \phi(p)$ with $F_+ = \Ord{\P_+}^{\ell}$ and $F_- =
    \Ord{\P_-}^{-\ell}$. Since the elements in $E$ and $O$ are exactly the
    minimal and maximal elements of the parts in $\FaPa_+$, it follows from
    Corollary \ref{cor:maxmin} that $\ell_p > 0$ if $p \in E$, $\ell_p < 0$
    for $p \in O$. By Lemma~\ref{lem:compat}, it follows that if $q \in
    (p_i,p_{i+1})_{\P_+}$, then $q$ is not contained in a part of the reduced
    face partition $\FaPa_-$ and vice versa. By
    Corollary~\ref{cor:maxmin}(iii), it follows that $\ell_p = 0$ for $p
    \not\in E \cup O$. Finally, $\ell_{p_i} + \ell_{p_{i+1}} = 0$ for all $1
    \le i \le 2k$ by Proposition~\ref{prop:NC} and therefore $\ell = \mu
    \ell_C$ for some $\mu > 0 $ finishes the proof.
\end{proof}

\begin{example}
    Let $\dP = (\P,\preceq_\pm)$ be a compatible double poset with $|\P| =
    n$.
    \begin{enumerate}
        \item Let $\preceq_+ = \preceq_- = \preceq$ and $(\P,\preceq)$ be the
            $n$-antichain. Then the only alternating chains are of the form
            $\Pbot \prec_\sigma a \prec_{-\sigma} \Ptop$  for $a \in \P$. The
            double order polytope $\TOrd{\dP}$ is the $(n+1)$-dimensional cube
            with vertices $\{0,2\}^n \times \{+1\}$ and 
            $\{0,-2\}^n \times \{-1\}$.
        \item If $\preceq_+ = \preceq_- = \preceq$ and $(\P,\preceq)$ is the
            $n$-chain $[n]$, then any alternating chain can be identified with
            an element in $\{-,+\}^{n+1}$. More precisely,
            $\TOrd{\dP}$ is linearly isomorphic to the
            $(n+1)$-dimensional crosspolytope $C_{n+1}^\triangle = \conv\{ \pm
            \e_1,\dots, \pm \e_{d+1}\}$.
        \item Let $(\P,\preceq_+)$ be the $n$-chain and $(\P,\preceq_-)$ be
            the $n$-antichain. Then any alternating chain is of the form
            $\Pbot \prec_\sigma a \prec_{-\sigma} \Ptop$ for $\sigma = \pm$
            and any relation $a \prec_+ b$ be can be completed to a unique
            alternating chain. Thus, $\TOrd{\dP}$ is a
            $(n+1)$-dimensional polytope with $2^n + n + 1$ vertices and $
            \binom{n}{2} + 2n + 2$ facets.
        \item The \Defn{comb} (see Figure \ref{fig:comb}) is the poset $C_n$ on elements
            $\{a_1,\dots,a_n,b_1,\dots,b_n\}$ such that $a_i \preceq a_j$ if
            $i \le j$ and $b_i \prec a_i$ for all $i,j \in [n]$. The $n$-comb
            has $2^{n+1}-1$ filters and $3\cdot2^{n} - 2$ chains. Hence
            $\TOrd{C_n,\preceq,\preceq}$ has $2^{n+2}-2$ vertices and
            $3\cdot2^{n+1} - 4$ facets.
        \item Generally, let $\P_1, \P_2$ be two posets and denote by $f_i$
            and $c_i$ the number of filters and chains of $\P_i$ for $i=1,2$.
            Let $\dP_\circ$ be the trivial double poset induced by $\P_1 \uplus
            \P_2$. Then $\TOrd{\dP_\circ}$ has $2f_1f_2$ vertices and
            $2(c_1+c_2) - 2$ facets.
    \end{enumerate}
\end{example}

\begin{example}
    Consider the compatible 'XW'-double poset $\dP_{XW}$ on five elements,
    whose Hasse diagrams are given in Figure \ref{fig:XW}. The polytope
    $\TOrd{\dP_{XW}}$ is six-dimensional with face vector
    \[
        f(\TOrd{\dP_{XW}}) \ = \ (21, 112, 247, 263, 135, 28).
    \]
    The facets correspond to the 28 alternating chains in
    $\widehat\dP_{XW}$, which are shown in Figure \ref{fig:XW_alt}.
\end{example}

\begin{figure}[h]
  \centering
  \begin{minipage}{.49\textwidth}
  \centering
    \includegraphics[width=0.4\textwidth]{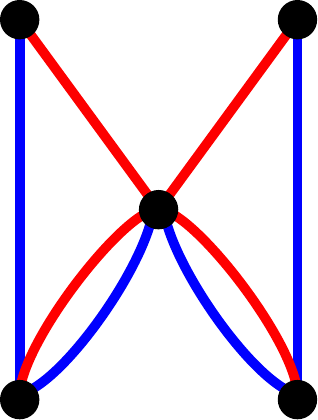}
    \caption{The 'XW'-double poset $\dP_{XW}$. The red and blue lines are the Hasse diagram of $\P_+$ and $\P_-$, respectively. Striped lines are edges in both Hasse diagrams.}
    \label{fig:XW}
    \end{minipage}
      \begin{minipage}{.49\textwidth}
  \centering
    \includegraphics[width=0.4\textwidth]{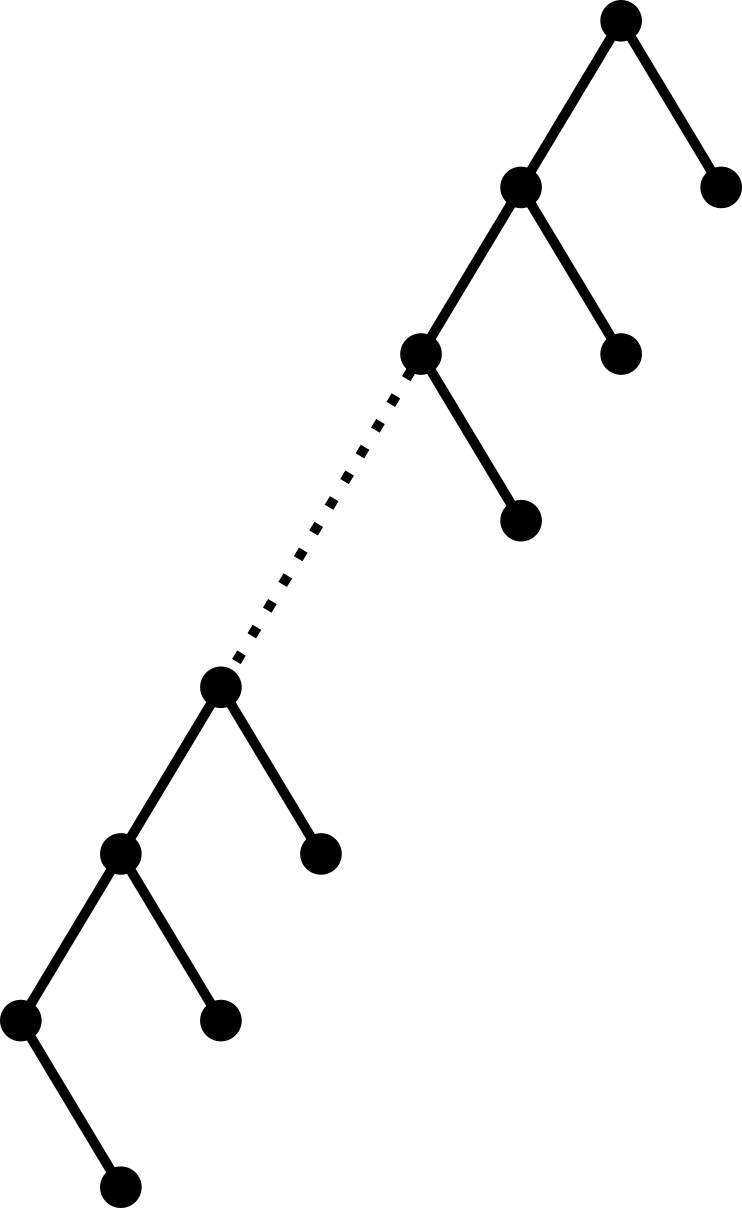}
    \caption{The comb $C_n$.}
    \label{fig:comb}
    \end{minipage}
\end{figure}

\begin{figure}[h]
  \centering
    \includegraphics[width=0.6\textwidth]{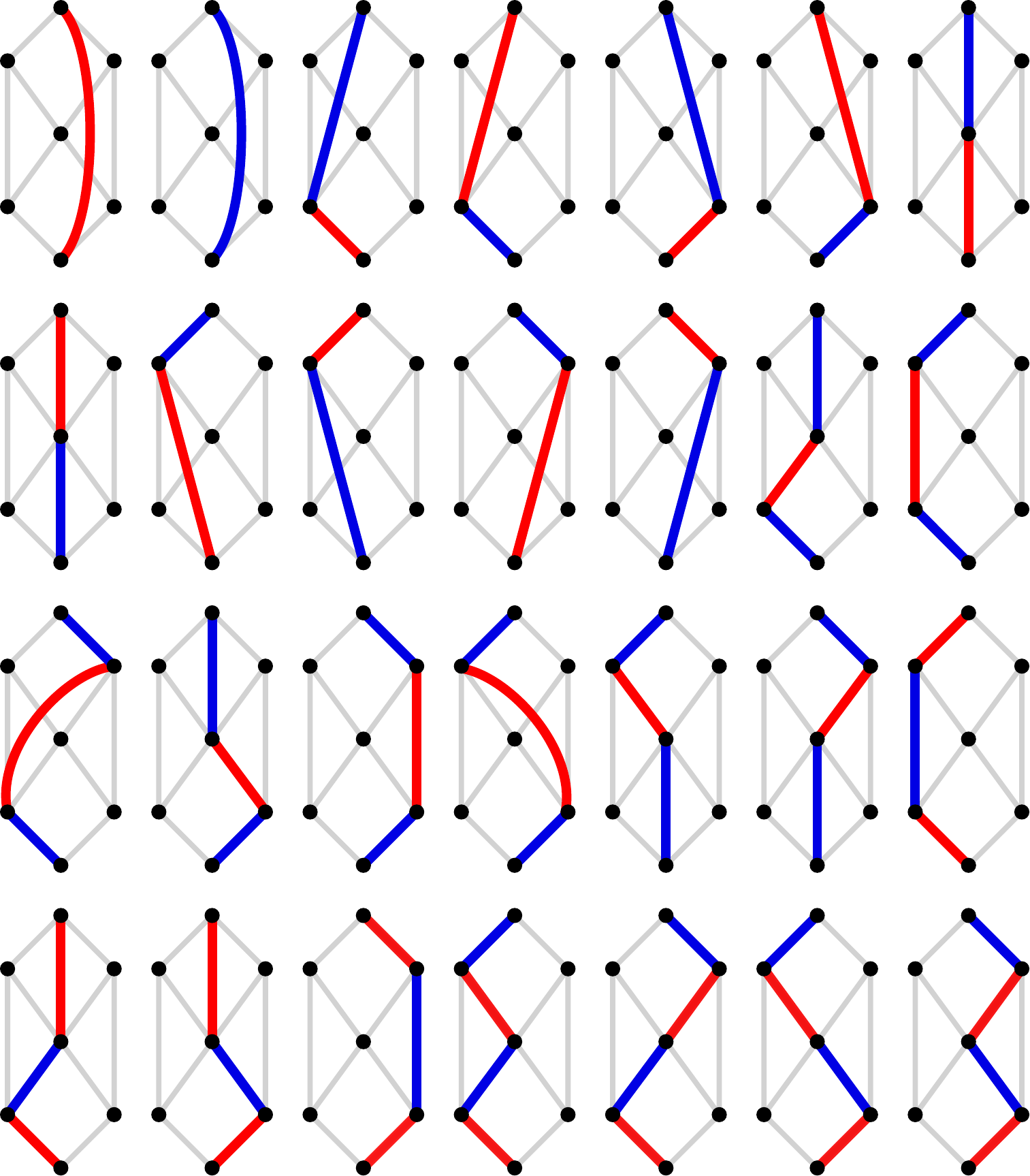}
    \caption{The 28 alternating chains in $\widehat \dP_{XW}$.}
    \label{fig:XW_alt}
\end{figure}

For two particular types of posets, we wish to determine the combinatorics of
$\TOrd{\dP}$ in more detail.

\begin{example}[Dimension-$2$ posets] \label{ex:2dim}
    For $n \ge 1$, let $\pi = (\pi_1,\pi_2,\dots,\pi_n)$ be an ordered
    sequence of distinct numbers. We may define a partial order $\preceq_\pi$
    on $[n]$ by $i \prec_\pi j$ if $i < j$ and $\pi_i < \pi_j$. Following
    Dushnik and Miller~\cite{Dushnik}, these are, up to isomorphism, exactly
    the posets of order dimension $2$.  A chain in $\P_\pi :=
    ([n],\preceq_\pi)$ is a sequence $i_1 < i_2 < \cdots < i_k$ with $
    \pi_{i_1} < \pi_{i_2} < \cdots < \pi_{i_k}$. Thus, chains in $\P_\pi$ are
    in bijection to \Defn{increasing subsequences} of $\pi$.  Conversely, one
    checks that filters (via their minimal elements) are in bijection to
    decreasing subsequences. It follows from Theorem~\ref{thm:compat_facets}
    that facets and vertices of $\TOrd{[n],\preceq_\pi,\preceq_\pi}$ are in
    2-to-1 correspondence with increasing and decreasing sequences,
    respectively.
\end{example}

\begin{example}[Plane posets]
    Let $\dP = (\P,\preceq_+,\preceq_-)$ be a compatible double poset.  We may
    assume that $\P = \{ p_1,\dots, p_n\}$ are labelled such that $p_i
    \prec_\sigma p_j$ for $\sigma = +$ or $= -$ implies $i < j$.
    By~\cite[Prop.~15]{Foissy2}, $\dP$ is a plane poset, if and only if  there
    is a sequence of distinct numbers $\pi = (\pi_1,\pi_2,\dots,\pi_n)$ such
    that for $p_i,p_j \in \P$
    \[
        \begin{aligned} 
            p_i \prec_+ p_j & \quad \Longleftrightarrow  \quad i < j \text{
            and } \pi_i < \pi_j \text{ and }\\
            p_i \prec_- p_j & \quad \Longleftrightarrow  \quad i < j \text{
            and } \pi_i > \pi_j.
        \end{aligned}
    \]
    This is to say, $\P_+$ is canonically isomorphic to $([n], \preceq_\pi)$ and
    $\P_-$ is canonically isomorphic to $([n],\preceq_{-\pi})$. It
    follows that alternating chains in $\dP$ are in bijection to
    \Defn{alternating sequences}. That is, sequences $i_1 < i_2 < i_3 < \cdots
    < i_k$ such that $\pi_{i_1} < \pi_{i_2} > \pi_{i_3} < \cdots$. Hence, by
    Theorem~\ref{thm:compat_facets}, the facets of $\TOrd{\dP}$ are in
    bijection to alternating sequences of $\pi$ whereas the vertices are in
    bijection to increasing and decreasing sequences of $\pi$.
\end{example}

As a consequence of the proof of Theorem~\ref{thm:compat_facets} we can
determine a facet-defining inequality description of double order polytopes.
For an alternating chain $C$ as in~\eqref{eqn:alt_chain}, let us write
$\sgn(C) = \tau \in \{-,+\}$ if the last relation in $C$ is $p_{k-1} \prec_\tau p_k=\Ptop$.

\begin{cor}\label{cor:TO_ineq}
    Let $\dP = (\P,\preceq_\pm)$ be a compatible double poset. Then $\TOrd{\dP}$
    is the set of points $(f,t) \in \R^\P \times \R$ such that 
    \[
       L_C(f,t) \ := \ \ell_C(f) - \sgn(C)\, t \ \le \  1 
    \]
    for all alternating chains $C$ of $\dP$.
\end{cor}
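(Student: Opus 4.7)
The plan is to use Theorem~\ref{thm:compat_facets}, which identifies the facets of $\TOrd{\dP}$ with alternating chains of $\dP$ and provides $\ell_C$ as the $\R^\P$-component of the corresponding outer normal. What remains is to fix the scaling and determine the $t$-coefficient: writing the facet-defining inequality as $\ell_C(f) + at \le b$, my task is to show $a = -\sgn(C)$ and $b = 1$. The two improper chains $\Pbot \prec_\pm \Ptop$ can be dispatched at a glance, since there $\ell_C \equiv 0$ and $\sgn(C) = \pm$ recover the inequalities $\mp t \le 1$ cutting out the horizontal facets $\pm 2\Ord{\P_\pm} \times \{\pm 1\}$. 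For all other inequalities, validity on $\TOrd{\dP}$ reduces to checking them on the vertices $(\pm 2\1_{\Filter_\pm}, \pm 1)$.

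For a proper alternating chain $C$ as in~\eqref{eqn:alt_chain} with starting sign $\sigma$, let $E$ and $O$ denote the real chain elements at even and odd positions respectively, so that $\ell_C(\1_A) = \sigma(|A \cap E| - |A \cap O|)$ for any $A \subseteq \P$. The main step---and the primary bookkeeping obstacle---is a telescoping argument exploiting the filter property: every $\prec_+$-pair $(p_{2j}, p_{2j+1})$ within $C$ forces $x_{p_{2j}} \le x_{p_{2j+1}}$ in any filter of $\P_+$, and every $\prec_-$-pair $(p_{2j-1}, p_{2j})$ forces $x_{p_{2j-1}} \le x_{p_{2j}}$ in any filter of $\P_-$. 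Writing $\ell_C(\1_\Filter)$ as a sum of consecutive differences and substituting these inequalities collapses all interior terms, leaving only a single boundary contribution. The resulting bounds are
\[
    \max_{\Filter_+} \ell_C(\1_{\Filter_+}) \ = \ \tfrac{1+\sgn(C)}{2} \quad \text{and} \quad \max_{\Filter_-} \bigl(-\ell_C(\1_{\Filter_-})\bigr) \ = \ \tfrac{1-\sgn(C)}{2},
\]
with each maximum attained on a filter of the form described in the proof of Theorem~\ref{thm:compat_facets} (for instance, a principal filter of a suitable chain element).

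Plugging these estimates into $L_C(2\1_{\Filter_+}, 1) = 2\ell_C(\1_{\Filter_+}) - \sgn(C)$ and $L_C(-2\1_{\Filter_-}, -1) = -2\ell_C(\1_{\Filter_-}) + \sgn(C)$ yields $L_C \le 1$ at every vertex of $\TOrd{\dP}$, with equality achievable in each case; this pins down $(a, b) = (-\sgn(C), 1)$. Since Theorem~\ref{thm:compat_facets} exhausts all facets of $\TOrd{\dP}$ by alternating chains, the family $\{L_C \le 1\}$ constitutes a complete facet-defining description, which proves the corollary. The telescoping step technically splits into four cases indexed by $(\sigma, \sgn(C)) \in \{\pm\}^2$, but they all reduce to the same pattern: swapping $\sigma$ interchanges the roles of $\P_+$ and $\P_-$, and the two values of $\sgn(C)$ differ only by which boundary element survives the telescoping, so it suffices to write out a single case in detail.
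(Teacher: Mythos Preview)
Your proposal is correct and follows essentially the same approach as the paper: both arguments use Theorem~\ref{thm:compat_facets} to identify the facet normals and then determine the scaling and $t$-coefficient by computing the maxima of $\ell_C$ over $2\Ord{\P_+}$ and $-2\Ord{\P_-}$. The paper's version is terser because it recycles the max-value computation already carried out in the proof of Theorem~\ref{thm:compat_facets} and invokes the fact that $\0$ lies in the interior to normalize the right-hand side to~$1$, whereas you spell out the telescoping explicitly and verify validity and tightness directly; the content is the same.
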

\begin{proof}
    Note that $0$ is in the interior of $\TOrd{\dP}$. Hence by
    Theorem~\ref{thm:compat_facets} every facet-defining halfspace of
    $\TOrd{\dP}$ is of the form $\{ (\phi,t) : L(\phi,t) = \mu \ell_C +
    \beta t \le 1\}$ for some alternating chain $C$ and $\mu, \beta \in \R$
    with $\mu > 0$. If $C$ is an alternating chain with $\sgn(C) = +$, then
    the maximal value of $\ell_C$ over $2\Ord{\P_+}$ is $2$ and $0$ over
    $-2\Ord{\P_-}$. The values are exchanged for $\sgn(C) = -$. It then follows
    that $\mu = 1$ and $\beta = -\sgn(C)$.
\end{proof}

With this, we can characterize the $2$-level polytopes among compatible
double order polytopes.
\begin{cor}\label{cor:TO-2l}
    Let $\dP = (\P,\preceq_\pm)$ be a compatible double poset. Then
    $\TOrd{\dP}$ is $2$-level if and only if $\preceq_+ = \preceq_-$.  In
    this case, the number of facets of $\TOrd{\dP_\circ}$ is twice the
    number of chains in $(\P,\preceq)$.
\end{cor}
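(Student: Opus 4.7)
The plan is to leverage the facet description from Corollary~\ref{cor:TO_ineq}: facets of $\TOrd{\dP}$ correspond to alternating chains $C$ via the inequalities $L_C(f,t) = \ell_C(f) - \sgn(C)\,t \le 1$. Hence $\TOrd{\dP}$ is $2$-level exactly when every $L_C$ takes only two distinct values on the vertex set $\{(2\1_{\Filter_+},1), (-2\1_{\Filter_-},-1)\}$, one of which must be $+1$ (the facet value).

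For the ``if'' direction, assume $\preceq_+ = \preceq_- = \preceq$. The horizontal facets $\pm t \le 1$ are trivially $2$-level. For a proper alternating chain $C : \Pbot = p_0 \prec p_1 \prec \cdots \prec p_k = \Ptop$ with starting sign $+$, the crucial observation is that $p_1 \prec \cdots \prec p_{k-1}$ is an actual chain in $(\P,\preceq)$. Consequently any filter $\Filter$ meets it in an upper segment $\{p_j,\ldots,p_{k-1}\}$, and the alternating sum $\ell_C(\1_\Filter)=\sum_{i=j}^{k-1}(-1)^i$ telescopes to $0$ if $k-j$ is even and to $(-1)^{k-1}$ if $k-j$ is odd, giving $\ell_C(\1_\Filter)\in\{0,(-1)^{k-1}\}$. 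Since $\sgn(C)=(-1)^{k-1}$, a direct substitution into $L_C(\pm 2\1_\Filter,\pm 1)$ shows $L_C\in\{-1,+1\}$ on every vertex. The case of starting sign $-$ is analogous.

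For the ``only if'' direction, suppose $\preceq_+\neq\preceq_-$. By the symmetry of $\TOrd{\dP}$ in $\pm$, assume there exist $a,b\in\P$ with $a \prec_+ b$ but $a \not\preceq_- b$. Compatibility of $\dP$ forbids $b \prec_- a$ (else $a \prec_+ b \prec_- a$ is an alternating cycle, contradicting Proposition~\ref{prop:compat}), so $a$ and $b$ are $\preceq_-$-incomparable. Then $C : \Pbot \prec_- a \prec_+ b \prec_- \Ptop$ is a proper alternating chain with $\ell_C(f)=f(a)-f(b)$, $\sgn(C)=-$, so $L_C(f,t) = f(a) - f(b) + t$. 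On $(2\1_{\Filter_+},1)$, the relation $a \prec_+ b$ forces $a \in \Filter_+ \Rightarrow b \in \Filter_+$, yielding $L_C \in \{-1,+1\}$. But on the $\P_-$ side, the principal filter $\Filter_- := \{x \in \P : x \succeq_- a\}$ contains $a$ and excludes $b$ (because $a \not\preceq_- b$), giving $L_C(-2\1_{\Filter_-},-1) = -2 + 0 - 1 = -3$. So $L_C$ attains three distinct values $\{-3,-1,+1\}$ on the vertex set, and $\TOrd{\dP}$ is not $2$-level.

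For the count, when $\preceq_+=\preceq_-$, each chain in $(\P,\preceq)$ (including the empty one) extends uniquely to a chain from $\Pbot$ to $\Ptop$ in $\widehat{\P}$, and each such chain gives rise to exactly two alternating chains according to the starting sign $\sigma\in\{+,-\}$. By Theorem~\ref{thm:compat_facets}, the number of facets of $\TOrd{\dP_\circ}$ is therefore $2\cdot|\{\text{chains in }(\P,\preceq)\}|$. The main obstacle is in the converse direction: one must invoke compatibility at the right moment to upgrade ``$a\not\preceq_- b$'' to ``$a,b$ are $\preceq_-$-incomparable'', which is what guarantees the existence of the bad filter witnessing the forbidden value $L_C=-3$.
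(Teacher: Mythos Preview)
Your proof is correct and follows essentially the same approach as the paper: both directions hinge on evaluating $L_C$ at vertices and checking whether only two values occur. The paper's ``only if'' direction factors through Proposition~\ref{prop:2l-terti} (showing $2$-levelness forces \emph{tertispecial}, then deducing $\preceq_+ = \preceq_-$ from compatibility), whereas you give the same computation directly via the explicit alternating chain $\Pbot \prec_- a \prec_+ b \prec_- \Ptop$, which is slightly more economical since you avoid reducing to a cover relation.

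One minor citation issue: your appeal to Proposition~\ref{prop:compat} for ``compatible implies no alternating cycles'' is misplaced; that proposition only asserts that certain classes of double posets are compatible. The fact you need is stated in the paragraph immediately preceding it (or follows instantly from the definition of compatibility via a common linear extension). This does not affect the mathematics.
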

\begin{proof}
    If $\preceq_+ = \preceq_- = \preceq$, then every alternating chain is a
    chain in $\P$ and conversely, every chain in $(\P,\preceq)$ gives rise to
    exactly two distinct alternating chains in $(\P,\preceq_\pm)$. In this
    case, it is straightforward to verify that the minimum of $\ell_C$ over
    $2\Ord{\P}$ is $-2$ if $\sgn(C) = +$ and $0$ otherwise. The claim now
    follows from Corollary~\ref{cor:TO_ineq} and together with
    Theorem~\ref{thm:compat_facets} also yields the number of facets.

    The converse follows from Proposition~\ref{prop:2l-terti} by noting that
    if both $(\P,\preceq_+,\preceq_-)$ and $(\P,\preceq_-,\preceq_+)$ are
    compatible and tertispecial then $\preceq_+ = \preceq_-$.
\end{proof}

In~\cite{Grinberg} a double poset $(\P,\preceq_+,\preceq_-)$ is called
\Defn{tertispecial} if $a$ and $b$ are $\prec_{-}$-comparable whenever
$a\prec_+ b$ is a cover relation for $a,b \in \P$.

\begin{prop}\label{prop:2l-terti}
    Let $\dP = (\P,\preceq_\pm)$ be a double poset.  If $\TOrd{\dP}$ is
    $2$-level, then $\dP$ as well as $(\P,\preceq_-,\preceq_+)$ are
    tertispecial.
\end{prop}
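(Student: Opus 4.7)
I prove the contrapositive of the first assertion: if $\dP = (\P,\preceq_+,\preceq_-)$ is not tertispecial, then $\TOrd{\dP}$ is not $2$-level. The statement for $(\P,\preceq_-,\preceq_+)$ will then follow by the reflection $(f,t) \mapsto (-f,-t)$, which carries $\TOrd{\P,\preceq_+,\preceq_-}$ isomorphically onto $\TOrd{\P,\preceq_-,\preceq_+}$ and preserves $2$-levelness. So fix a cover relation $a \prec_+ b$ in $\P_+$ with $a,b$ not $\prec_-$-comparable, and consider the alternating chain
\[
    C:\quad \Pbot \prec_- a \prec_+ b \prec_- \Ptop,
\]
for which $\ell_C(f) = f(a) - f(b)$, $\sgn(C) = -$, and hence $L_C(f,t) = f(a) - f(b) + t$.

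The main step is to verify that $\ell_C$ is rigid, so that $L_C \le 1$ is facet-defining for $\TOrd{\dP}$. Let $F_+ = \Ord{\P_+}^{\ell_C}$ and $F_- = \Ord{\P_-}^{-\ell_C}$. The cover relation forces $F_+ = \{f : f(a) = f(b)\} \cap \Ord{\P_+}$, a facet of $\Ord{\P_+}$ whose normal cone is the $1$-dimensional ray $\R_{\ge 0}\,\ell_C$. Incomparability of $a,b$ in $\P_-$ ensures $F_- = \{f : f(a)=0,\ f(b)=1\} \cap \Ord{\P_-}$ is nonempty, with reduced face partition consisting of the two disjoint parts $\{\Pbot\} \cup L^-(a)$ and $\{\Ptop\} \cup U^-(b)$, where $L^-(a) = \{p : p \preceq_- a\}$ and $U^-(b) = \{p : p \succeq_- b\}$ (disjoint because no $x$ satisfies $b \preceq_- x \preceq_- a$). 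To see that $-\ell_C \in \relint \NC_{\P_-}(F_-)$, note that every saturated chain $\Pbot \prec_- c_0 \prec_- \cdots \prec_- c_k = a$ inside $\{\Pbot\} \cup L^-(a)$ telescopes to $\ell_{\Pbot,c_0} + \ell_{c_0,c_1} + \cdots + \ell_{c_{k-1},c_k} = -\e_a$; averaging this identity over all such chains produces $-\e_a$ as a strictly positive combination of all ray generators of $\NC_{\P_-}(F_-)$ supported in $\{\Pbot\} \cup L^-(a)$, because every cover in that part lies on at least one saturated chain from $\Pbot$ to $a$. A symmetric averaging over saturated chains $b \to \Ptop$ inside $\{\Ptop\} \cup U^-(b)$ yields $\e_b$ with strictly positive coefficients on all remaining rays. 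Summing writes $-\ell_C = -\e_a + \e_b$ as a strict positive combination of all ray generators, placing it in $\relint \NC_{\P_-}(F_-)$. Combined with $\ell_C \in \relint \NC_{\P_+}(F_+)$, this gives $\relint \NC_{\P_+}(F_+) \cap \relint(-\NC_{\P_-}(F_-)) = \R_{>0}\,\ell_C$, so $\ell_C$ is rigid.

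Finally, I would compute $L_C$ on the vertices of $\TOrd{\dP}$. On a vertex $(2\1_{\Filter_+}, 1)$ the upward closure of $\Filter_+$ together with $a \prec_+ b$ excludes the configuration $a \in \Filter_+,\ b \notin \Filter_+$, leaving values in $\{+1,-1\}$, both attained (for example by $\Filter_+ = \emptyset$ and the principal filter generated by $b$). On a vertex $(-2\1_{\Filter_-}, -1)$, $\prec_-$-incomparability allows all four configurations of $\Filter_- \cap \{a,b\}$ to occur, realized by $\emptyset$, the principal filters $U^-(a)$ and $U^-(b)$, and $\P$, yielding the three distinct values $\{-3,-1,+1\}$. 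Hence $L_C$ takes at least three distinct values on the vertex set of $\TOrd{\dP}$, which contradicts $2$-levelness applied to the facet $L_C = 1$.

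The main obstacle is the rigidity verification in the middle step: specifically, confirming that $-\ell_C$ lies in the \emph{relative} interior of $\NC_{\P_-}(F_-)$, not merely in the cone, even when $L^-(a)$ or $U^-(b)$ is large. The averaging-over-saturated-chains device handles this precisely because every cover within the relevant part belongs to some saturated chain $\Pbot \to a$ or $b \to \Ptop$; once rigidity is secured, the vertex-value enumeration is immediate.
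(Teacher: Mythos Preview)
Your proof is correct and follows the same outline as the paper's: pick a cover relation $a \prec_+ b$ with $a,b$ incomparable in $\P_-$, check that $\ell_{a,b}$ defines a facet of $\TOrd{\dP}$, and then exhibit three vertices on which the facet linear form takes three distinct values. The symmetry $(f,t)\mapsto(-f,-t)$ to handle $(\P,\preceq_-,\preceq_+)$ is exactly what the paper does implicitly by letting $\sigma=\pm$.

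However, you work much harder than necessary in the rigidity step. The containment $-\ell_C \in \relint \NC_{\P_-}(F_-)$ is immediate from the very definition $F_- := \Ord{\P_-}^{-\ell_C}$: a linear functional always lies in the relative interior of the normal cone of the face it selects. Since $a \prec_+ b$ is a cover, $F_+$ is a facet of $\Ord{\P_+}$ and $\NC_{\P_+}(F_+) = \R_{\ge 0}\ell_C$ is already a ray; rigidity is then a one-line consequence, with no need to identify the reduced face partition of $F_-$ at all. The paper compresses this entire middle step into the phrase ``$\ell_{a,b}$ is facet defining for $\Ord{\P_\sigma}$ and hence yields a facet for $\TOrd{\dP}$''. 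Your averaging-over-saturated-chains device is valid but entirely dispensable.
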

\begin{proof}
    Let $\sigma = \pm$ and let $a \prec_\sigma b$ be a cover relation. The
    linear function $\ell_{a,b}$ is facet defining for $\Ord{\P_\sigma}$ and
    hence yields a facet for $\TOrd{\dP}$. If $a,b$ are not comparable in
    $\P_{-\sigma}$, then the filters $\emptyset, \{ c \in \P : c \succeq_-
    a\}$ and $\{ c \in \P : c \succeq_- b\}$ take three distinct values on
    $\ell_{a,b}$.
\end{proof}

Let us remark that the number of facets of a given double poset $\dP =
(\P,\preceq_+,\preceq_-)$ can be computed by the transfer-matrix method. Let us
define the matrices $\eta^+, \eta^- \in \R^{\Phat \times \Phat}$ by 
\[
    \eta^\sigma_{a,b} \ := \
    \begin{cases}
        1 & \text{ if } a \prec_\sigma b\\
        0 & \text{ otherwise}
    \end{cases}
\]
for $a,b \in \Phat$ and $\sigma = \pm$.  Then $(\eta^+\eta^-)^k_{\Pbot,\Ptop}$
is the number of alternating chains of $\dP$ of length $k$ starting with
$\prec_+$ and ending with $\prec_-$. This shows the following.

\begin{cor}\label{cor:num_facets}
    Let $\dP = (\P,\preceq_+,\preceq_-)$ be a compatible double poset. Then the
    number of facets of $\TOrd{\dP}$ is given by
    \[
    \left[ (\mathrm{Id} - \eta^+\eta^-)^{-1}(\mathrm{Id} + \eta^+) + 
    (\mathrm{Id} - \eta^-\eta^+)^{-1}(\mathrm{Id} + \eta^-) \right]_{\Pbot,\Ptop}.
    \]
\end{cor}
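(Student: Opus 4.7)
The plan is to interpret each entry $(\eta^+\eta^-)^k_{\Pbot,\Ptop}$ combinatorially, split alternating chains according to whether their first relation is $\prec_+$ or $\prec_-$, and then apply Theorem~\ref{thm:compat_facets}.

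First I would use compatibility to justify the matrix inverses. Since $\dP$ is compatible, there is a common linear extension of $\preceq_+$ and $\preceq_-$, which extends to $\Phat$ by placing $\Pbot$ first and $\Ptop$ last. Ordering rows and columns of $\eta^\pm$ by this extension makes both $\eta^+$ and $\eta^-$ strictly upper triangular, hence $\eta^+\eta^-$ and $\eta^-\eta^+$ are nilpotent. Therefore $(\mathrm{Id}-\eta^+\eta^-)^{-1} = \sum_{k\ge 0}(\eta^+\eta^-)^k$ is a well-defined finite sum, and similarly for the other inverse.

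Next I would read off the combinatorial meaning of the powers. By definition of matrix multiplication,
\[
    \bigl((\eta^+\eta^-)^k\bigr)_{\Pbot,\Ptop} \ = \
    \#\bigl\{ \Pbot = p_0 \prec_+ p_1 \prec_- p_2 \prec_+ \cdots \prec_- p_{2k} = \Ptop \bigr\},
\]
and an analogous statement holds for $((\eta^+\eta^-)^k \eta^+)_{\Pbot,\Ptop}$, which counts sequences of odd length $2k+1$ starting and ending with $\prec_+$. Because the indices $p_0,\ldots,p_m$ appear in strict order with respect to the common linear extension, they are automatically pairwise distinct, so these sequences are precisely the alternating chains of $\dP$ (in the sense of~\eqref{eqn:alt_chain}) with the prescribed length and starting relation. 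Consequently,
\[
    \bigl[(\mathrm{Id}-\eta^+\eta^-)^{-1}(\mathrm{Id}+\eta^+)\bigr]_{\Pbot,\Ptop} \ = \ \sum_{k\ge 0}\bigl((\eta^+\eta^-)^k\bigr)_{\Pbot,\Ptop} + \sum_{k\ge 0}\bigl((\eta^+\eta^-)^k\eta^+\bigr)_{\Pbot,\Ptop}
\]
enumerates all alternating chains of $\dP$ whose first relation is $\prec_+$; here the $k=0$ term of the second sum contributes $\eta^+_{\Pbot,\Ptop}=1$, accounting for the improper alternating chain $\Pbot\prec_+\Ptop$.

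The symmetric argument shows that $[(\mathrm{Id}-\eta^-\eta^+)^{-1}(\mathrm{Id}+\eta^-)]_{\Pbot,\Ptop}$ counts alternating chains whose first relation is $\prec_-$. Since every alternating chain begins with exactly one of the two relations, the sum of the two bracketed expressions equals the total number of alternating chains of $\dP$. By Theorem~\ref{thm:compat_facets}, this is exactly the number of facets of $\TOrd{\dP}$. The only delicate point is the bookkeeping that guarantees distinctness of the $p_i$ and that every facet---including the two horizontal ones corresponding to the improper chains---is counted exactly once; both are handled by compatibility and by splitting according to the initial relation.
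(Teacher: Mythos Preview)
Your proof is correct and follows essentially the same approach as the paper: the paper simply remarks that $(\eta^+\eta^-)^k_{\Pbot,\Ptop}$ counts alternating chains starting with $\prec_+$ and ending with $\prec_-$, and states the corollary as an immediate consequence of Theorem~\ref{thm:compat_facets}. You have supplied the details the paper omits---nilpotency of the $\eta^\sigma$ via a common linear extension (guaranteeing the inverses exist and the $p_i$ are distinct) and the bookkeeping for the two improper chains---so your argument is a fleshed-out version of the paper's sketch.
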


\subsection{Faces and embedded sublattices}\label{ssec:TO_faces}%
The \Defn{Birkhoff lattice} $\Birk{\P}$ of a finite poset $\P$ is the
distributive lattice given by the collection of filters of $\P$  ordered by
inclusion. A subposet $\L \subseteq \Birk{\P}$ is called an \Defn{embedded
sublattice} if for any two filters $\Filter,\Filter' \in \Birk{\P}$
\[
    \Filter \cup \Filter', \Filter \cap \Filter' \in \L \quad \text{if and
    only if} \quad \Filter, \Filter' \in \L.
\]
For a subset $L \subseteq \Birk{\P}$ of filters we write $F(L) :=
\conv(\1_\Filter : \Filter \in L)$. Embedded sublattices give an alternative
way to characterize faces of $\Ord{\P}$.

\begin{thm}[{\cite[Thm~1.1(f)]{Wagner}}] \label{thm:O_faces}
    Let $\P$ be a poset and $\L \subseteq \Birk{\P}$ a collection of filters.
    Then $F(\L)$ is a face of $\Ord{\P}$ if and only if $\L$ is an embedded
    sublattice.
\end{thm}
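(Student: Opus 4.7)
The plan is to prove both directions by exploiting the identity
\[
    \1_{\Filter \cup \Filter'} + \1_{\Filter \cap \Filter'} \ = \ \1_\Filter + \1_{\Filter'},
\]
which says that the midpoints of the segments $[\1_\Filter, \1_{\Filter'}]$ and $[\1_{\Filter \cup \Filter'}, \1_{\Filter \cap \Filter'}]$ coincide. For the forward direction, suppose $F(\L)$ is a face of $\Ord{\P}$. All four points involved are vertices of $\Ord{\P}$, and since faces are intersections of $\Ord{\P}$ with supporting hyperplanes, any convex combination of vertices of $\Ord{\P}$ lying in $F(\L)$ forces each vertex with positive coefficient into $F(\L)$. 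Combined with the observation that the vertices of $F(\L)$ are exactly $\{\1_\Filter : \Filter \in \L\}$ (as extreme points of the convex hull of a subset of vertices of $\Ord{\P}$), both directions of the embedded iff follow at once: if $\Filter, \Filter' \in \L$ the shared midpoint lies in $F(\L)$, forcing $\Filter \cup \Filter', \Filter \cap \Filter' \in \L$, and symmetrically if $\Filter \cup \Filter', \Filter \cap \Filter' \in \L$ then $\Filter, \Filter' \in \L$.

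For the converse, given an embedded sublattice $\L$, I would construct the face partition realizing $F(\L)$ and invoke Stanley's face theorem \cite[Thm.~1.2]{TwoPoset} together with Proposition~\ref{prop:filter_fapa}. Declare $a \equiv_\L b$ on $\Phat$ whenever $a \in \widehat\Filter \Leftrightarrow b \in \widehat\Filter$ for all $\Filter \in \L$, and refine each equivalence class to its connected components in $\Phat$ to obtain a partition $\{B_1, \ldots, B_k\}$. Antisymmetry of the induced quotient order is automatic from the upward-closure of each $\widehat\Filter$: if two distinct blocks were mutually $\preceq'$-comparable, any $\Filter \in \L$ separating them would have to contain one of a pair of comparable elements without the other. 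Stanley's theorem then yields a face $F \subseteq \Ord{\P}$, and by Proposition~\ref{prop:filter_fapa} its vertices are exactly the filters whose extensions are unions of complete blocks. By construction $\{\1_\Filter : \Filter \in \L\}$ is contained in this vertex set, so $F(\L) \subseteq F$.

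The main obstacle is the reverse inclusion $F \subseteq F(\L)$, that every block-respecting filter $\Filter$ already lies in $\L$. This is precisely where the embedded condition (beyond mere sublattice closure) is indispensable, as the non-embedded antichain sublattice $\L = \{\emptyset, \{a,b\}\}$ illustrates by failing to yield a face. I would set $\Filter^+ := \bigcup\{\Filter' \in \L : \Filter' \subseteq \Filter\}$ and $\Filter^- := \bigcap\{\Filter' \in \L : \Filter \subseteq \Filter'\}$; closure under join and meet places both in $\L$, the latter set being nonempty because the block-respecting property of $\Filter$ forces $\Filter$ to miss the class of $\Pbot$ and hence to be contained in $\bigcup \L \in \L$. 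Then $\Filter^+ \subseteq \Filter \subseteq \Filter^-$, and after verifying that $\Filter^+$ and $\Filter^-$ share the same block-pattern as $\Filter$, applying the embedded condition to a suitably chosen pair of filters in $\L$ straddling $\Filter$ collapses $\Filter^+ = \Filter = \Filter^- \in \L$, giving the identification $F = F(\L)$ and completing the proof.
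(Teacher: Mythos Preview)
The paper does not supply its own proof of this statement; it is quoted from Wagner. That said, the machinery of Section~\ref{sec:GB} furnishes a clean alternative: by Lemma~\ref{lem:cone_orbit}, $F(\L)$ is a face of $\Ord{\P}$ if and only if every element of the toric ideal $\Id_{\Ord{\P}}$ vanishes at $\1_\L$, and by Theorem~\ref{thm:HibiIdeal} this ideal is generated by the Hibi relations $x_\Filter x_{\Filter'} - x_{\Filter\cap\Filter'}x_{\Filter\cup\Filter'}$. Evaluating such a relation at $\1_\L$ gives $[\Filter\in\L]\,[\Filter'\in\L] = [\Filter\cap\Filter'\in\L]\,[\Filter\cup\Filter'\in\L]$, which is precisely the embedded-sublattice condition. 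Both implications fall out simultaneously.

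Your forward direction is correct, and the midpoint identity you use is exactly the geometric content of the Hibi relation, so this half of your argument is in full agreement with the algebraic route.

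Your converse, however, has a genuine gap at the very step you flag as the main obstacle. You arrive at $\Filter^+ \subseteq \Filter \subseteq \Filter^-$ with $\Filter^\pm \in \L$ and then assert that ``applying the embedded condition to a suitably chosen pair of filters in $\L$ straddling $\Filter$'' forces $\Filter \in \L$. But the embedded condition, read in the useful direction, says: if $\Filter_1\cup\Filter_2,\ \Filter_1\cap\Filter_2 \in \L$ then $\Filter_1,\Filter_2 \in \L$. To extract $\Filter \in \L$ you would need to exhibit some $\Filter_2 \in \Birk{\P}$ with both $\Filter\cup\Filter_2$ and $\Filter\cap\Filter_2$ already in $\L$; you have not produced one, and the naive candidate $\Filter^+ \cup (\Filter^-\setminus\Filter)$ need not be a filter. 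The phrase ``share the same block-pattern as $\Filter$'' is also unclear: $\Filter^+$ and $\Filter^-$ certainly are unions of blocks (they lie in $\L$), but if the inclusions are strict they are \emph{different} unions of blocks from $\Filter$, so nothing collapses. Finally, refining $\equiv_\L$-classes to their connected components only enlarges the set of block-respecting filters, so the refinement works against you rather than for you. The Gr\"obner-basis argument avoids all of this because the Hibi relations already generate the full ideal, making their vanishing both necessary and sufficient.
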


We will generalize this description to the case of double order polytopes.
Throughout this section, let $\dP = (\P,\preceq_+,\preceq_-)$ be a double
poset. We define $\TBirk{\dP} := \Birk{\P_+} \uplus \Birk{\P_-}$. For any
subset $\L \subseteq \TBirk{\dP}$ we will denote by $\L_+$ the set $\L \cap
\Birk{\P_+}$ and we define $\L_-$ accordingly. Moreover, we shall write
\newcommand\Tface[1]{\overline{F}(#1)}
\begin{equation}\label{eqn:Tfaces}
    \Tface{\L} \ := \  \conv \left( \{ (2\1_{\Filter_+}, +1) : \Filter_+ \in
    \L_+ \}
    \cup \{ (2\1_{\Filter_-}, -1) : \Filter_- \in \L_- \} \right) 
    \ \subseteq \ \TOrd{\dP}.
\end{equation}
Thus, $\Tface{\L}  = \CayDiff{2F(\L_+)}{2F(\L_-)}$.

\begin{thm}\label{thm:TO_faces}
    Let $\dP = (\P,\preceq_+,\preceq_-)$ be a compatible double poset and
    $\L \subseteq \TBirk{\dP}$. Then $\Tface{\L}$ is a face of
    $\TOrd{\dP}$ if and only if
    \begin{enumerate}[\rm (i)]
        \item $\L_+ \subseteq \Birk{\P_+}$ and $\L_- \subseteq \Birk{\P_-}$
            are embedded sublattices and
        \item for all filters $\Filter_\sigma \subseteq \Filter_\sigma' \in
            \Birk{\P_\sigma}$ for $\sigma = \pm$ such that 
            \[
                \Filter_+' \setminus \Filter_+ \ = \ \Filter_-' \setminus
                \Filter_-
            \]
            it holds that $\{\Filter_+,\Filter_-\} \subseteq \L$ if and only if 
            $\{\Filter_+',\Filter_-'\} \subseteq \L$.
    \end{enumerate}
\end{thm}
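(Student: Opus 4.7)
The statement is a biconditional; I would prove necessity first. Assuming $\Tface{\L}$ is a face of $\TOrd{\dP}$, I would intersect with the horizontal facets $2\Ord{\P_+}\times\{1\}$ and $-2\Ord{\P_-}\times\{-1\}$ of $\TOrd{\dP}$: the intersections $2F(\L_+)\times\{1\}$ and $-2F(\L_-)\times\{-1\}$ must be faces of these facets, so $F(\L_\pm)$ are faces of $\Ord{\P_\pm}$ and Theorem~\ref{thm:O_faces} (Wagner) yields (i). For (ii), I would start from the elementary identity
\[
    \1_{\Filter_+}-\1_{\Filter_-} \ = \ \1_{\Filter_+'}-\1_{\Filter_-'},
\]
which follows directly from $\Filter_\sigma\subseteq\Filter_\sigma'$ and the set-difference hypothesis. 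This means the midpoint of the two vertices $(2\1_{\Filter_+},1),(-2\1_{\Filter_-},-1)$ of $\TOrd{\dP}$ coincides with that of $(2\1_{\Filter_+'},1),(-2\1_{\Filter_-'},-1)$. If one pair lies in $\Tface{\L}$ then so does this common midpoint, and the face property (a segment of $\TOrd{\dP}$ whose relative interior meets a face lies entirely in the face) forces the other pair into $\Tface{\L}$.

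\textbf{Sufficiency.} Assuming (i) and (ii), the boundary cases $\L_\sigma=\emptyset$ reduce to faces of a horizontal facet via Theorem~\ref{thm:O_faces}. For the main case $\L_\pm\neq\emptyset$, I would construct $(\ell,\beta)\in(\R^\P)^*\times\R$ whose maximizer set on $\TOrd{\dP}$ is exactly $\Tface{\L}$. This amounts to finding $\ell$ with
\[
    \ell \ \in \ \relint\NC_{\P_+}(F(\L_+)) \quad\text{and}\quad -\ell \ \in \ \relint\NC_{\P_-}(F(\L_-)),
\]
after which $\beta$ is pinned down by equalizing the maxima on the top and bottom facets. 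Witnesses in each open cone exist separately by (i) and Wagner's theorem, and Corollary~\ref{cor:maxmin} characterizes them by sign conditions on the coefficients $\ell_p$ dictated by the reduced face partitions of $F(\L_\pm)$.

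\textbf{Main obstacle.} The hard part will be merging the two sets of sign constraints: showing that no $p\in\P$ is simultaneously required to have $\ell_p$ of opposite signs by the $\P_+$- and $\P_-$-sides. My strategy is to derive a contradiction from any hypothetical conflict by building filter-pairs $\Filter_\sigma\subseteq\Filter_\sigma'$ with matching set differences for which exactly one of $\{\Filter_+,\Filter_-\},\{\Filter_+',\Filter_-'\}$ lies in $\L$, contradicting (ii). Once sign-compatibility is established, compatibility of $\dP$ (no alternating cycles, Proposition~\ref{prop:compat}) and the cone description in Proposition~\ref{prop:NC} combine to show that the two open cones actually intersect, yielding the desired $\ell$.
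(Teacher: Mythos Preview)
Your approach is genuinely different from the paper's. The paper proves Theorem~\ref{thm:TO_faces} algebraically in Section~\ref{sec:GB}: it first establishes a quadratic Gr\"obner basis for the toric ideal $\Id_{\TOrd{\dP}}$ (Theorem~\ref{thm:TO_GB}), then invokes the orbit--cone correspondence (Lemma~\ref{lem:cone_orbit}) to say that $\Tface{\L}$ is a face if and only if every generator of $\Id_{\TOrd{\dP}}$ vanishes at $\1_\L$. The Hibi relations~\eqref{eqn:dHibi_rels} then encode condition~(i), and the twist relations~\eqref{eqn:twist_rels} encode condition~(ii) via Lemma~\ref{lem:cond2}. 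No normal-cone analysis is needed; the combinatorics is packaged entirely in the binomial generators.

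Your necessity argument is correct and pleasantly elementary; the midpoint observation for~(ii) is cleaner than what one would extract from the Gr\"obner route. For sufficiency, however, your sketch is incomplete in a way that matters. Showing ``no coordinate $\ell_p$ is forced to carry opposite signs'' is not enough to conclude that $\relint\NC_{\P_+}(F(\L_+))\cap(-\relint\NC_{\P_-}(F(\L_-)))\neq\emptyset$: Corollary~\ref{cor:maxmin} gives necessary sign conditions on $\ell$, not a full description of the open cone, and in particular part~(iii) forces $\ell_p=0$ whenever $p$ lies outside every block of $\FaPa_+$, which may conflict with $p$ lying \emph{inside} a block of $\FaPa_-$. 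You would need to show that the two reduced face partitions are coherent in a much stronger sense than pointwise sign-compatibility, and your one-sentence appeal to compatibility and Proposition~\ref{prop:NC} does not do this. The paper sidesteps exactly this difficulty by moving to the algebraic side, where the face condition becomes a finite list of binomial vanishings and the hard convex geometry disappears. If you want to push your geometric approach through, the paper's remark that the theorem ``can be deduced from the description of facets in Theorem~\ref{thm:compat_facets}'' suggests the intended direct route: write an arbitrary face as an intersection of facets (alternating chains) and read off~(i) and~(ii) from Proposition~\ref{prop:filter_fapa}, rather than trying to exhibit a single $\ell$ in the open-cone intersection.
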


We call a pair $\L = \L_+ \uplus \L_- \subseteq \TBirk{\dP}$ of embedded
sublattice \Defn{cooperating} if they satisfy condition (ii) of
Theorem~\ref{thm:TO_faces} above.  We may also rephrase condition (ii) as
follows.

\begin{lem}\label{lem:cond2}
    Let $\L_\sigma \subset \Birk{\P_\sigma}$ be an embedded sublattice for
    $\sigma = \pm$.
     Then $\L_+, \L_-$
    are cooperating if and only if only if for any two filters 
    $\Filter_- \in \L_-, \Filter_+ \in \L_+$ the following holds:
    \begin{enumerate}[\rm (a)]
        \item For $A \subseteq \min(\Filter_+) \cap \min(\Filter_-)$ we have
            $\Filter_- \setminus A \in \L_- $  and 
            $\Filter_+ \setminus A \in \L_+ $, and  
    \item for $B \subseteq \max(\P_+ \setminus \Filter_+) \cap \max(\P_-
        \setminus \Filter_-)$ we have $\Filter_- \cup B \in \L_- $  and
        $\Filter_+ \cup B \in \L_+ $.
    \end{enumerate}
\end{lem}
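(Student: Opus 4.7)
The plan is to prove the biconditional in two stages. The forward direction ($\Rightarrow$) is a direct specialization of condition (ii) to the elementary moves (a) and (b). The reverse direction ($\Leftarrow$) builds up and strips down the pair $(\Filter_+, \Filter_-)$ in the hypothesis of (ii) by iterating one-element moves of types (b) and (a) respectively, with the induction base on the size of the common difference.

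For the forward direction, assume $\L_+, \L_-$ cooperate. For (a), given $\Filter_+ \in \L_+$, $\Filter_- \in \L_-$ and $A \subseteq \min(\Filter_+) \cap \min(\Filter_-)$, the set $\Filter_\sigma \setminus A$ is again a filter of $\P_\sigma$ for each $\sigma = \pm$, since $A$ consists of minimal elements of $\Filter_\sigma$. Applying (ii) to $\Filter_+ \setminus A \subseteq \Filter_+$ and $\Filter_- \setminus A \subseteq \Filter_-$, whose common difference is $A$, and using that $\{\Filter_+, \Filter_-\} \subseteq \L$, we deduce that $\Filter_\sigma \setminus A \in \L$. The argument for (b) is symmetric: the maximality condition $B \subseteq \max(\P_+ \setminus \Filter_+) \cap \max(\P_- \setminus \Filter_-)$ ensures $\Filter_\sigma \cup B$ is a filter, and (ii) applied to the nested pairs with common difference $B$ yields $\Filter_\sigma \cup B \in \L$.

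For the reverse direction, assume (a) and (b) and take filters $\Filter_\sigma \subseteq \Filter_\sigma'$ with common difference $D := \Filter_+' \setminus \Filter_+ = \Filter_-' \setminus \Filter_-$. I induct on $|D|$, the base case $|D| = 0$ being trivial. For the forward implication of (ii), I choose $b \in D$ to be the maximum of $D$ with respect to a common linear extension of $\preceq_+$ and $\preceq_-$ (which exists by compatibility of $\dP$). Any $c \succ_\sigma b$ in $\P_\sigma \setminus \Filter_\sigma$ would necessarily lie in $\Filter_\sigma' \setminus \Filter_\sigma = D$, contradicting the choice of $b$, so $b$ is simultaneously maximal in $\P_+ \setminus \Filter_+$ and $\P_- \setminus \Filter_-$. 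Condition (b) with $B = \{b\}$ then gives $\Filter_\sigma \cup \{b\} \in \L$, and the inductive hypothesis applied with the smaller difference $D \setminus \{b\}$ delivers $\Filter_\sigma' \in \L$. For the reverse implication of (ii), symmetrically take $b'$ to be the minimum of $D$ under the linear extension; then $b'$ is minimal in both $\Filter_+'$ and $\Filter_-'$, and (a) with $A = \{b'\}$ combined with induction gives $\Filter_\sigma \in \L$.

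The main obstacle is securing the element $b$ (or $b'$) that is simultaneously maximal (resp.\ minimal) in both relevant posets. This combinatorial existence rests on the absence of alternating cycles in $D$, for which compatibility of $\dP$ is the natural hypothesis and is in force in the applications of the lemma following Theorem~\ref{thm:TO_faces}; once this single-element decomposition is available, the rest is routine bookkeeping with the local moves supplied by (a) and (b).
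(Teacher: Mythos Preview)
Your proof is correct and follows essentially the same approach as the paper: both prove the converse by induction on $|D|$, peeling off minimal (resp.\ maximal) elements of $D$ that are simultaneously minimal (resp.\ maximal) for both orders, and both implicitly rely on compatibility to guarantee such elements exist. The only cosmetic differences are that the paper removes the whole set $A := \min(D)$ at once rather than a single element, and that you spell out both directions of the biconditional in (ii) and the role of compatibility more explicitly than the paper does.
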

\begin{proof}
    It follows from the definition that for sets as stated, condition (ii)
    implies $\Filter_\sigma \setminus A, \Filter_\sigma \cup B \in \L_\sigma$ 
    for $\sigma = \pm$. 
    For the converse direction, let $\Filter_\sigma \subseteq \Filter_\sigma'$
    such that $\Filter_\sigma' \in \L_\sigma$ for $\sigma = \pm$.  Assume that
    $D := \Filter_+' \setminus \Filter_+  = \Filter_-' \setminus \Filter_-$.
    Then $A : = \min(D) \subseteq \min(\Filter_+') \cap \min(\Filter_-')$ and
    by (a), $\Filter_\sigma' \setminus A \in \L_\sigma$ for $\sigma = \pm$ and
    induction on $|D|$ yields the claim.
\end{proof}

Theorem~\ref{thm:TO_faces} can be deduced from the description of facets in
Theorem~\ref{thm:compat_facets}. We will give an alternative proof using
Gr\"obner bases in Section~\ref{sec:GB}. In conjunction with
Theorem~\ref{thm:TO_GB}, we can read the dimension of $\Tface{\L}$ from the
cooperating pair $\L$.  In the case of order polytopes, the canonical
triangulation (see Section~\ref{sec:triang}) of $\Ord{\P}$ yields the
following.

\begin{cor}\label{cor:O_dim_faces}
    Let $F \subseteq \Ord{\P}$ be a face with corresponding embedded
    sublattice $\L \subseteq \Birk{\P}$. Then $\dim F = l(\L) - 1$ where
    $l(\L)$ is the length of a longest chain in $\L$.
\end{cor}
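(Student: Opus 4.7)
Plan. I would derive the formula from Stanley's canonical triangulation of $\Ord{\P}$, which is the geometric realization of the order complex of $\Birk{\P}$ obtained by sending each filter $\Filter$ to its indicator $\1_\Filter$. Under this identification, a chain of filters $\Filter_0 \subsetneq \Filter_1 \subsetneq \cdots \subsetneq \Filter_k$ indexes the $k$-dimensional simplex $\conv(\1_{\Filter_0},\ldots,\1_{\Filter_k})$, and maximal chains (with $|\P|+1$ elements) index the top-dimensional simplices that tile $\Ord{\P}$. The proof bounds $\dim F$ from above and below.

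For the lower bound $\dim F \ge l(\L) - 1$, the vertex set of $F$ is $\{\1_\Filter : \Filter \in \L\}$ by Theorem~\ref{thm:O_faces}. Any chain $\Filter_0 \subsetneq \Filter_1 \subsetneq \cdots \subsetneq \Filter_{l(\L)-1}$ of maximum size in $\L$ then yields $l(\L)$ vertices of $F$ whose successive differences $\1_{\Filter_i} - \1_{\Filter_{i-1}} = \1_{\Filter_i \setminus \Filter_{i-1}}$ have pairwise disjoint nonempty supports in $\P$, and are therefore linearly independent.

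For the upper bound $\dim F \le l(\L) - 1$, I would show that the canonical triangulation of $\Ord{\P}$ restricts to a triangulation of $F$ whose simplices are precisely those indexed by chains in $\L$. A simplex $\conv(\1_{\Filter_0},\ldots,\1_{\Filter_k})$ of the canonical triangulation is contained in $F$ if and only if every $\Filter_i \in \L$, again by Theorem~\ref{thm:O_faces}. Coverage of $F$ uses the standard decomposition of $f \in \Ord{\P}$ as a convex combination $f = \sum_i c_i \1_{\Filter_i}$ indexed by the chain of distinct level sets $\Filter_t := \{p \in \P : f(p) > t\}$; when $f \in F$, the embedded sublattice property of $\L$ forces these level sets into $\L$. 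Hence the largest simplex of the induced triangulation of $F$ has dimension $l(\L) - 1$.

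The main obstacle is the coverage step, that is, verifying that the level sets of every $f \in F$ actually lie in $\L$. Writing $f$ as a convex combination of vertex indicators $\1_\Filter$ with $\Filter \in \L$, the level sets $\Filter_t$ arise from these filters via finite unions and intersections; closure of $\L$ under $\cup$ and $\cap$ as an embedded sublattice then keeps the decomposition inside $\L$ and completes the argument.
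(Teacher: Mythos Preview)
Your proof is correct and follows exactly the approach the paper indicates---the paper simply cites the canonical triangulation of $\Ord{\P}$ without spelling out the argument. Your coverage step works because the superlevel set $\{p : f(p) > t\}$ of $f = \sum_i c_i \1_{\Filter_i}$ equals $\bigcup_{S : \sum_{i\in S} c_i > t}\bigcap_{i\in S}\Filter_i$ and hence lies in the sublattice generated by the $\Filter_i \in \L$; alternatively, one can simply invoke that a polytopal triangulation restricts to a triangulation of every face.
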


Let $\dP = (\P,\preceq_+,\preceq_-)$ be a double poset and let $C_\sigma
\subseteq \Birk{\P_\sigma}$ be a chain of filters in $(\P,\preceq_\sigma)$ for
$\sigma = \pm$.  The pair of chains $C = C_+ \uplus C_-$ is
\Defn{non-interfering} if $\min(\Filter_+) \cap \min(\Filter_-) = \emptyset$
for any $\Filter_+ \in C_+$ and $\Filter_- \in C_-$. For $\L \subseteq
\TBirk{\dP}$, we denote by  $cl(\L)$ the maximum over $|C| = |C_+| + |C_-|$
where $C \subseteq \L$ is a pair of non-interfering chains.

\begin{figure}[h]
    \centering
    \includegraphics[width=0.3\textwidth]{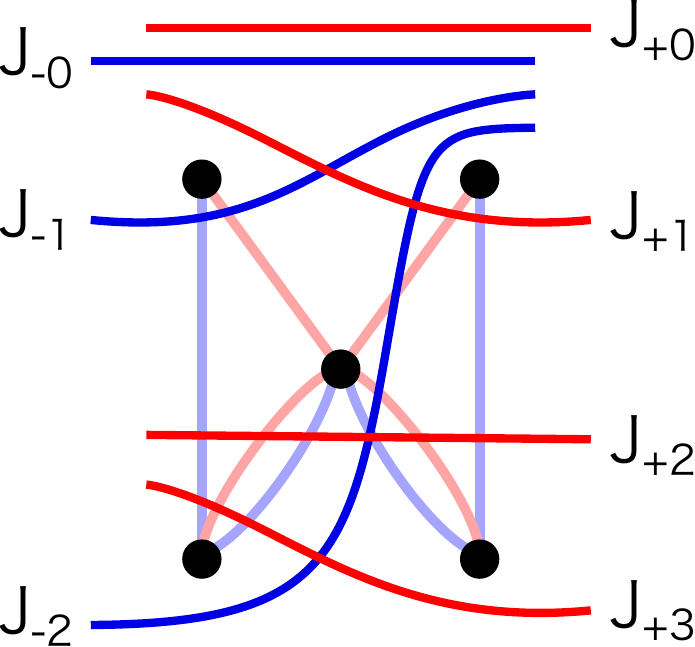}
    \caption{A maximal non-interfering set of filters in $\dP_{XW}$. A red or
    blue curve denotes the filter consisting of all elements above the curve.}
    \label{XW_fil}
\end{figure}

\begin{cor}\label{cor:TO_dim_face}
    Let $\dP$ be a compatible double poset and let $\L \subseteq
    \TBirk{\dP}$ be a cooperating pair of embedded sublattices. Then $\dim
    \Tface{\L} = cl(\L)-1$.
\end{cor}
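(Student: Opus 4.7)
The plan is to prove the dimension formula by establishing matching upper and lower bounds, paralleling how Corollary~\ref{cor:O_dim_faces} follows from Stanley's canonical triangulation of $\Ord{\P}$. For the upper bound $\dim \Tface{\L} \le cl(\L)-1$, I would invoke the canonical flag triangulation of $\TOrd{\dP}$ promised in Section~\ref{sec:triang}, which the introduction describes as a subcomplex of the order complex of the double Birkhoff lattice $\TBirk{\dP} = \Birk{\P_+} \uplus \Birk{\P_-}$. Its simplices are indexed by non-interfering pairs of chains in $\TBirk{\dP}$, with the simplex attached to $(C_+, C_-)$ having dimension $|C_+| + |C_-| - 1$. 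Such a simplex lies in $\Tface{\L}$ if and only if all its vertices do, which by~\eqref{eqn:Tfaces} means $C_\sigma \subseteq \L_\sigma$. Hence the triangulation restricts to one of $\Tface{\L}$ whose maximal simplex has dimension exactly $cl(\L)-1$.

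For the lower bound $\dim \Tface{\L} \ge cl(\L)-1$, I would argue directly. Fix a non-interfering pair $(C_+, C_-)$ with $|C_+|+|C_-| = cl(\L)$ and write $C_\sigma = (\Filter^\sigma_1 \subsetneq \cdots \subsetneq \Filter^\sigma_{k_\sigma})$; the claim is that the $k_+ + k_-$ corresponding vertices of $\Tface{\L}$ are affinely independent. Translating by the height-$(-1)$ reference vertex $(-2\1_{\Filter^-_1}, -1)$ converts this to linear independence of one ``height-$2$'' vector together with the ``height-$0$'' indicator vectors $\{\1_{A^+_i}\}_{i=1}^{k_+-1}$ and $\{\1_{A^-_j}\}_{j=1}^{k_--1}$ in $\R^\P$, where $A^\sigma_k := \Filter^\sigma_{k+1} \setminus \Filter^\sigma_k$. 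Within each chain these indicator vectors have pairwise disjoint supports and are automatically linearly independent, so the only possible obstructions are cross-chain relations. To rule these out I would use the observation $\min(A^\sigma_k) \subseteq \min(\Filter^\sigma_{k+1})$, which follows directly from the filter property, together with the non-interference hypothesis $\min(\Filter^+_i) \cap \min(\Filter^-_j) = \emptyset$.

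The main obstacle is this linear-independence step in the lower bound. The minimal-element observation immediately rules out any element lying in $\min(A^+_i) \cap \min(A^-_j)$, but a general linear relation could involve coefficients supported on non-minimal overlaps $A^+_i \cap A^-_j$. Here compatibility of $\dP$ is essential: without it, opposite orders could produce identical difference indicators from non-interfering chains. The cooperating-lattice hypothesis, as unpacked in Lemma~\ref{lem:cond2}, should allow an inductive peeling argument in which any minimal counterexample to linear independence forces a shared-minimum configuration, contradicting non-interference. This is also the point at which a Gr\"obner-basis-based argument from Section~\ref{sec:GB} would be most convenient to import, since Sturmfels' correspondence packages the combinatorics of the simplices and their dimensions uniformly.
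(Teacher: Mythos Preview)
Your upper-bound argument via the canonical triangulation of $\TOrd{\dP}$ is exactly the paper's (implicit) route: the corollary is stated right after the remark ``In conjunction with Theorem~\ref{thm:TO_GB}, we can read the dimension of $\Tface{\L}$ from the cooperating pair $\L$,'' and no further proof is supplied. The point is that Theorem~\ref{thm:T_iso} (equivalently Corollary~\ref{cor:TO_triang}) says $\niDelta{\dP}$ is a \emph{unimodular} triangulation of $\TOrd{\dP}$; restricting it to the face $\Tface{\L}$ gives a triangulation whose cells are the $\Tface{C}$ with $C\subseteq\L$ non-interfering, and the maximal such cell has dimension $cl(\L)-1$.

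Where your proposal diverges from the paper is in treating the lower bound as a separate task and then flagging a gap in it. This is unnecessary: the very triangulation result you invoke for the upper bound already contains the lower bound. ``Unimodular triangulation'' means each $\Tface{C}$ is an honest $(|C|-1)$-simplex, so its $|C|$ vertices are affinely independent. Thus the affine-independence statement you try to prove by hand---and the cross-chain linear-dependence obstacle you worry about---is exactly what Theorem~\ref{thm:T_iso}/Corollary~\ref{cor:TO_triang} has already packaged up for you. Once you accept that theorem, the restricted triangulation of $\Tface{\L}$ immediately gives $\dim\Tface{\L}=cl(\L)-1$ as an equality, not just an upper bound, and no further argument (compatibility peeling, Lemma~\ref{lem:cond2}, etc.) is needed.
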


As a consequence of Theorem~\ref{thm:O_faces}, 
$[\1_{\Filter},\1_{\Filter'}] \subseteq \Ord{\P}$ is an edge if and only if
$\Filter \subseteq \Filter'$ are filters of $\P$ such that $\Filter' \setminus
\Filter$ is a connected poset. Of course, this description captures all the
\emph{horizontal} edges of $\TOrd{\dP}$. The upcoming characterization
of vertical edges follows from Theorem~\ref{thm:compat_facets} but we
supply a direct proof.

\begin{cor}\label{cor:TO_vertical_edges}
    Let $\dP$ be a compatible double poset and let $\Filter_+ \subseteq \P_+$
    and $\Filter_- \subseteq \P_-$ be filters.  Then $(2\1_{\Filter_+},+1)$
    and $(-2\1_{\Filter_-},-1)$ are the endpoints of a vertical edge of
    $\TOrd{\dP}$ if and only if $\1_{\Filter_+} - \1_{\Filter_-}$ is a
    vertex of $\DOrd{\dP}$ if and only if 
    \[
        \min(\Filter_+) \cap \min(\Filter_-) \ = \ \emptyset \quad \text{
        and}\quad
        \max(\P_+ \setminus \Filter_+) \cap \max(\P_- \setminus \Filter_-) \
        = \ \emptyset.
    \]
\end{cor}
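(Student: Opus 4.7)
The corollary asserts the equivalence of three conditions: call them (E) (the two listed points span a vertical edge of $\TOrd{\dP}$), (V) ($\1_{\Filter_+} - \1_{\Filter_-}$ is a vertex of $\DOrd{\dP}$), and (M) (the min and max disjointness conditions). I will prove (E) $\Leftrightarrow$ (V) via the standard Cayley--Minkowski correspondence and (V) $\Leftrightarrow$ (M) by a direct argument that does not rely on Theorem~\ref{thm:TO_faces} (whose proof is deferred to the Gr\"obner basis section).

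For (E) $\Leftrightarrow$ (V): the reduced double order polytope $\DOrd{\dP} = \Ord{\P_+} - \Ord{\P_-}$ is the central slice $\TOrd{\dP} \cap \{t = 0\}$, and the midpoint of the segment joining $(2\1_{\Filter_+}, +1)$ and $(-2\1_{\Filter_-}, -1)$ is $(\1_{\Filter_+} - \1_{\Filter_-}, 0)$. The face correspondence for the Cayley sum $\TOrd{\dP} = \CayDiff{2\Ord{\P_+}}{2\Ord{\P_-}}$ says that this segment is a vertical edge precisely when there exists a linear form $\ell$ uniquely maximized on $\Ord{\P_+}$ at $\1_{\Filter_+}$ and for which $-\ell$ is uniquely maximized on $\Ord{\P_-}$ at $\1_{\Filter_-}$, which is exactly the condition for $\1_{\Filter_+} - \1_{\Filter_-}$ to be a vertex of $\Ord{\P_+} + (-\Ord{\P_-}) = \DOrd{\dP}$.

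For (V) $\Rightarrow$ (M), I argue the contrapositive. If $p \in \min(\Filter_+) \cap \min(\Filter_-)$, then $\Filter_+ \setminus \{p\}$ and $\Filter_- \setminus \{p\}$ are filters of $\P_+$ and $\P_-$, and
\[
\1_{\Filter_+ \setminus \{p\}} - \1_{\Filter_- \setminus \{p\}} \ = \ \1_{\Filter_+} - \1_{\Filter_-}
\]
exhibits a second vertex-pair decomposition of this point, so by the standard Minkowski characterization (a point is a vertex of $\Po + \Qo$ iff its decomposition as a sum of two vertices is unique), $\1_{\Filter_+} - \1_{\Filter_-}$ is not a vertex of $\DOrd{\dP}$. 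The max condition is handled symmetrically by adding $p \in \max(\P_+ \setminus \Filter_+) \cap \max(\P_- \setminus \Filter_-)$ to both filters.

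For (M) $\Rightarrow$ (V), I show that any decomposition $\1_{\Filter_+} - \1_{\Filter_-} = x - y$ with $(x, y) \in \Ord{\P_+} \times \Ord{\P_-}$ is forced to be $(\1_{\Filter_+}, \1_{\Filter_-})$; since these are vertices of the respective order polytopes, vertex-hood of $\1_{\Filter_+} - \1_{\Filter_-}$ in $\DOrd{\dP}$ then follows from the same characterization. Using $x(p), y(p) \in [0, 1]$, a pointwise check forces $(x(p), y(p)) = (1, 0)$ on $\Filter_+ \setminus \Filter_-$, $(x(p), y(p)) = (0, 1)$ on $\Filter_- \setminus \Filter_+$, and $x(p) = y(p) =: \phi(p) \in [0, 1]$ on the two ``diagonal'' regions $\Filter_+ \cap \Filter_-$ and $\P \setminus (\Filter_+ \cup \Filter_-)$. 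Suppose $\phi(p) < 1$ for some $p \in \Filter_+ \cap \Filter_-$. Since $x$ is $\preceq_+$-order-preserving, $\{p \in \Filter_+ : x(p) < 1\}$ is a nonempty $\preceq_+$-downset of $\Filter_+$; a $\preceq_+$-minimum $p^*$ of this set must lie in $\min(\Filter_+)$, and $x(p^*) < 1$ forces $p^* \in \Filter_+ \cap \Filter_-$. By the min condition, $p^* \notin \min(\Filter_-)$, so some $q \prec_- p^*$ lies in $\Filter_-$; then $y(q) \le y(p^*) = \phi(p^*) < 1$ combined with $y \equiv 1$ on $\Filter_- \setminus \Filter_+$ places $q$ in $\Filter_+ \cap \Filter_-$ with $\phi(q) < 1$. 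Descending $\preceq_+$-wise inside $\Filter_+$ from $q$ produces a new $p^{**} \in \min(\Filter_+) \cap \Filter_-$ with $\phi(p^{**}) < 1$; fixing a common linear extension $\preceq_*$ of $\preceq_+, \preceq_-$ (afforded by compatibility), $p^{**} \preceq_* q \prec_* p^*$, hence $p^{**} \prec_* p^*$. Iterating yields an infinite strictly $\preceq_*$-decreasing sequence in the finite set $\P$, a contradiction. A symmetric argument using the max condition rules out $\phi(p) > 0$ on the other diagonal region.

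The main obstacle is this iterative descent in (M) $\Rightarrow$ (V); it critically uses compatibility, without which alternating $\prec_+ / \prec_-$ patterns could cycle and obstruct termination. Everything else is either a routine pointwise verification or a standard invocation of the face structure of Cayley sums.
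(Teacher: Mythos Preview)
Your proof is correct and follows a genuinely different route from the paper's. Both arguments handle (E) $\Leftrightarrow$ (V) identically via the Cayley--Minkowski correspondence. For (V) $\Rightarrow$ (M), the paper invokes Corollary~\ref{cor:maxmin} on the sign pattern of a certifying linear functional, whereas you exhibit a second decomposition directly; both are short. The substantive difference is in (M) $\Rightarrow$ (V): the paper \emph{constructs} an explicit linear functional by building, for each $b \in \min(\Filter_+)$ and each $a \in \max(\P_+\setminus\Filter_+)$, a descending alternating chain and summing the associated $\ell_C$'s to obtain a functional uniquely maximized at $\1_{\Filter_+}$ on $\Ord{\P_+}$ and at $-\1_{\Filter_-}$ on $-\Ord{\P_-}$. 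You instead prove that the Minkowski fiber over $\1_{\Filter_+}-\1_{\Filter_-}$ is the single point $(\1_{\Filter_+},\1_{\Filter_-})$ by a pointwise analysis and an infinite-descent contradiction along a common linear extension. Compatibility enters both proofs at the same conceptual point---terminating an alternating $\prec_+/\prec_-$ descent---but your argument is more elementary (no alternating-chain machinery or normal-cone bookkeeping), while the paper's is constructive and ties the result visibly to the facet description of Theorem~\ref{thm:compat_facets}.

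One small remark: your stated characterization ``a point is a vertex of $\Po+\Qo$ iff its decomposition as a sum of two vertices is unique'' is slightly imprecise; the correct statement is that $v$ is a vertex iff the fiber $\{(p,q)\in\Po\times\Qo : p+q=v\}$ is a single point (which is then automatically a pair of vertices). Your actual argument shows exactly this---you prove uniqueness of the decomposition over all of $\Ord{\P_+}\times\Ord{\P_-}$, not merely among vertex pairs---so the proof is sound as written; only the parenthetical phrasing should be tightened.
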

\begin{proof}
    The first equivalence follows from the fact that 
    \[
        \TOrd{\dP} \cap \{(\phi,t) : t = 0\} \ = \ 
        (\Ord{\P_+}-\Ord{\P_-})  \times \{0\}
    \]  
    and $\1_{\Filter_+} - \1_{\Filter_-}$ is the midpoint between
    $(2\1_{\Filter_+},+1)$ and $(-2\1_{\Filter_-},-1)$.

    Before we come to the second claim, let us note that the face partition of
    a vertex $\1_\Filter$ for a poset $(\P,\preceq)$ is given by $\{ \Filter,
    \P \setminus \Filter\}$.  Thus, if $\1_{\Filter_+} - \1_{\Filter_-}$ is a
    vertex of $\DOrd{\dP}$, then there is a linear function $\ell(f) = \sum_{a
    \in P} \ell_a f(a)$ such that $\Ord{\P_+}^\ell = \{\1_{\Filter_+}\}$ and
    $\Ord{\P_-}^{-\ell} = \{\1_{\Filter_-}\}$. Corollary~\ref{cor:maxmin} then
    yields that $\ell_a > 0$ for each $a \in \min(\Filter_+)$ and $\ell_a < 0$
    for $a \in \min(\Filter_-)$. The same reasoning applies to $\max(\P_+
    \setminus \Filter_+)$ and $\max(\P_- \setminus \Filter_-)$ and shows
    necessity.

    Let $b \in \min(\Filter_+)$. If $b \not \in \Filter_-$, then the linear
    function $\ell(f) := f(b)$ is maximized over $\Ord{\P_+}$ at every filter
    that contains $b$ and over $-\Ord{\P_-}$ at every filter that does not
    contain $b$.  If $b \in \Filter_-$, then, by assumption, $b \not\in
    \min(\Filter_-)$ and there is some $p_2 \in \min(\Filter_-)$ with $p_2
    \prec_- b$. Now, if $p_2 \in \Filter_+$, then there is $p_3 \in
    \min(\Filter_+)$ with $p_3 \prec_+ p_2$ and so on. Compatibility now
    assures us that we get a descending alternating chain of the form 
    \[
       \Ptop  \ \succ_+ \ b =: p_1 \ \succ_- \ p_{2} \ \succ_+ \ p_{3} \
       \succ_- \ \cdots \ \succ_{\sigma} \  p_k \ \succ_{-\sigma} \ a \
       \succ_\sigma \ \Pbot
    \]
    where $p_2, p_4, p_6,\ldots \in \min(\Filter_-) \cap \Filter_+$ and $p_1,
    p_3, p_5,\ldots \in \min(\Filter_+) \cap \Filter_-$ and
    $a\in\min(\Filter_{-\sigma})\setminus \Filter_\sigma$.  Consider the
    associated linear function 
    \[
        \ell(f)  \ = \ f(p_0) - f(p_1) + f(p_{2}) - \cdots + (-1)^{k} f(p_k) +
        (-1)^{k+1} f(a)
    \]
    for $f\in\R^\P$.  We claim that $\ell(\1_{\Filter'_+}) \le 1$  for each
    filter $\Filter'_+ \subseteq \P_+$ and with equality if $b \in
    \Filter'_+$. Indeed, if $p_{2i+1} \in \Filter'_+$, then $p_{2i} \in
    \Filter'_+$ for all $i \ge 1$. Conversely,
    $\ell(-\1_{\Filter'_-})  \le 0 = \ell(-\1_{\Filter_-})$ for each filter
    $\Filter'_- \subseteq \P_-$. This follows from the fact that $p_{2i}
    \in \Filter'_-$ implies $p_{2i-1} \in \Filter'_-$ for each $i \ge 1$.

    For $a \in \max(\P_+\setminus \Filter_+)$ the situation is similar and we
    search for $b \in \max(\P_- \setminus \Filter_-)$ with $a \prec_- b$ in
    the case that $a \not\in \Filter_-$. This yields a linear function $\ell
    \in -\NC_{\P_-}(\1_{\Filter_-})$ that is maximized over $\Ord{\P_+}$ at
    filters $\1_{\Filter'_+}$ with $a \not\in \Filter'_+$. Summing these
    linear functions for $b \in \min(\Filter_+)$ and $a \in \max(\P_+\setminus
    \Filter_+)$ yields a linear function $\ell^+$ with  $\Ord{\P_+}^{\ell^+} =
    \{ \1_{\Filter_+} \}$ and  $\1_{\Filter_-} \in \Ord{\P_-}^{-\ell^+}$.

    Of course, the same reasoning applies to $\Filter_-$ instead of
    $\Filter_+$ and it follows that $\ell^+ - \ell^-$ is uniquely maximized at
    $\1_{\Filter_+} - \1_{\Filter_-}$ over $\DOrd{\dP} = \Ord{\P_+} -
    \Ord{\P_-}$.
\end{proof}

\subsection{Polars and valuation polytopes}\label{ssec:val}

A real-valued \Defn{valuation} on a finite distributive lattice
$(\Birkhoff,\vee,\wedge)$ is a function $h : \Birkhoff \rightarrow \R$ such
that for any $a,b \in \Birkhoff$, 
\begin{equation}
    h(a \vee b) \ = \ h(a) + h(b) - h(a \wedge b)
\end{equation}
and $h(\Pbot) = 0$. Geissinger~\cite{Geissinger} studied the
\Defn{valuation polytope}
\[
    \Val(\Birkhoff) \ := \ \{ h : \Birkhoff \rightarrow [0,1] : h \text{
    valuation} \}
\]
and conjectured that its vertices are exactly the valuations with values in
$\{0,1\}$. This was shown by Dobbertin~\cite{Dobbertin}. Not much is known
about the valuation polytope and Stanley's
\emph{`5-'}-Exercise~\cite[Ex.~4.61(h)]{EC1new} challenges the reader to find
interesting combinatorial properties of $\Val(\Birkhoff)$.  In this section, we
prove a curious relation between valuation polytopes and order polytopes. 

It follows from Birkhoff's fundamental theorem (cf.~\cite[Sect.~3.4]{EC1new})
that any finite distributive lattice $\Birkhoff$ is of the form $\Birkhoff =
\Birkhoff(\P)$, that is, it is the lattice of filters of some poset $\P$. In
particular, for every valuation $h : \Birkhoff(\P) \rightarrow \R$ there is a
unique $h_0 : \P \rightarrow \R$ such that
\[
    h(\Filter) \ = \ \sum_{a \in \Filter} h_0(a),
\]
for every filter $\Filter \subseteq \P$. Hence, $\Val(\Birkhoff)$ is linearly
isomorphic to the $|\P|$-dimensional polytope
\[
    \ValP (\P) \ := \ \{ h_0 : \P \rightarrow \R : 0 \le
    h(\Filter) \le
    1 \text{ for all filters } \Filter \subseteq \P \}.
\]
We denote by
\[
    S^\dual \ = \ \{ \ell \in (\R^n)^* : \ell(s) \le 1 \text{ for all } s \in
    S \}
\]
the \Defn{polar} of a set $S \subset \R^d$.  For a polytope $\Po\subset\R^d$ we write
$\tprism{\Po}:=\CayDiff{\Po}{\Po}\subset\R^{d+1}$
for the \Defn{twisted prism} of $\Po$.

\begin{thm}\label{thm:twisted_val}
    For any finite poset $\P$
    \[
    \TOrd{\dP_\circ}^\dual \ = \ \TOrd{\P,\preceq,\preceq}^\dual \ = \
    \tprism{-\ValP(\P)}.
    \] 
\end{thm}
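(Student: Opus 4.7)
\emph{Plan.} My strategy is to compare vertex sets on both sides, exploiting the facet description of $\TOrd{\dP_\circ}$ from Corollary~\ref{cor:TO_ineq}. Since $\dP_\circ$ has $\preceq_+=\preceq_-$ it is compatible, and $0$ lies in the interior of $\TOrd{\dP_\circ}$, so polar duality identifies the vertices of $\TOrd{\dP_\circ}^\dual$ with the vectors $(\ell_C,-\sgn(C))$ indexed by the alternating chains $C$ of $\dP_\circ$. I would verify the two inclusions separately.

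The inclusion $\tprism{-\ValP(\P)}\subseteq\TOrd{\dP_\circ}^\dual$ is immediate. Any $(g,s)$ in the twisted prism may be written as $\mu_2(v_2,-1)+\mu_1(-v_1,1)$ with $v_1,v_2\in\ValP(\P)$ and non-negative $\mu_1,\mu_2$ summing to $1$. Summing $g$ over any filter yields $\sum_{a\in\Filter} g(a)\in[-\mu_1,\mu_2]$ since $\sum_{a\in\Filter} v_i(a)\in[0,1]$, which combined with $s=\mu_1-\mu_2$ gives $-1\le 2\sum_{a\in\Filter}g(a)+s\le 1$. These are precisely the polar inequalities coming from the vertices $(\pm 2\1_\Filter,\pm1)$ of $\TOrd{\dP_\circ}$.

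For the reverse inclusion it suffices to check that each vertex $(\ell_C,-\sgn(C))$ lies in $\tprism{-\ValP(\P)}$. The key claim is $\sgn(C)\cdot\ell_C\in\ValP(\P)$: this puts the vertex into $\ValP(\P)\times\{-1\}$ when $\sgn(C)=+$ and into $-\ValP(\P)\times\{+1\}$ when $\sgn(C)=-$, both subsets of $\tprism{-\ValP(\P)}$ by definition. To verify the claim, write $C=(\Pbot=p_0\prec_\sigma p_1\prec_{-\sigma}\cdots\prec p_k=\Ptop)$ and, for a filter $\Filter\subseteq\P$, note that $\Filter\cap\{p_1,\ldots,p_{k-1}\}$ is a suffix $\{p_j,\ldots,p_{k-1}\}$ of the chain because the $p_i$ are totally ordered in $\P$ and $\Filter$ is upward closed. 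The alternating sum then telescopes to
\[
\ell_C(\1_\Filter) \ = \ \sigma\sum_{i=j}^{k-1}(-1)^i \ = \ \begin{cases}\sigma(-1)^j & \text{if } k-j \text{ is odd,}\\ 0 & \text{if } k-j \text{ is even.}\end{cases}
\]

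Checking the four combinations of parities of $k$ and $\sigma\in\{\pm\}$, together with the identity $\sgn(C)=\sigma$ if $k$ is odd and $\sgn(C)=-\sigma$ if $k$ is even, shows that the nonzero value $\sigma(-1)^j$ always equals $\sgn(C)$. Thus $\ell_C(\1_\Filter)\in\{0,\sgn(C)\}$ for every filter, so $\sgn(C)\ell_C\in\ValP(\P)$, completing the argument. The proof is essentially forced once Corollary~\ref{cor:TO_ineq} is in hand; I expect the only technical point to be this four-case parity bookkeeping, which is the main obstacle.
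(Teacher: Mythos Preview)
Your proof is correct. Both you and the paper start from Corollary~\ref{cor:TO_ineq}, which identifies the vertices of $\TOrd{\dP_\circ}^\dual$ with the vectors $(\ell_C,-\sgn(C))$. From here the paper simply invokes Dobbertin's Theorem~B, which says that $\ValP(\P)=\conv(\ell'_C:C\text{ chain})$, and observes that this matches. You instead verify the two inclusions by hand: the inclusion $\tprism{-\ValP(\P)}\subseteq\TOrd{\dP_\circ}^\dual$ via the vertex description of $\TOrd{\dP_\circ}$, and the reverse by checking directly that $\sgn(C)\ell_C(\1_\Filter)\in\{0,1\}$ for every filter. The effect is that you have essentially reproved the half of Dobbertin's result that is needed here (namely that each $\ell'_C$ lies in $\ValP(\P)$), while getting the other half for free from your separate forward inclusion. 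Your argument is more self-contained and avoids the external citation; the paper's is shorter but relies on a nontrivial result from the literature.
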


\begin{proof}
    For a chain $C = \{ a_0 \prec a_1 \prec \cdots \prec a_k \}$ in $\P$, we
    define
    \[
        \ell'_C(f) \ := \ \sum_{i=0}^k (-1)^{k-i} f(a_i)
    \]
    and $L'_C(f,t) :=  \ell'_C(f) - t$. It follows from
    Corollary~\ref{cor:TO_ineq} and Corollary~\ref{cor:TO-2l} that 
    \[
        \TOrd{\dP_\circ}^\dual \ = \ \conv ( \pm L'_C(f,t) : C \subseteq \P
        \text{ chain} ).
    \]
    It is shown in Dobbertin~\cite[Theorem~B]{Dobbertin} that 
    \[
        \ValP(\P) \ = \ \conv\left( \ell'_C :  C \subseteq
        \P \text{ chain} \right),
    \]
    from which the claim follows.
\end{proof}

As a direct consequence, we note the following.
\begin{cor}\label{cor:val_2l}
    Let $\P$ be a finite poset. Then $\tprism{\Val(\P)}$ is $2$-level.
\end{cor}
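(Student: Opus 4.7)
The plan is to deduce the claim from the polarity identity of Theorem~\ref{thm:twisted_val} together with the $2$-levelness of $\TOrd{\dP_\circ}$ established in Corollary~\ref{cor:TO-2l}. First, the linear bijection $h \mapsto h_0$ identifies $\Val(\P)$ with $\ValP(\P)$, and the coordinate flip $(x,t) \mapsto (-x,t)$ sends $\tprism{-\ValP(\P)}$ to $\tprism{\ValP(\P)}$; since $2$-levelness is invariant under affine isomorphism, Theorem~\ref{thm:twisted_val} reduces the corollary to showing that $\TOrd{\dP_\circ}^\dual$ is $2$-level.

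The heart of the argument will be the following general fact: if $Q$ is a centrally symmetric $2$-level polytope with $0$ in its interior, then $Q^\dual$ is $2$-level as well. To establish it, I would normalize each facet-defining functional $\ell_F$ of $Q$ so that $Q \subseteq \{\ell_F \le 1\}$ and $F = Q \cap \{\ell_F = 1\}$. Two-levelness provides a constant $c$ such that every vertex of $Q$ lies in $\{\ell_F = 1\} \cup \{\ell_F = c\}$. Central symmetry forces the set of values that $\ell_F$ takes on the vertices of $Q$ to be closed under negation; since this set has at most two elements, contains $1$, and is not $\{0\}$, it must equal $\{-1, 1\}$. Hence $\ell_F(v) \in \{\pm 1\}$ for every vertex $v$ of $Q$ and every facet $F$.

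To finish, I would apply this observation to $Q = \TOrd{\dP_\circ}$, which is centrally symmetric and contains the origin in its interior (by full-dimensionality and central symmetry) and is $2$-level by Corollary~\ref{cor:TO-2l}. The vertices of $Q^\dual$ are exactly the normalized functionals $\ell_F$, while the facets of $Q^\dual$ are cut out by the pairings $\ell \mapsto \ell(v)$ as $v$ ranges over the vertices of $Q$. The previous paragraph shows that these pairings take only values in $\{\pm 1\}$ on the vertices of $Q^\dual$, which is exactly what $2$-levelness of $Q^\dual$ demands. The main obstacle is really just the bookkeeping between the two normalizations and the observation that central symmetry forces the two witnessing hyperplanes for $Q$ to be $\{\ell_F = 1\}$ and $\{\ell_F = -1\}$; no deeper difficulty arises.
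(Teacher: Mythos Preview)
Your proposal is correct and follows essentially the same route as the paper: use Theorem~\ref{thm:twisted_val} to reduce to showing that $\TOrd{\dP_\circ}^\dual$ is $2$-level, then combine central symmetry of $\TOrd{\dP_\circ}$ with its $2$-levelness (Corollary~\ref{cor:TO-2l}) to force every vertex to evaluate to $\pm 1$ on every normalized facet functional, and conclude via polarity. The paper compresses your middle paragraph into a single sentence, but the argument is the same.
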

\begin{proof}
    Since $\TOrd{\dP_\circ}$ is centrally-symmetric and, by
    Corollary~\ref{cor:TO-2l}, $2$-level,  it follows that every vertex of
    $\TOrd{\dP_\circ}$ takes the values $+1$ or $-1$ on every facet-defining
    linear function. The vertices correspond to facet normals under polarity,
    which shows that $\TOrd{\dP_\circ}^\dual$ is $2$-level.
    Theorem~\ref{thm:twisted_val} now yields the claim.
\end{proof}
        
We can make the connection to valuations more transparent by considering
valuations with values in $[-1,1]$. Let $\Val^\pm(\Birkhoff(\P))$ denote the
corresponding polytope, then
\begin{equation}\label{eqn:ValPpm}
    \ValP^\pm(\P) \ = \ \{ h_0 : \P \rightarrow \R : -1 \le
    h(\Filter) \le 1 \text{ for all filters } \Filter \subseteq \P \}
    \ = \ (\Ord{\P} \cup -\Ord{\P})^\dual.
\end{equation}
Now, the convex hull of $\Ord{\P} \cup -\Ord{\P}$ is exactly the image of
$\TOrd{\dP_\circ}$ under the projection $\pi : \R^\P \times \R \rightarrow
\R^\P$ with $\pi(f,t) = \tfrac{1}{2}f$. Hence,
\[
    \ValP^\pm(\P) \ \cong \ \pi(\TOrd{\dP_\circ})^\dual \ \cong \
    \TOrd{\dP_\circ}^\dual \cap \mathrm{im}(\pi^*) \ \cong \ 
    \tprism{-2\ValP(\P)}\cap (\R^\P \times \{0\}),
\]
by Theorem~\ref{thm:twisted_val}.  If we now view $\tprism{-\ValP(\P)}$ as a
Cayley sum, we obtain
\begin{cor}\label{cor:val_pm}
    For any poset $\P$
    \[
        \ValP^\pm(\P)  \ = \ \ValP(\P) -
        \ValP(\P).
    \]
\end{cor}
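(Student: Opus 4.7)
The plan is to harvest directly the identification established in the paragraph immediately preceding the corollary, namely
\[
    \ValP^\pm(\P) \ \cong \ \tprism{-2\ValP(\P)} \cap (\R^\P \times \{0\}),
\]
which itself is an easy consequence of \eqref{eqn:ValPpm} and Theorem~\ref{thm:twisted_val}. The only remaining task is to recognize the central horizontal slice of a twisted prism as the Minkowski sum of its two ``floors.''

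Concretely, I would unfold the definition of the twisted prism as a Cayley sum:
\[
    \tprism{-2\ValP(\P)} \ = \ \CayDiff{-2\ValP(\P)}{-2\ValP(\P)} \ = \ \conv\bigl( (-2\ValP(\P))\times\{1\} \,\cup\, (2\ValP(\P))\times\{-1\} \bigr).
\]
Any point in this convex hull can be written as $\lambda\, p_+ + (1-\lambda)\, p_-$ at height $2\lambda - 1$, with $p_+ \in -2\ValP(\P)$, $p_- \in 2\ValP(\P)$, and $\lambda \in [0,1]$. Imposing $t = 0$ forces $\lambda = \tfrac{1}{2}$, so the slice is exactly
\[
    \tfrac{1}{2}\bigl( -2\ValP(\P) + 2\ValP(\P) \bigr) \ = \ \ValP(\P) - \ValP(\P).
\]
Combining with the displayed isomorphism above yields the desired equality.

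There is no serious obstacle: the nontrivial content has already been packed into Theorem~\ref{thm:twisted_val} and the polar description \eqref{eqn:ValPpm}, and the corollary is a one-line consequence of the elementary fact that the midheight section of a Cayley sum is a Minkowski average of the two constituents. The only thing to be slightly careful about is bookkeeping of the scaling factors ($2$ versus $\tfrac{1}{2}$) introduced by the projection $\pi$ and by the definition of $\TOrd{\dP_\circ}$, but these cancel cleanly in the computation above.
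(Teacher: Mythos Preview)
Your proposal is correct and follows exactly the paper's approach: the paper states the corollary immediately after the displayed chain of isomorphisms ending in $\tprism{-2\ValP(\P)}\cap (\R^\P \times \{0\})$, and remarks that viewing the twisted prism as a Cayley sum gives the result---which is precisely your midheight-section computation. Your care with the scaling factors is warranted, since the $\cong$'s in the paper's chain absorb the factor of $2$ coming from $\pi^*$; once tracked, the isomorphism is in fact an equality, as the corollary asserts.
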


A polytope $\Po$ with vertices in a lattice $\Lambda \subset \R^n$ is
\Defn{reflexive} if $\Po^\dual$ is a lattice polytope with respect to the dual
lattice $\Lambda^\vee := \{ \ell \in (\R^n)^* : \ell(x) \in \Z \text{ for all
} x \in \Lambda \}$. For two polytopes $\Po,\Qo \subset \R^n$, write
$\Gamma(\Po,\Qo):=\conv(\Po \cup -\Qo)$. Thus, $\Gamma(\Po,\Qo)$ is the
projection of $\CayDiff{\Po}{\Qo}$ onto the first $n$ coordinates.  The
polytopes $\Gamma(\Ord{\P},\Ord{\P})$ where studied by Hibi, Matsuda, and
Tsuchiya~\cite{Hibi15-1,Hibi15-2} in the context of Gorenstein polytopes,
i.e.\ lattice polytopes $\Po$ such that $r\Po$ is reflexive for some $r \in
\Z_{>0}$. By taking polars, we obtain the following from~\eqref{eqn:ValPpm}
and Corollary~\ref{cor:val_pm}.

\begin{cor}\label{cor:Gamma}
        For any poset $\P$,
        \[
            \Gamma(\Ord{\P},\Ord{\P}) \ = \ (\ValP(\P) -
            \ValP(\P))^\dual.
        \]
        In particular, $\Gamma(\Ord{\P},\Ord{\P})$ is reflexive.
\end{cor}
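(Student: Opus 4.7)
The plan is to chain together equation~\eqref{eqn:ValPpm} and Corollary~\ref{cor:val_pm} and then apply polar duality. First I would unfold the definition $\Gamma(\Ord{\P},\Ord{\P}) = \conv(\Ord{\P} \cup -\Ord{\P})$ and observe that a set and its convex hull have the same polar. By~\eqref{eqn:ValPpm},
\[
\Gamma(\Ord{\P},\Ord{\P})^\dual \ = \ (\Ord{\P} \cup -\Ord{\P})^\dual \ = \ \ValP^\pm(\P),
\]
and by Corollary~\ref{cor:val_pm}, this in turn equals $\ValP(\P) - \ValP(\P)$.

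Next I would dualize once more. This is valid provided $0$ lies in the interior of $\Gamma(\Ord{\P},\Ord{\P})$, which is immediate: $\tfrac{1}{2}\1_\P$ is a relative interior point of $\Ord{\P}$, so $-\tfrac{1}{2}\1_\P$ is interior to $-\Ord{\P}$, and the open segment between these two points contains the origin and sits in the full-dimensional set $\Gamma(\Ord{\P},\Ord{\P})$. By biduality,
\[
\Gamma(\Ord{\P},\Ord{\P}) \ = \ (\ValP(\P) - \ValP(\P))^\dual,
\]
which is the first assertion.

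For reflexivity, I need both $\Gamma(\Ord{\P},\Ord{\P})$ and its polar to be lattice polytopes with respect to $\Z^\P$ (which is self-dual under the standard inner product). The vertices of $\Gamma(\Ord{\P},\Ord{\P})$ lie in $\{\pm \1_\Filter : \Filter \text{ a filter of } \P\}$, all integral. For the polar, Dobbertin's description recalled inside the proof of Theorem~\ref{thm:twisted_val} identifies the vertices of $\ValP(\P)$ with the functionals $\ell'_C$ for chains $C \subseteq \P$, whose coordinate entries lie in $\{-1,0,+1\}$; hence $\ValP(\P) - \ValP(\P)$ is a Minkowski sum of two lattice polytopes and is itself a lattice polytope. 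No step of this argument looks genuinely hard—once \eqref{eqn:ValPpm} and Corollary~\ref{cor:val_pm} are available, the corollary is essentially bookkeeping about polarity and integrality of vertices.
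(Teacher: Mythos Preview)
Your approach is exactly the paper's: combine~\eqref{eqn:ValPpm} with Corollary~\ref{cor:val_pm} and take polars, then check integrality of the vertices on both sides. One small slip: $\tfrac{1}{2}\1_\P$ is \emph{not} an interior point of $\Ord{\P}$ whenever $\P$ has a nontrivial relation, since the facet inequality $f(a)\le f(b)$ for a cover relation $a\prec b$ is tight there. The easiest repair is to note that $\Gamma(\Ord{\P},\Ord{\P})$ is centrally symmetric and full-dimensional, hence automatically contains the origin in its interior; alternatively, replace $\tfrac{1}{2}\1_\P$ by any strictly order-preserving map into $(0,1)$ (e.g.\ one coming from a linear extension). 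With that fix the argument goes through unchanged.
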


An explicit description of the face lattices of $\Val(\P)$, $\Val^\pm(\P)$ as
well as $\Gamma(\Ord{\P},\Ord{\P})$ can be obtained from
Theorem~\ref{thm:TO_faces}. 

This theorem also yields information about the polars of
$\TOrd{\P,\preceq_+,\preceq_-}$ for compatible double posets. For a poset that
is not compatible, the next result shows that the origin is not contained in
the interior of $\TOrd{\dP}$ and hence the polar is not bounded.

\begin{prop}\label{prop:0contained}
    Let $\dP$ be a double poset. Then $\TOrd{\dP}$ contains the origin
    in its interior if and only if $\dP$ is compatible.
\end{prop}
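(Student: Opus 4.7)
I will prove the two implications separately. Note first that the origin always lies in $\TOrd{\dP}$: the zero function belongs to both $\Ord{\P_+}$ and $\Ord{\P_-}$, so $(0,0) = \tfrac{1}{2}(0,1) + \tfrac{1}{2}(0,-1) \in \TOrd{\dP}$. Since $\TOrd{\dP}$ is full-dimensional in $\R^\P\times\R$, the question is whether $(0,0)$ lies on a facet.

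For the direction ``compatible $\Rightarrow$ interior'', I invoke Corollary~\ref{cor:TO_ineq}: every defining inequality of $\TOrd{\dP}$ has the form $L_C(f,t) = \ell_C(f) - \sgn(C)\, t \le 1$ for some alternating chain $C$. Evaluating at $(f,t)=(0,0)$ gives $L_C(0,0) = 0 < 1$ strictly for all such $C$. Hence $(0,0)$ satisfies every facet inequality with strict slack and lies in the interior.

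For the direction ``not compatible $\Rightarrow$ boundary'', I use that by the definition of compatibility given just before Proposition~\ref{prop:compat}, a non-compatible double poset contains an alternating cycle
\[
    C\colon\ p_0 \ \prec_+ \ p_1 \ \prec_- \ p_2 \ \prec_+ \ \cdots \ \prec_- \ p_{2k}=p_0
\]
(after possibly relabeling $\sigma$). Set $L(f,t) := \ell_C(f)$, a nonzero linear functional since $p_0,\dots,p_{2k-1}$ are distinct and each appears with coefficient $\pm 1$. The key observation is that $\ell_C$ can be rewritten in two ways, grouping according to the two partial orders:
\begin{align*}
    \ell_C(f) &\ = \ \sum_{j=0}^{k-1}\bigl(f(p_{2j})-f(p_{2j+1})\bigr),\\
    \ell_C(f) &\ = \ \sum_{j=1}^{k}\bigl(f(p_{2j})-f(p_{2j-1})\bigr),
\end{align*}
where in the second sum we use $p_{2k}=p_0$. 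The first expression shows $\ell_C\le 0$ on $\Ord{\P_+}$ (each summand involves a $\prec_+$-relation), and the second shows $\ell_C\ge 0$ on $\Ord{\P_-}$ (each summand involves a $\prec_-$-relation).

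Consequently $L(v)\le 0$ on every vertex of $\TOrd{\dP}$: on $(2\1_{\Filter_+},1)$ it equals $2\ell_C(\1_{\Filter_+})\le 0$, and on $(-2\1_{\Filter_-},-1)$ it equals $-2\ell_C(\1_{\Filter_-})\le 0$. Hence $L\le 0$ on all of $\TOrd{\dP}$, while $L(0,0)=0$, giving a supporting hyperplane through the origin. So $(0,0)$ is on the boundary. The main potential obstacle is keeping track of the sign $\sigma$ and the parity of indices in the telescoping rewriting, but once one fixes the starting $\sigma$ the rest is bookkeeping; the case $\sigma=-$ is entirely symmetric.
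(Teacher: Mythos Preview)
Your proof is correct and follows essentially the same approach as the paper: invoke Corollary~\ref{cor:TO_ineq} for the compatible direction, and for the non-compatible direction use an alternating cycle $C$ to produce the linear functional $\ell_C$ which is nonpositive on $\Ord{\P_+}$ and nonnegative on $\Ord{\P_-}$, so that $\{\ell_C=0\}$ is a supporting hyperplane through the origin. The paper merely asserts ``it follows easily that $\ell_C\le 0$ on $\Ord{\P_+}$ and $-\Ord{\P_-}$'', whereas you spell out the two telescoping regroupings explicitly; otherwise the arguments are identical.
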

\begin{proof}
        If $\dP$ is compatible, then Corollary~\ref{cor:TO_ineq} shows that
    $\0$  strictly satisfies all facet-defining inequalities. If $\dP$ is
    not compatible, then it contains an alternating cycle $C$. It follows easily that $\ell_{C} \le 0$ on $\Ord{\P_+}$
    and $-\Ord{\P_-}$ and hence $\TOrd{\dP}$ is contained in the negative
    halfspace of $H = \{ (f,t) : \ell_{C}(f) \le 0\}$. Moreover, $\0 \in H
    \cap \TOrd{\dP}$, which shows that $\0 \not\in \relint \TOrd{\dP}$.
\end{proof}

\section{Anti-blocking polytopes}
\label{sec:AB}

\subsection{Anti-blocking polytopes and Minkowski sums}
\label{ssec:AB_Minkowski}
A polytope $\Po \subset \Rnn^n$ is called \Defn{anti-blocking} if
\begin{equation}\label{eqn:antiblock}
    q \in \Po \ \text{ and } \ 0 \le p \le q \quad \Longrightarrow \quad p \in
    \Po,
\end{equation}
where $p \le q$ refers to componentwise order in $\R^n$. The notion of
anti-blocking polyhedra was introduced by Fulkerson~\cite{Fulkerson} in
connection with min-max-relations in combinatorial optimization; our main
reference for anti-blocking polytopes is
Schrijver~\cite[Sect.~9.3]{Schrijver}. In this section, we consider the Cayley
sums 
\[
    \CayDiff{\Po}{\Qo} \ = \ \conv( \Po \times \{1\} \cup (-\Qo) \times \{-1\} ),
\]
where $\Po$ and $\Qo$ are anti-blocking polytopes. As before, we write
$\tprism{\Po}$ for $\CayDiff{\Po}{\Po}$. Our main source of examples will be the class of
stable set polytopes: For a graph $G = (V,E)$, a \Defn{stable set} is a subset
$S \subseteq V$ such that $\binom{S}{2} \cap E = \emptyset$. For simplicity,
we will assume that $V = [n]$ and we write $\1_S \in \{0,1\}^n$ for the
characteristic vector of a stable set $S$. The \Defn{stable set polytope} of
$G$ is the anti-blocking polytope
\[
    \Po_G \ := \ \conv( \1_S : S \subseteq V \text{ stable set} ) \ \subseteq \
    \R^n.
\]
The class of \emph{perfect} graphs is particularly interesting in this
respect. Lov\'{a}sz~\cite{Lovasz} characterized perfect graphs in terms of
their stable set polytopes and we use his characterization as a definition of
perfect graphs.  A \Defn{clique} of a graph $G = (V,E)$ is a subset $C
\subseteq V$ such that $\binom{C}{2} \subseteq E$. For a vector $x \in \R^n$
and a subset $J \subseteq [n]$, we write $x(J) = \sum_{j \in J} x_j$.

\begin{thm}[{\cite{Lovasz}}]\label{thm:perfect}
    A graph $G = ([n],E)$ is \Defn{perfect} if and only if
    \[
        \Po_G \ = \ \{ x \in \R^n : x \ge 0, x(C) \le 1 \text{ for all cliques
        } C \subseteq [n] \}.
    \]
\end{thm}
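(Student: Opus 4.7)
The plan is essentially to observe that the paper declares this to be a definition, so within the logical structure of the paper no proof is required; nevertheless, I would sketch Lov\'{a}sz's original argument, which establishes the equivalence with the classical characterization $\chi(H) = \omega(H)$ for every induced subgraph $H$.

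The easy direction is the containment $\Po_G \subseteq \{x \in \Rnn^n : x(C) \le 1 \text{ for every clique } C\} =: \Qo_G$. This holds for every graph, independent of perfection: for any stable set $S$ and any clique $C$, the set $S \cap C$ has at most one vertex (since two vertices of a clique are adjacent and hence cannot both lie in a stable set), so $\1_S(C) = |S \cap C| \le 1$. Nonnegativity is automatic. Taking the convex hull over all stable sets $S$ yields $\Po_G \subseteq \Qo_G$.

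For the reverse inclusion assuming $G$ is perfect (in the classical sense), the strategy I would follow is Lov\'{a}sz's: show that every vertex $x^{\ast}$ of $\Qo_G$ is integral, so it must be the indicator of a stable set and thus lie in $\Po_G$. The key tool is the \emph{replication lemma}, which says that the class of perfect graphs is closed under duplicating a vertex. Given a putative fractional vertex $x^{\ast}$ with rational coordinates $x^{\ast}_i = p_i/q$ (common denominator $q$), one forms the graph $G'$ obtained by replacing vertex $i$ by $p_i$ twin copies (and identifying with a complete multipartite structure in the usual way) together with a stable-set/clique LP on $G'$ whose optimal value equals $\sum_i x^{\ast}_i$. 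Perfection of $G'$, combined with $\omega(G') = \chi(G')$, forces this optimum to equal a \emph{clique cover}, contradicting the assumption that $x^{\ast}$ violated integrality. This circle of ideas also gives the converse: if $\Po_G = \Qo_G$, then the analogous identity for every induced subgraph (which inherits the property) yields $\omega(H) = \chi(H)$ via LP duality applied to the all-ones objective.

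The main obstacle is the replication lemma and the LP-duality step, which together constitute the genuine content of Lov\'{a}sz's theorem; the polyhedral translation itself is a repackaging. Since Theorem~\ref{thm:perfect} is invoked here as a \emph{definition} of perfectness and all subsequent uses in the paper (e.g., Hansen's characterization and the $2$-levelness results of Corollary~\ref{cor:AB_2l}) depend only on the polytopal formulation, I would simply cite~\cite{Lovasz} rather than reproduce these arguments.
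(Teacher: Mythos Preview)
Your assessment is correct: the paper gives no proof of this theorem, explicitly stating that it uses Lov\'asz's characterization as the \emph{definition} of perfect graphs and citing~\cite{Lovasz}. Your sketch of the replication-lemma argument is accurate as background, but within the paper's logical structure nothing beyond the citation is needed, exactly as you observed.
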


In this language, we can express the \emph{chain polytope} of a poset $\P$ as
a stable set polytope: The \Defn{comparability graph} $G(\P)$ of a poset
$(\P,\preceq)$ is the undirected graph with vertex set $\P$ and edge set $\{
xy : x \prec y \text{ or } y \prec x \}$. Note that cliques in $G(\P)$ are
exactly the chains of $\P$. For a poset $\P = ([n],\preceq)$ the comparability graph $G(\P)$ is perfect and hence
\[
    \Chain{\P} \ = \ \{ x \in \R^n : x \ge 0, x(C) \le 1 \text{ for all
    chains } C \subseteq [n] \} \ = \ \Po_{G(\P)}.
\]

\renewcommand\c{\mathbf{c}}%
\renewcommand\d{\mathbf{d}}%
If $\Po \subset \R^n$ is an anti-blocking polytope, then there are 
$\c_1,\dots,\c_r \in \Rnn^n$ such that
\newcommand\convDown[1]{\{#1\}^{\downarrow}}%
\newcommand\Vdown{V^\downarrow}%
\begin{equation}\label{eqn:downhull}
    \Po \ = \ \convDown{\c_1,\dots,\c_r} \ := \ \R^n_{\ge 0} \cap (
    \conv(\c_1,\dots,\c_r) - \R^n_{\ge 0}).
\end{equation}
The unique minimal such set, denoted by $\Vdown(P)$, is given by the minimal
elements of the vertex set of $\Po$ with respect to the partial order $\le$.
It also follows from~\eqref{eqn:antiblock} and the Minkowski--Weyl theorem
that there is a minimal collection $\d_1,\dots,\d_s \in \R^n_{\ge 0}$ such
that 
\[
    \Po \ = \ \{ \x \in \R^n : \x \ge 0, \inner{\d_i,\x} \le 1 \text{ for all }
    i=1,\dots,s\}
\]

For a polytope $\Qo \subseteq \Rnn^n$, its \Defn{associated} anti-blocking
polytope is the set
\newcommand\Ant[1]{A({#1})}%
\[
    \Ant{\Qo} \ := \ \{ \d \in \Rnn^n : \inner{\d,\x} \le 1 \text{ for all } \x
    \in \Qo \}.
\]
The following is the structure theorem for anti-blocking polytopes akin to the
bipolar theorem for convex bodies.

\begin{thm}[{\cite[Thm.~9.4]{Schrijver}}] \label{thm:doubleAnt}
    Let $\Po \subset \R^n$ be a full-dimensional anti-blocking polytope with
    \begin{align*}
        \Po \ = \ \convDown{\c_1,\dots,\c_r} &\ = \ \{ \x \in \R^n : \x \ge 0,
        \inner{\d_i,\x} \le 1 \text{ for all } i=1,\dots,s\} \\
    \intertext{for some $\c_1,\dots,\c_r,\d_1,\dots,\d_s \in \Rnn^n$.  Then}
        \Ant{\Po} \ = \ \convDown{\d_1,\dots,\d_s} &\ = \ \{ \x \in \R^n : \x
        \ge 0, \inner{\c_i,\x} \le 1 \text{ for all } i=1,\dots,r\}.
    \end{align*}
    In particular, $\Ant{\Ant{\Po}} = \Po$.
\end{thm}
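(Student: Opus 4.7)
The plan is to establish both descriptions of $\Ant{\Po}$ in succession and then obtain $\Ant{\Ant{\Po}}=\Po$ by a second application of the same argument with the roles of the inner and outer description swapped.

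First I would prove the inequality description $\Ant{\Po} = \{\x \ge \0 : \inner{\c_i,\x} \le 1 \text{ for all } i=1,\dots,r\}$. The key observation is that for any $\d \ge \0$ the linear functional $\x \mapsto \inner{\d,\x}$ is monotone with respect to the coordinatewise order. Since $\Po = \convDown{\c_1,\dots,\c_r}$ is obtained from $\conv(\c_1,\dots,\c_r)$ by closure under $\le$, and since increasing a coordinate only increases $\inner{\d,\x}$ for $\d \ge \0$, the maximum of $\inner{\d,\cdot}$ over $\Po$ is already attained at some $\c_i$. Hence $\d \in \Ant{\Po}$ if and only if $\d \ge \0$ and $\inner{\d,\c_i} \le 1$ for every $i$.

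Next I would show $\Ant{\Po} = \convDown{\d_1,\dots,\d_s}$. The inclusion $\supseteq$ is immediate, since each $\d_j$ lies in $\Ant{\Po}$ by the given halfspace description of $\Po$ and the first step makes $\Ant{\Po}$ visibly anti-blocking, so taking downward closure stays inside. The main obstacle is the reverse inclusion, which I would handle by separation. Assume $\e \in \Ant{\Po}\setminus\convDown{\d_1,\dots,\d_s}$, so there is a linear functional $\y$ with $\inner{\y,\e} > \sup_{\d \in \convDown{\d_1,\dots,\d_s}} \inner{\y,\d}$. Replacing each negative coordinate of $\y$ by $0$ does not change the supremum (the maximizer over a downward-closed subset of $\Rnn^n$ has zero in those coordinates) while only increasing $\inner{\y,\e}$ since $\e\ge\0$; so we may assume $\y \ge \0$ and the supremum equals $\max_i \inner{\y,\d_i}$. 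After rescaling we may assume $\inner{\y,\d_i} \le 1$ for every $i$, which is exactly the condition for $\y \in \Po$; but then $\inner{\y,\e} > 1$ contradicts $\e \in \Ant{\Po}$. The rescaling is legitimate because $\max_i\inner{\y,\d_i} > 0$: otherwise $(\d_i)_j = 0$ whenever $y_j > 0$, and the halfspace description would leave the coordinates $\{j:y_j>0\}$ unconstrained in $\Po$, contradicting boundedness (this is where full-dimensionality of $\Po$ enters, as it forces a bounding inequality in each coordinate direction).

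Finally, $\Ant{\Ant{\Po}} = \Po$ follows by feeding $\Ant{\Po}$ back into the same theorem: by the two parts above, $\Ant{\Po}$ has $\convDown$-description via $\d_1,\dots,\d_s$ and halfspace description via $\c_1,\dots,\c_r$, and the argument then gives $\Ant{\Ant{\Po}} = \convDown{\c_1,\dots,\c_r} = \Po$.
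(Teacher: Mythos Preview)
The paper does not prove this theorem; it is quoted from Schrijver~\cite[Thm.~9.4]{Schrijver} and used as a black box. So there is no ``paper's own proof'' to compare against.

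Your argument is correct and is essentially the standard one. One small mislabeling: at the point where you rule out $\max_i\inner{\y,\d_i}=0$, the relevant hypothesis is that $\Po$ is a \emph{polytope} (hence bounded), not that it is full-dimensional. If $(\d_i)_j=0$ for all $i$ and some $j$ with $y_j>0$, then the ray $\{t\e_j:t\ge 0\}$ lies in the halfspace description of $\Po$, contradicting boundedness. Full-dimensionality of $\Po$ is instead what guarantees that $\Ant{\Po}$ is again bounded (so that the second application of the argument in your last step is legitimate): if $\Po$ lay in a coordinate hyperplane, $\Ant{\Po}$ would contain a ray. With that attribution fixed, the proof goes through as written.
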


\newcommand\ZeroOut[2]{{#1}^{[#2]}}%
Before we come to our first result regarding Cayley- and Minkowski-sums of
anti-blocking polytopes, we note the following fact. We write $V(\Po)$ for the
vertex set of a polytope $\Po$.

\begin{prop}\label{prop:ab_vert_union}
    Let $\Po_1, \Po_2$ be two full-dimensional anti-blocking polytopes. Then
    the vertices of $\conv(\Po_1 \cup -\Po_2)$ are exactly $(V(\Po_1) \cup
    V(-\Po_2)) \setminus \{\0\}$.
\end{prop}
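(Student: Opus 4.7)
My plan proceeds in three steps. The easy inclusion is that every extreme point of $\conv(\Po_1 \cup -\Po_2)$ must belong to $\Po_1 \cup -\Po_2$ and be extreme in the respective polytope, hence lies in $V(\Po_1) \cup V(-\Po_2)$. What remains is to check that $\0$ is \emph{not} a vertex of $\conv(\Po_1 \cup -\Po_2)$, and conversely that every nonzero vertex of $\Po_1$ (and symmetrically of $-\Po_2$) persists as a vertex of the convex hull.

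For the claim about $\0$, full-dimensionality combined with the anti-blocking property forces any interior point $p$ of $\Po_1$ to satisfy $p > \0$ coordinatewise (otherwise $p$ would lie on a coordinate hyperplane bounding $\Po_1$), so $[\0, p] \subseteq \Po_1$ by~\eqref{eqn:antiblock} and hence $[0, \eps]^n \subseteq \Po_1$ for some $\eps > 0$. The same reasoning gives $[-\eps, 0]^n \subseteq -\Po_2$. Writing any $y$ with $\|y\|_\infty < \eps/2$ as $y = \tfrac{1}{2}(2y^+) + \tfrac{1}{2}(2y^-)$, where $y^\pm$ are the coordinatewise positive/negative parts of $y$, places $y$ in the convex hull, so $\0$ lies in the interior.

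The main step is to exhibit, for each $v \in V(\Po_1) \setminus \{\0\}$, a linear functional that uniquely maximizes $v$ over $\conv(\Po_1 \cup -\Po_2)$. Using Theorem~\ref{thm:doubleAnt}, I write $\Po_1 = \{ x \ge \0 : \inner{d_i, x} \le 1,\ i = 1, \dots, s\}$ with each $d_i \in \Rnn^n$, and set $I_v := \{i : \inner{d_i, v} = 1\}$ and $J_v := \{j : v_j = 0\}$. Since $v$ is a vertex with $v \ne \0$, the active constraint normals $\{d_i : i \in I_v\} \cup \{\e_j : j \in J_v\}$ span $\R^n$ and $I_v$ is nonempty. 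I then take
\[
\ell \ := \ \sum_{i \in I_v} d_i \ - \ \eps \sum_{j \in J_v} \e_j
\]
for a sufficiently small $\eps > 0$. On $\Po_1$, $\ell(x) \le |I_v|$ with equality forcing every active constraint of $v$ to be tight at $x$, hence $x = v$ by the rank condition. On $-\Po_2 \subseteq -\Rnn^n$, $\ell(-y) \le \eps \sum_{j \in J_v} y_j \le \eps \max_{y' \in \Po_2} \|y'\|_1 < |I_v|$ once $\eps$ is small enough. A symmetric argument (swapping the roles of $\Po_1$ and $-\Po_2$, and using a dual description of $\Po_2$) handles every nonzero vertex of $-\Po_2$.

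The delicate point is the construction of $\ell$: a nonnegative combination of the $d_i$'s alone only cuts out a face of $\Po_1$ containing $v$, so the coordinate constraints $x_j = 0$ must be incorporated as a small perturbation with negative entries. What makes this succeed is precisely that $-\Po_2 \subseteq -\Rnn^n$, so the term $\sum_i d_i$ evaluates nonpositively there and leaves ample room for the perturbation without spoiling the inequality on the $-\Po_2$ side.
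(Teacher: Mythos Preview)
Your proof is correct. Both arguments construct a linear functional uniquely maximized at a given nonzero vertex $v \in V(\Po_1)$, but the constructions differ. The paper starts with an \emph{arbitrary} functional $\c$ exposing $v$ in $\Po_1$, picks a single $\d \in \Rnn^n$ with $\inner{\d,\cdot} \le 1$ on $\Po_1$ and $\inner{\d,v} = 1$ (which exists since $v \neq \0$), and then uses $\c + \mu \d$ for large $\mu$: the term $\mu \d$ is nonpositive on $-\Po_2$, so for $\mu$ large enough the value at $v$ dominates everything in $-\Po_2$, while on $\Po_1$ the face $\Po_1^{\c+\mu\d}$ remains $\{v\}$. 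You instead assemble the functional explicitly from \emph{all} active facet normals at $v$, taking $\sum_{i \in I_v} \d_i$ and then subtracting a small $\eps$-perturbation along the coordinate directions in $J_v$. Your version is more constructive and uses the full rank of the active constraints; the paper's is slicker because it avoids any rank argument by leveraging an already-exposing $\c$. You also spell out the easy inclusion and the fact that $\0$ lies in the interior of $\conv(\Po_1 \cup -\Po_2)$, both of which the paper leaves implicit.
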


For a polytope $\Po \subset \R^n$ and a vector $\c \in \R^n$, we denote by
$\Po^\c$ the face of $\Po$ that maximizes the linear function $\x \mapsto
\inner{\c,\x}$.

\begin{proof}
    It suffices to show that every $\v \in V(\Po_1) \setminus \{\0\}$ is a
    vertex of $\conv(\Po_1 \cup -\Po_2)$.  Let $\c \in \R^n$ such that $\Po_1^\c
    = \{\v\}$.  Since $\v \neq 0$, there is some $\d \in \R^n_{\ge 0}$ such
    that $\inner{\d, \u_1} \le 1$ for all $\u_1 \in \Po_1$ and $\inner{\d, \v}
    = 1$.  Hence, for any $\mu \ge 0$, $\Po_1^{\c + \mu \d} = \{v\}$. Now,
    $\inner{\d,-\u_2} \le 0$ for all $\u_2 \in P_2$. In particular, for $\mu >
    0$ sufficiently large, 
    \[
        \inner{\c + \mu \d,\u_2}  \ \le \ \inner{\c,\u_2}  \ < \ \mu +
        \inner{\c,\v} \ = \  \inner{\c + \mu \d,\v},
    \]
    which shows that $\v$ uniquely maximizes $\inner{\c + \mu\d,\u}$ over
    $\conv(\Po_1 \cup -\Po_2)$.
\end{proof}

For $\d \in \R^n_{\ge 0}$ and $I \subseteq [n]$, we write $\ZeroOut{\d}{I}$
for the vector with
\[
    (\ZeroOut{\d}{I})_j \ = \
        \begin{cases}
            d_j & \text{ for } j \in I \\
            0 & \text{ otherwise}.
        \end{cases}
\]

\begin{thm}\label{thm:CayleyAnti}
    Let $\Po_1,\Po_2 \subset \R^n$ be full-dimensional anti-blocking
    polytopes. Then
    \[
        (\Po_1 - \Po_2)^\dual  \ = \ \conv( \Ant{\Po_1} \cup -\Ant{\Po_2} ).
    \]
    Moreover, 
    \[
        (\CayDiff{2\Po_1}{2\Po_2})^\dual 
        \ = \ \CayDiff{-\Ant{\Po_2}}{-\Ant{\Po_1}}.
    \]
\end{thm}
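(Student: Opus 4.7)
Both identities rest on a single observation about anti-blocking polytopes that translates property~\eqref{eqn:antiblock} into a statement about support functions. Given $\ell \in \R^n$, write $\ell = \ell^+ - \ell^-$ for its componentwise positive/negative decomposition, with disjoint supports $I^\pm = \supp(\ell^\pm)$. I claim that for any anti-blocking $\Po \subseteq \Rnn^n$,
\[
    h_\Po(\ell) \ = \ h_\Po(\ell^+).
\]
The inequality $h_\Po(\ell) \le h_\Po(\ell^+)$ is trivial from $\ell \cdot p \le \ell^+ \cdot p$ for any $p \ge 0$. Conversely, given any $p \in \Po$, zeroing out the coordinates of $p$ outside $I^+$ yields a vector $\ZeroOut{p}{I^+} \in \Po$ by~\eqref{eqn:antiblock}, and $\ell^+ \cdot p = \ell^+ \cdot \ZeroOut{p}{I^+} = \ell \cdot \ZeroOut{p}{I^+} \le h_\Po(\ell)$. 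In particular, $h_{-\Po_2}(\ell) = h_{\Po_2}(-\ell) = h_{\Po_2}(\ell^-)$.

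\emph{First identity.} Unfolding the polar, $\ell \in (\Po_1 - \Po_2)^\dual$ iff $h_{\Po_1}(\ell) + h_{-\Po_2}(\ell) \le 1$, which by the claim amounts to $\alpha + \beta \le 1$, where $\alpha := h_{\Po_1}(\ell^+) \ge 0$ and $\beta := h_{\Po_2}(\ell^-) \ge 0$. The inclusion $\conv(\Ant{\Po_1} \cup -\Ant{\Po_2}) \subseteq (\Po_1-\Po_2)^\dual$ is immediate from the claim applied separately on each piece. For the reverse inclusion, when $\alpha,\beta > 0$ set $a := \ell^+/\alpha \in \Ant{\Po_1}$ and $b := \ell^-/\beta \in \Ant{\Po_2}$ and write
\[
    \ell \ = \ \alpha \cdot a \ + \ \beta \cdot (-b) \ + \ (1 - \alpha - \beta) \cdot \0,
\]
an honest convex combination of vectors in $\Ant{\Po_1} \cup -\Ant{\Po_2}$ since $\0$ belongs to both sets. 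Degenerate cases ($\alpha = 0$ or $\beta = 0$) are handled by observing that full-dimensionality of $\Po_i$ (together with the anti-blocking property, which forces $\epsilon \e_i \in \Po_i$ for some $\epsilon > 0$) makes $\alpha = 0 \Rightarrow \ell^+ = 0$, so $\ell$ lies already in $-\Ant{\Po_2}$.

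\emph{Cayley identity.} A pair $(\ell, s) \in \R^n \times \R$ lies in the polar of $\CayDiff{2\Po_1}{2\Po_2}$ iff $2 h_{\Po_1}(\ell^+) \le 1 - s$ and $2 h_{\Po_2}(\ell^-) \le 1 + s$, which forces $s \in [-1,1]$. Setting $\lambda := (s+1)/2 \in [0,1]$ and, when possible, $a := \ell^+/(1-\lambda)$ and $b := \ell^-/\lambda$, these inequalities become exactly $a \in \Ant{\Po_1}$ and $b \in \Ant{\Po_2}$, and one computes
\[
    (\ell, s) \ = \ \lambda \cdot (-b, 1) \ + \ (1-\lambda) \cdot (a, -1) \ \in \ \CayDiff{-\Ant{\Po_2}}{-\Ant{\Po_1}}.
\]
Boundary cases ($\lambda \in \{0,1\}$ or $\ell^\pm = 0$) are dispatched exactly as in the previous paragraph, using $\0 \in \Ant{\Po_i}$. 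The reverse inclusion reduces to evaluating the two polar inequalities on the vertices $(-b,1)$ and $(a,-1)$, which is immediate since $h_{\Po_1}(-b) = 0$ and $h_{\Po_2}(-a) = 0$ (the vectors $-b,-a$ are nonpositive). The only nontrivial ingredient in the entire argument is the support-function identity $h_\Po(\ell) = h_\Po(\ell^+)$; everything else is writing down the obvious convex combination and bookkeeping the degenerate cases.
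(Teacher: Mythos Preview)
Your proof is correct and takes a genuinely different route from the paper. The paper proves the first identity by a bipolar argument: rather than directly exhibiting $\ell \in (\Po_1-\Po_2)^\dual$ as a convex combination of points in $\Ant{\Po_1}\cup -\Ant{\Po_2}$, it shows the equivalent inclusion $\Qo^\dual \subseteq \Po_1 - \Po_2$, where $\Qo := \conv(\Ant{\Po_1}\cup -\Ant{\Po_2})$. There the key step is to note that every vertex of $\Qo$ has the form $\ZeroOut{\d}{I}$ for some $\d \in \Vdown(\Ant{\Po_1})\cup -\Vdown(\Ant{\Po_2})$, and then to decompose any $\z \in \Qo^\dual$ into its positive and negative parts and check membership in $\Po_1$ and $\Po_2$ separately. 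For the Cayley identity, the paper does not argue directly but instead leverages the first identity together with Proposition~\ref{prop:ab_vert_union} to identify the facet-defining linear functions of $\CayDiff{2\Po_1}{2\Po_2}$ and then match them against the vertices of $\CayDiff{-\Ant{\Po_2}}{-\Ant{\Po_1}}$.

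Your approach isolates a single clean lemma, the support-function identity $h_\Po(\ell) = h_\Po(\ell^+)$ for anti-blocking $\Po$, and uses it uniformly in both parts to write down an explicit convex combination. This makes the two identities parallel rather than sequential, avoids the appeal to Proposition~\ref{prop:ab_vert_union}, and is self-contained. The paper's route, by contrast, makes the facet structure of $\CayDiff{2\Po_1}{2\Po_2}$ explicit along the way, which is what is actually used downstream (e.g.\ in Theorem~\ref{thm:TC_facets} and Corollary~\ref{cor:Hansen}); your argument establishes the set-theoretic equality more quickly but does not single out the facets.
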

\begin{proof}
    Let us denote the right-hand side of the first equation by $\Qo$. Note
    that $\inner{\u_1,-\v_2} \le 0$ for $\u_1 \in \Ant{P_1}$ and $\v_2 \in
    P_2$. This shows that $\inner{\u_1,\v} \le 1$ for all $\v \in P_1 - P_2$.
    By symmetry, this yields $\Qo \subseteq (\Po_1 - \Po_2)^\dual$.

    For the converse, observe that every vertex of $\Qo$ is of the form $\ZeroOut{\d}{I}$ with $\d\in\Vdown(\Ant{\Po_1})
    \cup -\Vdown(\Ant{\Po_2})$. It follows that $\z \in \Qo^\dual$ if and only if $
    \inner{\ZeroOut{\d}{I}, \z} \le 1$ for all $ \d \in \Vdown(\Ant{\Po_1})
    \cup -\Vdown(\Ant{\Po_2})$ and  all $I \subseteq [n]$.  For $\z \in
    \Qo^\dual$ write $\z = \z^1 - \z^2$ with $\z^1,\z^2 \ge 0$ and
    $\supp(\z^1) \cap \supp(\z^2) = \emptyset$, where for any $\z=(z_1,\dots,z_n)\in\R^n$ we set $\supp(\z):=\{i:z_i \neq 0\}$. We claim that $\z^i \in \Po_i$
    for $i=1,2$. Indeed, let $I = \supp(\z^1)$. Then for any $\d \in
    \Vdown(\Po_1)$ we have
    \[
        \inner{\d,\z^1} \ = \ \inner{\ZeroOut{\d}{I},\z} \ \le \ 1
    \]
    and hence $\z^1 \in \Po_1$. Applying the same argument to $\z^2$ shows
    that $\z \in \Po_1 - \Po_2$ and hence $(\Po_1 - \Po_2)^\dual \subseteq
    \Qo$.

    For the second claim, note that any linear function on $\R^n \times \R$
    that maximizes on a \emph{vertical} facet of $\CayDiff{2\Po_1}{2\Po_2}$ is
    of the form $\alpha_\d \inner{\d,\x} + \delta_\d t$ for $\d$ a vertex of
    $(\Po_1 - \Po_2)^\dual$ and some $\alpha_\d, \delta_\d \in \R$ with
    $\alpha_\d > 0$.  By the first claim and
    Proposition~\ref{prop:ab_vert_union}, it follows that $\d \in
    (V(\Ant{\Po_1}) \cup V(-\Ant{\Po_2})) \setminus \{\0\}$. 

    If $\d \in V(\Ant{\Po_1}) \setminus \{\0\}$, then $\inner{\d,\u_1} \le 1$
    is tight for $\u_1 \in \Po_1$ whereas $\inner{\d,-\u_2} \le 0$ is tight
    for $-\u_2 \in -\Po_2$.  Hence, 
    \[
        \inner{\d,\x} - t \ \le \ 1
    \]
    is the corresponding facet-defining halfspace. Similarly, if $-\d \in
    -V(\Ant{\Po_1}) \setminus \{\0\}$, then 
    \[
         \inner{-\d,x} + t \ \le \ 1
    \]
    is facet-defining. Together with the two horizontal facets $\inner{\0,\x}
    \pm t \le 1$ this yields an inequality description of
    $(\CayDiff{-\Ant{\Po_2}}{-\Ant{\Po_1}})^\dual$, which proves the second
    claim.
\end{proof}

Theorem~\ref{thm:CayleyAnti} together with Theorem~\ref{thm:doubleAnt} has a
nice implication that was used in~\cite{WSZ} in connection with Hansen
polytopes.

\begin{cor}\label{cor:AB_selfdual}
    For any full-dimensional anti-blocking polytope $\Po \subset \R^n$, the
    polytope $\CayDiff{\Po}{\Ant{\Po}}$ is linearly isomorphic to its polar 
    $(\CayDiff{\Po}{\Ant{\Po}})^\dual$. In particular, 
    $\CayDiff{\Po}{\Ant{\Po}}$ is self-dual.
\end{cor}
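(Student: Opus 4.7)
The plan is to apply Theorem~\ref{thm:CayleyAnti} to $\Po_1 = \Po$ and $\Po_2 = \Ant{\Po}$ and then reduce the resulting statement to a self-duality assertion via two elementary linear symmetries of $\CayDiff{\cdot}{\cdot}$. From the second formula in Theorem~\ref{thm:CayleyAnti},
\[
    (\CayDiff{2\Po}{2\Ant{\Po}})^\dual \ = \ \CayDiff{-\Ant{\Ant{\Po}}}{-\Ant{\Po}} \ = \ \CayDiff{-\Po}{-\Ant{\Po}},
\]
where for the second equality I use $\Ant{\Ant{\Po}} = \Po$ from Theorem~\ref{thm:doubleAnt}.

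The remaining work is to absorb the factors of $2$ and the signs. First, the invertible linear map $(\x,t) \mapsto (\tfrac{1}{2}\x, t)$ is an isomorphism $\CayDiff{2\Po}{2\Ant{\Po}} \to \CayDiff{\Po}{\Ant{\Po}}$ (it sends the two vertex-generating sets to each other), so taking polars (transport by the inverse transpose) yields a linear isomorphism
\[
    (\CayDiff{\Po}{\Ant{\Po}})^\dual \ \cong \ (\CayDiff{2\Po}{2\Ant{\Po}})^\dual \ = \ \CayDiff{-\Po}{-\Ant{\Po}}.
\]
Second, the reflection $(\x,t) \mapsto (-\x, t)$ sends $\Po \times \{1\}$ to $-\Po \times \{1\}$ and $-\Ant{\Po} \times \{-1\}$ to $\Ant{\Po} \times \{-1\}$, hence it is a linear isomorphism
\[
    \CayDiff{\Po}{\Ant{\Po}} \ \cong \ \CayDiff{-\Po}{-\Ant{\Po}}.
\]
Composing the two isomorphisms gives $(\CayDiff{\Po}{\Ant{\Po}})^\dual \cong \CayDiff{\Po}{\Ant{\Po}}$, which is the claim.

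There is no genuine obstacle: the argument is bookkeeping with Cayley sums. The only point to be slightly careful about is the scaling mismatch between the factor-$2$ version of the Cayley sum that appears in Theorem~\ref{thm:CayleyAnti} (chosen there for lattice reasons) and the unscaled version in the statement of the corollary; this is taken care of by the first linear isomorphism above, which is why the result is only up to linear rather than lattice isomorphism.
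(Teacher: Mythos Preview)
Your proof is correct and follows exactly the approach the paper intends: the corollary is stated without proof as an immediate consequence of Theorem~\ref{thm:CayleyAnti} and Theorem~\ref{thm:doubleAnt}, and you have carefully supplied the bookkeeping with the scaling and sign isomorphisms that the paper leaves implicit.
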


\subsection{Stable set polytopes of double graphs and double chain polytopes}
\label{ssec:double_graph}%
A \Defn{double graph} is a triple $\dG = (V,E_+,E_-)$ consisting of a node
set $V$ with two sets of edges $E_+,E_- \subseteq \binom{V}{2}$. Again, we
write $G_+ = (V, E_+)$ and $G_- = (V, E_-)$ to denote the two underlying
ordinary graphs.  The results of the preceding sections prompt the definition
of \Defn{stable set polytope} of a double graph
\[
    \Po_{\dG} \ := \ \CayDiff{2\Po_{G_+}}{2\Po_{G_-}}.
\]
For a double graph $\dG$, define the \Defn{complement graph} as
$\overline{\dG} = (V,E_-^c,E_+^c)$. Then Theorem~\ref{thm:CayleyAnti}
implies the following relation.

\begin{cor}\label{cor:dual_doublegraph}
    Let $\dG$ be a perfect double graph. Then $\Po_{\dG}^\dual$ is linearly
    isomorphic to $\Po_{\overline{\dG}}$.
\end{cor}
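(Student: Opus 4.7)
The plan is to combine Theorem~\ref{thm:CayleyAnti} with the Lov\'asz characterization of perfect graphs, the latter giving a clean identification of the associated anti-blocking polytopes of $\Po_{G_\pm}$. So first, for any perfect graph $G$ on vertex set $[n]$, I would show $\Ant{\Po_G} = \Po_{\overline{G}}$: by Theorem~\ref{thm:perfect} the clique inequalities $x(C)\le 1$ can be read as $\inner{\1_C, x}\le 1$, so Theorem~\ref{thm:doubleAnt} gives $\Ant{\Po_G} = \convDown{\1_C : C\subseteq V \text{ clique in } G}$; since cliques in $G$ are exactly the stable sets of $\overline{G}$, and $\overline{G}$ is again perfect by the perfect graph theorem, this set equals $\Po_{\overline{G}}$.

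With that identification in hand, I would assume (as is natural from the definition) that $\dG$ being perfect means both $G_+$ and $G_-$ are perfect graphs, so that $\Po_{G_\pm}$ are full-dimensional anti-blocking polytopes and Theorem~\ref{thm:CayleyAnti} applies. Plugging $\Po_1 = \Po_{G_+}$ and $\Po_2 = \Po_{G_-}$ into the second statement of Theorem~\ref{thm:CayleyAnti}, together with the identifications $\Ant{\Po_{G_\pm}} = \Po_{\overline{G_\pm}}$, gives
\[
    \Po_{\dG}^{\dual} \ = \ (\CayDiff{2\Po_{G_+}}{2\Po_{G_-}})^{\dual}
    \ = \ \CayDiff{-\Po_{\overline{G_-}}}{-\Po_{\overline{G_+}}}.
\]

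It then remains to unpack $\overline{\dG} = (V, E_-^c, E_+^c)$, so that $(\overline{\dG})_+ = \overline{G_-}$ and $(\overline{\dG})_- = \overline{G_+}$, and hence
\[
    \Po_{\overline{\dG}} \ = \ \CayDiff{2\Po_{\overline{G_-}}}{2\Po_{\overline{G_+}}}
    \ = \ \conv\bigl( 2\Po_{\overline{G_-}} \times\{1\} \,\cup\, -2\Po_{\overline{G_+}}\times\{-1\} \bigr).
\]
The linear map $\R^V\times\R \to \R^V\times\R$, $(x,t)\mapsto (-2x,t)$, sends $-\Po_{\overline{G_-}}\times\{1\}$ to $2\Po_{\overline{G_-}}\times\{1\}$ and $\Po_{\overline{G_+}}\times\{-1\}$ to $-2\Po_{\overline{G_+}}\times\{-1\}$, so it is the desired linear isomorphism from $\Po_{\dG}^{\dual}$ onto $\Po_{\overline{\dG}}$.

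The main obstacle is really just bookkeeping: keeping careful track of the swap of indices caused by passing from $\dG$ to $\overline{\dG}$ (the formula in Theorem~\ref{thm:CayleyAnti} already exchanges $\Po_1$ and $\Po_2$ on the dual side, and the complement double graph $\overline{\dG}$ also swaps $E_+$ and $E_-$, so one must verify that these two swaps are consistent), and choosing the correct linear map to absorb the factor of $2$ and the signs. Once these are in order, the statement is a direct corollary.
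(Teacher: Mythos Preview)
Your proposal is correct and follows essentially the same route as the paper: apply Theorem~\ref{thm:CayleyAnti}, identify $\Ant{\Po_{G_\sigma}} = \Po_{\overline{G}_\sigma}$ via Lov\'asz's characterization, and conclude the linear isomorphism. The paper compresses this into a single displayed chain of equalities and writes only $\cong$ at the end, whereas you additionally spell out the identification $\Ant{\Po_G}=\Po_{\overline G}$ and exhibit the explicit linear map $(x,t)\mapsto(-2x,t)$, which is a welcome bit of extra detail but not a different argument.
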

\begin{proof}
We have
\[
    \Po_{\dG}^\dual=(\CayDiff{2\Po_{G_+}}{2\Po_{G_-}})^\dual  \ = \
    \CayDiff{-\Ant{\Po_{G_-}}}{-\Ant{\Po_{G_+}}}  \ = \
    \CayDiff{-\Po_{\overline{G}_-}}{-\Po_{\overline{G}_+}} \ \cong \
    \Po_{\overline{\dG}}.
    \qedhere
\]
\end{proof}

In particular, a double poset $\dP = (\P,\preceq_\pm)$ gives rise to a double
graph $\dG(\dP) = (G(\P_+),G(\P_-))$ and the double chain polytope of $\dP$ is
simply $\TChain{\dP}  = \Po_{\dG(\dP)}$, the \Defn{double chain polytope} of
$\dP$. Theorem~\ref{thm:CayleyAnti} directly gives a facet description of the
double chain polytope. Note that compatibility is not required.

\begin{thm}\label{thm:TC_facets}
    Let $\dP$ be a double poset and $\TChain{\dP}$ its double chain polytope.
    Then $(g,t) \in \R^\P \times \R$ is contained in $\TChain{\dP}$ if and
    only if 
    \[
        \sum_{a \in C_+} g(a) - t \ \le \ 1 \quad \text{ and } \quad \sum_{a \in
        C_-} -g(a) + t \ \le \ 1,
    \]
    where $C_+ \subseteq \P_+$ and $C_- \subseteq \P_-$ ranges of all chains.
\end{thm}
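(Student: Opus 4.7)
The plan is to identify the facets of $\TChain{\dP}$ by computing its polar via Theorem~\ref{thm:CayleyAnti}. Each chain polytope $\Chain{\P_\sigma}$ is a full-dimensional anti-blocking polytope, being the stable set polytope of the (perfect) comparability graph $G(\P_\sigma)$; from~\eqref{eqn:chain} its inequality description is
\[
    \Chain{\P_\sigma} \ = \ \{g \in \Rnn^\P : \inner{\1_{C_\sigma}, g} \le 1 \text{ for all chains } C_\sigma \subseteq \P_\sigma\}.
\]
Theorem~\ref{thm:doubleAnt} therefore yields
\[
    \Ant{\Chain{\P_\sigma}} \ = \ \convDown{\1_{C_\sigma} : C_\sigma \subseteq \P_\sigma \text{ chain}},
\]
and every indicator $\1_{C_\sigma}$ (including $\1_\emptyset = \0$) is readily seen to be a vertex of $\Ant{\Chain{\P_\sigma}}$: it is uniquely maximized by the linear functional that rewards the $C_\sigma$-coordinates and heavily penalizes the remaining ones.

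Next, applying the second assertion of Theorem~\ref{thm:CayleyAnti} with $\Po_\sigma := \Chain{\P_\sigma}$ gives
\[
    \TChain{\dP}^\dual \ = \ \bigl(\CayDiff{2\Chain{\P_+}}{2\Chain{\P_-}}\bigr)^\dual \ = \ \CayDiff{-\Ant{\Chain{\P_-}}}{-\Ant{\Chain{\P_+}}}.
\]
As a Cayley sum, the vertices of this polytope lie in the two slices $t = \pm 1$ and are precisely the vertices of $-\Ant{\Chain{\P_-}}$ at height $+1$ together with the vertices of $\Ant{\Chain{\P_+}}$ at height $-1$. By the preceding step these are exactly the points $(-\1_{C_-}, +1)$ for chains $C_- \subseteq \P_-$ and $(\1_{C_+}, -1)$ for chains $C_+ \subseteq \P_+$, with $C_\sigma = \emptyset$ producing the two special vertices $(\0,\pm 1)$.

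Under biduality each vertex $(\w, s)$ of $\TChain{\dP}^\dual$ corresponds to the facet-defining inequality $\inner{\w, g} + s\, t \le 1$ of $\TChain{\dP}$: the vertex $(\1_{C_+}, -1)$ produces $\sum_{a \in C_+} g(a) - t \le 1$, the vertex $(-\1_{C_-}, +1)$ produces $\sum_{a \in C_-} -g(a) + t \le 1$, and the empty chains recover the horizontal inequalities $\pm t \le 1$. Since these exhaust $V(\TChain{\dP}^\dual)$, the biduality $\TChain{\dP} = (\TChain{\dP}^\dual)^\dual$ finishes the proof. Nothing here is genuinely hard once Theorem~\ref{thm:CayleyAnti} is in place; the only care needed is with the sign conventions in the polar/Cayley dictionary, and notably compatibility of $\dP$ is never invoked.
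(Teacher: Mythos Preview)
Your proof is correct and follows exactly the route the paper indicates: the paper simply says ``Theorem~\ref{thm:CayleyAnti} directly gives a facet description of the double chain polytope,'' and you have carefully unpacked what that means by computing the polar via Theorem~\ref{thm:CayleyAnti} and reading off its vertices. Your observation that every $\1_{C_\sigma}$ is a vertex of $\Ant{\Chain{\P_\sigma}}$ could be shortened by noting that $\Ant{\Chain{\P_\sigma}} = \Po_{\overline{G(\P_\sigma)}}$ (since $G(\P_\sigma)$ is perfect), whose vertices are precisely the indicators of cliques of $G(\P_\sigma)$, i.e., chains of $\P_\sigma$; but your direct argument is fine too.
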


For the usual order- and chain polytope, Hibi and Li~\cite{Hibi16} showed that $\Ord{\P}$ has at most as many facets as
$\Chain{\P}$ and equality holds if and only if $\P$ does not contain the $5$-element
poset with Hasse diagram 'X'. This is different in the case of double poset polytopes.

\begin{cor}
    Let $(\P,\preceq)$ be a poset. Then 
    $\TOrd{\dP_\circ}$ and $\TChain{\dP_\circ}$ have the same number of facets.
\end{cor}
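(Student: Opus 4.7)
The plan is to count facets on both sides. By Corollary~\ref{cor:TO-2l}, $\TOrd{\dP_\circ}$ has exactly $2c(\P)$ facets, where $c(\P)$ denotes the number of chains in $(\P,\preceq)$ (including the empty chain). Theorem~\ref{thm:TC_facets} presents $\TChain{\dP_\circ}$ as the solution set of the $2c(\P)$ linear inequalities
\[
    \sum_{a \in C} g(a) - t \ \le \ 1 \quad \text{and} \quad \sum_{a \in C} -g(a) + t \ \le \ 1
\]
indexed by chains $C \subseteq \P$ (with $C = \emptyset$ yielding the horizontal inequalities $\mp t \le 1$). This gives at most $2c(\P)$ facets, so the task reduces to showing that each listed inequality is facet-defining.

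Let $n = |\P|$, so $\dim \TChain{\dP_\circ} = n + 1$, and let $C = \{a_1 \prec \cdots \prec a_k\}$ be a chain. Recall that the vertices of $\TChain{\dP_\circ}$ are $(2\1_S, 1)$ and $(-2\1_S, -1)$ as $S$ ranges over antichains of $(\P,\preceq)$. I would verify that the $n+1$ points
\[
    \bigl\{(2\e_{a_i}, 1) : i = 1,\dots,k \bigr\} \ \cup \ \bigl\{(-2\e_b, -1) : b \in \P \setminus C \bigr\} \ \cup \ \bigl\{(\0, -1)\bigr\}
\]
all lie in the face $F_C := \{(g,t) \in \TChain{\dP_\circ} : \sum_{a \in C} g(a) - t = 1\}$ and are affinely independent. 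Membership is immediate: the singleton antichain $\{a_i\}$ meets $C$ in exactly one element (so the left-hand side evaluates to $2-1 = 1$), while each antichain disjoint from $C$, including the empty antichain, contributes $0+1 = 1$. Affine independence follows by translating $(\0,-1)$ to the origin; the resulting $n$ vectors $(2\e_{a_i}, 2)$ and $(-2\e_b, 0)$ have their first $n$ coordinates supported on distinct positions of $\P$, so they are linearly independent. Hence $\dim F_C = n$ and $F_C$ is a facet.

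The $-$-side is handled by the symmetric argument, interchanging the roles of the upper and lower vertex layers. The two horizontal facets $\{t = \pm 1\}$ arise from the $C = \emptyset$ instance of each family. Finally, the $2c(\P)$ inequalities are pairwise distinct, since different chains give different linear forms on $\R^\P$ and the two sign choices define oppositely oriented halfspaces. Hence $\TChain{\dP_\circ}$ has exactly $2c(\P)$ facets, matching $\TOrd{\dP_\circ}$. The only subtle point is to remember to include the empty chain in the count, as this is what accounts for the horizontal facets on both sides.
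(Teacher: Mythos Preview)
Your proof is correct and follows the same strategy as the paper: count facets on both sides and observe that each count equals $2c(\P)$. The paper's one-line proof treats Theorem~\ref{thm:TC_facets} as a genuine \emph{facet} description (which it is, via the polar computation in Theorem~\ref{thm:CayleyAnti}), whereas you verify irredundancy directly by exhibiting $n+1$ affinely independent vertices on each hyperplane; this is extra care rather than a different argument.
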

\begin{proof}
    Alternating chains in $\dP_\circ$ are in bijection to twice the number of
    chains in $\P$.
\end{proof}

However, it is not true that $\TOrd{\dP_\circ}$ is always combinatorially isomorphic
to $\TChain{\dP_\circ}$.

\begin{example}\label{ex:XX}
    Let $\P$ be the $5$-element poset with Hasse diagram 'X'. Then the face
    vectors of $\TOrd{\dP_\circ}$ and $\TChain{\dP_\circ}$ are
    \begin{align*}
            f(\TOrd{\dP_\circ}) \ &= \ (16, 88, 204, 240, 144, 36) \\
            f(\TChain{\dP_\circ}) \ &= \ (16, 88, 222, 276, 162, 36).
    \end{align*}
\end{example}

Hibi and Li~\cite{Hibi16} conjectured that
$f(\Ord{\P})\le f(\Chain{\P})$ componentwise. Computations suggest that the
same relation should hold for the double poset polytopes of induced double
posets.

\begin{conj}\label{conj:f_dom}
    Let $\dP = (\P,\preceq,\preceq)$ be a double poset induced by a poset
    $(\P,\preceq)$. Then
    \[
        f_i(\TOrd{\dP}) \ \le \ f_i(\TChain{\dP})
    \]
    for $0 \le i \le |\P|$.
\end{conj}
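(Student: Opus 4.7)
The plan is to construct, for each $i$ with $0\le i\le |\P|$, a dimension-preserving injection from the $i$-faces of $\TOrd{\dP}$ to the $i$-faces of $\TChain{\dP}$. The extreme cases are immediate: vertices of both polytopes biject with pairs (filter of $\P$, sign), because vertices of $\Chain{\P}$ are indicators of antichains and antichains biject with filters via their generating set; and by Corollary~\ref{cor:TO-2l} together with the corollary immediately preceding the conjecture, both polytopes have exactly twice the number of chains in $(\P,\preceq)$ as facets. So the content lies strictly in the middle of the $f$-vector.

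The natural vehicle is the lattice-preserving PL homeomorphism $\TPsi{\dP}\colon\TChain{\dP}\to\TOrd{\dP}$ of Section~\ref{sec:triang}, which identifies the two canonical flag triangulations. Write $\subdiv$ for the resulting common abstract simplicial complex. Every face of either polytope is a union of maximal simplices of its triangulation, so the inequality reduces to a comparison of how the simplices of $\subdiv$ coalesce into flat pieces of the boundary on each side. Given a face $F\subseteq\TOrd{\dP}$, let $\subdiv(F)$ be the set of maximal simplices of $\subdiv$ contained in $F$; the map $\TPsi{\dP}^{-1}$ transports $|\subdiv(F)|$ to a subcomplex of $\TChain{\dP}$, and we define $\iota_i(F)$ as the smallest face of $\TChain{\dP}$ containing that transported subcomplex. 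Injectivity of $\iota_i$ is then essentially formal once the dimension bound $\dim\iota_i(F)\le\dim F$ is established, so all the content sits in that bound.

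The hard step is exactly this dimension control. On the order-polytope side, the coplanarity of two adjacent maximal simplices of $\subdiv$ across a shared codimension-one face of $F$ is witnessed by the vanishing of some linear form $\ell_{a,b}$ for a cover relation $a\prec b$, or by a horizontal form $\pm t$; on the chain-polytope side, the relevant facet-defining forms are the chain sums of Theorem~\ref{thm:TC_facets}. Heuristically, chain sums encode strictly coarser gluing data than cover-relation forms, so new coplanarities can appear on the chain side while every coplanarity present on the order side should persist under $\TPsi{\dP}^{-1}$ — and this asymmetry is precisely the reason $f_i(\TChain{\dP})$ dominates. Making this precise amounts to tracking explicitly how $\TPsi{\dP}$ acts on the supporting hyperplanes of the internal facets of $\subdiv$, using the piecewise-linear formula for the transfer map of Section~\ref{sec:triang} and the facet descriptions of Theorem~\ref{thm:TO_faces} and Theorem~\ref{thm:TC_facets}.

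The main obstacle will therefore be showing that ``cover-relation coplanarities'' on the order side propagate to ``chain-sum coplanarities'' on the chain side; this is not completely formal because chain-polytope faces are governed by the comparability-graph stable set structure, whose face lattice is subtler than that of $\Ord{\P}$. A natural first test is the $5$-element poset 'X' of Example~\ref{ex:XX}, where $f_2$ and $f_3$ are the first indices with strict inequality and an explicit $\iota_i$ can be produced by hand, and a complementary reduction would exploit the product/ordinal-sum behavior of order and chain polytopes to restrict to connected posets. If the dimension control turns out to fail for some $F$, the same framework still produces a well-defined map and may yield a weaker comparison (e.g.\ on the $h$-vector via Sullivant's theorem applied to the common pulling triangulation).
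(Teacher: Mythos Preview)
This statement is labeled as a \emph{conjecture} in the paper and is not proved there; the authors present it as an analogue of the (then-open) Hibi--Li conjecture for single posets, supported only by computation, and immediately after give an example showing the obvious extension to arbitrary compatible double posets fails. So there is no ``paper's own proof'' to compare against --- you are attempting an open problem.

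As for the proposal itself, there is a genuine gap, and it is not where you locate it. Your claim that injectivity of $\iota_i$ is ``essentially formal'' once the dimension bound $\dim\iota_i(F)\le i$ holds is unjustified, and in fact your own heuristic undermines it. Saying ``new coplanarities can appear on the chain side'' is precisely the statement that two distinct $i$-faces $F,F'$ of $\TOrd{\dP}$ can have $\TPsi_\dP^{-1}(F)$ and $\TPsi_\dP^{-1}(F')$ landing in the \emph{same} $i$-face $G$ of $\TChain{\dP}$; that is non-injectivity of $\iota_i$, not a harmless side effect. Concretely, for the ordinary transfer map $\phi_\P:\Ord{\P}\to\Chain{\P}$, every facet $\{f(a)=f(b)\}$ of $\Ord{\P}$ with $a\prec b$ a cover relation maps into the single facet $\{g(b)=0\}$ of $\Chain{\P}$, so whenever $b$ has several lower covers the analogous $\iota$ already fails to be injective at the facet level --- this is exactly the `X'-poset obstruction. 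You would have to argue that passing to the Cayley sum $\TOrd{\dP_\circ}$ repairs this collapsing, and nothing in your outline does so.

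Separately, the dimension control itself is not routine either: $\TPsi_\dP^{-1}(F)$ is only a PL $i$-ball, and the smallest face of $\TChain{\dP}$ containing an $i$-dimensional set has dimension at least $i$, so the inequality you need is the \emph{opposite} one, namely that this ball sits inside a single $i$-face. That is a concrete statement about how the chain-sum facet hyperplanes of Theorem~\ref{thm:TC_facets} pull back under $\TPsi_\dP$, and it does not follow from the simplex-by-simplex argument in the proof of Theorem~\ref{thm:T_iso}.
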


An extension of the conjecture to general compatible double posets fails, as
the following example shows.

\begin{example}
    \newcommand\AltChain{\mathbf{A}}
    Let $\AltChain_n$ be an \Defn{alternating chain} of length $n$, that is,
    $\dP$ is a double poset on elements $a_1,a_2,\dots,a_{n+1}$ with cover
    relations
    \[
        a_1 \ \prec_+ \ a_2 \ \prec_- \ a_3 \ \prec_+ \cdots
    \]
    It follows from Theorem~\ref{thm:TC_facets} that the number of facets of
    $\TChain{\AltChain_n}$ is $3n+4$. Since $\AltChain_n$ is compatible, then
    by Theorem~\ref{thm:compat_facets} the number of facets of
    $\TOrd{\AltChain_n}$ equals the number of alternating chains which is
    easily computed to be $\binom{n+3}{2}+1$. Thus, for $n \ge 3$, the
    alternating chains $\AltChain_n$ fail Conjecture~\ref{conj:f_dom} for the
    number of facets.  For $n=3$, we explicitly compute
    \begin{align*}
            f(\TOrd{\AltChain_3}) \ &= \ ( 21, 70, 95, 60, 16) \quad \text{
            and} \\
            f(\TChain{\AltChain_3}) \ &= \ (21, 67, 86, 51, 13).
    \end{align*}
\end{example}

Every graph $G = (V,E)$ trivially gives rise to a double graph $\dG_\circ =
(V,E,E)$. Thus, the \Defn{Hansen polytope} of a graph $G$ is the polytope
$\Hansen{G} = \Po_{\dG_\circ}$. Theorem~\ref{thm:CayleyAnti} then yields a
strengthening of the main result of Hansen~\cite{Hansen}. Note that for the
complement graph $\overline{G} = (V,E^c)$, it follows that a subset $S
\subseteq V$ is a stable set of $G$ if and only if $S$ is a clique of
$\overline{G}$ and vice versa.  

\begin{cor}[{\cite[Thm.~4(c)]{Hansen}}]\label{cor:Hansen}
    Let $G$ be a perfect graph. Then $\Hansen{G}$ is $2$-level and
    $\Hansen{G}^\dual$ is affinely isomorphic to $\Hansen{\overline{G}}$.
\end{cor}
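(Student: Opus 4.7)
The plan is to derive both parts from the structural results already in hand: the duality is an instance of Corollary~\ref{cor:dual_doublegraph}, while 2-levelness boils down to evaluating the explicit facet inequalities produced by Theorem~\ref{thm:CayleyAnti} on the known vertices of $\Hansen{G}$. The one external ingredient we will need is the weak perfect graph theorem of Lov\'asz, which ensures that $\overline{G}$ is perfect whenever $G$ is.

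For the duality claim, I would note that $\Hansen{G} = \Po_{\dG_\circ}$ for the trivial induced double graph $\dG_\circ = (V,E,E)$, whose complement is $\overline{\dG_\circ} = (V,E^c,E^c) = (\overline{G})_\circ$. Perfection of both $G$ and $\overline{G}$ makes $\dG_\circ$ a perfect double graph, and Corollary~\ref{cor:dual_doublegraph} applies to give
\[
    \Hansen{G}^\dual \ = \ \Po_{\dG_\circ}^\dual \ \cong \ \Po_{\overline{\dG_\circ}} \ = \ \Hansen{\overline{G}}.
\]
Since Hansen polytopes are centrally symmetric about the origin, the linear isomorphism automatically extends to an affine isomorphism.

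For 2-levelness, I would begin by identifying the vertical facets of $\Hansen{G}$. The proof of Theorem~\ref{thm:CayleyAnti} shows that these correspond to the nonzero vertices of $\Ant{\Po_G}$ (and their negatives), and by Theorem~\ref{thm:perfect} combined with Theorem~\ref{thm:doubleAnt} these are precisely the clique indicators $\1_C$ of $G$. Hence every facet of $\Hansen{G}$ is cut out by an inequality of the form
\[
    \inner{\1_C, \x} - t \ \le \ 1, \qquad -\inner{\1_C, \x} + t \ \le \ 1, \qquad \text{or} \qquad \pm t \ \le \ 1,
\]
while by Proposition~\ref{prop:ab_vert_union} the vertices of $\Hansen{G}$ are $(\pm 2\1_S, \pm 1)$ for stable sets $S \subseteq V$. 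The remaining step is a one-line evaluation: since $|S \cap C| \in \{0,1\}$ whenever $S$ is stable and $C$ is a clique, each of the listed linear forms takes only values in $\{-1,+1\}$ at every vertex of $\Hansen{G}$, which is exactly the 2-level condition.

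The main obstacle, such as it is, is keeping track of which perfection hypothesis is used where: perfection of $G$ itself to pin down the $\le$-maximal vertices of $\Ant{\Po_G}$ as clique indicators, and perfection of $\overline{G}$ to invoke Corollary~\ref{cor:dual_doublegraph}. Both are supplied by the weak perfect graph theorem, leaving only the routine substitutions described above.
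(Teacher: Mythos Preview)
Your proof is correct and follows essentially the same route as the paper: Theorem~\ref{thm:CayleyAnti} (or its consequence Corollary~\ref{cor:dual_doublegraph}) for the polar, then the clique--stable-set inner product computation for $2$-levelness. One small over-claim: applying Corollary~\ref{cor:dual_doublegraph} to $\dG_\circ = (V,E,E)$ only needs $G$ itself to be perfect (both components of $\dG_\circ$ equal $G$), and the identity $\Ant{\Po_G} = \Po_{\overline{G}}$ already follows from Theorems~\ref{thm:perfect} and~\ref{thm:doubleAnt} using only perfection of $G$, so the weak perfect graph theorem is not actually needed here.
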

\begin{proof}
    By Theorem~\ref{thm:CayleyAnti} and Theorem~\ref{thm:perfect}
    \[
        \Hansen{G}^\dual \ = \ \CayDiff{-\Ant{\Po_G}}{-\Ant{\Po_G}} \ = \
        \CayDiff{-\Po_{\overline{G}}}{-\Po_{\overline{G}}} \ \cong \
        \Hansen{\overline{G}},
    \]
    which proves the second claim. A vertex of $\Hansen{G}^\dual$ is of the
    form $\d = \pm(-\1_C,1)$ for some clique
    $C$ of $G$. Thus, for any vertex $\v = \pm(2\1_S,1) \in \Hansen{G}$, where
    $S$ is a stable set of $G$, we compute $\inner{\d,\v} = \pm( 1 - 2|S \cap
    C|) = \pm 1$.
\end{proof}

\begin{example}[Double chain polytopes of dimension-two posets]
    Following Example~\ref{ex:2dim}, let $\pi_+,\pi_-\in\Z^n$ be two integer
    sequences with associated posets $\P_{\pi_+}$ and $\P_{\pi_-}$ of order
    dimension two. Consider the double posets $\dP=(\P_{\pi_+},\P_{\pi_-})$
    and $-\dP=(\P_{-\pi_-},\P_{-\pi_+})$.  We have
    \[
    \overline{\dG(\dP)} \ = \
    (\overline{G(\P_{\pi_-})},\overline{G(\P_{\pi_+})}) \ = \
    (G(\P_{-\pi_-}),G(\P_{-\pi_+})) \ = \ \dG(-\dP)
    \]
    and hence
    \[
    \TChain{\dP}^\dual \ \cong \ \TChain{-\dP}
    \]
    by Corollary \ref{cor:dual_doublegraph}. However, it is not necessarily
    true that $\TOrd{\dP}^\dual\cong\TOrd{-\dP}$, as can be checked for the
    double poset induced by the $X$-poset; cf.\ Example~\ref{ex:XX}.
\end{example}

\begin{example}[Double chain polytopes of plane posets]
Let $\dP$ be a plane double poset. By the last example, the double chain polytope $\TChain{\dP}$ is linearly equivalent to its polar $\TChain{\dP}^\dual$.
\end{example}

Among the $2$-level polytopes, independence polytopes of perfect graphs play a
distinguished role.  The following observation, due to Samuel Fiorini
(personal communication), characterizes $2$-level anti-blocking polytopes.
\begin{prop}\label{prop:Sam}
    Let $\Po$ be a full-dimensional anti-blocking polytope. Then $\Po$ is
    $2$-level if and only if $\Po$ is linearly isomorphic to $\Po_G$ for some
    perfect graph $G$.
\end{prop}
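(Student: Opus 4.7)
The plan is to prove both implications. For the easy direction, suppose $\Po = \Po_G$ with $G$ perfect. By Theorem~\ref{thm:perfect}, the facet-defining halfspaces of $\Po_G$ are $x_j \ge 0$ and $x(C) \le 1$ for cliques $C$. On any vertex $\1_S$ (with $S$ stable), we have $\1_S(j) \in \{0,1\}$ and $\1_S(C) = |S \cap C| \in \{0,1\}$, the latter because a stable set meets a clique in at most one element. Thus every facet-defining hyperplane has a parallel translate containing all remaining vertices, proving $2$-levelness.

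For the converse, let $\Po \subseteq \Rnn^n$ be a full-dimensional $2$-level anti-blocking polytope. Since $\Po$ is full-dimensional and anti-blocking, each coordinate hyperplane $\{x_j = 0\}$ is facet-defining, so $2$-levelness forces every vertex to satisfy $x_j \in \{0, v_j\}$ for some $v_j > 0$. The diagonal map $\mathrm{diag}(1/v_1,\dots,1/v_n)$ is a linear isomorphism that preserves the anti-blocking property and normalizes the vertex set to $\{\1_S : S \in \mathcal{S}\}$ for some family $\mathcal{S} \subseteq 2^{[n]}$; by anti-blocking, $\mathcal{S}$ is closed under subsets, and full-dimensionality guarantees $\{j\} \in \mathcal{S}$ for each $j$.

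By Theorem~\ref{thm:doubleAnt}, every nontrivial facet of $\Po$ has the form $\inner{\d,x} \le 1$ with $\d \in \Rnn^n$. Since $\0 \in \Po$, the $2$-level property forces $\inner{\d, \1_S} \in \{0, 1\}$ for every $S \in \mathcal{S}$; specializing to singletons yields $d_j \in \{0, 1\}$, so $\d = \1_C$ for some $C \subseteq [n]$. This is the crux of the proof: $2$-levelness upgrades arbitrary facet normals of an anti-blocking polytope to $0/1$-vectors. Now define $G = ([n], E)$ by $\{i,j\} \in E$ iff $\1_{\{i,j\}} \notin \Po$, equivalently $\{i,j\} \notin \mathcal{S}$. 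Each facet set $C$ is then a clique of $G$: if $\{i,j\} \subseteq C$ then $|\{i,j\} \cap C| = 2$, so $\{i,j\} \notin \mathcal{S}$ and hence $\{i,j\} \in E$.

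Finally, one verifies that $\mathcal{S}$ is exactly the family of stable sets of $G$: if $S$ is stable then $|S \cap C| \le 1$ for every facet clique $C$ (else $S$ would contain an edge), so $\1_S$ satisfies all defining inequalities of $\Po$ and is therefore a $0/1$-vertex, giving $S \in \mathcal{S}$; the reverse inclusion is immediate from the definition of $E$. Hence $\Po = \Po_G$, and since $\Po$ is cut out by nonnegativity together with clique inequalities alone, Theorem~\ref{thm:perfect} yields that $G$ is perfect. The principal obstacle is the step showing $\d$ is a $0/1$-vector, where $2$-levelness, full-dimensionality, and anti-blocking must interact simultaneously; everything else is essentially a bookkeeping exercise translating between combinatorial and polyhedral descriptions.
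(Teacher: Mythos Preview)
Your proof is correct and follows essentially the same approach as the paper's: normalize so that the $\e_j$ are vertices, use $2$-levelness to force the nontrivial facet normals into $\{0,1\}^n$, and then identify $\Po$ with the stable set polytope of the graph whose edges are the $2$-subsets not indexing vertices (equivalently, the graph generated by the facet supports). You are more explicit than the paper in several places---in particular you spell out the easy direction and the final appeal to Theorem~\ref{thm:perfect} for perfectness---but the underlying argument is the same.
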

\begin{proof}
    The origin is a vertex of $\Po$ and, since $\Po$ is full-dimensional and
    anti-blocking, its neighbors are $ \alpha_1 \e_1, \dots, \alpha_1 \e_n$
    are for some $\alpha_i > 0$. After a linear transformation, we can assume
    that $\alpha_1 = \cdots = \alpha_n = 1$. Since $\Po$ is $2$-level, $\Po =
    \{ \x \in \Rnn^n : \inner{\d_i, \x} \le 1 \text{ for } i=1,\dots,s\}$
    where $\d_i \in \{0,1\}^n$ for all $i=1,\dots,s$. Let $G = ([n],E)$ be the
    minimal graph with cliques $\supp(\d_i)$ for all $i=1,\dots,s$. That is,
    $E = \bigcup_i \binom{\supp(\d_i)}{2}$. We have $\Po_G \subseteq \Po$.
    Conversely, any vertex of $\Po$ is of the form $\1_S$ for some $S
    \subseteq [n]$ and $\inner{\d_i,\1_S} = |\supp(\d_i) \cap S| \le 1$ shows
    that $\Po \subseteq \Po_G$. 
\end{proof}

This implies a characterization of the $2$-level polytopes among Cayley
sums of anti-blocking polytopes.

\begin{cor}\label{cor:AB_2l}
    Let $\Po_1,\Po_2 \subset \R^n$ be full-dimensional anti-blocking
    polytopes.  Then $\Po = \CayDiff{\Po_1}{\Po_2}$ is $2$-level if and only if
    $\Po$ is affinely isomorphic to $\Hansen{G}$ for some perfect graph $G$.
\end{cor}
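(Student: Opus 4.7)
The \emph{if} direction is immediate from Corollary~\ref{cor:Hansen}, since $2$-levelness is an affine invariant. For the converse, my plan is to rescale so as to work with $\CayDiff{2\Po_1}{2\Po_2}$ (affinely isomorphic to $\CayDiff{\Po_1}{\Po_2}$) so that Theorem~\ref{thm:CayleyAnti} applies verbatim, and then to proceed in three steps: first show $\Po_1 = \Po_2$; then show this common polytope $\Qo$ is itself $2$-level; finally invoke Proposition~\ref{prop:Sam} to conclude.

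For the first step, I would fix any $\d \in V(\Ant{\Po_1}) \setminus \{\0\}$. By Theorem~\ref{thm:CayleyAnti} it produces the vertical facet $\inner{\d,\x} - t \le 1$ of $\CayDiff{2\Po_1}{2\Po_2}$. On the vertices $(2\v,1)$ for $\v \in V(\Po_1)$ and $(-2\u,-1)$ for $\u \in V(\Po_2)$, the defining linear form takes the values $2\inner{\d,\v} - 1$ and $1 - 2\inner{\d,\u}$ respectively. Since $\0 \in V(\Po_1) \cap V(\Po_2)$ the value $-1$ is attained, and the facet-maximum $1$ is attained as well, so the $2$-level hypothesis forces every value into $\{-1,+1\}$. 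This yields $\inner{\d,\v} \in \{0,1\}$ for every $\v \in V(\Po_1) \cup V(\Po_2)$. By Theorem~\ref{thm:doubleAnt}, the condition $\inner{\d,\u} \le 1$ for every $\d \in V(\Ant{\Po_1})$ places each $\u \in V(\Po_2)$ into $\Po_1$, so $\Po_2 \subseteq \Po_1$; the symmetric argument via $V(\Ant{\Po_2})$ gives $\Po_1 \subseteq \Po_2$, and hence $\Po_1 = \Po_2 =: \Qo$.

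The second step---verifying that $\Qo$ itself is $2$-level---is where the only genuine subtlety lies and is the main obstacle. The first step already handles every facet $\inner{\d,\x} \le 1$ of $\Qo$ with $\d \in V(\Ant{\Qo})$; what remains are the coordinate facets $x_i = 0$, which are not directly visible as vertical facets of the Cayley sum. To reach them I would exploit the duality between $\Qo$ and $\Ant{\Qo}$: since $\Ant{\Qo}$ is itself a full-dimensional anti-blocking polytope with $\0$ as a vertex, its axis-neighbors must be of the form $\beta_i \e_i$, and a direct computation gives $\beta_i = 1/c_i$ where $c_i := \max\{v_i : \v \in V(\Qo)\}$. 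Consequently $\e_i/c_i \in V(\Ant{\Qo}) \setminus \{\0\}$, and applying the first step to this distinguished $\d$ produces $v_i \in \{0, c_i\}$ for every $\v \in V(\Qo)$---precisely the $2$-level condition for the facet $x_i = 0$.

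Once $\Qo$ is known to be $2$-level, Proposition~\ref{prop:Sam} yields a linear isomorphism $L$ with $L(\Qo) = \Po_G$ for some perfect graph $G$. Extending $L$ trivially to the $t$-coordinate and composing with the rescaling $(\x,t) \mapsto (2\x, t)$ transports $\CayDiff{\Po_1}{\Po_2} = \CayDiff{\Qo}{\Qo}$ to $\CayDiff{2\Po_G}{2\Po_G} = \Hansen{G}$, completing the argument.
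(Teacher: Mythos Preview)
Your proof is correct and takes a route genuinely different from the paper's. The paper begins by observing that $\Po_1$ and $\Po_2$ are the two horizontal facets of $\Po$, hence automatically $2$-level; it then invokes Proposition~\ref{prop:Sam} \emph{immediately} to write $\Po_1 = \Po_{G_1}$ and $\Po_2 = A\Po_{G_2}$ for perfect graphs $G_1,G_2$ and a diagonal matrix $A$, and finishes with two separate computations showing first $A = \mathrm{Id}$ and then $G_1 = G_2$. You instead work directly with anti-blocking duality: your Step~1 establishes $\Po_1 = \Po_2$ in one stroke via Theorem~\ref{thm:CayleyAnti}, with no graphs in sight, and Proposition~\ref{prop:Sam} enters only at the very end. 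Your argument is cleaner on the ``$\Po_1 = \Po_2$'' side, effectively merging the paper's two matching steps into one; the price is that you must verify $2$-levelness of $\Qo$ by hand in Step~2. That step is correct as written (the axis vertices $\e_i/c_i$ of $\Ant{\Qo}$ do the job), but it could be replaced by the one-line observation the paper uses: $\Qo = \Po_1$ is a facet of the $2$-level polytope $\Po$, and faces of $2$-level polytopes are $2$-level.
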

\begin{proof}
    Sufficiency is Hansen's result (Corollary~\ref{cor:Hansen}). For
    necessity, observe that $\Po_1$ and $\Po_2$ are faces and hence have to be $2$-level. By the proof of Proposition~\ref{prop:Sam}, we may assume that $\Po_1 =  \Po_{G_1}$ for some perfect graph $G_1$ and $\Po_2 =  A\Po_{G_2}$ for some perfect $G_2$ and a diagonal matrix $A\in\R^{n\times n}$ with diagonal entries $a_i>0$ for $i\in[n]$. We will proceed in two steps: We first prove that $A$ must be the identity matrix and then show that $G_1=G_2$.
 
 For every $i\in[n]$ the inequality $x_i\ge 0$ is facet-defining for $\Po_1$. Hence it induces a facet-defining inequality for $\Po$, which must be of the form
  \[
\ell_i\ := \ -b_ix_i+t \ \le \ 1
 \]
 for some $b_i>0$, where $t$ denotes the last coordinate in $\R^{n+1}$. Observe that $\ell_i$ takes the values $1$ and $1-b_i$ on the vertices $\{\0,\e_i\}\times \{1\}$ of  the face $\Po_1\times\{1\}$. On the other hand, on $\{\0,-a_i\e_i\}\times \{-1\}\subset-\Po_2\times\{-1\}$, the values are $-1$ and $-1+a_ib_i$. Now $2$-levelness implies $a_i=1$ and $b_i=2$.
        
It now follows from Theorem~\ref{thm:CayleyAnti} that the facet-defining inequalities for $\Po$ are
\[
\begin{split}
2\1_{C_1}(\x)-t \ &\le \ 1\text{ and}\\
-2\1_{C_2}(\x)+t\ &\le \ 1,
\end{split}
\]        
where $C_1$ and $C_2$ are cliques in $G_1$ and $G_2$, respectively. By $2$-levelness each of these linear functions takes the values $-1$ and $1$ on the vertices of $\P$. This easily implies that every clique in $G_1$ must be a clique in $G_2$ and conversely. Hence $G_1=G_2$.
    \end{proof}

\subsection{Canonical Subdivisions}\label{ssec:AB_subdiv}
We now turn to the canonical subdivisions of $\Po_1 - \Po_2$ and
$\CayDiff{\Po_1}{\Po_2}$ for anti-blocking polytopes $\Po_1,\Po_2$.  A
\Defn{subdivision} of $\Po = \Po_1 - \Po_2$ is a collection of polytopes
$\Qo^1,\dots,\Qo^m \subseteq \Po$ each of dimension $\dim \Po$ such that $\Po
= \Qo^1\cup \cdots \cup \Qo^m$ and $\Qo^i \cap \Qo^j$ is a face of both for
all $i \neq j$. We call the subdivision \Defn{mixed} if each $\Qo^i$ is of the
form $\Qo^i_1 - \Qo^i_2$ where $\Qo^i_j$ is a vertex-induced subpolytope of
$\Po_j$ for $j=1,2$. Finally, a mixed subdivision is \Defn{exact} if $\dim
\Qo^i = \dim \Qo^i_1 + \dim \Qo^i_2$. That is, $\Qo^i$ is linearly isomorphic
to the Cartesian product $\Qo^i_1 \times \Qo^i_2$.  For a full-dimensional
anti-blocking polytope $\Po \subset \R^n$, every index set $J \subseteq [n]$
defines a distinct face $\Po|_J := \{ x \in \Po : x_j = 0 \text{ for } j
\not\in J \}$. This is an anti-blocking polytope of dimension $|J|$. For
disjoint $I,J \subseteq [n]$, the polytopes $\Po_1|_I, \Po_2|_J$ lie in
orthogonal subspaces and $\Po_1|_I -  \Po_2|_J$ is in fact a Cartesian
product. In this case, the Cayley sum $\CayDiff{\Po_1|_I}{\Po_2|_J}$ is 
called a \Defn{join} and denoted by $ \Po_1|_I * \Po_2|_J$. As with the
Cartesian product, the combinatorics of $ \Po_1|_I * \Po_2|_J$ is completely
determined by the combinatorics of $ \Po_1|_I$ and $\Po_2|_J$.

\begin{lem}\label{lem:AB_canonical}
    Let $\Po_1, \Po_2 \subset \R^n$ be full-dimensional anti-blocking
    polytopes.  Then $\Po_1 - \Po_2$ has a regular exact mixed subdivision
    with cells $\Po_1|_{J} - \Po_2|_{J^c}$ for all $J \subseteq [n]$. In
    particular, $\CayDiff{\Po_1}{\Po_2}$ has a regular subdivision into joins
    $\Po_1|_{J} * \Po_2|_{J^c}$ for all $J \subseteq [n]$.
\end{lem}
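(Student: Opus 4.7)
The strategy is to reduce the assertion to the observation that the subdivision we seek is induced by intersecting $\Po_1 - \Po_2$ with the orthant fan of $\R^n$, after which exactness and regularity essentially follow from convexity of the $\ell^1$-norm. The key technical tool is the anti-blocking decomposition: for $z \in \Po_1 - \Po_2$ written as $z = p - q$ with $p \in \Po_1, q \in \Po_2$, set $z^+_i := \max(z_i, 0)$ and $z^-_i := \max(-z_i, 0)$ so that $z = z^+ - z^-$ has disjoint supports; then $0 \le z^+ \le p$ and $0 \le z^- \le q$ coordinatewise, so the anti-blocking property yields $z^+ \in \Po_1$ and $z^- \in \Po_2$. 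This is essentially the argument already used in the proof of Theorem~\ref{thm:CayleyAnti}.

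Using this, I would first establish the identity
\[
    \Po_1|_J - \Po_2|_{J^c} \ = \ (\Po_1 - \Po_2) \cap O_J,
\]
where $O_J := \{x \in \R^n : x_i \ge 0 \text{ for } i \in J,\ x_i \le 0 \text{ for } i \notin J\}$. The inclusion $\subseteq$ is immediate from the definitions, and the reverse follows from the disjoint-support decomposition above. With the cells thus identified with intersections of $\Po_1 - \Po_2$ with the orthant cones, the combinatorial axioms of a subdivision follow directly: the cells cover $\Po_1 - \Po_2$ since the orthants cover $\R^n$; pairwise intersections of cells are faces since $O_J \cap O_{J'}$ is a common face of $O_J$ and $O_{J'}$; and each cell is full-dimensional because $\Po_1|_J$ and $\Po_2|_{J^c}$ are full-dimensional in the orthogonal coordinate subspaces $\R^J$ and $\R^{J^c}$, respectively. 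Exactness is then immediate, since the Minkowski difference of polytopes lying in orthogonal coordinate subspaces is a Cartesian product.

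For regularity, I would exhibit the convex piecewise linear function $\psi(x) := \sum_i |x_i|$ (equivalently, the support function of the cube $[-1,1]^n$), whose maximal domains of linearity on $\R^n$ are precisely the orthants $O_J$. Restricted to $\Po_1 - \Po_2$, its maximal domains of linearity are exactly the cells $\Po_1|_J - \Po_2|_{J^c}$, which certifies regularity. The second assertion about $\CayDiff{\Po_1}{\Po_2}$ then follows from the standard Cayley trick, which places regular mixed subdivisions of $\Po_1 - \Po_2$ (with mixed cells of the form $F - G$, with $F$ a face of $\Po_1$ and $G$ a face of $\Po_2$) in bijection with regular subdivisions of $\CayDiff{\Po_1}{\Po_2}$ (with corresponding cells $\CayDiff{F}{G} = F * G$); alternatively, one can repeat the disjoint-support argument directly for $(x, t) \in \CayDiff{\Po_1}{\Po_2}$ using the representation $x = \tfrac{1+t}{2}p - \tfrac{1-t}{2}q$ with $p \in \Po_1, q \in \Po_2$.

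The main obstacle I anticipate is the careful verification of regularity: one must be precise about the definition, since $\psi$ is defined on all of $\R^n$ rather than being a height function on the vertex set of $\Po_1 - \Po_2$ in the classical sense. The cleanest route is to invoke the formulation of regular subdivision as one arising from a convex piecewise linear function on the ambient polytope (equivalently, from a lifting by the support function of another polytope), as this bypasses the otherwise subtle question of computing the vertex set of $\Po_1 - \Po_2$ explicitly.
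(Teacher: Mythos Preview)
Your proposal is correct and follows essentially the same approach as the paper. Both arguments hinge on the disjoint-support decomposition $z = z^+ - z^-$ and the anti-blocking property to establish covering; your orthant-intersection identity $\Po_1|_J - \Po_2|_{J^c} = (\Po_1 - \Po_2)\cap O_J$ makes the face-intersection verification more explicit than the paper's (which simply asserts it), and your regularity certificate $\psi(x)=\sum_i|x_i|$ is exactly the value function of the paper's fiber-minimization with weight $\omega(y_1,y_2)=\sum_i (y_1)_i + \sum_j (y_2)_j$ on $\Po_1\times\Po_2$ (the paper's stated sign $\omega(0,\e_j)=-1$ appears to be a typo, since that choice makes $\omega$ constant on each fiber).
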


We call the subdivisions of Lemma~\ref{lem:AB_canonical} the \Defn{canonical
subdivisions} of $\Po_1 - \Po_2$ and $\CayDiff{\Po_1}{\Po_2}$, respectively.

\begin{proof}
    By the Cayley trick~\cite[Thm~9.2.18]{DLRS}, it is suffices to prove only
    the first claim. The subdivision of $\Po_1 - \Po_2$ is very easy to
    describe: Let us first note that the polytopes $\Po_1|_{J} - \Po_2|_{J^c}$
    for $J \subseteq [n]$ only meet in faces. Hence, we only need to verify that
    they cover $\Po_1 - \Po_2$. It suffices to show that for any point $\x \in
    \Po_1 - \Po_2$ with $x_i \neq 0$ for all $i$, there is a $J \subseteq [n]$
    with $\x \in \Po_1|_{J} - \Po_2|_{J^c}$. Let $\x_1,\x_2 \in
    \Rnn^n$ with $\x = \x_1 - \x_2$ and $\supp(\x_1) \cap \supp(\x_2) =
    \emptyset$.  We claim that $\x_i \in \Po_i$ for $i=1,2$. Indeed, if $\x =
    \y_1 - \y_2$ for some $\y_i \in \Po_i$, then $0 \le \x_i \le \y_i$ and
    $\x_i \in \Po_i$ by~\eqref{eqn:antiblock}.  In particular, 
    $\x_1 \in \Po_1 |_J$ and $\x_2 \in \Po_2 |_{J^c}$ and therefore $\x \in 
    \Po_1|_{J} - \Po_2|_{J^c}$.

    To show regularity, let $\omega : \R^n \times \R^n \rightarrow \R$ be the
    linear function such that $\omega(\e_i,0) = -\omega(0,\e_j) = 1$ for all
    $i,j = 1,\dots,n$. Then $\omega$ induces a mixed subdivision by picking
    for every point $\x \in \Po_1 - \Po_2$, the unique cell $F_1 - F_2$ such
    that
    $\x = \x_1 - \x_2$ with $\x_i \in \relint F_i$ and $(\x_1,\x_2)$
    minimizes $\omega$ over the set
    \[
        \{ (\y_1,\y_2) \in \Po_1 \times \Po_2 : \x = \y_1 - \y_2 \};
    \]
    see Section~9.2.2 of de Loera \emph{et al.}~\cite{DLRS} for more details.
    If $\omega$ is not generic, one has to be careful as the minimizer is not
    necessarily unique but in our case, we observe that for any $\y_i \in
    \Po_i$ with $\x = \y_1 - \y_2$ we have $\omega(\y_1,\y_2) >
    \omega(\x_1,\x_2)$ for all $(\y_i,\y_2)\neq(\x_1,\x_2)$ with $(\x_1,\x_2)$ defined above.
\end{proof}

\begin{figure}[h]
  \centering
    \includegraphics[width=0.6\textwidth]{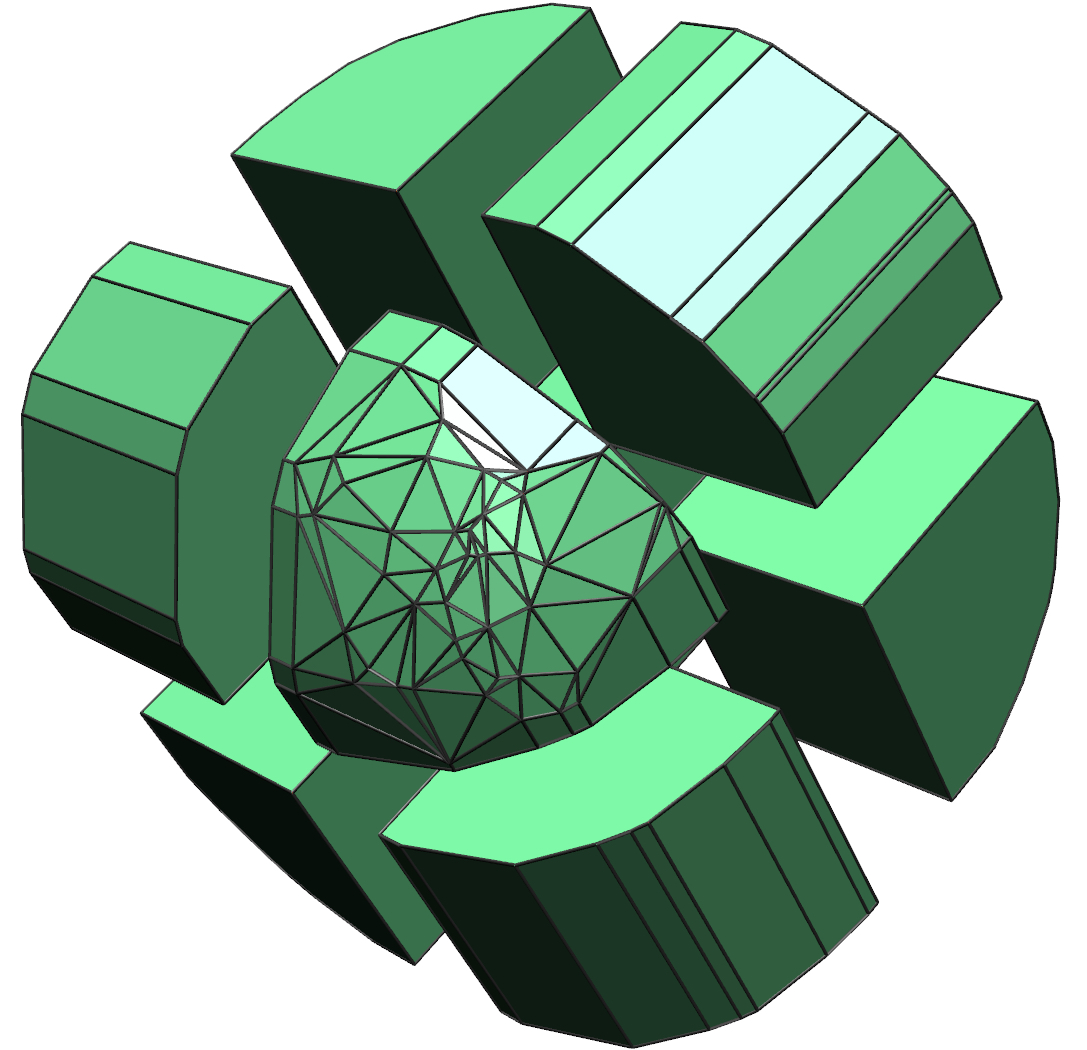}
    \caption{The canonical subdivision of $\Po_1 - \Po_2$ for two
    (random) anti-blocking polytopes $\Po_1,\Po_2 \subset \R^3_{\ge 0}$.  }
    \label{fig:can_subdiv}
\end{figure}

We define a \Defn{triangulation} of a polytope to be a subdivision into
simplices without new vertices. For a polytope with vertices in an affine
lattice $\AffLat$, a triangulation is \Defn{unimodular} if each simplex is
unimodular or, equivalently, has normalized volume $=1$.  A triangulation is
\Defn{flag} if any minimal non-face is of dimension $1$. This property implies
that the underlying simplicial complex is completely determined by its graph.

\begin{thm}\label{thm:AB_subdiv}
    Let $\Po_1,\Po_2 \subset \R^n$ be full-dimensional anti-blocking
    polytopes with subdivisions $\subdiv_1$ and $\subdiv_2$, respectively. For
    $J \subseteq [n]$, let $\subdiv_i|_J := \{ S \cap \Po_i|_J : S \in
    \subdiv_i \}$ be the restriction of $\subdiv_i$ to $\Po_i|_J$ for $i=1,2$.
    Then
    \[
        \subdiv \ := \ \bigcup_{J \subseteq [n]} \subdiv_1|_J * \subdiv_2|_{J^c}
    \]
    is a subdivision of $\CayDiff{\Po_1}{\Po_2}$. In particular,
    \begin{compactenum}[\rm (i)]
        \item If $\subdiv_1$ and $\subdiv_2$ are regular, then $\subdiv$ is
            regular.
        \item If $\subdiv_1$ and $\subdiv_2$  are unimodular
            triangulations with respect to $\Lambda$,
            then $\subdiv$ is a unimodular triangulation with respect to the
            affine lattice $\Lambda \times (2\Z + 1)$. 
        \item If $\subdiv_1$ and $\subdiv_2$ are flag, then $\subdiv$ is flag.
    \end{compactenum}
\end{thm}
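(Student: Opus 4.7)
The plan is to build $\subdiv$ on top of the regular subdivision from Lemma~\ref{lem:AB_canonical}. That lemma partitions $\CayDiff{\Po_1}{\Po_2}$ into cells $\Po_1|_J * \Po_2|_{J^c}$; since $\Po_i|_J$ is the face of $\Po_i$ cut out by the coordinate hyperplanes $\{x_j = 0\}$ for $j \notin J$, the restriction $\subdiv_i|_J$ is an honest subdivision of $\Po_i|_J$, and the join $\subdiv_1|_J * \subdiv_2|_{J^c}$ subdivides the corresponding cell. Two cells for $J, J'$ agree on their common face $\Po_1|_{J \cap J'} * \Po_2|_{(J \cup J')^c}$ because $(\subdiv_i|_J)|_{J \cap J'} = \subdiv_i|_{J \cap J'}$, so $\subdiv$ is indeed a subdivision.

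For (i), let $\omega_0$ realize the canonical subdivision and $\omega_1, \omega_2$ realize $\subdiv_1, \subdiv_2$. Define
\[
    \omega(v, +1) \ := \ M\,\omega_0(v, +1) + \omega_1(v), \qquad \omega(-w, -1) \ := \ M\,\omega_0(-w, -1) + \omega_2(w),
\]
for $v \in V(\Po_1)$ and $w \in V(\Po_2)$, and choose $M$ large enough that the coarse subdivision induced by $\omega_0$ persists. Within each cell $\Po_1|_J * \Po_2|_{J^c}$, the residual weight is the join of $\omega_1|_J$ and $\omega_2|_{J^c}$; since the subdivision induced by such a separately-defined weight on a join is the join of the respective subdivisions, $\omega$ realizes exactly $\subdiv$.

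For (ii), consider a top-dimensional simplex $S_1 * S_2$ of $\subdiv$ with $S_1 = \conv(v_1, \ldots, v_{|J|+1}) \in \subdiv_1|_J$ and $S_2 = \conv(w_1, \ldots, w_{|J^c|+1}) \in \subdiv_2|_{J^c}$. The $n+1$ edge vectors from the basepoint $(v_1, +1)$ are $(v_j - v_1, 0)$ for $j \ge 2$ together with $(w_i - v_1, -2)$ for $i \ge 1$; the unit-determinant column operation that replaces $(w_i - v_1, -2)$ by $(w_1 - w_i, 0)$ for $i \ge 2$ leaves only one column with nonzero height entry. Expanding along the last row yields $\pm 2 \cdot \det(M_1) \det(M_2)$, where $M_1, M_2$ are the edge-vector matrices of $S_1, S_2$ in the orthogonal coordinate subspaces $\R^J, \R^{J^c}$. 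Faces of unimodular simplices are unimodular in the induced sublattice (for $\Lambda = \Z^n$ this follows from a direct basis argument using the projection $\Z^n \to \Z^{J^c}$), so $|\det(M_i)| = 1$. The total absolute determinant is $2$, matching the covolume of $\Lambda \times 2\Z$; hence every top-dimensional simplex of $\subdiv$ is unimodular with respect to $\Lambda \times (2\Z+1)$.

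For (iii), a vertex set $V = V_1 \cup V_2$ with $V_1 \subset V(\subdiv_1)$ at height $+1$ and $V_2 \subset V(\subdiv_2)$ at height $-1$ spans a face of $\subdiv$ if and only if $V_1 \in \subdiv_1$, $V_2 \in \subdiv_2$, and $\bigcup_{v \in V_1}\supp(v) \cap \bigcup_{w \in V_2}\supp(w) = \emptyset$. If $V$ is a minimal non-face violating one of the first two conditions, flagness of $\subdiv_i$ provides an edge minimal non-face $F_i \subseteq V_i$, which is still a non-face of $\subdiv$, forcing $V = F_i$; if instead the support condition fails, a pair $\{v, w\}$ with $v \in V_1, w \in V_2, \supp(v) \cap \supp(w) \neq \emptyset$ is already a non-face, so $V = \{v, w\}$. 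Either way $V$ is an edge. The main obstacle is the determinant bookkeeping in (ii), specifically ensuring that restrictions $\subdiv_i|_J$ inherit unimodularity with respect to $\Lambda \cap \R^J$ so that the covolume factor of $2$ from the affine lattice $\Lambda \times (2\Z+1)$ comes out exactly right; (i) is a standard weighted combination argument and (iii) is purely combinatorial.
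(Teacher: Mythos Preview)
Your proof is correct and follows essentially the same architecture as the paper's: refine the canonical subdivision of Lemma~\ref{lem:AB_canonical} cell by cell, then verify (i)--(iii). The one place where the paper does something slightly different is (i): rather than combining the canonical weight $\omega_0$ with $\omega_1,\omega_2$ via a large constant $M$, the paper simply shifts $\omega_1$ and $\omega_2$ by constants so that $\omega_1 > 0$ on $V(\Po_1)$ and $\omega_2 < 0$ on $V(\Po_2)$, and then observes (via the Cayley trick) that the combined weight $\omega$ on $V(\CayDiff{\Po_1}{\Po_2})$ already induces $\subdiv$ --- the sign separation plays the role of your $M\omega_0$. For (ii) the paper is terser than you, appealing only to the fact that a join of unimodular simplices is unimodular; your explicit determinant computation is a fine way to justify that fact. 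Part (iii) is identical.
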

Note that {\rm (iii)} also holds if the triangulations use more lattice points
than just the vertices.
\begin{proof}
    For the first claim, observe that $\subdiv_i|_J$ is a subdivision of the
    face $\Po_i|_J$. By~\cite[Thm~4.2.7]{DLRS}, $\subdiv_1|_J *
    \subdiv_2|_{J^c}$ is a subdivision of $\Po_1|_J * \Po_2|_{J^c}$. Hence,
    $\subdiv$ is a refinement of the canonical subdivision of
    Lemma~\ref{lem:AB_canonical}.

    If $\subdiv_i$ is a regular subdivision of $\Po_i$, then there are weights
    $\omega_i : V(\Po_i) \rightarrow \R$ for $i=1,2$. By adding a constant
    weight to every vertex if necessary, we can assume that $\omega_1(\v_1) >
    0$ and $\omega_2(\v_2) < 0$ for all $\v_1 \in V(\Po_1)$ and $\v_2 \in
    V(\Po_2)$. Again using the Cayley trick, it is easily seen that $\omega :
    V(\CayDiff{\Po_1}{\Po_2}) \rightarrow \R$ given by $\omega(\v_1,+1) :=
    \omega_1(\v_1)$ and $\omega(\v_2,-1) := \omega_2(\v_2)$ induces $\subdiv$.
    
    Claim (ii) simply follows from the fact that the join of two unimodular
    simplices is unimodular. 
    
    For claim (iii), let $\sigma = \sigma_1 \uplus \sigma_2 \subseteq
    V(\CayDiff{\Po_1}{\Po_2})$ be a minimal non-face. Since $\subdiv_1$ and
    $\subdiv_2$ are flag, it follows that $\sigma_1 \in \subdiv_1$ and
    $\sigma_2 \in \subdiv_2$. Thus, there vertices $v_i \in \sigma_i$ for
    $i=1,2$ such that $\supp(v_1) \cap \supp(v_2) \neq \emptyset$ but $\sigma
    \setminus \{v_i\}$ is a face for $i=1$ and $i=2$. But then $\{v_1,v_2\}$
    is already a non-face and the claim follows.
\end{proof}

The theorem has some immediate consequences.

\begin{cor}
    Let $\Po_1, \Po_2 \subset \R^n$ be two full-dimensional anti-blocking
    polytopes with vertices in a given lattice. If $\Po_1, \Po_2$ have
    unimodular triangulations, then $\Po_1 - \Po_2$ and $\Gamma(\Po_1,\Po_2) =
    \conv(\Po_1 \cup -\Po_2)$ also have unimodular triangulations.
\end{cor}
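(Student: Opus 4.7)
The plan is to derive both unimodular triangulations from canonical subdivisions whose cells, by the anti-blocking structure, are Cartesian products (for $\Po_1-\Po_2$) or free sums (for $\Gamma(\Po_1,\Po_2)$) of lower-dimensional faces lying in orthogonal coordinate subspaces; each cell is then triangulated by lifting the given unimodular triangulations $\subdiv_1,\subdiv_2$, in parallel to the proof of Theorem~\ref{thm:AB_subdiv}.

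For $\Po_1-\Po_2$, Lemma~\ref{lem:AB_canonical} exhibits the canonical subdivision with cells $\Po_1|_J-\Po_2|_{J^c}$ for $J\subseteq[n]$. Because $\Po_1|_J\subset\R^J$ and $-\Po_2|_{J^c}\subset\R^{J^c}$ lie in orthogonal coordinate subspaces, each cell is lattice-isomorphic to the Cartesian product $\Po_1|_J\times(-\Po_2|_{J^c})$. The given unimodular triangulations $\subdiv_i$ restrict to unimodular triangulations of each face $\Po_i|_K$, and their Cartesian products refine each cell into products $\sigma_1\times\sigma_2$ of unimodular simplices; each such product admits the classical unimodular staircase triangulation. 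Fixing a global linear order on $[n]$ and orienting the staircases accordingly ensures that the triangulations on adjacent cells agree on the shared faces $\Po_1|_{J\cap J'}-\Po_2|_{(J\cup J')^c}$.

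For $\Gamma(\Po_1,\Po_2)$, the analogous subdivision $\Gamma=\bigcup_{J\subseteq[n]}R_J$ with $R_J:=\conv(\Po_1|_J\cup-\Po_2|_{J^c})$ can be checked directly or obtained by projecting the canonical subdivision of $\CayDiff{\Po_1}{\Po_2}$ to $\R^n$. Each $R_J$ is the free sum $\Po_1|_J\oplus(-\Po_2|_{J^c})$ at the shared vertex $0$, and it sits inside the corresponding Cartesian-product cell $\Po_1|_J\times(-\Po_2|_{J^c})$ of the $\Po_1-\Po_2$-subdivision as the sub-polytope consisting of points of the form $\alpha\,p-\beta\,q$ with $p\in\Po_1|_J$, $q\in\Po_2|_{J^c}$ and $\alpha,\beta\ge 0$, $\alpha+\beta\le 1$. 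The strategy is to orient the staircase triangulations of the Cartesian-product cells so that the slanted boundary of $R_J$ (where $\alpha+\beta=1$) appears as a subcomplex; then $\Gamma$ is itself a subcomplex of the unimodular triangulation of $\Po_1-\Po_2$ built above and inherits a unimodular triangulation.

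The main obstacle is verifying that the slanted boundary of $R_J$ can be arranged to appear as a subcomplex of a suitably oriented unimodular staircase triangulation. Since this boundary is piecewise linear, determined by the facet structures of $\Po_1|_J$ and $\Po_2|_{J^c}$, and need not globally align with the grid structure of an arbitrary staircase, one may have to refine the staircase — possibly introducing additional lattice points — in order to simultaneously preserve unimodularity and achieve the subcomplex condition. The coherence of these local choices across cells then follows the same template as in the proof of Theorem~\ref{thm:AB_subdiv}.
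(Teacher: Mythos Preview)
Your treatment of $\Po_1-\Po_2$ is essentially the paper's: subdivide via Lemma~\ref{lem:AB_canonical} into Cartesian products $\Po_1|_J\times(-\Po_2|_{J^c})$, refine to products of unimodular simplices, and then unimodularly triangulate those products. The paper invokes Sullivant's result that such products are $2$-level (hence admit unimodular pulling triangulations), whereas you use oriented staircases; both routes work and the compatibility across cells is handled the same way.

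For $\Gamma(\Po_1,\Po_2)$, however, you take a genuinely different and harder path, and it is not completed. You attempt to realise each free-sum cell $R_J$ as a subcomplex of the staircase triangulation of the ambient product $\Po_1|_J-\Po_2|_{J^c}$, but---as you yourself note---the ``slanted'' boundary of $R_J$ need not appear in an arbitrary staircase. Your suggestion to ``refine the staircase, possibly introducing additional lattice points'' is not a proof: inserting lattice points can destroy unimodularity, and you give no construction showing it can always be avoided. Even in the simplest case $R_{\{1\}}\subset[0,1]\times[-1,0]$ one of the two staircase diagonals fails, so a global coherent choice across all $J$ is required and you have not produced one.

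The paper sidesteps this entirely. Since $\Gamma(\Po_1,\Po_2)$ is the image of $\CayDiff{\Po_1}{\Po_2}$ under the projection $\R^{n+1}\to\R^n$ forgetting the last coordinate, the upper (or lower) boundary of $\CayDiff{\Po_1}{\Po_2}$ maps bijectively onto $\Gamma$. Theorem~\ref{thm:AB_subdiv} already furnishes a unimodular triangulation of $\CayDiff{\Po_1}{\Po_2}$; restricting it to the upper hull and projecting yields a unimodular triangulation of $\Gamma$ with no further work. This avoids the free-sum difficulties altogether, and I would recommend replacing your argument for $\Gamma$ with this one-line observation.
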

\begin{proof}
    By Theorem~\ref{thm:AB_subdiv} and the Cayley trick, $\Po_1 - \Po_2$ has a
    mixed subdivision into Cartesian products of unimodular simplices.
    Products of unimodular simplices are $2$-level and, for example 
    by~\cite[Thm.~2.4]{Sullivant}, have unimodular triangulations. The
    polytope $\conv(\Po_1 \cup -\Po_2)$ inherits a triangulation from the
    upper or lower hull of $\CayDiff{\Po_1}{\Po_2}$, which has a unimodular
    triangulation by Theorem~\ref{thm:AB_subdiv}.
\end{proof}

\begin{cor}
    Let $\dG$ be a perfect double graph. Then $\Po_{\dG}$, $\Po_{G_+} -
    \Po_{G_-}$, and $\Gamma(\Po_{G_+},\Po_{G_-})$ have regular
    unimodular triangulations.
\end{cor}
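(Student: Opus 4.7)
The plan is to bootstrap from regular unimodular triangulations of $\Po_{G_+}$ and $\Po_{G_-}$ individually, and then propagate them to the three polytopes in the statement using Theorem~\ref{thm:AB_subdiv} and the preceding corollary.

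First I would observe that $\dG$ being a perfect double graph means that both $G_+$ and $G_-$ are perfect. By Theorem~\ref{thm:perfect}, each $\Po_{G_\pm}$ is a full-dimensional $0/1$-polytope cut out by $x \ge 0$ and the clique inequalities $x(C) \le 1$. Every such halfspace evaluates to $0$ or $1$ on each $0/1$-vertex $\1_S$, so $\Po_{G_\pm}$ is $2$-level.  By Sullivant~\cite[Thm.~2.4]{Sullivant}, any pulling triangulation of a $2$-level lattice polytope using all its lattice points is unimodular; pulling triangulations are regular by construction. Hence $\Po_{G_+}$ and $\Po_{G_-}$ both admit regular unimodular triangulations with respect to $\Z^V$.

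For the Cayley sum $\Po_{\dG} = \CayDiff{2\Po_{G_+}}{2\Po_{G_-}}$, I would rescale the above triangulations to regular unimodular triangulations of $2\Po_{G_\pm}$ with respect to $2\Z^V$, and then invoke parts (i) and (ii) of Theorem~\ref{thm:AB_subdiv} with $\Po_1 = 2\Po_{G_+}$ and $\Po_2 = 2\Po_{G_-}$.  This yields a regular unimodular triangulation of $\Po_{\dG}$ with respect to the intrinsic affine lattice $2\Z^V \times (2\Z+1)$.

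For the remaining two polytopes I would use the corollary immediately preceding this one, which already produces unimodular triangulations of $\Po_{G_+}-\Po_{G_-}$ and $\Gamma(\Po_{G_+},\Po_{G_-})$. To upgrade unimodularity to regularity, I would track regularity through its proof: Theorem~\ref{thm:AB_subdiv} provides a regular triangulation of $\CayDiff{2\Po_{G_+}}{2\Po_{G_-}}$, the projection onto the upper (resp.\ lower) hull transfers regularity to $\Gamma(\Po_{G_+},\Po_{G_-})$, and the Cayley trick~\cite[Thm~9.2.18]{DLRS} transfers it to a regular mixed subdivision of $\Po_{G_+}-\Po_{G_-}$ whose cells are Cartesian products of unimodular simplices; any coherent pulling refinement of this mixed subdivision is then a regular unimodular triangulation.

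The main obstacle is purely bookkeeping: ensuring that the lattice rescaling aligns at each step and that regularity is genuinely preserved by the Cayley trick and by the global refinement of product cells. Both are standard — the former directly from the Cayley-trick statement, the latter by combining the regular subdivision weighting with a sufficiently dominated regular pulling weighting on each cell.
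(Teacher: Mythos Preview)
Your proposal is correct and follows essentially the same route as the paper: perfectness gives $2$-levelness of $\Po_{G_\pm}$, Sullivant's theorem supplies (regular) unimodular pulling triangulations, and then Theorem~\ref{thm:AB_subdiv} together with the preceding corollary produces the desired triangulations of the three polytopes. You are in fact more careful than the paper's two-sentence proof, which simply invokes the previous corollary without separately treating $\Po_{\dG}$ via Theorem~\ref{thm:AB_subdiv} and without explicitly tracking regularity through the Cayley trick and the upper/lower-hull projection.
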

\begin{proof}
    By Theorem~\ref{thm:perfect}, both polytopes $\Po_{G_+}$ and $\Po_{G_-}$
    are $2$-level and by~\cite[Thm.~2.4]{Sullivant} have
    unimodular triangulations. The result now follows from the previous
    corollary.
\end{proof}

\subsection{Lattice points and volume} \label{ssec:AB_ehrhart}

Lemma~\ref{lem:AB_canonical} directly implies a formula for the (normalized)
volume of $\CayDiff{\Po_1}{\Po_2}$ in terms of the volumes of the
anti-blocking polytopes $\Po_1,\Po_2$.

\begin{cor}\label{cor:AB_volumes}
    Let $\Po_1,\Po_2 \subset \R^n$ be full-dimensional anti-blocking
    polytopes.  Then
    \[
        \vol(\Po_1 - \Po_2) \ = \  \sum_{J \subseteq [n]} \vol(\Po_1|_J)
        \vol(\Po_2|_{J^c}).
    \]
    If $\Po_1$ and $\Po_2$ have unimodular triangulations with respect to a
    lattice $\Lambda$, then the normalized volume of $\CayDiff{\Po_1}{\Po_2}$
    with respect to the affine lattice $\Lambda \times (2\Z+1)$ is
    \[
        \nvol(\CayDiff{\Po_1}{\Po_2}) \ = \ 
        \sum_{J \subseteq [n]} 
        \nvol(\Po_1|_J)
        \nvol(\Po_2|_{J^c}).
    \]
\end{cor}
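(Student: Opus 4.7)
The plan is to deduce both formulas directly from the canonical subdivision of Lemma~\ref{lem:AB_canonical} and the refinement procedure of Theorem~\ref{thm:AB_subdiv}; the work is essentially bookkeeping once those results are in hand.

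For the first equality, I would start by recalling that Lemma~\ref{lem:AB_canonical} exhibits $\Po_1 - \Po_2$ as a union of cells $\Po_1|_J - \Po_2|_{J^c}$ that meet only in lower-dimensional faces, so volumes add. The key geometric observation is that because $J$ and $J^c$ are disjoint, $\Po_1|_J$ and $-\Po_2|_{J^c}$ sit in complementary coordinate subspaces of $\R^n$. Hence each cell is a Cartesian (hence orthogonal) product up to an isometry of the ambient space, and
\[
    \vol(\Po_1|_J - \Po_2|_{J^c}) \ = \ \vol(\Po_1|_J) \cdot \vol(\Po_2|_{J^c})
\]
by Fubini. Summing over $J \subseteq [n]$ gives the claimed identity.

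For the second equality, I would invoke Theorem~\ref{thm:AB_subdiv}(ii): if $\Po_1, \Po_2$ carry unimodular triangulations $\subdiv_1, \subdiv_2$ with respect to $\Lambda$, then $\subdiv = \bigcup_J \subdiv_1|_J * \subdiv_2|_{J^c}$ is a unimodular triangulation of $\CayDiff{\Po_1}{\Po_2}$ with respect to $\Lambda \times (2\Z + 1)$. The normalized volume of a lattice polytope equals the number of top-dimensional simplices in any unimodular triangulation, so
\[
    \nvol(\CayDiff{\Po_1}{\Po_2}) \ = \ \sum_{J \subseteq [n]} \#\bigl\{\text{top-dim.~simplices in } \subdiv_1|_J * \subdiv_2|_{J^c}\bigr\}.
\]
A top-dimensional simplex of a join is the join of top-dimensional simplices of the two factors, so the inner count factors as $\#\{\text{top simplices of } \subdiv_1|_J\} \cdot \#\{\text{top simplices of } \subdiv_2|_{J^c}\}$.

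The last point requiring a brief justification is that $\subdiv_i|_J$ is itself a unimodular triangulation of the face $\Po_i|_J$ with respect to the sublattice $\Lambda \cap \R^J$, so that these two counts equal $\nvol(\Po_1|_J)$ and $\nvol(\Po_2|_{J^c})$ respectively. This follows from the standard fact that any face of a $\Lambda$-unimodular simplex is unimodular with respect to the lattice induced on its affine span, combined with the observation that $\Po_i|_J$ is a face of $\Po_i$ (hence $\subdiv_i$ restricts to a triangulation of it). Combining everything yields the stated sum, and this bookkeeping step — matching the lattice of the face with that induced by restriction — is the only point where one has to be careful; it is straightforward because the faces $\Po_i|_J$ are coordinate faces.
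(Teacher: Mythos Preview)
Your proof is correct and follows essentially the same route as the paper: both claims are deduced from Lemma~\ref{lem:AB_canonical}, and for the second you invoke Theorem~\ref{thm:AB_subdiv}(ii) to obtain a unimodular triangulation and then count top-dimensional simplices. You have simply spelled out in more detail the points the paper leaves implicit (the Fubini step for the product cells, and why the restricted triangulations $\subdiv_i|_J$ remain unimodular on the coordinate faces).
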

\begin{proof}
    Both claims follow from Lemma~\ref{lem:AB_canonical}.
    For the second statement, note that Theorem~\ref{thm:AB_subdiv} yields
    that $\CayDiff{\Po_1}{\Po_2}$ has a unimodular triangulation and hence its
    normalized volume is the number of simplices of maximal dimension, which
    is the number in the right-hand side.
\end{proof}

If $\Po_1,\Po_2 \subset \R^n$ are \emph{rational} anti-blocking polytopes,
then so are $\CayDiff{2\Po_1}{2\Po_2}$ and $\Po_1 - \Po_2$. Our next goal is
to determine their Ehrhart quasi-polynomials for a particular interesting
case. We briefly recall the basics of Ehrhart theory; for more see, for
example,~\cite{BR,crt}. If $\Po \subset \R^n$ is a $d$-dimensional polytope
with rational vertex coordinates, then the function $\ehr_\Po(k) := |k\Po \cap
\Z^n|$ agrees with a quasi-polynomial of degree $d$. We will identify
$\ehr_\Po(k)$ with this quasi-polynomial, called the \Defn{Ehrhart
quasi-polynomial}. If $\Po$ has its vertices in $\Z^n$, then $\ehr_\Po(k)$ is
a polynomial of degree $d$.  If $\Po$ is full-dimensional, then the leading
coefficient of $\ehr_\Po(k)$ is $\vol(\Po)$. We will need the following
fundamental result of Ehrhart theory.

\begin{thm}[Ehrhart--Macdonald theorem]\label{thm:EM}
    Let $\Po \subset \R^n$ be a rational polytope of dimension $d$, then
    \[
        (-1)^d \ehr_{\Po}(-k) \ = \ | \relint(k\Po) \cap \Z^n|.
    \]
\end{thm}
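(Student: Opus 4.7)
The plan is to lift the statement to the cone over $\Po$ and prove a reciprocity identity at the level of multivariate integer-point transforms, from which the Ehrhart--Macdonald reciprocity follows by extracting coefficients.

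Form the homogenization $K := \cone(\Po \times \{1\}) \subseteq \R^{n+1}$, so that lattice points of $K$ at height $k$ biject with $k\Po \cap \Z^n$, and lattice points of $\relint K$ at height $k \ge 1$ biject with $\relint(k\Po) \cap \Z^n$. Writing $\sigma_S(z,t) := \sum_{(m,k) \in S \cap \Z^{n+1}} z^m t^k$ for the integer-point transform of a set $S \subseteq \R^{n+1}$, the Ehrhart series $\sum_{k \ge 0} \ehr_\Po(k)\,t^k$ is the specialization $\sigma_K(\mathbf{1},t)$. Standard manipulations of rational generating functions reduce the theorem to the cone-level identity
\[
    \sigma_K(\mathbf{1},t^{-1}) \ = \ (-1)^{d+1}\,\sigma_{\relint K}(\mathbf{1},t),
\]
understood as an equality of rational functions.

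To prove this, I would triangulate $\Po$ into rational simplices using only its vertices (e.g.\ by a pulling triangulation) and lift to a triangulation of $K$ into simplicial cones $K_1, \dots, K_m$. Using a half-open shelling of this triangulation, rewrite
\[
    K \ = \ \bigsqcup_{i=1}^m K_i^\circ, \qquad \relint K \ = \ \bigsqcup_{i=1}^m K_i^\bullet,
\]
where each $K_i^\circ$ is $K_i$ with a prescribed subset of its facets removed and $K_i^\bullet$ is the complementary half-open cone in which the ``open'' and ``closed'' facets are swapped. For each $K_i$ with primitive ray generators $u_0, \dots, u_d$ the integer-point transform factors as
\[
    \sigma_{K_i^\circ}(z) \ = \ \frac{\sum_{\alpha \in \Pi_i^\circ} z^\alpha}{\prod_{j=0}^d (1 - z^{u_j})},
\]
where $\Pi_i^\circ$ is the half-open fundamental parallelepiped. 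Under $z \mapsto z^{-1}$ each factor $(1 - z^{u_j})$ becomes $-z^{-u_j}(1 - z^{u_j})$, and the bijection $\alpha \mapsto \sum_j u_j - \alpha$ carries $\Pi_i^\circ$ onto $\Pi_i^\bullet$; combining these yields $\sigma_{K_i^\circ}(z^{-1}) = (-1)^{d+1} \sigma_{K_i^\bullet}(z)$. Summing over $i$ produces the cone-level identity.

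The delicate step, and the main obstacle, is arranging the half-open decomposition so that both $\bigsqcup K_i^\circ = K$ and $\bigsqcup K_i^\bullet = \relint K$ hold simultaneously, with each shared facet between adjacent simplicial cones assigned to exactly one side. This is handled by fixing a generic direction $v \in \R^{n+1}$ and, for each $K_i$, opening precisely those facets whose outer normal pairs positively with $v$; the complementary rule then produces $K_i^\bullet$. Once this combinatorial bookkeeping is in place, the generating-function manipulation is purely formal and the theorem drops out after reading off the coefficient of $t^k$ on both sides of the cone identity.
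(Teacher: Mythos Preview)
The paper does not prove this theorem at all: it is stated as a classical result of Ehrhart theory (``We will need the following fundamental result of Ehrhart theory'') and then used as a black box in the subsequent lattice-point computations. So there is no paper proof to compare against.

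Your outline is the standard modern proof (essentially the approach in Beck--Robins, \emph{Computing the Continuous Discretely}, and going back to Stanley): pass to the cone, triangulate into simplicial cones, use a half-open decomposition governed by a generic direction so that the closed cone and its relative interior are both tiled disjointly, and invoke the parallelepiped bijection $\alpha \mapsto \sum_j u_j - \alpha$ to get reciprocity on each piece. The sketch is correct in its essentials. One small point worth tightening if you flesh it out: the claim that the complementary half-open cones $K_i^\bullet$ partition $\relint K$ when the $K_i^\circ$ partition $K$ is exactly where the generic-direction argument does real work, and you should check separately what happens on the boundary facets of $K$ (not just the internal walls of the triangulation) --- the direction $v$ must be chosen so that every boundary facet of $K$ is ``visible'' from the appropriate side. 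But this is routine and your plan is sound.
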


We call an anti-blocking polytope $\Po \subset \R^n$ \Defn{dual integral} if
$A(\Po)$ has all vertices in $\Z^n$. By Theorem~\ref{thm:doubleAnt}, this means
that there are $\d_1,\dots,\d_s \in \Znn^n$ such that
\[
    \Po \ = \ \{ \x \in \R^n : \x \ge 0, \inner{\d_i,\x} \le 1 \text{ for }
    i=1,\dots,s \}.
\]

\begin{cor}\label{cor:AB_ehrhart}
    Let $\Po_1,\Po_2 \subset \R^n$ be two full-dimensional rational
    anti-blocking polytopes. If $\Po_1$ is dual integral, then
    \[
        \ehr_{\Po_1 - \Po_2}(k) \ = \ \sum_{J \subseteq [n]}
        (-1)^{|J|}\ehr_{\Po_1|_J}(-k-1) \ehr_{\Po_2|_{J^c}}(k).
    \]
\end{cor}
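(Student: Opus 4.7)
The plan is to partition the lattice points of $k(\Po_1-\Po_2)$ by sign pattern and then translate each bucket count into a value of an Ehrhart polynomial via Ehrhart--Macdonald reciprocity. The starting observation is the uniqueness of the decomposition used in the proof of Lemma~\ref{lem:AB_canonical}: every $\x\in\R^n$ has a canonical expression $\x=\x^+-\x^-$ with $\x^\pm\in\Rnn^n$ and disjoint supports, and if $\x\in k(\Po_1-\Po_2)$ then writing $\x=k\y_1-k\y_2$ with $\y_i\in\Po_i$ yields $0\le\x^+\le k\y_1$ and $0\le\x^-\le k\y_2$, so the anti-blocking property forces $\x^+\in k\Po_1$ and $\x^-\in k\Po_2$.

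For each $k\ge 1$ I partition $k(\Po_1-\Po_2)\cap\Z^n$ according to $J:=\{i:x_i>0\}\subseteq[n]$. The slice indexed by $J$ is in bijection with pairs $(\y^+,\y^-)$ where $\y^+\in k\Po_1\cap\Z^n$ has support exactly $J$ and $\y^-$ is an arbitrary lattice point of $k\Po_2|_{J^c}$, via $\y^\pm=\x^\pm$. Setting $N_J(k):=|\{\y^+\in k\Po_1\cap\Z^n:\supp(\y^+)=J\}|$, this yields
\[
\ehr_{\Po_1-\Po_2}(k) \ = \ \sum_{J\subseteq[n]} N_J(k)\cdot\ehr_{\Po_2|_{J^c}}(k).
\]

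It remains to show $N_J(k)=(-1)^{|J|}\ehr_{\Po_1|_J}(-k-1)$. By dual integrality, $\Po_1=\{\x\ge\0:\inner{\d_i,\x}\le 1\}$ for some $\d_1,\dots,\d_s\in\Znn^n$, which gives $\Po_1|_J=\{\y\in\Rnn^J:\inner{\d_i|_J,\y}\le 1\text{ for all }i\}$ in $\R^J$. Hence $N_J(k)$ counts $\y\in\Z^J$ with $y_i>0$ for $i\in J$ and $\inner{\d_i|_J,\y}\le k$ for all $i$. Since both $\y$ and $\d_i|_J$ have integer entries, each constraint $\inner{\d_i|_J,\y}\le k$ is equivalent to $\inner{\d_i|_J,\y}<k+1$, whence $N_J(k)=|\relint((k+1)\Po_1|_J)\cap\Z^J|$. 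Applying Theorem~\ref{thm:EM} to the $|J|$-dimensional polytope $\Po_1|_J\subseteq\R^J$ (which is full-dimensional in $\R^J$ because $\Po_1$ is full-dimensional and anti-blocking) gives the desired identity, and substituting into the displayed formula completes the proof.

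The main obstacle is the last translation between weak and strict inequalities: dual integrality is used precisely to make this work. Without it, the facet normals of $\Po_1$ could have non-integer entries and the equivalence of $\inner{\d_i|_J,\y}\le k$ with $\inner{\d_i|_J,\y}<k+1$ on $\Z^J$ would fail, blocking the direct application of Ehrhart--Macdonald. No symmetric hypothesis on $\Po_2$ is needed, since the factor $\ehr_{\Po_2|_{J^c}}(k)$ only involves a straightforward lattice count with no strictness requirement.
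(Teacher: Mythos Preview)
Your proof is correct and follows essentially the same route as the paper: partition the lattice points of $k(\Po_1-\Po_2)$ by the sign pattern $J=\{i:x_i>0\}$, factor each bucket as a product of lattice-point counts in $k\Po_1|_J$ (with strictly positive coordinates) and $k\Po_2|_{J^c}$, and then use dual integrality to rewrite the strict-positivity count as $|\relint((k+1)\Po_1|_J)\cap\Z^J|$ so that Ehrhart--Macdonald applies. The paper packages the intermediate count as a separate statement (Theorem~\ref{thm:AB_dvol}) with independent parameters $a,b$ in place of the single $k$, but the argument is otherwise identical.
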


The Corollary is simply deduced from Theorem~\ref{thm:EM} and the following
stronger assertion. For a set $S \subset \R^n$, let us write $E(S) := |S \cap
\Z^n|$.

\begin{thm}\label{thm:AB_dvol}
    Let $\Po_1,\Po_2 \subset \R^n$ be two full-dimensional rational
    anti-blocking polytopes and assume that $\Po_1$ is dual integral. For any
    $a,b \in \Z_{>0}$
    \[
        E(a\Po_1 - b\Po_2) \ = \ 
        | (a\Po_1 - b\Po_2) \cap \Z^n | \ = \
        \sum_{J \subseteq [n]} E(\relint((a+1)\Po_1))
        \cdot E(b\Po_2).
    \]
\end{thm}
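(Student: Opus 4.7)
My plan is to set up a stratified bijection between integer points of $a\Po_1 - b\Po_2$ and pairs $(\x_1,\x_2)$ living in the cells of the canonical mixed subdivision of Lemma~\ref{lem:AB_canonical}, and then use dual integrality of $\Po_1$ to convert the closed cell counts on the $\Po_1$-side into relative interior counts of the dilate $(a+1)\Po_1|_J$. (I read the stated formula with the obvious typo corrected, namely $\sum_J E(\relint((a+1)\Po_1|_J)) \cdot E(b\Po_2|_{J^c})$, which is what is actually used to derive Corollary~\ref{cor:AB_ehrhart} via the Ehrhart--Macdonald Theorem~\ref{thm:EM}.)

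First I would note that every $\x \in (a\Po_1 - b\Po_2) \cap \Z^n$ has a unique decomposition $\x = \x_1 - \x_2$ with $\x_1 := \max(\x,\0)$ and $\x_2 := \max(-\x,\0)$ (componentwise), so $\x_1,\x_2 \in \Znn^n$ have disjoint supports. Arguing exactly as in the proof of Lemma~\ref{lem:AB_canonical}, writing $\x = \y_1 - \y_2$ with $\y_1 \in a\Po_1$ and $\y_2 \in b\Po_2$ gives $\0 \le \x_i \le \y_i$ componentwise, and the anti-blocking property \eqref{eqn:antiblock} forces $\x_1 \in a\Po_1$ and $\x_2 \in b\Po_2$. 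Conversely, any pair $(\x_1,\x_2) \in \Z^n \times \Z^n$ with disjoint supports, $\x_1 \in a\Po_1$, and $\x_2 \in b\Po_2$ produces a distinct integer point of $a\Po_1 - b\Po_2$. Stratifying by $J := \supp(\x_1) \subseteq [n]$ and observing that the disjoint-support condition on $\x_2$ is automatic once it is packaged into the face $b\Po_2|_{J^c}$, I obtain
\[
    E(a\Po_1 - b\Po_2) \ = \ \sum_{J \subseteq [n]}
    \bigl|\{ \x_1 \in a\Po_1|_J \cap \Z^n : \supp(\x_1) = J\}\bigr|
    \cdot E(b\Po_2|_{J^c}).
\]

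The remaining step is to identify the first factor with $E(\relint((a+1)\Po_1|_J))$. By dual integrality of $\Po_1$ and Theorem~\ref{thm:doubleAnt}, we can write $\Po_1 = \{\x \ge \0 : \inner{\d_i,\x} \le 1,\ i=1,\dots,s\}$ with $\d_i \in \Znn^n$. The face $a\Po_1|_J$ is cut out inside its affine hull $\{\x : x_j = 0 \text{ for } j \notin J\}$ by the closed inequalities $x_j \ge 0$ for $j \in J$ and $\inner{\d_i,\x} \le a$, while $\relint((a+1)\Po_1|_J)$ is cut out (in the same affine hull) by the strict inequalities $x_j > 0$ for $j \in J$ and $\inner{\d_i,\x} < a+1$ at each facet-defining index. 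For an \emph{integer} point $\x$, $\inner{\d_i,\x} \in \Z$, so $\inner{\d_i,\x} < a+1$ is equivalent to $\inner{\d_i,\x} \le a$, and $x_j > 0$ is equivalent to $x_j \ge 1$, i.e.\ to $\supp(\x) = J$. This produces the desired bijection and finishes the proof.

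The main obstacle is the bookkeeping around this dual-integrality shift: I need the strict/non-strict conversion to apply simultaneously to every facet inequality of $(a+1)\Po_1|_J$, including those inherited from restricting the original $\d_i$ to $\R^J$. This is precisely where the hypothesis that $\Po_1$ (and hence each restriction $\Po_1|_J$) is dual integral enters in an essential way; without integrality of the $\d_i$ the argument collapses.
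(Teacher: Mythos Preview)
Your proof is correct and follows essentially the same route as the paper: both arguments decompose each lattice point $\x$ into its positive and negative parts $\x_1-\x_2$, use the anti-blocking property to place $\x_i$ in the appropriate dilate, stratify by the set $J$ of positive coordinates (the paper phrases this as the orthant partition $\Z^n = \biguplus_J \Z_{>0}^J \times \Z_{\le 0}^{J^c}$), and then invoke dual integrality of $\Po_1|_J$ to convert the condition ``$\x_1 \in a\Po_1|_J$ with all coordinates positive'' into ``$\x_1 \in \relint((a+1)\Po_1|_J)$''. Your reading of the intended indices $\Po_1|_J$ and $\Po_2|_{J^c}$ in the displayed formula is also the correct one.
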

\begin{proof}
    It follows from Lemma~\ref{lem:AB_canonical} that for any $a,b \in
    \Z_{>0}$, 
    \[
        a\Po_1 - b\Po_2 \ = \ \bigcup_{J \subseteq [n]} (a\Po_1|_J -
        b\Po_2|_{J^c}).
    \]
    For $J \subseteq [n]$, the cell $ a\Po_1|_J - b\Po_2|_{J^c}$ is contained
    in the orthant $\Rnn^J \times \R_{\le 0}^{J^c}$. It is easy to see that 
    \[ 
        \Z^n \ = \ \biguplus_{J \subseteq [n]} \Z^J_{> 0} \times \Z^{J^c}_{\le
        0} 
    \] 
    is a partition and for each $J \subseteq [n]$
    \[
        (a\Po_1 - b\Po_2) \cap (\Z^J_{> 0} \times \Z^{J^c}_{\le 0})
        \ = \ 
        (a\Po_1|_J - b\Po_2|_{J^c}) \cap (\Z^J_{> 0} \times \Z^{J^c}_{\le 0})
        \ = \ (a\Po_1|_J \cap \Z_{>0}^J) - (b\Po_2|_{J^c} \cap \Z^{J^c}).
    \]
    If $\Po_1$ is dual integral, then $\Po_1|_J$
    is dual integral. Thus, for a fixed $J$, there are $\d_1,\dots,\d_s \in
    \Z_{\ge 0}^{J}$ such that 
    \begin{align*}
        (a\Po_1|_J \cap \Z^J_{>0})
        &\ = \ \{ \x \in \Z^J : \x > 0, \inner{\d_i,\x} \le a \}\\
        &\ = \ \{ \x \in \Z^J : \x > 0, \inner{\d_i,\x} < a+1 \}
        \ = \ \relint((a+1) \Po_1|_J) \cap \Z^J.
    \end{align*}
    This proves the result.
\end{proof}

Clearly, it would be desirable to apply Corollary~\ref{cor:AB_ehrhart} to the
case that $\Po_1$ is a lattice polytope as well as dual integral. 

\begin{prop}\label{AB_perfect}
    Let $\Po \subset \R^n$ be a full-dimensional dual-integral anti-blocking
    polytope with vertices in $\Z^n$. Then $\Po = \Po_G$ for some perfect
    graph $G$.
\end{prop}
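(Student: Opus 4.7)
The plan is to leverage dual integrality and integrality of vertices to force a $\{0,1\}$-structure on both the facet normals and the vertices of $\Po$, at which point $\Po$ can be read off as a stable set polytope and Lov\'asz's characterization (Theorem~\ref{thm:perfect}) closes the argument.

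First I would invoke dual integrality together with Theorem~\ref{thm:doubleAnt} to write
\[
    \Po \ = \ \{\x \ge 0 : \inner{\d_i,\x} \le 1, \ i=1,\dots,s\}
\]
with each $\d_i \in \Z_{\ge 0}^n$. The main obstacle---and the heart of the argument---is to show that each $\d_i$ and each vertex of $\Po$ lies in $\{0,1\}^n$. The key trick is the following: for every coordinate $j$, let $m_j := \max\{x_j : \x \in \Po\}$; this maximum is attained at an integral vertex and is positive by full-dimensionality, so $m_j \ge 1$ is a positive integer. Anti-blocking forces $m_j \e_j \in \Po$, and evaluating the facet inequalities at $m_j \e_j$ gives $(\d_i)_j \cdot m_j \le 1$ for every $i$. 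Since both factors are nonnegative integers with $m_j \ge 1$, this forces $(\d_i)_j \in \{0,1\}$, and whenever $(\d_i)_j = 1$ we have $m_j = 1$. Boundedness of $\Po$ rules out $(\d_i)_j = 0$ for every $i$ (else $x_j$ would be unbounded), so $m_j = 1$ for every $j$.

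With this $\{0,1\}$-structure in place, I would define $G = ([n], E)$ by declaring $ij \in E$ iff $\e_i + \e_j \notin \Po$, and set $C_i := \supp(\d_i)$. A one-line computation shows each $C_i$ is a clique of $G$, since for $j, k \in C_i$ one has $\inner{\d_i, \e_j + \e_k} = 2 > 1$, so $\e_j + \e_k \notin \Po$. To prove $\Po = \Po_G$, I would check two easy inclusions: any stable set $S$ of $G$ meets each $C_i$ in at most one element, so $\inner{\d_i, \1_S} \le 1$ and hence $\1_S \in \Po$; conversely, every vertex $\1_T$ of $\Po$ has $T$ stable in $G$, because the anti-blocking property forces $\e_j + \e_k \in \Po$ whenever $\{j,k\} \subseteq T$.

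Perfection of $G$ then follows from the chain of inclusions
\[
    \Po_G \ = \ \Po \ = \ \{\x \ge 0 : \x(C_i) \le 1,\ i=1,\dots,s\} \ \supseteq \ \{\x \ge 0 : \x(C) \le 1 \text{ for all cliques } C\} \ \supseteq \ \Po_G,
\]
where the last inclusion is the standard observation that clique inequalities are valid on indicators of stable sets. Equality throughout yields precisely the perfect-graph description of Theorem~\ref{thm:perfect}, so $G$ is perfect and $\Po = \Po_G$ as claimed.
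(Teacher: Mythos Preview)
Your proof is correct and follows essentially the same route as the paper's: both arguments use full-dimensionality together with integrality of the vertices and anti-blocking to place $\e_j \in \Po$, evaluate the integral facet normals $\d_i$ on these to force $\d_i \in \{0,1\}^n$, and then conclude that all vertices of $\Po$ lie in $\{0,1\}^n$.

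The only real difference is packaging. The paper observes that $\Po$ is $2$-level and then quotes Proposition~\ref{prop:Sam} (whose proof, in turn, constructs $G$ from the supports of the $\d_i$ and invokes Lov\'asz). You instead construct $G$ directly, verify $\Po = \Po_G$ by hand, and close with the chain of inclusions and Theorem~\ref{thm:perfect}. Your version is slightly more self-contained and makes the appeal to Lov\'asz's characterization explicit, while the paper's version is terser because it reuses Proposition~\ref{prop:Sam}. Substantively the two proofs are the same.
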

\begin{proof}
    Let $\Po$ be given by 
    \[
        \Po \ = \ \{ \x \in \R^n : \x \ge 0, \inner{\d_i,\x} \le 1 \text{ for
        } i=1,\dots,s \}
    \]
    for some $\d_1,\dots,\d_s \in \Znn^n$. Since $\Po$ is full-dimensional and
    a lattice polytope, it follows that $\e_1,\dots,\e_n \in \Po$ and for any
    $1 \le j \le s$ we compute
    \[
        0 \ \le \ \inner{\d_j, \e_i} \ \le \ 1
    \]
    for all $i$ and since the $\d_j$ are integer vectors, it follows that
    $\d_j = \1_{C_j}$ for some $C_j \subset [n]$. Consequently, the vertices
    of $\Po$ are in $\{0,1\}^n$ and $\Po$ is $2$-level.  By
    Proposition~\ref{prop:Sam}, $\Po = \Po_G$ for some perfect graph $G$. 
\end{proof}

This severely limits the applicability of Corollary~\ref{cor:AB_ehrhart}
to \emph{lattice} anti-blocking polytopes. On the other hand, we do not know of
many results regarding the Ehrhart polynomials or even volumes of stable set
polytopes of perfect graphs; see also the next section.

\begin{thm}\label{thm:Ehr_AB_Cay}
    Let  $\Po_1,\Po_2 \subset \R^n$ be two full-dimensional rational
    anti-blocking polytopes such that $\Po_1$ is dual integral. Then for $\Po
    := \CayDiff{2\Po_1}{2\Po_2}$
    \[
        \ehr_\Po(k) \ = \ 
        | k \Po \cap \Z^{n+1}| \ = \
        \sum_{J \subseteq [n]} (-1)^{|J|}\sum_{s=-k}^k
        \ehr_{\Po_1|_J}( s-k-1 ) \cdot \ehr_{\Po_2|_{J^c}}( k+s).
    \]
\end{thm}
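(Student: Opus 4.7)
The plan is to slice $k\Po$ by integer hyperplanes at each height $t \in \{-k,\ldots,k\}$ and apply Corollary~\ref{cor:AB_ehrhart} to each slice separately. Writing a point of $\Po = \CayDiff{2\Po_1}{2\Po_2}$ at height $\tau \in [-1,1]$ as a convex combination of some $(2u,1)$ and $(-2v,-1)$ identifies the slice of $\Po$ at height $\tau$ with $(1+\tau)\Po_1 - (1-\tau)\Po_2$; scaling by $k$ shows that the slice of $k\Po$ at integer height $t$ is $(k+t)\Po_1 - (k-t)\Po_2$. Partitioning $k\Po \cap \Z^{n+1}$ by last coordinate gives
\[
\ehr_\Po(k) \ = \ \sum_{t=-k}^{k} E\bigl((k+t)\Po_1 - (k-t)\Po_2\bigr).
\]

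For $1-k \le t \le k-1$, both $a := k+t$ and $b := k-t$ are positive integers, and Theorem~\ref{thm:AB_dvol} together with Ehrhart--Macdonald reciprocity (exactly as in Corollary~\ref{cor:AB_ehrhart}) gives
\[
E\bigl((k+t)\Po_1 - (k-t)\Po_2\bigr) \ = \ \sum_{J \subseteq [n]} (-1)^{|J|}\,\ehr_{\Po_1|_J}(-k-t-1)\,\ehr_{\Po_2|_{J^c}}(k-t).
\]
The substitution $s = -t$ turns the summand into $(-1)^{|J|}\ehr_{\Po_1|_J}(s-k-1)\,\ehr_{\Po_2|_{J^c}}(k+s)$, which is precisely the $s$-th term in the claimed identity.

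The core technical point is to verify that the two boundary slices $t = \pm k$, for which Corollary~\ref{cor:AB_ehrhart} does not formally apply, also yield the correct contributions $s = \mp k$. At $t = -k$ the slice is $-2k\Po_2$ with lattice-point count $\ehr_{\Po_2}(2k)$; the target summand at $s = k$ is $\sum_J (-1)^{|J|}\ehr_{\Po_1|_J}(-1)\,\ehr_{\Po_2|_{J^c}}(2k) = \sum_J E(\relint \Po_1|_J)\,\ehr_{\Po_2|_{J^c}}(2k)$ by Ehrhart--Macdonald. Dual integrality of $\Po_1$ is exactly what forces the $J \neq \emptyset$ terms to vanish: full-dimensionality and boundedness of $\Po_1$ imply that for every nonempty $J$ some facet normal $d_i \in \Znn^n$ has a nonzero, hence $\ge 1$, restriction to $J$, so any $x \in \Z_{>0}^J$ satisfies $\inner{d_i,x} \ge 1$ and cannot lie in $\relint \Po_1|_J$. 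At $t = k$ the slice is $2k\Po_1$ with count $\ehr_{\Po_1}(2k)$; the $s = -k$ summand equals $\sum_J E(\relint((2k+1)\Po_1|_J))$ after reciprocity, which by the strict-versus-nonstrict trick from the proof of Theorem~\ref{thm:AB_dvol} rewrites as $\sum_J E(2k\Po_1|_J \cap \Z_{>0}^J)$; the support partition of $\Znn^n$ together with anti-blocking closure collapses this to $\ehr_{\Po_1}(2k)$.

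The main obstacle is precisely this boundary bookkeeping, since Corollary~\ref{cor:AB_ehrhart} is stated only for strictly positive scalings; dual integrality of $\Po_1$ is the minimal hypothesis that allows Theorem~\ref{thm:AB_dvol} to be pushed to $a=0$ or $b=0$, and without it the boundary slices would contribute spurious interior-lattice-point terms. With the boundary matched to the interior, interchanging the order of summation between $J$ and $s \in \{-k,\ldots,k\}$ delivers the stated formula.
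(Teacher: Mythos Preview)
Your proof is correct and follows the same slicing strategy as the paper: decompose $k\Po \cap \Z^{n+1}$ by the last coordinate $t$ and apply Theorem~\ref{thm:AB_dvol} together with Ehrhart--Macdonald reciprocity to each slice. You are in fact more careful than the paper in two places: your slice formula $(k+t)\Po_1 - (k-t)\Po_2$ is the geometrically correct one (the paper's displayed slice has the roles of $k\pm t$ swapped, though this is harmless after summation over $t$), and you explicitly treat the boundary values $t = \pm k$, where one scaling factor vanishes and Theorem~\ref{thm:AB_dvol} does not literally apply, whereas the paper simply asserts that the earlier results ``complete the proof.''
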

\begin{proof}
    For $k > 0$, 
    \[
        k\Po \ = \ \conv( 2k\Po_1 \times \{k\} \cup -2k\Po_2 \times \{-k\} ).
    \]
    In particular, if $(\p,t)$ is a lattice point in $k\Po$, then $-k \le t
    \le k$. For fixed $t$, 
    \[
        \{ \p \in \Z^n : (\p,t) \in k\Po\} \ = \ \left((k-t)\Po_1 - (k+t)\Po_2
        \right) \cap \Z^n.
    \]
    Theorems~\ref{thm:AB_dvol} and~\ref{thm:EM} then complete the proof.
\end{proof}

\section{Triangulations and transfers}
\label{sec:triang}

If $\dP = \dP_\circ =  (\P,\preceq,\preceq)$ is induced by a single poset, then
Corollaries~\ref{cor:TO-2l} and~\ref{cor:AB_2l} assure us that
$\TOrd{\dP_\circ}$ and $\TChain{\dP_\circ}$ are 2-level
and~\cite[Thm.~2.4]{Sullivant} implies that both polytopes have unimodular
triangulations with respect to the affine lattice $\AffLat = 2\Z^\P \times
(2\Z + 1)$.  In this section we give explicit triangulations of the double
chain polytope $\TChain{\dP}$ and, in the compatible case, of the double order
polytope $\TOrd{\dP}$.  These triangulations will be \emph{regular},
\emph{unimodular}, and \emph{flag}.  To that end, we will generalize Stanley's
approach~\cite{TwoPoset} from poset polytopes to double poset polytopes.  We
put the triangulation to good use and explicitly compute the Ehrhart
polynomial and the volume of $\TChain{\dP}$ and, in case that $\dP$ is
compatible, of $\TOrd{\dP}$.

\subsection{Triangulations of double poset polytopes}
\label{ssec:TO_triang}

Recall from the introduction that for a poset $(\P,\preceq)$, the order
polytope $\Ord{\P}$ parametrizes all order preserving maps $f : \P \rightarrow
[0,1]$. Any $f \in \Ord{\P}$ induces a partial order $\P_f =
(\P,\preceq_f)$ by $a \prec_f b$ if $a \prec b$ or, when $a,b$ are
incomparable, if $f(a) < f(b)$. Clearly, $\preceq_f$ refines $\preceq$ and
hence $\Ord{\P_f} \subseteq \Ord{\P}$. Since filters in $\P_f$ are filters in
$\P$, $\Ord{\P_f}$ is a vertex-induced subpolytope of $\Ord{\P}$. If $f$ is
\emph{generic}, that is, $f(a) \neq f(b)$ for all $a \neq b$, then $\preceq_f$
is a total order and $\Ord{\P_f}$ is a unimodular simplex of dimension $|\P|$.
Stanley showed that the collection of all these simplices constitute a
unimodular triangulation of $\Ord{\P}$. More precisely, this canonical
triangulation of $\Ord{\P}$ is given by the \Defn{order complex}
$\Delta(\Birk{\P})$ of $\Birk{\P}$, i.e., the collection of chains in the
Birkhoff lattice of $\P$ ordered by inclusion.  Since a collection of filters
$\Filter_0,\dots,\Filter_k$ is \emph{not} a chain if and only if $\Filter_i
\not\subseteq \Filter_j$ and $\Filter_j \not\subseteq \Filter_i$ for some $0
\le i,j \le k$, the canonical triangulation is \emph{flag}.

Stanley~\cite{TwoPoset} elegantly \emph{transferred} the canonical
triangulation of $\Ord{\P}$ to $\Chain{\P}$ in the following sense. Define the
\Defn{transfer map} $\Transfer_\P : \Ord{\P} \rightarrow \Chain{\P}$ by
\begin{equation}\label{eqn:transfer}
    (\Transfer_\P f)(b) \ := \ \min\{ f(b) - f(a) : a \prec b \},
\end{equation}
for $f \in \Ord{\P}$ and $b \in \P$. This is a \Defn{piecewise linear} map and
the domains of linearity are exactly the full-dimensional simplices
$\Ord{\P_f}$ for generic $f$. In particular, $\Transfer_\P(\1_\Filter) =
\1_{\min(\Filter)}$ for any filter $\Filter \subseteq \P$, which shows that
$\Transfer_\P$ maps $\Ord{\P}$ into $\Chain{\P}$. To show that $\Transfer_\P$
is a PL homeomorphism of the two polytopes, Stanley gives an explicit inverse
$\iTransfer_\P : \Chain{\P} \rightarrow \Ord{\P}$ by
\begin{equation}\label{eqn:itransfer}
    (\iTransfer_\P g)(b) \ := \ \max\{ g(a_0) + \cdots + g(a_{k-1}) +
    g(a_k) : a_0 \prec \cdots \prec a_{k-1} \prec a_k \preceq b \},
\end{equation}
for any $g \in \Chain{\P}$.
Note that our definition of $\iTransfer_\P$ differs from that in~\cite{TwoPoset} in
that we do \emph{not} require that the chain has to end in $b$. This will be
important later. It can be easily checked that  $\iTransfer_\P$ is an inverse to
$\Transfer_\P$. Hence, the simplices
\[
    \conv( \1_{\min(\Filter_0)}, \dots, \1_{\min(\Filter_k)} ) \quad \text{
    for } \quad \{ \Filter_0 \subseteq  \cdots \subseteq \Filter_k \} \in
    \Delta(\Birk{\P})
\]
constitute a flag triangulation of $\Chain{\P}$.

We will follow the same approach as Stanley but, curiously, it will be simpler
to start with a triangulation of $\TChain{\dP}$. Recall from
Section~\ref{ssec:TO_faces} that a pair of chains $C = C_+ \uplus C_-$ with
$C_\sigma \subseteq \Birk{\P_\sigma}$ is non-interfering if $\min(\Filter_+)
\cap \min(\Filter_-) = \emptyset$ for any $\Filter_\sigma \in C_\sigma$ for
$\sigma = \pm$.

\begin{cor}\label{cor:TC_triang}
    Let $\dP = (\P,\preceq_+,\preceq_-)$ be a double poset. Then a regular
    triangulation of $\TChain{\dP}$ is given as follows: The
    $(k-1)$-dimensional simplices are in bijection to non-interfering pairs of
    chains $C = C_+ \uplus C_+ \subseteq \TBirk{\dP}$ with $|C| = |C_+| +
    |C_-| = k$. Moreover, the triangulation is regular, unimodular (with
    respect to $\AffLat$), and flag.
\end{cor}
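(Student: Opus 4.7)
The plan is to apply Theorem~\ref{thm:AB_subdiv} to the two anti-blocking polytopes $2\Chain{\P_+}$ and $2\Chain{\P_-}$, each equipped with Stanley's canonical triangulation of its chain polytope.

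First I would recall Stanley's triangulation of $\Chain{\P_\sigma}$ (as reviewed in the opening of this section): the transfer map $\Transfer_{\P_\sigma}$ from \eqref{eqn:transfer} carries the order-complex triangulation of $\Ord{\P_\sigma}$ to the triangulation of $\Chain{\P_\sigma}$ whose simplices are
\[
\conv\{\1_{\min(\Filter_0)},\dots,\1_{\min(\Filter_k)}\}
\qquad \text{for chains } \Filter_0\subsetneq\cdots\subsetneq\Filter_k \text{ in } \Birk{\P_\sigma}.
\]
Standard arguments show this triangulation is regular (it is the pulling triangulation induced by any linear extension of $\Birk{\P_\sigma}$), unimodular with respect to $\Z^\P$ (the $2$-levelness of $\Chain{\P_\sigma}$ together with \cite[Thm.~2.4]{Sullivant}), and flag (two filters failing to form a chain under inclusion are already a non-edge).

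Next I would apply Theorem~\ref{thm:AB_subdiv} with $\Po_1 := 2\Chain{\P_+}$, $\Po_2 := 2\Chain{\P_-}$, and the scaled Stanley triangulations on each factor. Parts (i)--(iii) of the theorem directly yield a regular, unimodular (with respect to $\AffLat = 2\Z^\P \times (2\Z+1)$), flag triangulation $\subdiv$ of $\TChain{\dP} = \CayDiff{2\Chain{\P_+}}{2\Chain{\P_-}}$.

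The remaining task, and the one requiring care, is to translate the combinatorial description of $\subdiv$ into non-interfering pairs of chains. By construction, every simplex of $\subdiv$ has the form $\tau_+ * \tau_-$ where, for some partition $\P = J_+ \uplus J_-$, the simplex $\tau_\sigma$ belongs to Stanley's triangulation of $\Chain{\P_\sigma}|_{J_\sigma}$; since $\Chain{\P_\sigma}|_{J_\sigma} = \Chain{\P_\sigma|_{J_\sigma}}$, such a $\tau_\sigma$ corresponds to a chain $C_\sigma$ of filters in $\Birk{\P_\sigma}$ satisfying $\min(\Filter)\subseteq J_\sigma$ for every $\Filter\in C_\sigma$. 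Disjointness of $J_+$ and $J_-$ translates into non-interference of $(C_+,C_-)$, and conversely every non-interfering pair arises this way by taking $J_+ := \bigcup_{\Filter\in C_+}\min(\Filter)$ (with $J_-$ any set containing $\bigcup_{\Filter\in C_-}\min(\Filter)$ in its complement).

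The main obstacle is well-definedness and bijectivity of this correspondence: a priori the same geometric simplex could be produced from several choices of the splitting $J_+\uplus J_-$. The key observation is that the simplex $\tau_+ * \tau_-$ is determined entirely by its vertex set
\[
\{(2\1_{\min(\Filter_+)}, +1) : \Filter_+ \in C_+\} \ \cup \ \{(-2\1_{\min(\Filter_-)}, -1) : \Filter_- \in C_-\},
\]
hence only by the pair $(C_+,C_-)$, and the choice of $J$ is auxiliary. Since $\min$ is a bijection between filters of $\P_\sigma$ and antichains of $\P_\sigma$, distinct pairs of chains produce distinct vertex sets, giving the stated bijection between simplices of $\subdiv$ and non-interfering pairs of chains, with dimension one less than $|C_+|+|C_-|$.
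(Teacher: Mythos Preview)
Your proposal is correct and follows essentially the same approach as the paper: invoke Stanley's canonical triangulation of each $\Chain{\P_\sigma}$ (regular, unimodular, flag), apply Theorem~\ref{thm:AB_subdiv} to $\CayDiff{2\Chain{\P_+}}{2\Chain{\P_-}}$, and identify the resulting joins with non-interfering pairs of chains. Your treatment is slightly more careful than the paper's in spelling out why the auxiliary choice of $J$ does not affect the simplex and why the correspondence with non-interfering pairs is bijective; the paper simply asserts that the condition ``$F(C_+)$ and $F(C_-)$ lie in complementary coordinate subspaces'' is equivalent to non-interference and stops there.
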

\begin{proof}
    The canonical triangulation of $\Chain{\P_\sigma}$ is regular, unimodular,
    and flag for $\sigma=\pm$. As described above, its
    $(l_\sigma-1)$-simplices are in bijection to chains $C_\sigma \subseteq
    \Birk{\P_\sigma}$ of length $|C_\sigma| = l_\sigma$. More
    precisely, the simplex corresponding to $C_\sigma$ is given by
    \[
        F(C_\sigma) = \conv( \1_{\min(\Filter_\sigma)} : \Filter_\sigma \in
        C_\sigma ).
    \]
    By Theorem~\ref{thm:AB_subdiv} applied to $\TChain{\dP} =
    \CayDiff{2\Chain{\P_+}}{2\Chain{\P_-}}$, it follows that a unimodular and
    flag triangulation is given by the joins $2F(C_+) * 2F(C_-)$ for all
    chains $C_\sigma \subseteq \Birk{\P_\sigma}$ such that $F(C_+)$ and
    $F(C_-)$ lie in complementary coordinate subspaces. This, however, is
    exactly the case when $\min(\Filter_+) \cap \min(\Filter_-) = \emptyset$
    for all $\Filter_\sigma \in C_\sigma$ for $\sigma=\pm$.
\end{proof}

Corollary~\ref{cor:TC_triang} gives a canonical triangulation that
combinatorially can be described as a subcomplex of $\Delta(\TBirk{\dP}) =
\Delta(\Birk{\P_+}) * \Delta(\Birk{\P_-})$, called the \Defn{non-interfering
complex}
\newcommand\niDelta[1]{\Delta^\mathrm{ni}(#1)}%
\[
    \niDelta{\dP} \ := \ \{ C : C = C_+ \uplus C_- \in
    \Delta(\TBirk{\dP}) ,\, C \text{ non-interfering} \}.
\]
Associating $\Delta(\Birk{\P})$ to a poset $\P$ is very natural and can be
motivated, for example, from an algebraic-combinatorial approach to the order
polynomial (cf.~\cite{crt}). It would be very interesting to know if the
association $\dP$ to $\niDelta{\dP}$ is equally natural from a purely
combinatorial perspective.

Given a double poset $\dP = (\P,\preceq_+,\preceq_-)$, we define a piecewise linear map $\TPsi{\dP} : \R^\P \rightarrow \R^\P$ by
\begin{equation}\label{eqn:TPsi}
    \TPsi{\dP}(g) \ := \ \iTransfer_{\P_+}(g) \ - \ \iTransfer_{\P_-}(-g),
\end{equation}
for any $g \in \R^\P$. Here, we use that $\iTransfer_{}$, as given 
in~\eqref{eqn:itransfer}, is defined on all of $\R^\P$ with
the following important property: For $g \in \R^\P$, let us write
$g = g^+ - g^-$, where $g^+,g^- \in \Rnn^\P$ with disjoint
supports. Then $\iTransfer_{\P_\sigma}(g) = \iTransfer_{\P_\sigma}(g^+)$
for $\sigma = \pm$. Thus, 
\[
    \TPsi{\dP}(g) \ = \ \iTransfer_{\P_+}(g^+) - \iTransfer_{\P_-}(g^-),
\]
for any $g \in \R^\P$.  In particular, $\TPsi{\dP}$ takes $\lambda
\Chain{\P_+} - \mu \Chain{\P_-}$ into $\lambda \Ord{\P_+} - \mu \Ord{\P_-}$
for any $\lambda,\mu \ge 0$.  Indeed, for any pair of antichains $A_\sigma
\subseteq \P_\sigma$, first observe that $\1_{A_+} - \1_{A_-} =
\1_{A_+\setminus A_-} - \1_{A_-\setminus A_+}$. Hence, it suffices to assume
that $A_+ \cap A_- = \emptyset$. We compute
\[
    \TPsi{\dP}(\1_{A_+} - \1_{A_-} ) \ = \ \1_{\Filter_+} -
    \1_{\Filter_-},
\]
where for $\sigma = \pm$, $\Filter_\sigma \subseteq \P_\sigma$ is the filter generated by
$A_\sigma$. If $\dP$ is a compatible double poset, then
Corollary~\ref{cor:TO_vertical_edges} implies that $\TPsi{\dP}$ is a
surjection on vertex sets.

\begin{lem}\label{lem:TPsi_iso}
    Let $\dP=(\Po,\preceq_+,\preceq_-)$ be a compatible double poset. Then $\TPsi{\dP} : \R^\P
    \rightarrow \R^\P$ is a lattice-preserving piecewise linear isomorphism. 
\end{lem}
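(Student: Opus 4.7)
My plan is to exploit compatibility to reduce $\TPsi{\dP}$ to an explicit recursion along a common linear extension of $\preceq_+$ and $\preceq_-$, and then to invert that recursion to build a PL lattice-preserving inverse $\Phi_{\dP}$.

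First, using compatibility, I fix a common linear extension $p_1, p_2, \ldots, p_n$ of $\preceq_+$ and $\preceq_-$, and observe that any chain in $\P_\sigma$ whose top lies $\preceq_\sigma p_i$ must be contained in $\{p_1, \ldots, p_i\}$. Splitting the maximum in~\eqref{eqn:itransfer} according to whether the optimal chain contains $p_i$ then yields the local identity
\[
    \iTransfer_{\P_+}(g^+)(p_i) \ = \ g^+(p_i) + \alpha_i, \qquad \iTransfer_{\P_-}(g^-)(p_i) \ = \ g^-(p_i) + \beta_i,
\]
where
\[
    \alpha_i \ = \ \max\!\left(0,\ \max_{p_j \prec_+ p_i}\iTransfer_{\P_+}(g^+)(p_j)\right)
\]
and $\beta_i$ is the analogous quantity for $\P_-$. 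Subtracting yields $\TPsi{\dP}(g)(p_i) = g(p_i) + \alpha_i - \beta_i$, and crucially compatibility forces the maxima inside $\alpha_i$ and $\beta_i$ to range only over indices $j < i$, so both are PL functions, with integer coefficients, depending only on $g(p_1), \ldots, g(p_{i-1})$.

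To construct the inverse I will define $\Phi_{\dP}:\R^\P\to\R^\P$ by the solved recursion $\Phi_{\dP}(f)(p_i) := f(p_i) - \alpha_i + \beta_i$, where at step $i$ the quantities $\alpha_i, \beta_i$ are computed from the previously determined values $\Phi_{\dP}(f)(p_1), \ldots, \Phi_{\dP}(f)(p_{i-1})$ via the same $\max$, sum, and positive-/negative-part operations. Since every step uses only these PL operations with integer coefficients, $\Phi_{\dP}$ is PL and sends $\Z^\P$ to $\Z^\P$, and induction on $i$ gives both $\Phi_{\dP}\circ\TPsi{\dP}=\mathrm{id}$ and $\TPsi{\dP}\circ\Phi_{\dP}=\mathrm{id}$. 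This exhibits $\TPsi{\dP}$ as a PL, lattice-preserving bijection, as desired.

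The main obstacle is the decoupling in the first step: without a common linear extension, one cannot arrange for $\iTransfer_{\P_+}(g^+)(p_i)$ and $\iTransfer_{\P_-}(g^-)(p_i)$ to depend simultaneously only on earlier $g(p_j)$ under a single ordering, and the recursive inversion collapses. Compatibility is also genuinely necessary here: on the noncompatible two-element double poset with $a \prec_+ b$ and $b \prec_- a$, both $(1,-1)$ and $(0,0)$ are sent to the origin by $\TPsi{\dP}$, so even injectivity already fails.
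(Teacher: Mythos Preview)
Your proposal is correct and follows essentially the same approach as the paper: both arguments use a common linear extension afforded by compatibility to write $\TPsi{\dP}(g)(p_i)=g(p_i)+\alpha_i-\beta_i$ with $\alpha_i,\beta_i$ depending only on $g(p_1),\dots,g(p_{i-1})$, and then invert this recursion step by step to build a PL, lattice-preserving inverse. Your exposition is somewhat more explicit (writing out $\alpha_i$ as a nested $\max$), and the closing counterexample on $([2],\le,\ge)$ is a nice touch that the paper addresses only later, in a different form.
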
    
\begin{proof}
    It follows directly from \eqref{eqn:TPsi} that $\TPsi{\dP}$ is piecewise
    linear. To show that $\TPsi{\dP}$ is an isomorphism, we explicitly
    construct for $f\in\R^\P$ a $g \in \R^\P$ such that $\TPsi{\dP}(g)=f$. 
    Since $\dP$ is compatible, we can assume that $\P = \{a_1,\dots,a_n\}$
    such that $a_i \prec_+ a_j$ or $a_i \prec_- a_j$ implies $i < j$. 

    It follows from~\eqref{eqn:TPsi} that $\TPsi{\dP}(g')(a_1) = g'(a_1)$ for
    any $g' \in \R^\P$ and hence, we can set $g(a_1) := f(a_1)$. Now assume
    that $g$ is already defined on $D_k := \{ a_1,\dots, a_k \}$ for some $k$.
    For $g'\in\R^\P$ observe that 
    \[
        \iTransfer_{\P_+}(g')(a_{k+1}) \ = \ \max(g'(a_{k+1}),0) + r
    \]
    where $r=0$ or $r = \iTransfer_{\P_+}(g')(a_i)$ for some $i \le k$.
    Analogously,
    \[
        \iTransfer_{\P_-}(-g')(a_{k+1}) \ = \ \max(-g'(a_{k+1}),0) + s
    \]
    where $s=0$ or $s = \iTransfer_{\P_-}(-g')(a_j)$ for some $j \le k$.
    Thus, we set
    \[
        g(a_{k+1})\ := \  f(a_{k+1}) - r + s
    \]
    This uniquely determines $g$ by induction on $k$.  To prove that
    $\TPsi{\dP}$ is lattice-preserving, observe that by~\eqref{eqn:TPsi} we
    have $\TPsi{\dP}(\Z^\P)\subseteq\Z^\P$. Moreover, if $f=\TPsi{\dP}(g)$
    with $f\in\Z^\P$ and the above construction shows that $g\in\Z^\P$. Hence,
    $\TPsi{\dP}(\Z^\P)\subseteq\Z^\P$, which finishes the proof. 
\end{proof}

Using the notation from~\eqref{eqn:Tfaces} in Section~\ref{ssec:TO_faces}, the
lemma shows that 
\begin{equation}\label{eqn:TOrd_triang}
    \{ \Tface{C} : C \in \ \niDelta{\dP} \}
\end{equation}
is a realization of the flag simplicial complex $\niDelta{\dP}$ by
unimodular simplices inside $\TOrd{\dP}$. Using Gr\"obner bases in
Section~\ref{sec:GB}, we will show the following result.

\begin{thm}\label{thm:T_iso}
    Let $\dP = (\P,\preceq_+,\preceq_-)$ be a compatible double poset.
    Then the map 
    \[
        (g,t) \mapsto (\TPsi{\dP}(g),t)
    \]
    is a piecewise linear homeomorphism from $\R^{\P}
    \times \R$ to itself that preserves the lattice $\Z^\P \times \Z$.
    In particular, it maps $\TChain{\dP}$ to $\TOrd{\dP}$ and hence
    $\niDelta{\dP}$ is a regular, unimodular, and flag triangulation of
    $\TOrd{\dP}$.
\end{thm}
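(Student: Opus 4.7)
The plan is to transport the canonical triangulation of $\TChain{\dP}$ from Corollary~\ref{cor:TC_triang} to $\TOrd{\dP}$ via the map $\Phi(g,t) := (\TPsi{\dP}(g), t)$. By Lemma~\ref{lem:TPsi_iso}, $\Phi$ is already a PL homeomorphism of $\R^\P \times \R$ preserving the lattice $\Z^\P \times \Z$ (hence also $\AffLat = 2\Z^\P \times (2\Z+1)$), so the global homeomorphism and lattice-preservation claims are immediate. The substantive content is the equality $\Phi(\TChain{\dP}) = \TOrd{\dP}$ together with the fact that $\Phi$ acts linearly on each simplex of the source triangulation, from which the triangulation statement then follows immediately.

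I would first verify that $\Phi$ is linear on each simplex $\Tface{C}$ of the canonical triangulation of $\TChain{\dP}$ for $C = C_+ \uplus C_- \in \niDelta{\dP}$. A point of $\Tface{C}$ has first coordinate $g = g_+ - g_-$ with
\[
    g_\sigma \ = \ 2\sum_{\Filter_\sigma \in C_\sigma} \alpha_{\Filter_\sigma} \1_{\min(\Filter_\sigma)}, \qquad \sigma \in \{+,-\},
\]
and the non-interference hypothesis forces $g_+$ and $g_-$ to have disjoint supports, so $\TPsi{\dP}(g) = \iTransfer_{\P_+}(g_+) - \iTransfer_{\P_-}(g_-)$ by definition of $\TPsi{\dP}$. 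Stanley's map $\iTransfer_{\P_\sigma}$ is linear on the canonical simplex of $\Chain{\P_\sigma}$ spanned by the vertices $2\1_{\min(\Filter_\sigma)}$ and maps it bijectively onto the Stanley simplex of $\Ord{\P_\sigma}$ with vertices $\{2\1_{\Filter_\sigma} : \Filter_\sigma \in C_\sigma\}$ (a short chain-counting argument using the definition of $\iTransfer$ gives $\iTransfer_{\P_\sigma}(2\1_{\min(\Filter_\sigma)}) = 2\1_{\Filter_\sigma}$). Consequently $\Phi|_{\Tface{C}}$ is an affine bijection onto the simplex $\Tface{C} \subseteq \TOrd{\dP}$ of~(\ref{eqn:TOrd_triang}), sending $(2\1_{\min(\Filter_+)}, 1) \mapsto (2\1_{\Filter_+}, 1)$ and $(-2\1_{\min(\Filter_-)}, -1) \mapsto (-2\1_{\Filter_-}, -1)$.

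Next I would establish $\Phi(\TChain{\dP}) \subseteq \TOrd{\dP}$ by slicing: the height-$t$ slice of $\TChain{\dP}$ is the anti-blocking difference $(1+t)\Chain{\P_+} - (1-t)\Chain{\P_-}$, and the property of $\TPsi{\dP}$ recorded just before the theorem sends it into $(1+t)\Ord{\P_+} - (1-t)\Ord{\P_-}$, i.e.\ into the height-$t$ slice of $\TOrd{\dP}$. The main obstacle, as the statement of the theorem anticipates, is the reverse inclusion $\TOrd{\dP} \subseteq \Phi(\TChain{\dP})$: the images $\Phi(\Tface{C})$ are known to be unimodular simplices inside $\TOrd{\dP}$ that meet only along faces, but it is not clear a priori that their union exhausts the convex body $\TOrd{\dP}$. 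The route the authors adopt, carried out in Section~\ref{sec:GB}, is algebraic: construct a quadratic Gr\"obner basis for the toric ideal of $\TOrd{\dP}$ and invoke Sturmfels' correspondence~\cite[Thm.~8.3]{Sturmfels96} to obtain a regular, unimodular, flag triangulation of $\TOrd{\dP}$. Matching its squarefree initial monomials with non-interfering chains identifies the maximal cells of this triangulation as the simplices $\Tface{C}$ for $C \in \niDelta{\dP}$; this simultaneously yields the reverse inclusion and all three asserted properties of the triangulation, completing the proof, since regularity, unimodularity with respect to $\AffLat$, and flagness are then automatic from the simplex-linearity of $\Phi$ established in the second paragraph.
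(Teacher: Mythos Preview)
Your proposal is correct and in fact matches one of the two arguments the paper gives: the forward reference to Section~\ref{sec:GB} and Corollary~\ref{cor:TO_triang} is exactly the Gr\"obner-basis route you sketch, and your handling of the slice inclusion and simplex-by-simplex linearity is accurate.

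However, the proof block that the paper places immediately after the theorem statement does \emph{not} defer the surjectivity step to Section~\ref{sec:GB}; it gives a direct geometric argument. The paper observes that every maximal simplex of the image complex contains the edge $e = \conv\{(\0,1),(\0,-1)\}$, so it suffices to check that each facet of a maximal image simplex that does \emph{not} contain $e$ lies in a supporting hyperplane of $\TOrd{\dP}$. For a maximal non-interfering pair $C$, the paper explicitly inverts $\TPsi{\dP}$ on the corresponding simplex (as a linear map $T$) and verifies that the pullback $\ell \circ T$ of each such facet functional equals $L_{C'}$ for some alternating chain $C'$, hence is facet-defining for $\TOrd{\dP}$ by Corollary~\ref{cor:TO_ineq}. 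This is a self-contained ``star around $e$ fills the polytope'' argument and is logically independent of the Gr\"obner machinery. Your route trades that explicit hyperplane computation for Sturmfels' correspondence, which is cleaner but requires the full content of Theorem~\ref{thm:TO_GB}; the paper's inline proof buys independence from Section~\ref{sec:GB} at the cost of a somewhat delicate coordinate computation.
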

\begin{proof}
    By the previous lemma, \eqref{eqn:TOrd_triang} is a realization of
    $\niDelta{\dP}$ in $\TOrd{\dP}$ without new vertices. Moreover, every
    maximal simplex contains the edge $e = \conv\{(\0,1), (\0,-1)\}$. Hence,
    it suffices to show that for every maximal simplex in $\niDelta{\dP}$, the
    supporting hyperplane of every facet not containing $e$ is supporting for
    $\TOrd{\dP}$.

    Let $C = \{ \Filter_{+0} \subset \cdots \subset \Filter_{+k}, \Filter_{-0}
    \subset \cdots \subset \Filter_{-l}\}$ be two maximal non-interfering
    chains. Set $A_{+i} := \min(\Filter_{+i})$ for $1 \le i \le k$ and
    $A_{-0},\dots,A_{-l}$ likewise. It follows that $\P_1 = \bigcup A_{+i}$
    and $\P_2 = \bigcup A_{-j}$ give a partition of $\P$. In particular, since
    $C$ was maximal, we have that $\{a^+_{k-i-1}\} = A_{+i} \setminus
    A_{+(i-1)}$ and $\P_1 = \{a^+_1,\dots,a^+_k\}$. In particular, if $a^+_s
    \prec_+ a^+_t$, then $s < t$. The same argument yields $\P_2 =
    \{a^-_1,\dots,a^-_l\}$ and the labelling is a linear extension of
    $(\P_2,\preceq_-)$. 

    We focus on $\P_1$; the argument for $\P_2$ is analogous. Pick the maximal
    chain $D$ in $(\P_1,\preceq_+)$ starting in $a^+_k$. Then $A_{+i} \cap D
    \neq \emptyset$ for all $i > 0$ and hence $\{(g,t) \in \R^{\P_1}
    :\inner{\1_D,g} = 1\}$ is the hyperplane for the maximal simplex in the
    triangulation of $\Chain{\P_1,\preceq_+}$ corresponding to
    $A_{+0},\dots,A_{+k}$ and not containing the origin. Thus, one of the two
    hyperplanes supporting a facet of the simplex in $\TChain{\dP}$
    corresponding to $C$ is given by $H := \{ (g,t) \in \R^\P \times \R :
    \ell(g,t) = 1 \}$ where $\ell(g,t) = \inner{\1_D,g} - t$.
    
    Now, $\TPsi{\dP}$ is linear on the simplex $C$ in $\TChain{\dP}$ and can
    be easily inverted. Since $\dP$ is compatible, we can find a linear
    extension $\sigma : \P \rightarrow \{1,\dots,|\P|\}$ that respects the
    constructed linear extensions on $\P_1$ and $\P_2$. On the image of $C$
    under $\TPsi{\dP}$, the inverse is given by the linear transformation $T :
    \R^\P \times \R \to \R^\P \times \R$ with $T(f,t) = (f',t)$ and $f' : \P
    \rightarrow \R$ is defined as follows. If $b \in \P_1$, then by $f'(b) =
    f(b) - f(\bar b)$, $\bar b \prec_+ b$ is a cover relation and $\sigma(\bar
    b)$ is maximal. If $b \in \P_2$, we choose $\bar b$ covered by $b$ in with
    respect to $\preceq_-$. It can now be checked that $\ell \circ T = L_C$
    for some alternating chain $C$. Thus $H$ is supporting for $\TOrd{\dP}$
    and the map $\TPsi{\dP}$ maps $\TChain{\dP}$ onto $\TOrd{\dP}$.
\end{proof}

Theorem~\ref{thm:T_iso} does not extend to the non-compatible case as the
following example shows.

\begin{example}
    Consider the double poset $\dP = ([2],\le,\ge)$, that is, $\P_+$ is the
    $2$-chain $\{1,2\}$ and $\P_-$ is the opposite poset.  Then $\Chain{\P_+}
    = \Chain{\P_-} = T := \{ x \in \R^2 : x \ge 0, x_1 + x_2 \le 1\}$ and
    $\TChain{\dP}$ is a three-dimensional octahedron with volume
    $\frac{16}{3}$.  Any triangulation of the octahedron has at least four
    simplices.
    In contrast, $\Ord{\P_-} = \1 - \Ord{\P_+}$ and hence $\TOrd{\dP}$ is
    linearly isomorphic to a prism over a triangle with volume $4$. Any
    triangulation of the prism has exactly $3$ tetrahedra. 
\end{example}

\subsection{Volumes and Ehrhart polynomials}\label{ssec:TO_ehrhart}
The canonical subdivision of $\Ord{\P}$ makes it easy to compute its volume.
For a generic $f \in \Ord{\P}$, there is a unique linear extension $\sigma : \P
\rightarrow \{1,2,\dots,d\}$ where $d := |\P|$  such that 
\[
    \Ord{\P_f} \ = \ \{ h \in \R^P : 0 \le h(\sigma^{-1}(1)) \le \cdots \le
    h(\sigma^{-1}(d)) \le 1 \}.
\]
In particular, the full-dimensional simplex $\Ord{\P_f}$ is unimodular
relative to $\Z^\P \subseteq \R^\P$ and has volume $\vol(\Ord{\P_f}) =
\frac{1}{|\P|!}$. If we denote by $e(\P)$ the number of linear extensions of
$\P$, then Stanley~\cite{TwoPoset} showed the following.

\begin{cor}\label{cor:O_vol}
    $\nvol(\Ord{\P}) = |\P|! \cdot \vol(\Ord{\P}) = e(\P)$.
\end{cor}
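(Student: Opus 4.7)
The plan is to read off the volume directly from the canonical triangulation of $\Ord{\P}$ described in the paragraph preceding the corollary, so that almost all of the work has already been done.

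First I would observe that the full-dimensional simplices of the canonical triangulation of $\Ord{\P}$ are in bijection with linear extensions of $\P$. Indeed, a generic $f \in \Ord{\P}$ (meaning $f(a)\neq f(b)$ for $a\neq b$) induces a total order $\preceq_f$ that refines $\preceq$, hence a linear extension $\sigma_f : \P \to \{1,\dots,d\}$ with $d = |\P|$; and the cell $\Ord{\P_f}$ containing $f$ depends only on $\sigma_f$. Conversely, every linear extension $\sigma$ arises this way, giving the simplex
\[
\Ord{\P_\sigma} \ = \ \{ h \in \R^\P : 0 \le h(\sigma^{-1}(1)) \le \cdots \le h(\sigma^{-1}(d)) \le 1 \}.
\]

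Next I would invoke the fact, already stated in the excerpt, that each such $\Ord{\P_\sigma}$ is unimodular with respect to $\Z^\P$, so that its Euclidean volume equals $\tfrac{1}{d!}$. Since distinct linear extensions yield simplices whose interiors are disjoint (they are distinguished by the induced total order on the coordinates) and the union of their closures covers $\Ord{\P}$ (any non-generic $f$ lies on the boundary of such a simplex), summing gives
\[
\vol(\Ord{\P}) \ = \ \sum_{\sigma} \vol(\Ord{\P_\sigma}) \ = \ \frac{e(\P)}{d!},
\]
where the sum runs over all linear extensions $\sigma$ of $\P$. Multiplying by $d! = |\P|!$ converts Euclidean to normalized volume and yields $\nvol(\Ord{\P}) = e(\P)$.

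There is really no obstacle here beyond verifying that the listed simplices cover $\Ord{\P}$ without overlap of interiors, which is immediate from the uniqueness of $\sigma_f$ for generic $f$; this step only uses that $\{ f \in \Ord{\P} : f(a) = f(b)\}$ for distinct $a,b\in\P$ is a measure-zero set and that $\Ord{\P_\sigma}$ is precisely the closure of the set of generic $f$ with $\sigma_f = \sigma$. All other ingredients (unimodularity, the precise description of $\Ord{\P_\sigma}$) are supplied verbatim in the paragraph preceding the corollary.
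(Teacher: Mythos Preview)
Your proposal is correct and follows exactly the approach the paper takes: the paper does not give a separate proof but simply records the corollary as an immediate consequence of the preceding paragraph (and Stanley's original argument), namely that the canonical triangulation of $\Ord{\P}$ has one unimodular simplex of volume $\tfrac{1}{|\P|!}$ per linear extension. You have spelled out precisely these details.
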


For the Ehrhart polynomial $\ehr_{\Ord{\P}}(n)$ of $\Ord{\P}$ it suffices to
interpret the lattice points in $n \Ord{\P}$ for $n > 0$. Every point in
$n\Ord{\P} \cap \Z^P$ corresponds to an order preserving map $\phi : \P
\rightarrow [n+1]$. Counting order preserving maps is
classical~\cite[Sect.~3.15]{EC1new}: the \Defn{order polynomial}
$\Omega_\P(n)$ of $\P$ counts the number of order preserving maps into
$n$-chains. The \Defn{strict} order polynomial $\Omega^\circ_\P(n)$ counts the
number of strictly order preserving maps $f : \P \rightarrow [n]$, that is,
$f(a) < f(b)$ for $a \prec b$.  The transfer map $\Transfer_\P$ as well as its
inverse $\iTransfer_\P$ (given in~\eqref{eqn:transfer}
and~\eqref{eqn:itransfer}, respectively) both take lattice points to lattice
points and hence, together with Theorem~\ref{thm:EM}, yield the following
result.

\begin{cor}\label{cor:O_C_iso}
    Let $\P$ be a finite poset. Then for every $n > 0$
    \begin{align*}
        \Omega_\P(n+1)
        & \ = \ 
        \ehr_{\Ord{\P}}(n) \ = \ \ehr_{\Chain{\P}}(n)\\
    \intertext{ and }
        (-1)^{|\P|}\Omega^\circ_\P(n-1) &\ = \ 
        \ehr_{\Ord{\P}}(-n) \ = \ \ehr_{\Chain{\P}}(-n).
    \end{align*}
    In particular, $\vol(\Ord{\P}) = \vol(\Chain{\P})$.
\end{cor}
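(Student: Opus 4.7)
The plan is to establish the two chains of equalities and then deduce the volume equality as a corollary. The key tool is the transfer map $\Transfer_\P$ together with its inverse $\iTransfer_\P$, which as observed in the excerpt preserve $\Z^\P$.

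First I would check $\ehr_{\Ord{\P}}(n) = \Omega_\P(n+1)$ directly from the definitions. A point $f \in n\Ord{\P} \cap \Z^\P$ is an integer-valued map $f : \P \to \Z$ with $0 \le f(a) \le f(b) \le n$ whenever $a \preceq b$, i.e., an order-preserving map from $\P$ to the $(n+1)$-chain $\{0,1,\dots,n\}$. By the standard definition of the order polynomial (as recalled from \cite{EC1new}), these are counted by $\Omega_\P(n+1)$.

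Next I would show $\ehr_{\Ord{\P}}(n) = \ehr_{\Chain{\P}}(n)$ by checking that $\Transfer_\P$ restricts to a bijection $n\Ord{\P} \cap \Z^\P \to n\Chain{\P} \cap \Z^\P$ for every positive integer $n$. Inspecting the definitions~\eqref{eqn:transfer} and~\eqref{eqn:itransfer}, both $\Transfer_\P$ and $\iTransfer_\P$ are piecewise linear maps whose linear pieces have integer (in fact $\pm1$) coefficients, so they take integer points to integer points. They commute with dilation by positive integers since both are homogeneous of degree $1$ (the $\min$ and $\max$ formulas are homogeneous). Being mutually inverse, they give the required bijection.

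For the second line, I would invoke the Ehrhart--Macdonald reciprocity theorem (Theorem~\ref{thm:EM}). The interior lattice points $\relint(n\Ord{\P}) \cap \Z^\P$ are exactly the strict order preserving maps $f : \P \to \Z$ with $0 < f(a) < f(b) < n$ for $a \prec b$, i.e., strictly order-preserving maps $\P \to \{1,\dots,n-1\}$, which by definition number $\Omega^\circ_\P(n-1)$. So $(-1)^{|\P|}\ehr_{\Ord{\P}}(-n) = \Omega^\circ_\P(n-1)$, and the same equality for $\Chain{\P}$ follows from the previous step. Finally, since both polytopes are full-dimensional of dimension $|\P|$ and their Ehrhart polynomials agree, their leading coefficients, which equal $\vol(\Ord{\P})$ and $\vol(\Chain{\P})$ respectively, must be equal.

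The only delicate point is verifying that $\Transfer_\P$ is indeed a bijection of $n\Ord{\P} \cap \Z^\P$ with $n\Chain{\P} \cap \Z^\P$; this essentially repeats the argument (sketched earlier in the excerpt) that $\Transfer_\P$ is a PL homeomorphism taking $\Ord{\P}$ to $\Chain{\P}$, combined with the easy observation that integer data goes to integer data under both the $\min$ and the $\max$ expressions. No further obstacle arises.
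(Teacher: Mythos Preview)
Your proposal is correct and follows essentially the same approach as the paper: interpret lattice points of $n\Ord{\P}$ as order-preserving maps into $[n+1]$, use that $\Transfer_\P$ and $\iTransfer_\P$ are lattice-preserving to equate the two Ehrhart polynomials, and then invoke Ehrhart--Macdonald reciprocity (Theorem~\ref{thm:EM}) for the second line. The paper states this in one sentence rather than spelling out the homogeneity and interior-point interpretation, but the argument is the same.
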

This is an interesting result as it implies that the number of linear
extensions of a poset $\P$ only depends on the comparability graph $G(\P)$.

\begin{thm}\label{thm:TC_ehr}
    Let $\dP = (\P,\preceq_+,\preceq_-)$ be a double poset. Then
    $\DChain{\dP}$ is a lattice polytope with respect to $\Z^\P$ and
    \begin{align*}
    \ehr_{\DChain{\dP}}(n-1) \ &= \ \sum_{\P = \P_1 \uplus \P_2}
        \Omega^\circ_{(\P_1,\preceq_+)}(n-1) \cdot
        \Omega_{(\P_2,\preceq_-)}(n) \text{ and }\\
        \nvol(\DChain{\dP}) \ &= \ 
        \sum_{\P = \P_1 \uplus \P_2} \binom{|\P|}{|\P_1|} 
        e(\P_1,\preceq_+)\cdot
        e(\P_2,\preceq_-).
    \end{align*}
\end{thm}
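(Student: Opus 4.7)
The plan is to deduce both formulas directly from the canonical subdivision machinery of Section~\ref{sec:AB}, combined with the classical translation between Ehrhart polynomials of chain polytopes and order polynomials (Corollary~\ref{cor:O_C_iso}). That $\DChain{\dP}$ is a $\Z^\P$-lattice polytope follows because $\Chain{\P_+}$ and $\Chain{\P_-}$ both have $\{0,1\}$-vertices indexed by antichains, so their Minkowski difference has vertices in $\{-1,0,1\}^\P$.

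For the Ehrhart formula, I would apply Corollary~\ref{cor:AB_ehrhart} with $\Po_1 = \Chain{\P_+}$ and $\Po_2 = \Chain{\P_-}$.  The hypothesis that $\Po_1$ is dual integral follows from~\eqref{eqn:chain}, which exhibits $\Chain{\P_+}$ as the intersection of $\Rnn^\P$ with the halfspaces $\inner{\1_C,g} \le 1$ for chains $C$ of $\P_+$.  For any $J \subseteq \P$ the face $\Chain{\P_\sigma}|_J$ is precisely $\Chain{\P_\sigma|_J}$, the chain polytope of the induced subposet, and by Corollary~\ref{cor:O_C_iso}
\[
    \ehr_{\Chain{\P_-|_{J^c}}}(n-1) \ = \ \Omega_{(\P_-|_{J^c})}(n), \qquad
    \ehr_{\Chain{\P_+|_J}}(-n) \ = \ (-1)^{|J|}\,\Omega^\circ_{(\P_+|_J)}(n-1).
\]
Substituting $k=n-1$ into the formula of Corollary~\ref{cor:AB_ehrhart}, the two factors of $(-1)^{|J|}$ cancel, and reindexing subsets $J \subseteq \P$ as ordered partitions $\P = \P_1 \uplus \P_2$ with $\P_1 = J$ yields the claimed identity.

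For the normalized volume, I would use Corollary~\ref{cor:AB_volumes}, which gives the Euclidean volume identity
\[
    \vol(\DChain{\dP}) \ = \ \sum_{J\subseteq \P} \vol(\Chain{\P_+|_J})\cdot\vol(\Chain{\P_-|_{J^c}}),
\]
where each factor on the right is the $|J|$- resp.\ $|J^c|$-dimensional Euclidean volume in its ambient coordinate subspace.  Multiplying through by $|\P|!$ converts the left side into $\nvol(\DChain{\dP})$; splitting $|\P|! = \binom{|\P|}{|J|}|J|!\,|J^c|!$ converts each summand on the right into $\binom{|\P|}{|J|}\nvol(\Chain{\P_+|_J})\nvol(\Chain{\P_-|_{J^c}})$.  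Stanley's canonical triangulation (or Corollary~\ref{cor:O_C_iso} combined with Corollary~\ref{cor:O_vol}) identifies $\nvol(\Chain{\P_\sigma|_J}) = e(\P_\sigma|_J)$, and reindexing subsets by partitions $\P = \P_1 \uplus \P_2$ yields the desired formula.  No step here presents a real obstacle: the only point that requires brief care is bookkeeping of the signs and of the difference between Euclidean and normalized volumes across the coordinate subspaces associated to each $J$.
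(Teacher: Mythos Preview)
Your proposal is correct and follows exactly the same route as the paper: the Ehrhart identity via Corollary~\ref{cor:AB_ehrhart} (using that $\Chain{\P_+}$ is dual integral) together with Corollary~\ref{cor:O_C_iso}, and the volume identity via Corollary~\ref{cor:AB_volumes} together with Corollary~\ref{cor:O_vol}. You have simply spelled out the sign and normalization bookkeeping that the paper leaves implicit.
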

\begin{proof}
    Since $\Chain{\P} = \Po_{G(\P)}$ is a dual integral anti-blocking
    polytope, the first identity follows from Corollary~\ref{cor:AB_ehrhart}
    and Corollary~\ref{cor:O_C_iso}. The second identity follows
    from Corollary~\ref{cor:AB_volumes} and Corollary~\ref{cor:O_vol}.
\end{proof}

Notice from Theorem~\ref{thm:Ehr_AB_Cay} we can also deduce a closed formula
for the Ehrhart polynomial of $\TChain{\dP}$ with respect to the lattice
$\Z^\P \times \Z$ and, by substituting $\frac{1}{2}k$ for $k$, also with
respect to the affine lattice $\AffLat$.  These formulas are not very
enlightening and instead we record the normalized volume. Note that the
minimal Euclidean volume of a full-dimensional simplex with vertices in
$\AffLat = \Z^\P \times (2\Z + 1)$ is $\frac{2^{|P|+1}}{(|P|+1)!}$.

\begin{cor}\label{cor:TC_Vol}
    Let $\dP = (\P,\preceq_+,\preceq_-)$ be a double poset. Then
    the normalized volume with respect to the affine lattice $\AffLat = 2\Z^\P
    \times (2\Z + 1)$ is 
    \[
        \nvol(\TChain{\dP}) \ = \ 
        \sum_{\P = \P_1 \uplus \P_2} 
        e(\P_1,\preceq_+) \cdot
        e(\P_2,\preceq_-).
    \]
\end{cor}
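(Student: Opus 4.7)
The plan is to apply Corollary~\ref{cor:AB_volumes} to the anti-blocking polytopes $\Po_1 := 2\Chain{\P_+}$ and $\Po_2 := 2\Chain{\P_-}$, since by definition $\TChain{\dP} = \CayDiff{\Po_1}{\Po_2}$. To invoke that corollary, I need both $\Po_1$ and $\Po_2$ to have unimodular triangulations with respect to a common lattice $\Lambda$. Stanley's canonical triangulation (recalled in Section~\ref{sec:triang} and used in Corollary~\ref{cor:O_vol}) gives a unimodular triangulation of $\Chain{\P_\sigma}$ with respect to $\Z^\P$; scaling by $2$ translates this into a unimodular triangulation of $2\Chain{\P_\sigma}$ with respect to the lattice $\Lambda := 2\Z^\P$. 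The affine lattice in Corollary~\ref{cor:AB_volumes} is then $\Lambda \times (2\Z+1) = 2\Z^\P \times (2\Z+1) = \AffLat$, which matches our convention for $\TChain{\dP}$.

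Corollary~\ref{cor:AB_volumes} therefore yields
\[
    \nvol(\TChain{\dP}) \ = \ \sum_{J \subseteq \P} \nvol\bigl(2\Chain{\P_+}|_J\bigr) \cdot \nvol\bigl(2\Chain{\P_-}|_{J^c}\bigr).
\]
The next step is to identify each term on the right. The face $\Chain{\P_\sigma}|_J$ coincides with the chain polytope of the induced subposet $(J,\preceq_\sigma|_J)$, since a chain in the induced subposet is a chain in $\P_\sigma$ supported on $J$. Scaling a full-dimensional lattice polytope and its ambient lattice by the same factor $2$ preserves the normalized volume (Euclidean volume and the volume of a unimodular simplex are each multiplied by $2^{|J|}$), so
\[
    \nvol_{2\Z^J}\bigl(2\Chain{(J,\preceq_\sigma|_J)}\bigr) \ = \ \nvol_{\Z^J}\bigl(\Chain{(J,\preceq_\sigma|_J)}\bigr).
\]
By Corollary~\ref{cor:O_C_iso} we have $\nvol(\Chain{\P}) = \nvol(\Ord{\P})$, and by Corollary~\ref{cor:O_vol} this equals $e(\P)$. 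Applied to the induced subposet on $J$, this gives $\nvol(2\Chain{\P_\sigma}|_J) = e(J,\preceq_\sigma|_J)$.

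Substituting and reindexing the sum over subsets $J \subseteq \P$ as a sum over ordered partitions $\P = \P_1 \uplus \P_2$ with $\P_1 = J$ and $\P_2 = J^c$ yields
\[
    \nvol(\TChain{\dP}) \ = \ \sum_{\P = \P_1 \uplus \P_2} e(\P_1,\preceq_+) \cdot e(\P_2,\preceq_-),
\]
which is the claimed identity. There is no serious obstacle: the only thing to be careful about is the lattice bookkeeping, namely that scaling both the polytope and the lattice by $2$ keeps the normalized volume unchanged, so that the factor of $2$ built into the definition of $\TChain{\dP}$ is exactly absorbed by the factor of $2$ built into the affine lattice $\AffLat$. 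Note in particular that, in contrast to the reduced version in Theorem~\ref{thm:TC_ehr}, no binomial coefficient appears, because we are not converting between Euclidean and normalized volumes of subpolytopes of differing dimensions in a fixed ambient lattice.
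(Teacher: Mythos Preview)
Your proof is correct and follows essentially the same route the paper intends: apply Corollary~\ref{cor:AB_volumes} to $\Po_i = 2\Chain{\P_\sigma}$ with the lattice $\Lambda = 2\Z^\P$, then identify each factor using $\Chain{\P_\sigma}|_J = \Chain{(J,\preceq_\sigma)}$ together with Corollaries~\ref{cor:O_vol} and~\ref{cor:O_C_iso}. The paper leaves this corollary without an explicit proof, but the analogous volume computation in Theorem~\ref{thm:TC_ehr} is justified in exactly this way, and your lattice bookkeeping (scaling both polytope and lattice by $2$ to land in $\AffLat$) is handled correctly.
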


We leave it to the reader to give direct combinatorial interpretations of the
volume and the Ehrhart polynomials for double posets. 

It follows directly from~\eqref{eqn:TPsi} that $\TPsi{\P_{\pm}} : \R^\P
\rightarrow \R^\P$ maps lattice points to lattice points. If $\dP$ is
compatible, then the proof of Lemma~\ref{lem:TPsi_iso} asserts that
$\TPsi{\P_{\pm}}$ is in fact lattice preserving. Hence, we record an analog
to Corollary~\ref{cor:O_C_iso}.

\begin{cor}\label{cor:TO_TC_iso}
    If $\dP$ is a compatible double poset, then $\TOrd{\dP}$ and
    $\TChain{\dP}$ have the same Ehrhart polynomials and hence the same
    volumes.
\end{cor}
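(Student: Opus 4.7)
The plan is to exhibit a single piecewise linear bijection that simultaneously matches $n\TChain{\dP}$ with $n\TOrd{\dP}$ for every positive integer $n$ and preserves the lattice $\Z^\P\times\Z$. The natural candidate is the map $\Phi(g,t):=(\TPsi{\dP}(g),t)$ from Theorem~\ref{thm:T_iso}, which is already known to be a lattice-preserving PL homeomorphism of $\R^\P\times\R$ carrying $\TChain{\dP}$ onto $\TOrd{\dP}$. The whole proof reduces to verifying that $\Phi$ commutes with positive dilations, so that it also carries $n\TChain{\dP}$ onto $n\TOrd{\dP}$.

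First, I would check positive homogeneity of $\iTransfer_{\P}$ directly from its definition~\eqref{eqn:itransfer}: for $\lambda\ge 0$ and $g\in\R^\P$, the maximum defining $\iTransfer_\P(\lambda g)(b)$ is taken over the same index set as for $\iTransfer_\P(g)(b)$, and each summand scales by $\lambda$, so $\iTransfer_\P(\lambda g)=\lambda\,\iTransfer_\P(g)$. Because the decomposition $g=g^+-g^-$ into disjointly supported nonnegative parts also scales as $(\lambda g)^\pm=\lambda g^\pm$, it follows from~\eqref{eqn:TPsi} that $\TPsi{\dP}(\lambda g)=\lambda\,\TPsi{\dP}(g)$ for all $\lambda\ge 0$. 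Hence $\Phi(\lambda x)=\lambda\,\Phi(x)$ for every $\lambda\ge 0$ and $x\in\R^\P\times\R$.

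Next, I would apply Theorem~\ref{thm:T_iso} to get $\Phi(\TChain{\dP})=\TOrd{\dP}$, and combine this with the positive homogeneity to conclude $\Phi(n\TChain{\dP})=n\,\Phi(\TChain{\dP})=n\TOrd{\dP}$ for every integer $n\ge 1$. Since the same theorem guarantees that $\Phi$ is a bijection of $\R^\P\times\R$ that preserves $\Z^\P\times\Z$, restriction yields a bijection
\[
n\TChain{\dP}\cap(\Z^\P\times\Z)\ \longleftrightarrow\ n\TOrd{\dP}\cap(\Z^\P\times\Z).
\]
Because both $\TChain{\dP}$ and $\TOrd{\dP}$ are lattice polytopes with respect to $\Z^\P\times\Z$, their Ehrhart counting functions agree on all $n\ge 1$, hence as polynomials. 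The volume equality is immediate from comparing leading coefficients, which also recovers Corollary~\ref{cor:TC_Vol} for $\TOrd{\dP}$ in the compatible case.

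The only genuinely technical point is the positive homogeneity of $\iTransfer_{\P}$, and this is routine from~\eqref{eqn:itransfer}; there is no analogous statement to prove in the non-compatible case, which is good, since the example immediately following Theorem~\ref{thm:T_iso} shows that the corollary genuinely relies on compatibility. So the argument above is essentially a direct corollary of Theorem~\ref{thm:T_iso} together with the obvious homogeneity of the inverse transfer map.
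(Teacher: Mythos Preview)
Your proposal is correct and takes essentially the same approach as the paper: the corollary is recorded as an immediate consequence of the lattice-preserving PL homeomorphism $(g,t)\mapsto(\TPsi{\dP}(g),t)$ from Theorem~\ref{thm:T_iso} (and Lemma~\ref{lem:TPsi_iso}), in direct analogy with Corollary~\ref{cor:O_C_iso}. You have simply made explicit the positive homogeneity of $\iTransfer_\P$ and hence of $\TPsi{\dP}$, which the paper leaves implicit.
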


The formulas of Theorem~\ref{thm:TC_ehr} are particularly simple when $\dP$
is special or anti-special. We illustrate these cases at some simple examples.

\begin{example}
    For the 'XW'-double poset we have 
    \[
        \nvol(\TOrd{\dP_{XW}}) \ = \  \nvol(\TChain{\dP_{XW}}) 
        \ = \  \tfrac{6!}{2^6} \vol(\TChain{\dP_{XW}}) 
        \ = \ 128
    \]
    and 
    $\nvol(\DOrd{\dP_{XW}})= \nvol(\DChain{\dP_{XW}})
    = 6! \vol(\DChain{\dP_{XW}})= 880$.
\end{example}

\begin{example} 
    As the following examples are all compatible, the given values
    also give the normalized volumes of the respective (reduced) double order
    polytopes.
    \begin{enumerate}
        \item Let $\dP = ([d],\le,\le)$ be the double chain on $d$
            elements.  Then $\TChain{\dP}$ is a crosspolytope and 
            $\nvol(\TChain{\dP})  = 2^d$ and it follows
            from Vandermonde's identity that 
            \[
                \nvol(\DChain{\dP}) \ = \ d! \vol(\DChain{\dP}) \ = \
                \sum_{i=0}^d \binom{d}{i}^2 \ = \ \binom{2d}{d}.
            \]
        \item If $\dP$ is the double anti-chain on $d$ elements, then
            $\TChain{\dP}$ is isomorphic to $[0,2]^{d} \times [-1,1]$ and its
            normalized volume is
            \[
                \nvol(\TChain{\dP}) \ = \ 
                \tfrac{(d+1)!}{2^{d+1}}\vol(\TChain{\dP}) \ = \ 
                \sum_{i=0}^d \binom{d}{i} i!
                (d-i)! \ = \ (d+1)!.
            \]
            Likewise, $\DChain{\dP}$ is isomorphic to
            $[-1,1]^d$, which can be decomposed into $2^d$ unit cubes.
            Consequently, its normalized volume is 
            \[
                \nvol(\DChain{\dP})  \ = \ \sum_{i=0}^d \binom{d}{i}^2 i!(d-i)! \ = \ 2^d
            d!.
            \]
        \item Let $\dP$ be the double poset such that $\P_+$ is the
            $d$-chain and $\P_-$ is the $d$-antichain. Then
            \[
                \nvol(\TChain{\dP}) \ = \ \sum_{i=0}^d \frac{d!}{i!} 
            \]
            is the number of choices of ordered subsets of a $d$-set.
            Moreover
            \[
                \nvol(\DChain{\dP})  \ = \ \sum_{i=0}^d
                \binom{d}{i}^2 i! 
            \]
            is the number of partial permutation matrices, i.e.\
            $0/1$-matrices of size $d$ with at most one nonzero entry per row
            and column. Indeed, such a matrix is uniquely identified by an
            $i$-by-$i$ permutation matrix and a choice of $i$ rows and $i$
            columns in which it is embedded.
        \item For the comb $C_n$, the number of linear extensions is $e(C_n) =
            (2n-1)!!$. Let $\dP$ be the double poset induced by the comb
            $C_n$. Then an induction argument shows that
            \[
                \nvol(\TChain{\dP}) \ = \ 4^n\, n!.
            \]
            It would be nice to have a bijective proof of this equality.
    \end{enumerate}
\end{example}

Let $\dP_\circ = (\P,\preceq,\preceq)$ be a compatible double poset induced by
a poset $(\P,\preceq)$.  By Corollary~\ref{cor:TO_TC_iso}, the polytopes
$\TOrd{\dP_\circ}$ and $\TChain{\dP_\circ}$ have the same normalized volume.
Since both polytopes are $2$-level, this means that the number of maximal
simplices in any pulling triangulation of $\TOrd{\dP_\circ}$ and
$\TChain{\dP_\circ}$ coincides.  From Theorem~\ref{thm:twisted_val}, we know
that $\TOrd{\dP_\circ}^\dual$ is the twisted prism over the valuation polytope
associated to $\P$. On the other hand, we know from Corollary~\ref{cor:Hansen}
that $\TChain{\dP_\circ}^\dual$ is linearly isomorphic to the Hansen polytope
$\Hansen{\overline{G(\P)}}$. Moreover, $\TOrd{\dP_\circ}^\dual$ and
$\TChain{\dP_\circ}^\dual$ are both $2$-level and it is enticing to conjecture
that their normalized volumes also agree. Unfortunately, this is not the case.
For the poset $\P$ on $5$ elements whose Hasse diagram is the letter 'X', any
pulling triangulation of $\TChain{\dP_\circ}^\dual$ has $324$ simplices whereas
for $\TOrd{\dP_\circ}^\dual$ pulling triangulations have $320$ simplices.

\section{Gr\"obner bases and triangulations}\label{sec:GB}

\subsection{Double Hibi rings}\label{ssec:double_hibi}
Hibi~\cite{Hibi87} associated to any finite poset $(\P,\preceq)$ a ring
$\Hring{\P}$, nowadays called \Defn{Hibi ring}, that algebraically reflects
many of the order-theoretic properties of $\P$. The ring $\Hring{\P}$ is
defined as the graded subring of the polynomial ring $S = \C[t, s_a : a \in \P]$
generated by the elements $t \cdot s^\Filter$, where
\[
    s^\Filter \ := \  \prod_{a \in \Filter} s_a,
\]
ranges over all filters $\Filter \subseteq \P$. For example, Hibi showed
that $\Hring{\P}$ is a normal Cohen--Macaulay domain of dimension $|\P|+1$ and
that $\Hring{\P}$ is Gorenstein if and only if $\P$ is a graded poset. By definition, Hibi rings are toric and hence they have the following quotient description. Let $R =
\C[x_\Filter : \Filter \in \Birk{\P}]$ be the polynomial ring with variables
indexed by filters and define the homogeneous ring map $\phi : R \rightarrow
S$ by $\phi(x_\Filter) = t\, s_\Filter$. Then $\Hring{\P} \cong R /
\Id_{\Ord{\P}}$
where $\Id_{\Ord{\P}} = \ker \phi$ is a toric ideal. 

Hibi elegantly described a reduced Gr\"obner basis of $\Id_{\Ord{\P}}$ in terms of
$\Birk{\P}$.  Fix a total order $\le$ on the variables of $R$ such that
$x_\Filter \le x_{\Filter'}$ whenever $\Filter \subseteq \Filter'$ and let
$\mo$ denote the induced reverse lexicographic order on $R$. For $f \in R$,
we write $\init(f)$ for its leading term with respect to $\mo$ and we will
underline leading terms in what follows.

\begin{thm}[{\cite[Thm.~10.1.3]{HH11}}]\label{thm:HibiIdeal}
    Let $(\P,\preceq)$ be a finite poset.
    Then the collection
    \begin{equation}\label{eqn:Hibi_rels}
        \underline{x_{\Filter} \, x_{\Filter'}} \ - \
        x_{\Filter\cap\Filter'} \, x_{\Filter\cup\Filter'} \quad \text{
        with }\Filter,\Filter'\in\Birk{\P}\text{ incomparable}
    \end{equation}
    is a reduced Gr\"obner basis of $\Id_{\Ord{\P}}$.
\end{thm}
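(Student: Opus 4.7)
The plan is to apply Sturmfels' correspondence between toric Gröbner bases and regular triangulations, exactly as in \cite[Thm.~8.3]{Sturmfels96}. Let me first recall what needs to be verified: that the displayed binomials actually lie in the ideal, that their leading monomials generate $\init_{\mo}(I_{\Ord{\P}})$, and that the basis is reduced.

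The first step is the trivial verification that each binomial in~\eqref{eqn:Hibi_rels} belongs to $I_{\Ord{\P}}$. Under the map $\phi(x_\Filter) = t\cdot s^\Filter$, both monomials are sent to $t^2 \cdot \prod_{a} s_a^{m(a)}$, where $m(a) = \1_\Filter(a) + \1_{\Filter'}(a)$; this is equal to $\1_{\Filter\cap\Filter'}(a) + \1_{\Filter\cup\Filter'}(a)$ since for any $a \in \P$ we have $\1_{\Filter}(a)+\1_{\Filter'}(a) = \1_{\Filter\cap\Filter'}(a)+\1_{\Filter\cup\Filter'}(a)$ as integers. So the binomials lie in $I_{\Ord{\P}} = \ker\phi$.

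The main step is to apply Sturmfels' theorem: a set of binomials $G \subseteq I_{\Ord{\P}}$ is a Gröbner basis with respect to $\mo$ if and only if the simplicial complex whose Stanley--Reisner ideal is the radical of $\init_{\mo}(\langle G \rangle)$ is a triangulation of $\Ord{\P} = \conv\{\1_\Filter : \Filter \in \Birk{\P}\}$. Let $M$ denote the monomial ideal generated by all $x_{\Filter}x_{\Filter'}$ with $\Filter, \Filter'$ incomparable. By construction $M \subseteq \init_{\mo}(\langle G \rangle) \subseteq \init_{\mo}(I_{\Ord{\P}})$. The Stanley--Reisner complex of $M$ is exactly the order complex $\Delta(\Birk{\P})$, whose faces are the chains of filters in $\P$. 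In Section~\ref{sec:triang} (recalling Stanley's classical construction) we saw that $\Delta(\Birk{\P})$, realized via $\Filter \mapsto \1_\Filter$, is a regular, unimodular, flag triangulation of $\Ord{\P}$. Hence Sturmfels' criterion is satisfied: $G$ is a Gröbner basis, and moreover the containments above are equalities, i.e., $\init_{\mo}(I_{\Ord{\P}}) = M$.

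The last step is to check reducedness. Each generator in~\eqref{eqn:Hibi_rels} is already monic, and its leading term $x_\Filter x_{\Filter'}$ is a generator of $M$ (and is a square-free monomial of degree two, so it does not divide any other leading term). The trailing term $x_{\Filter\cap\Filter'}x_{\Filter\cup\Filter'}$ is a product indexed by a comparable pair, hence does not lie in $M = \init_{\mo}(I_{\Ord{\P}})$, so no trailing monomial is further reducible. Distinct binomials have distinct leading terms, because the unordered pair $\{\Filter,\Filter'\}$ can be recovered from $x_{\Filter}x_{\Filter'}$.

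The main obstacle in this outline is really the fact that $\Delta(\Birk{\P})$ triangulates $\Ord{\P}$, but this is the canonical triangulation recalled in Section~\ref{ssec:TO_triang}, so no additional work is needed. Once that geometric fact is in hand, Sturmfels' theorem packages the algebraic statement cleanly, and the reducedness is immediate from the bijection between incomparable pairs $\{\Filter,\Filter'\}$ and the binomial relations.
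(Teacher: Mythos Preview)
Your proof is correct. Note, however, that the paper does not actually prove this theorem: it is quoted as a known result from~\cite[Thm.~10.1.3]{HH11}, so there is no ``paper's own proof'' to compare against. That said, your approach via Sturmfels' correspondence and Stanley's canonical triangulation is precisely the technique the paper itself employs for the analogous statement about $\TChain{\dP}$ (Theorem~\ref{thm:TC_GB}), so your argument is entirely in the spirit of the surrounding section and could serve as a self-contained replacement for the citation.

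One small remark on presentation: your invocation of ``Sturmfels' criterion'' as an if-and-only-if statement is a slight repackaging of~\cite[Thm.~8.3]{Sturmfels96}. The cleanest way to phrase the key step is that $M \subseteq \init_{\mo}(I_{\Ord{\P}})$, that the Stanley--Reisner complex of $M$ is the unimodular triangulation $\Delta(\Birk{\P})$, and that therefore $R/M$ and $R/\init_{\mo}(I_{\Ord{\P}})$ have the same Hilbert function (both equal to the Ehrhart series of $\Ord{\P}$), forcing $M = \init_{\mo}(I_{\Ord{\P}})$. This is implicit in your argument but worth making explicit, since it is the unimodularity of the triangulation that closes the gap between $M$ and the initial ideal.
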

The binomials~\eqref{eqn:Hibi_rels} are called \Defn{Hibi relations}.

In light of the previous sections, the natural question that we will address
now is regarding an algebraic counterpart of the Hibi rings for double posets.
For a double poset $\dP = (\P,\preceq_+,\preceq_-)$, we define the
\Defn{double Hibi ring} $\dHring{\dP}$ as the subalgebra of the Laurent ring
$\hat{S} := \C[t_-,t_+,s_a,s_a^{-1} : a \in \P]$ spanned by the elements $t_+
\cdot s^{\Filter}$ for filters $\Filter \in \Birk{\P_+}$ and  $t_- \cdot
(s^\Filter)^{-1}$ for filters $\Filter \in \Birk{\P_-}$. This is the affine
semigroup ring associated to $\TOrd{\dP}$ with respect to the affine lattice
$\AffLat = 2\Z^\P \times (2\Z + 1)$. Up to a translation by $(\0,1)$ and the
lattice isomorphism $2\Z^\P \times 2\Z \cong \Z^\P \times \Z$, the double Hibi ring $\dHring{\dP}$
is the affine semigroup ring of
\[
    \conv \bigl\{ (\Ord{\P_+} \times \{1\} ) \cup (-\Ord{\P_-} \times \{0\} )
    \bigr\},
\]
with respect to the usual lattice $\Z^\P \times \Z$.  In particular, the
double Hibi ring $\dHring{\dP}$ is graded of Krull dimension $|\P|+1$.
Moreover, since the double order polytope $\TOrd{\dP}$ is reflexive by
Corollary \ref{cor:TO_ineq}, it follows that $\dHring{\dP}$ is a Gorenstein
domain for any compatible double poset $\dP$. The rings $\dHring{\dP}$ as well
as affine semigroup rings associated to the double chain polytopes
$\TChain{\dP}$ as treated at the end of Section~\ref{ssec:GB_triang} were also
considered by Hibi and Tsuchiya~\cite{HT}. 

Set $\hat{R} := \C[ x_{\Filter_+},
x_{\Filter_-} : \Filter_+ \in \Birk{\P_+}, \Filter_+ \in \Birk{\P_+}]$ and
define the monomial map $\hat \phi : \hat R \rightarrow \hat S$ by
\[
    \hat \phi(x_{\Filter_+}) \ = \  t_+\, s^{\Filter_+}
    \quad \text{ and } \quad
    \hat \phi(x_{\Filter_-}) \ = \  t_-\, (s^{\Filter_-})^{-1}.
\]
The corresponding toric ideal $\Id_{\TOrd{\dP}} = \ker \hat \phi$ is then
generated by the binomials
\begin{equation}\label{eqn:binom}
    \underline{
    x_{\Filter_{+1}} x_{\Filter_{+2}} \dots x_{\Filter_{+k_+}} \cdot
    x_{\Filter_{-1}} x_{\Filter_{-2}} \dots x_{\Filter_{-k_-}}}
    -
    x_{\Filter'_{+1}} x_{\Filter'_{+2}} \dots x_{\Filter'_{+k_+}} \cdot
    x_{\Filter'_{-1}} x_{\Filter'_{-2}} \dots x_{\Filter'_{-k_-}},
\end{equation}
for filters $\Filter_{+1}, \dots, \Filter_{+k_+}, \Filter'_{+1}, \dots,
\Filter'_{+k_+} \in \Birk{\P_+}$ and $\Filter_{-1}, \dots, \Filter_{-k_-},
\Filter'_{-1}, \dots, \Filter'_{-k_-} \in \Birk{\P_-}$.

Again, fix a total order $\le$ on the variables of $\hat{R}$ such that for
$\sigma = \pm$
\begin{compactitem}
    \item $x_{\Filter_\sigma} < x_{\Filter'_\sigma}$ for any filters
        $\Filter_\sigma,\Filter'_\sigma \in \Birk{\P_\sigma}$ with
        $\Filter_\sigma \subset \Filter'_\sigma$, and
    \item $x_{\Filter_+} < x_{\Filter_-}$ for any filters 
        $\Filter_+ \in \Birk{\P_+}$ and 
        $\Filter_- \in \Birk{\P_-}$,
\end{compactitem}
and denote by $\mo$ the reverse lexicographic term order on $\hat R$ induced
by this order on the variables.

\begin{thm}\label{thm:TO_GB}
    Let $\dP = (\P,\preceq_+,\preceq_-)$ be a compatible double poset. Then
    a Gr\"obner basis for $\Id_{\TOrd{\dP}}$ is given by the binomials
    \begin{align}
        \label{eqn:dHibi_rels}
        \underline{x_{\Filter_\sigma}\, x_{\Filter_\sigma'}} \ &- \
        x_{\Filter_\sigma \cup \Filter_\sigma'} \, x_{\Filter_\sigma \cap
        \Filter_\sigma'}\\
        \intertext{for incomparable filters $\Filter_\sigma,\Filter_\sigma'
        \in \Birk{\P_\sigma}$ and $\sigma = \pm$, and }
        \label{eqn:twist_rels}
        \underline{x_{\Filter_+}\, x_{\Filter_-}} \ &- \ x_{\Filter_+\setminus
        A} \, x_{\Filter_-\setminus A}
    \end{align}
    for filters $\Filter_+ \in \Birk{\P_+}, \Filter_- \in \Birk{\P_-}$ such
    that $A := \min(\Filter_+)\cap\min(\Filter_-) \neq \emptyset$.
\end{thm}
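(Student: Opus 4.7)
My plan is to apply the standard correspondence between Gröbner bases of a toric ideal and squarefree initial ideals indexing unimodular triangulations of the associated lattice polytope \cite[Ch.~8]{Sturmfels96}, using the triangulation $\niDelta{\dP}$ of $\TOrd{\dP}$ that Theorem~\ref{thm:T_iso} already supplies.

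The first step is the routine verification that each binomial in \eqref{eqn:dHibi_rels} and \eqref{eqn:twist_rels} lies in $\Id_{\TOrd{\dP}}$. For \eqref{eqn:dHibi_rels} this is the classical Hibi identity $s^{\Filter_\sigma\cup\Filter_\sigma'}\,s^{\Filter_\sigma\cap\Filter_\sigma'} = s^{\Filter_\sigma}\,s^{\Filter_\sigma'}$ internal to $\P_\sigma$. For \eqref{eqn:twist_rels}, the hypothesis $A \subseteq \min(\Filter_+)\cap\min(\Filter_-)$ guarantees that $\Filter_+\setminus A$ and $\Filter_-\setminus A$ are still filters of $\P_+$ and $\P_-$ respectively, and then $s^{\Filter_+}(s^{\Filter_-})^{-1} = s^{\Filter_+\setminus A}(s^{\Filter_-\setminus A})^{-1}$ closes the verification.

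The second step is to recognize the monomial ideal $J$ generated by the underlined leading terms as a Stanley-Reisner ideal. Since all generators are squarefree quadratics, the minimal non-faces of the associated simplicial complex are precisely the incomparable pairs within $\Birk{\P_\sigma}$ (coming from \eqref{eqn:dHibi_rels}) together with the pairs $\{\Filter_+,\Filter_-\}$ with $\min(\Filter_+)\cap\min(\Filter_-)\neq\emptyset$ (coming from \eqref{eqn:twist_rels}). These are exactly the minimal non-faces of the non-interfering complex $\niDelta{\dP}$ from Section~\ref{ssec:TO_triang}.

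The decisive step is a Hilbert-series comparison. By Theorem~\ref{thm:T_iso}, $\niDelta{\dP}$ is a unimodular triangulation of $\TOrd{\dP}$ with respect to $\AffLat$. Unimodularity yields $h^{\ast}_{\TOrd{\dP}}(t) = h_{\niDelta{\dP}}(t)$, which in turn means that the Hilbert series of $R/J = \C[\niDelta{\dP}]$ coincides with that of $\dHring{\dP} \cong R/\Id_{\TOrd{\dP}}$, and hence with that of $R/\init(\Id_{\TOrd{\dP}})$. Combined with the obvious containment $J \subseteq \init(\Id_{\TOrd{\dP}})$ from the first step, this equality of Hilbert functions upgrades the inclusion to equality, which is exactly the statement that the binomials form a Gröbner basis. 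The main thing to guard against is circularity with Theorem~\ref{thm:T_iso}; fortunately only its unimodularity assertion is needed here, and that follows from Corollary~\ref{cor:TC_triang} together with the fact that $\TPsi{\dP}$ is a lattice-preserving PL map, neither of which relies on Gröbner bases.
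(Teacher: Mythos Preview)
Your argument is correct, and it is a genuinely different route from the paper's. The paper proves Theorem~\ref{thm:TO_GB} by a direct reduction argument: given an arbitrary binomial $f=\underline{m_1}-m_2\in\Id_{\TOrd{\dP}}$, it first uses the Hibi relations~\eqref{eqn:dHibi_rels} to arrange the filters in each monomial into chains, then introduces the notion of a \emph{moving element} (an element $q$ appearing in some $\Filter_{+i}\setminus\Filter'_{+i}$, equivalently in some $\Filter_{-j}\setminus\Filter'_{-j}$, by Lemma~\ref{lem:moving}), and uses compatibility to choose a moving $q$ that is simultaneously $\preceq_+$- and $\preceq_-$-minimal. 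This $q$ then lies in $\min(\Filter_{+i})\cap\min(\Filter_{-j})$ for suitable $i,j$, so $m_1$ is divisible by the leading term of a twist relation~\eqref{eqn:twist_rels}. Only after this does the paper invoke Sturmfels' correspondence, in the \emph{opposite} direction, to derive the triangulation statement (Corollary~\ref{cor:TO_triang}) from the Gr\"obner basis.

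Your approach reverses the flow: you take the unimodular triangulation of $\TOrd{\dP}$ by $\niDelta{\dP}$ as given (from Theorem~\ref{thm:T_iso}) and pass through the Hilbert-series equality to force $J=\init(\Id_{\TOrd{\dP}})$. This is cleaner and avoids the moving-element combinatorics entirely, but it puts all the weight on the geometric proof of Theorem~\ref{thm:T_iso}. You were right to worry about circularity: the paper explicitly announces Theorem~\ref{thm:T_iso} as something to be shown ``using Gr\"obner bases in Section~\ref{sec:GB}'', and Corollary~\ref{cor:TO_triang} is presented as a consequence of Theorem~\ref{thm:TO_GB}. The paper does, however, supply a self-contained geometric proof of Theorem~\ref{thm:T_iso} immediately after its statement (the argument that facets of maximal simplices not containing the edge $e$ lie on supporting hyperplanes of $\TOrd{\dP}$), and \emph{that} argument is what your proof must cite---not merely Corollary~\ref{cor:TC_triang} plus Lemma~\ref{lem:TPsi_iso}, since those two together only show $\TPsi{\dP}$ maps $\TChain{\dP}$ \emph{into} $\TOrd{\dP}$, not onto it. With that dependency made explicit, your proof stands. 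The paper's approach, by contrast, is logically self-contained and makes the role of compatibility completely transparent (it is exactly what guarantees a common minimal moving element), at the cost of a more hands-on argument.
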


It is clear that binomials of the form~\eqref{eqn:dHibi_rels}
and~\eqref{eqn:twist_rels} are contained in $\Id_{\TOrd{\dP}}$ and hence it
suffices to show that their leading terms generate $\init(\Id_{\TOrd{\dP}})$.  For
this, let us take a closer look at the combinatorics of $\hat \phi$. Let
$\mathcal G$ be the collection of binomial given in~\eqref{eqn:dHibi_rels}
and~\eqref{eqn:twist_rels} and let $f = \underline{m_1} - m_2$ be an element
of the form~\eqref{eqn:binom}. By reducing $f$ by the binomial
of~\eqref{eqn:dHibi_rels}, we can view $f$ as a quadruple
\begin{equation}\label{eqn:quad}
\begin{aligned}
    \Filter_{+1} \subset \Filter_{+2} \subset \cdots \subset \Filter_{+k_+} &\quad\qquad
    \Filter_{-1} \subset \Filter_{+2} \subset \cdots \subset \Filter_{-k_-}\\
    \Filter_{+1}' \subset \Filter_{+2}' \subset \cdots \subset \Filter_{+k_+}' &\quad\qquad
    \Filter_{-1}' \subset \Filter_{+2}' \subset \cdots \subset
    \Filter_{-k_-}'.
\end{aligned}
\end{equation}
From the definition of $\hat \phi$ it follows that such a quadruple defines a
binomial in $\Id_{\TOrd{\dP}}$ if and only if for any $q \in \P$
\begin{equation}\label{eqn:moving}
        \max\{r:q \notin \Filter_{+r} \} - \max\{s : q \notin \Filter_{-s} \}
        \ = \ \max\{ r : q \notin \Filter_{+r}' \} - \max\{ s : q \notin
        \Filter_{-s}'\}.
\end{equation}
and we note the following implication.
\begin{lem}\label{lem:moving}
    Let the collection of filters in~\eqref{eqn:binom} correspond to a
    binomial $f \in \Id_{\TOrd{\dP}}$ and let $q \in \P$. Then there is some $1
    \le i \le k_+$ such that $q \in \Filter_{+i} \setminus \Filter_{+i}'$ if
    and only if there is some $1 \le j \le k_-$ such that 
    $q \in \Filter_{-j} \setminus \Filter_{-j}'$.
\end{lem}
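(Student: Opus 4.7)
My plan is to reduce both existence statements to simple numerical comparisons of integer invariants associated to $q$ and the two chains, and then to deduce the equivalence directly from~\eqref{eqn:moving}.

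Concretely, for $\sigma \in \{+,-\}$ I would introduce the threshold
\[
a_\sigma(q) \ := \ \max\{r : q \notin \Filter_{\sigma r}\},
\]
with the natural convention $a_\sigma(q) = 0$ when $q \in \Filter_{\sigma 1}$, and analogously $a_\sigma'(q)$ for the primed chain. The key observation is that, because $\Filter_{\sigma 1} \subset \cdots \subset \Filter_{\sigma k_\sigma}$ is totally ordered by inclusion, membership of $q$ is governed by this single threshold: $q \in \Filter_{\sigma i}$ if and only if $i > a_\sigma(q)$, and similarly for the primed chain.

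From this I would read off that the existence of some $1 \le i \le k_+$ with $q \in \Filter_{+i}\setminus \Filter_{+i}'$ amounts to the existence of an integer $i$ satisfying $a_+(q) < i \le a_+'(q)$, and hence is equivalent to the strict inequality $a_+(q) < a_+'(q)$. Symmetrically, the existence of some $1 \le j \le k_-$ with $q \in \Filter_{-j}\setminus \Filter_{-j}'$ is equivalent to $a_-(q) < a_-'(q)$.

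Finally, rewriting~\eqref{eqn:moving} as
\[
a_+(q) - a_+'(q) \ = \ a_-(q) - a_-'(q)
\]
yields immediately that $a_+(q) < a_+'(q)$ if and only if $a_-(q) < a_-'(q)$, closing the equivalence. There is no real obstacle here; the content of the lemma is essentially a bookkeeping consequence of~\eqref{eqn:moving} once the existence of an ``offending'' index is properly translated into a comparison of thresholds. The only minor care needed is in handling the boundary cases where $q$ belongs to all or to none of the filters in one of the chains, which is absorbed by the convention $a_\sigma(q) \in \{0,1,\dots,k_\sigma\}$.
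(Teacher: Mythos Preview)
Your proof is correct and is essentially the paper's own argument, just spelled out more explicitly: the paper also observes that $q \in \Filter_{+i}\setminus\Filter_{+i}'$ forces $\max\{r:q\notin\Filter_{+r}\} < i \le \max\{r:q\notin\Filter_{+r}'\}$ and then invokes~\eqref{eqn:moving}. Your introduction of the thresholds $a_\sigma(q)$ and the explicit reduction to the inequality $a_+(q) < a_+'(q)$ makes the bookkeeping cleaner, but the underlying mechanism is identical.
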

\begin{proof}
    If $q \in \Filter_{+i} \setminus \Filter_{+i}'$, then $\max\{r:q \notin
    \Filter_{+r} \} < i$ and $\max\{r:q \notin \Filter_{+r}' \} \ge i$
    and~\eqref{eqn:moving} implies that $q \in \Filter_{-j} \setminus
    \Filter_{-j}'$ for some $j$. The other direction is identical.
\end{proof}

We call $q \in \P$ \Defn{moving} if it satisfies one of the two equivalent
conditions of Lemma~\ref{lem:moving}.

\begin{proof}[Proof of Theorem~\ref{thm:TO_GB}]
    Let $f = \underline{m_1} - m_2 \in \Id_{\TOrd{\dP}}$ be a binomial represented
    by a collection of filters given by~\eqref{eqn:quad}. If $k_- = 0$ or $k_+
    = 0$, then the Hibi relations~\eqref{eqn:dHibi_rels} for $\P_-$ or $\P_+$
    together with Theorem~\ref{thm:HibiIdeal} yields the result. Thus, we
    assume that $k_-,k_+ > 0$ and we need to show that there are filters
    $\Filter_{+i}$ and $\Filter_{-j}$ such that $\min(\Filter_{+i}) \cap
    \min(\Filter_{-j}) \neq \emptyset$.

    Observe that there is at least one moving element. Indeed,
    $\Filter_{+1}\nsubseteq \Filter_{+1}'$ and hence $\Filter_{+1} \setminus
    \Filter_{+1}' \neq \emptyset$. Otherwise, $x_{\Filter_{+1}} <
    x_{\Filter_{+1}'}$ and the reverse lexicographic term order $\mo$ would
    not select $m_1$ as the lead term of $f$.  Among all moving elements,
    choose $q$ to be minimal with respect to $\preceq_+$ and $\preceq_-$.
    Since $\dP$ is a compatible double poset, such a $q$ exists. But then,
    if $q \in \Filter_{+i} \setminus \Filter_{+i}'$, then $q \in
    \min(\Filter_{+i})$. The same holds true for $\Filter_{-j}$ and shows that
    $\underline{m_1}$ is divisible by the leading term of a binomial of
    type~\eqref{eqn:twist_rels}.
\end{proof}

\newcommand\mkFilter[1]{\langle #1 \rangle}

\subsection{Gr\"obner bases, faces, and triangulations}\label{ssec:GB_triang}
In light of the regular and unimodular triangulation of $\Ord{\P}$ given
in~\cite{TwoPoset} (and recalled in Section~\ref{ssec:TO_triang}), the Hibi
ring $\Hring{\P}$ is exactly the affine semigroup ring associated to
$\Ord{\P}$. That is, $\Hring{\P}$ is the standard graded $\C$-algebra associated to the
normal affine semigroup 
\[
    \{ (f,k) \in \Z^\P \times \Z : k \ge 0, f \in k \Ord{\P}\}.
\]

For a lattice polytope $\Po \subset \R^n$,
Sturmfels~\cite[Thm.~8.3]{Sturmfels96} described a beautiful relationship
between regular triangulations of $\Po$ and radicals of initial ideals of the
toric ideal $\Id_\Po$. It follows from Theorem~\ref{thm:TO_GB} that
$\init(\Id_{\TOrd{\dP}})$ is a squarefree ideal generated by quadratic monomials.
Appealing to~\cite[Thm.~8.3]{Sturmfels96}, this yields the following
refinement of Theorem~\ref{thm:T_iso}.

\begin{cor}\label{cor:TO_triang}
    Let $\dP$ be a compatible double poset. Then $\TOrd{\dP}$ has a
    regular triangulation whose underlying simplicial complex is exactly
    $\niDelta{\dP}$.
\end{cor}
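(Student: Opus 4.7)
The plan is to apply Sturmfels' Theorem~8.3 from~\cite{Sturmfels96} to the Gr\"obner basis obtained in Theorem~\ref{thm:TO_GB}. Recall that this theorem states: if $\mo$ is a term order such that $\init_\mo(\Id_\Po)$ is a squarefree monomial ideal, then there is a regular unimodular triangulation of $\Po$ whose maximal simplices are exactly the vertex subsets $\sigma \subseteq V(\Po)$ such that the squarefree monomial $\prod_{v \in \sigma} x_v$ is not contained in $\init_\mo(\Id_\Po)$. Equivalently, the underlying simplicial complex of the triangulation is the Stanley--Reisner complex of $\init_\mo(\Id_\Po)$.

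First I would observe that, by Theorem~\ref{thm:TO_GB}, the leading terms of the Gr\"obner basis elements of $\Id_{\TOrd{\dP}}$ with respect to $\mo$ are precisely the squarefree quadratic monomials of the form
\[
    x_{\Filter_\sigma} \, x_{\Filter_\sigma'} \quad (\Filter_\sigma, \Filter_\sigma' \in \Birk{\P_\sigma} \text{ incomparable}, \ \sigma = \pm)
\]
and
\[
    x_{\Filter_+} \, x_{\Filter_-} \quad (\Filter_+ \in \Birk{\P_+},\, \Filter_- \in \Birk{\P_-},\, \min(\Filter_+) \cap \min(\Filter_-) \neq \emptyset).
\]
Hence $\init_\mo(\Id_{\TOrd{\dP}})$ is generated by squarefree quadratic monomials, which verifies the hypothesis of Sturmfels' theorem and, incidentally, reconfirms that the resulting triangulation is both flag and unimodular.

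Next I would identify the Stanley--Reisner complex. A collection $\L = \L_+ \uplus \L_- \subseteq \TBirk{\dP}$ of filters avoids the initial ideal if and only if no pair of distinct variables in $\L$ multiplies to a generator of $\init_\mo(\Id_{\TOrd{\dP}})$. By the description above, this happens precisely when (i) any two filters $\Filter_\sigma, \Filter_\sigma' \in \L_\sigma$ are comparable under inclusion, so that $\L_\sigma$ forms a chain in $\Birk{\P_\sigma}$ for $\sigma = \pm$, and (ii) $\min(\Filter_+) \cap \min(\Filter_-) = \emptyset$ for every $\Filter_+ \in \L_+$ and $\Filter_- \in \L_-$. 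But this is exactly the definition of a non-interfering pair of chains, so the Stanley--Reisner complex of $\init_\mo(\Id_{\TOrd{\dP}})$ coincides with $\niDelta{\dP}$.

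Finally, Sturmfels' theorem then produces a regular triangulation of $\TOrd{\dP}$ whose underlying complex is $\niDelta{\dP}$. This is the same triangulation that was constructed geometrically in Theorem~\ref{thm:T_iso} via the transfer map $\TPsi{\dP}$, and the present argument upgrades it by giving regularity ``for free'' from the term order $\mo$. The only real work in this plan is already contained in Theorem~\ref{thm:TO_GB}; the remaining step is the routine translation between the Stanley--Reisner complex of a squarefree monomial ideal and the combinatorial description of $\niDelta{\dP}$, with no substantial obstacle to overcome.
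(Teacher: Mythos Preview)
Your proposal is correct and follows essentially the same approach as the paper: apply Sturmfels' Theorem~8.3 to the squarefree quadratic initial ideal from Theorem~\ref{thm:TO_GB}, then read off the Stanley--Reisner complex and check that the two types of quadratic generators translate exactly into the chain and non-interference conditions defining $\niDelta{\dP}$. The paper's argument is virtually identical, with only cosmetic differences in presentation.
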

\begin{proof}
    The initial ideal $\init(\Id_{\TOrd{\dP}})$ is already radical and Theorem 8.3
    of~\cite{Sturmfels96} yields that $\init(\Id_{\TOrd{\dP}})$ is the
    Stanley-Reisner ideal of a regular triangulation of $\TOrd{\dP}$.
    Hence, a collection $C = C_1 \uplus C_2 \subseteq \TBirk{\dP}$ forms a
    simplex in the triangulation of $\TOrd{\dP}$ if and only if 
    \[
        \prod_{\Filter_+ \in C_+} x_{\Filter_+}
        \prod_{\Filter_- \in C_-} x_{\Filter_-} \ \not\in \init(\Id_{\TOrd{\dP}}).
    \]
    Translating the conditions given in Theorem~\ref{thm:TO_GB}, this is the case
    if and only if $C_\sigma = C \cap \TBirk{\P_\sigma}$ is a chain of filters
    for $\sigma = \pm$ and $C_+, C_-$ are non-interfering chains. This is
    exactly the definition of the flag complex $\niDelta{\dP}$.
\end{proof}

Using the orbit-cone correspondence for affine toric varieties (see, for
example, \cite[Sect.~3.2]{CLS}), we can give an algebraic perspective on
Theorem~\ref{thm:TO_faces}. We are in a particularly nice situation as the
polytopes we consider have unimodular triangulations and hence the affine
semigroup rings are generated in degree $1$ by the vertices of the underlying
polytope.

\begin{lem}\label{lem:cone_orbit}
    Let $V \subset \Lambda$ be a finite set of lattice points and $\Po =
    \conv(V)$ the corresponding lattice polytope. If $\Id \subseteq \C[ x_v : v
    \in V]$ is the toric ideal of the homogenization $\{ (v,1) : v
    \in V \} \subseteq \Lambda \times \Z$, then for any subset $U \subseteq V$, we have that $\conv(U)$ is
    a face of $\Po$ with $\conv(U)\cap V = U$ if and only if
    \[
        f(\1_U) \ = \ 0 \qquad \text{ for all } f \in \Id.
    \]
\end{lem}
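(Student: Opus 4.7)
The plan is to translate the geometric condition ``$\conv(U)$ is a face of $\Po$ with $\conv(U) \cap V = U$'' into the existence of a separating linear functional on the homogenized configuration, and then to use that functional both to verify vanishing of $\Id$ at $\1_U$, and conversely — via a theorem of alternatives — to extract an obstructing binomial whenever the face condition fails.

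First I would record the standard reformulation: $\conv(U)$ is a face of $\Po$ with $\conv(U) \cap V = U$ if and only if there is a linear functional $\tilde\ell$ on $\R \otimes (\Lambda \times \Z)$ with $\tilde\ell(u,1) = 0$ for all $u \in U$ and $\tilde\ell(v,1) > 0$ for all $v \in V \setminus U$. This follows from the supporting hyperplane theorem applied to $\{(v,1) : v \in V\}$, and it is the geometric content we will exploit on both sides.

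For the forward direction, I would test $\tilde\ell$ against an arbitrary binomial $x^a - x^b \in \Id$. The membership in $\Id$ says $\sum_v a_v (v,1) = \sum_v b_v (v,1)$, so pairing with $\tilde\ell$ yields $\sum_v a_v \tilde\ell(v,1) = \sum_v b_v \tilde\ell(v,1)$. Because $\tilde\ell \ge 0$ on all $(v,1)$, with equality precisely on $U$, this common value vanishes iff $\supp(a) \subseteq U$, iff $x^a(\1_U) = 1$; the same is true for $b$. Hence $x^a$ and $x^b$ take the same value in $\{0,1\}$ at $\1_U$, so $(x^a - x^b)(\1_U) = 0$. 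Since $\Id$ is a $\C$-linear span of such binomials (it is graded by $\Z(\Lambda \times \Z)$-degree, and each graded piece is spanned by degree-matched binomials), the conclusion extends to all of $\Id$.

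For the converse I would argue contrapositively. If no separating $\tilde\ell$ exists, Motzkin's transposition theorem — applied to the rational system above and hence producing rational certificates — supplies non-negative rationals $(\lambda_v)_{v \in V \setminus U}$, not all zero, and arbitrary rationals $(\mu_u)_{u \in U}$ satisfying
\[
\sum_{v \in V\setminus U} \lambda_v (v,1) \;=\; \sum_{u \in U} \mu_u (u,1).
\]
Moving the negatively-signed $\mu_u$ contributions to the left-hand side and then clearing a common denominator produces an honest toric relation $x^a - x^b \in \Id$ in which $\supp(a)$ meets $V \setminus U$ while $\supp(b) \subseteq U$; evaluating at $\1_U$ gives $0 - 1 = -1 \ne 0$, contradicting the hypothesis. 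The only real subtlety is this last rearrangement — handling the mixed signs of the $\mu_u$ and clearing denominators to obtain a genuine non-negative-exponent binomial in $\Id$ — but once the separation certificate is ruled out this is routine bookkeeping.
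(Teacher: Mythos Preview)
Your proof is correct. The paper does not actually give a proof of this lemma; it simply attributes the statement to the orbit-cone correspondence for affine toric varieties, citing \cite[Sect.~3.2]{CLS}. In that picture, the points of $V(\Id)$ are stratified into torus orbits indexed by faces of the cone over $\{(v,1):v\in V\}$, and the $0/1$-point $\1_U$ is the distinguished point of the orbit attached to the face spanned by $\{(u,1):u\in U\}$; hence $\1_U$ lies on the variety precisely when $U$ cuts out a face. Your argument bypasses this machinery entirely, replacing it with a separating-hyperplane reformulation and a Motzkin-type alternative. The payoff is a completely elementary and self-contained proof that even produces an explicit obstructing binomial when the face condition fails; the toric-geometric route, by contrast, situates the lemma in the orbit stratification that the paper is invoking conceptually but does not spell out.
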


\begin{proof}[Proof of Theorem~\ref{thm:TO_faces}]
    Let $\L \subseteq \TBirk{\dP}$. Then for $\sigma = \pm$ and
    $\Filter_\sigma, \Filter_\sigma' \in \Birk{\P_\sigma}$
    Lemma~\ref{lem:cone_orbit} and~\eqref{eqn:dHibi_rels} of
    Theorem~\ref{thm:TO_GB} states that 
    \[
        \Filter_\sigma, \Filter_\sigma' \in \L_\sigma 
        \quad \Longleftrightarrow \quad
        \Filter_\sigma \cup \Filter_\sigma', 
        \Filter_\sigma \cap  \Filter_\sigma' \in \L_\sigma.
    \]
    That is, if and only if $\L_\sigma$ is an embedded Thus, $\L_\sigma$ is an
    embedded sublattice of $\Birk{\P_\sigma}$.  The same reasoning shows that
    the conditions imposed by~\eqref{eqn:twist_rels} are equivalent to those
    of Lemma~\ref{lem:cond2}.
\end{proof}

\newcommand\IChain[1]{\Id_{\TChain{#1}}}%
\newcommand\Cring[1]{\C[\TChain{#1}]}%
We can also use Sturmfels' result in the other direction to find Gr\"obner
bases. For a double poset $\dP = (\P,\preceq_+,\preceq_-)$ we may define
the
subring $\Cring{\dP} \subseteq \hat R$ generated by the monomials
$t_+ s^{\min(\Filter_+)}$  and $t_+ (s^{\min(\Filter_-)})^{-1}$ for filters
$\Filter_+ \subseteq \P_+$ and $\Filter_- \subseteq \P_-$. The corresponding
toric ideal $\IChain{\dP}$ is contained in the ring $T = \C[ x_{A_+},
x_{A_-} ]$, where $A_\sigma$ ranges over all anti-chains in $\P_\sigma$ for
$\sigma = \pm$.  Since $\TOrd{\dP}$ is the stable set polytope of the
perfect double graph $G(\dP)$, it follows from
Corollary~\ref{cor:TC_triang} that $\Cring{\dP}$ is the normal affine
semigroup ring associated to the lattice polytope $\TChain{\dP}$. To
describe a Gr\"obner basis for, we introduce the following notation. For
$\sigma = \pm$ and two antichains $A,A' \subseteq \P_\sigma$ define $A \sqcup A'
\ := \ \min(A \cup A')$ and 
\[
    A \sqcap A' \ := \ (A \cap A') \cup (\max(A \cup A') \setminus \min(A \cup
    A')).
\]
For a subset $S \subseteq
\P$ and $\sigma = \pm$, we write $\mkFilter{S}_\sigma := \{ a \in \P : a
\succeq_\sigma s \text{ for some } s \in S \}$ for the filter in $\P_\sigma$
generated by $S$.

\begin{thm}\label{thm:TC_GB}
    Let $\dP$ be a double poset. Then a Gr\"obner basis for
    $\IChain{\dP}$ is given by the binomials
    \begin{align*}
        \underline{x_A \, x_{A'}} \ &- \ 
        x_{A \sqcup A'}\, x_{A \sqcap A'} \qquad \mkFilter{A}_\sigma,
        \mkFilter{A}_\sigma \in \Birk{\P_\sigma} \text{ incomparable}\\
        \intertext{ for antichains $A,A' \subset \P_\sigma$ for $\sigma = \pm$
        and }
        \underline{x_{A_+} \, x_{A_-}} \ &- \ x_{A_+ \setminus
        A_-}x_{A_-\setminus A_+}
        \qquad \text{ for antichains } A_\sigma \subseteq \P_\sigma.
    \end{align*}
\end{thm}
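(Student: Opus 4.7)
The plan is to parallel the proof of Theorem~\ref{thm:TO_GB}, using Corollary~\ref{cor:TC_triang} and Sturmfels' correspondence~\cite[Thm.~8.3]{Sturmfels96} in place of the compatibility-based ``moving element'' argument. Since Corollary~\ref{cor:TC_triang} provides, for every double poset $\dP$, a regular unimodular flag triangulation $\niDelta{\dP}$ of $\TChain{\dP}$, Sturmfels' theorem will convert this triangulation into a squarefree initial ideal matching the leading terms of the proposed binomials.

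First, I would verify that both families of binomials lie in $\IChain{\dP}$. For the Hibi-type relations, the identity reduces to the multiset equality $A + A' = (A\sqcup A') + (A \sqcap A')$ in $\P_\sigma$. The key observation: for $a\in A\setminus A'$, the antichain property of $A'$ precludes having both $b\prec_\sigma a$ and $a\prec_\sigma c$ with $b,c\in A'$ (this would force $b\prec_\sigma c$ in $A'$), so $a\in\min(A\cup A')$ or $a\in\max(A\cup A')\setminus\min(A\cup A')$, contributing multiplicity one on each side; elements of $A\cap A'$ lie both in $\min(A\cup A')$ and in $A\cap A'$, contributing multiplicity two on each side. The cross relations follow immediately by cancelling $s_a s_a^{-1}$ for $a\in A_+\cap A_-$.

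Next, fix the reverse lexicographic term order $\mo$ of Section~\ref{ssec:double_hibi} (within $\Birk{\P_\sigma}$ set $x_A<x_{A'}$ whenever $\mkFilter{A}_\sigma\subsetneq\mkFilter{A'}_\sigma$, and let every plus-variable precede every minus-variable). The underlined terms are then the leading terms: in the Hibi-type binomial, $\mkFilter{A\sqcup A'}_\sigma=\mkFilter{A}_\sigma\cup\mkFilter{A'}_\sigma$ strictly contains both $\mkFilter{A}_\sigma$ and $\mkFilter{A'}_\sigma$, while $\mkFilter{A\sqcap A'}_\sigma\subseteq\mkFilter{A}_\sigma\cap\mkFilter{A'}_\sigma$ is strictly contained in each (since $\mkFilter{A}_\sigma$ and $\mkFilter{A'}_\sigma$ are incomparable); in the cross binomial the largest variable $x_{A_-}$ appears in the underlined monomial but not in the other, because $\mkFilter{A_-\setminus A_+}_-\subsetneq\mkFilter{A_-}_-$, any element of $A_+\cap A_-$ witnessing this. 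The resulting leading-term ideal $M$ is generated by (i) $x_A x_{A'}$ with $\mkFilter{A}_\sigma,\mkFilter{A'}_\sigma$ incomparable and (ii) $x_{A_+}x_{A_-}$ with $A_+\cap A_-\neq\emptyset$.

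These are precisely the Stanley--Reisner generators of $\niDelta{\dP}$, whose minimal non-faces are the same-side pairs with incomparable filters (violating the chain condition) and the cross-side pairs with $\min(\mkFilter{A_+}_+)\cap\min(\mkFilter{A_-}_-) = A_+\cap A_-\neq\emptyset$ (violating the non-interfering condition). Since $\niDelta{\dP}$ is a regular unimodular triangulation of $\TChain{\dP}$, Sturmfels' Theorem~8.3 yields $\init(\IChain{\dP}) = M$, so the proposed binomials form a Gr\"obner basis. The principal subtle point I anticipate is checking that $\mo$ is compatible with a weight order realizing $\niDelta{\dP}$; alternatively, this can be bypassed by observing that $M$ and $\init(\IChain{\dP})$ both yield the same Hilbert function as $T/\IChain{\dP}$ (by unimodularity of $\niDelta{\dP}$), which together with the trivial inclusion $M\subseteq\init(\IChain{\dP})$ forces equality.
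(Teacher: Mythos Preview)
Your proof is correct and follows exactly the paper's approach: verify the binomials lie in $\IChain{\dP}$, identify their leading terms with the minimal non-faces of the regular unimodular triangulation $\niDelta{\dP}$ from Corollary~\ref{cor:TC_triang}, and invoke Sturmfels' Theorem~8.3 (your Hilbert-function ``bypass'' making explicit what the paper leaves implicit). One small slip: for the cross binomial, the revlex leading term is determined by the \emph{smallest} variable in which the two monomials differ, namely $x_{A_+\setminus A_-}$ (present in the trailing term, absent from the underlined one), not by the largest variable $x_{A_-}$; the conclusion is unchanged.
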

\begin{proof}
    It is easy to verify that the given binomials are contained in
    $\IChain{\dP}$. Moreover, the leading monomials are exactly the minimal
    non-faces of the unimodular triangulation of Corollary~\ref{cor:TC_triang}.
    The result now follows from Theorem 8.3 in~\cite{Sturmfels96}.
\end{proof}

\begin{rem}
    Reformulated in the language of double posets, Hibi, Matsuda, and
    Tsuchiya~\cite{Hibi15-1,Hibi15-2,Hibi15-3} computed related Gr\"obner
    bases of the toric ideals associated with the polytopes
    $\Gamma(\Ord{\P_+},\Ord{\P_-})$ (in the compatible case),
    $\Gamma(\Chain{\P_+},\Chain{\P_-})$, and $\Gamma(\Ord{\P_+},\Chain{\P_-})$
    for a double poset $\dP$. See the paragraph before
    Corollary~\ref{cor:Gamma} for notation.
\end{rem}

\bibliographystyle{siam}
\bibliography{TwistedPosetPolytopes}

\end{document}